\numberwithin{equation}{section}
\newtheorem{theorem}{Theorem}[section]
\newtheorem{prop}[theorem]{Proposition}
\newtheorem{lemma}[theorem]{Lemma}
\newtheorem{definition}[theorem]{Definition}
\newtheorem{corollary}[theorem]{Corollary}
\newtheorem{remark}[theorem]{Remark}
\newcommand{\R}{\mathbb{R}}
\begin{document}

\title{Maximal function, Littlewood--Paley theory, Riesz transform and atomic decomposition in the multi-parameter flag setting}

\author{Yongsheng Han, Ming-Yi Lee, Ji Li and Brett D. Wick}

\date{}
\maketitle

\begin{abstract}
In this paper, we develop via real variable methods various characterisations of the Hardy spaces in the multi-parameter flag setting.  These characterisations include those via, the non-tangential and radial maximal function, the Littlewood--Paley square function and area integral, Riesz transforms and the atomic decomposition in the multi-parameter flag setting.  The novel ingredients in this paper include (1) establishing appropriate discrete Calder\'on reproducing formulae in the flag setting and a version of the Plancherel--P\'olya inequalities for flag quadratic forms; (2) introducing the maximal function and area function via flag Poisson kernels and flag version of harmonic functions; (3) developing an atomic decomposition via the finite speed propagation and area function in terms of flag heat semigroups. As a consequence of these real variable methods, we obtain the full characterisations of the multi-parameter Hardy space with the flag structure.
\end{abstract}

\bigskip
\setcounter{tocdepth}{2}
\tableofcontents
\bigskip

\bigskip

{ {\it Keywords}: maximal function, Littlewood--Paley square function, Lusin area integral, flag Riesz transforms, atomic decomposition, flag Hardy space}

\medskip

{{Mathematics Subject Classification 2010:} {42B30, 42B25, 42B20}}

\newpage

\hskip5.7cm{\bf Notation}
{\small
\begin{itemize}
\item $\|\cdot\|_2$:\ the $L^2$ norm on $\mathbb R^{n+m}$, i.e., $\|\cdot\|_{L^2(\mathbb R^{n+m})}$;
\item $\|\cdot\|_1$:\ the $L^1$ norm on $\mathbb R^{n+m}$, i.e., $\|\cdot\|_{L^1(\mathbb R^{n+m})}$;
\item $\mathcal S(\mathbb R^n)$:\ Schwartz test function space on $\mathbb R^{n}$;
\item $M_s$: the strong maximal function on $\Bbb R^n\times \Bbb R^m$, see definition in \eqref{strong maximal};
\item $M_F$: the flag maximal function on $\Bbb R^n\times \Bbb R^m$, see Definition \ref{def flag HL};
\item $g_F(f)$: flag Littlewood--Paley square function via flag Schwartz function $\psi$, see Definition \ref{def-of-gf};
\item $S_F(f)$: flag Littlewood--Paley area function via flag Schwartz function $\psi$, see Definition \ref{def-of-S-function-Sf};
\item $M^{*}_{\phi}(f)$ : flag non-tangential maximal function via flag Schwartz function $\phi$, see Definition \ref{non-tangential-maximal of f};
\item $M^{+}_{\phi}(f)$ : flag radial maximal function via flag Schwartz function $\phi$, see Definition \ref{def-of-radial-maximal-function};
\item $H_F^1(\mathbb{R}^n\times\mathbb{R}^m)$ : The flag Hardy spaces, see Definition \ref{def-of-hardy-by-han};
\item $\triangle^{(1)}$ :  the Laplacian on $\Bbb R^{n+m}$, see Definition \ref{def of product atom};
\item $\triangle^{(2)}$ :  the Laplacian on $\Bbb R^{m}$, see Definition \ref{def of product atom};
\item $\ell(Q)$ : the sidelength of the cube $Q$;
\item $H^1_{F,at,M}(\mathbb{R}^{n}\times\mathbb{R}^{m})$: the atomic Hardy space, see Definition \ref{def-of-atomic-product-Hardy-space};
\item $R^{(1)}_j$: the $j$-th Riesz transform on $\mathbb R^{n+m}$, $j=1,2,\ldots,n+m$;
\item $R^{(2)}_k$: the $k$-th Riesz transform on $\R^m$, $k=1,2,\ldots,m$;
\item $R_{j,k}=R^{(1)}_j\ast_{\mathbb R^m} R^{(2)}_k$ : the flag Riesz transforms;
\item $ P(x,y)=P^{(1)}\ast_{\mathbb R^m}P^{(2)}(x,y)$ : the flag Poisson kernel, $P^{(1)}(x,y)$ and $P^{(2)}(z)$ are the classical Poisson kernels on $\mathbb{R}^{n+m}$ and $\mathbb{R}^m$, respectively;
\item $S_F(U)$: the flag Lusin area integral via flag Poisson kernel, that is $U(x,y,t,s)=P_{t,s}\ast f(x,y)$, see Definition \ref{def-of-S-function-Su};
\item $M_1$ : the Hardy--Littlewood maximal function on $\mathbb R^{n+m}$;
\item $M_2$ : the Hardy--Littlewood maximal function on $\mathbb R^m$;
\item $S_{F,\triangle^{(1)},\triangle^{(2)}}(f)$: the area function associated with $\triangle^{(1)}$ and $\triangle^{(2)}$, see Definition \ref{def-of-S-function-Sf L};
\item $\Omega$: an open set in $\mathbb R^n\times\mathbb R^m$ with finite measure;
\item $m(\Omega)$: the set of all maximal dyadic subrectangles contained in $\Omega$.
\end{itemize}
}

\newpage

\section{Introduction and statement of main results, applications}
\setcounter{equation}{0}

\subsection{Background and main results}

It was well-known that techniques from Fourier series and methods of complex analysis played a seminal role in the classical harmonic analysis. After many improvements, mostly achieved by the
Calder\'on--Zygmund school, the real variable methods, such as, maximal function, Littlewood--Paley square function, Lusin area integral, singular integrals and atomic decomposition have come to more prominence.

For the classical one parameter case, the Hardy--Littlewood maximal function and Calder\'{o}n--Zygmund singular integrals commute with
the usual dilations on $\mathbb{R}^{n}$, $\delta \cdot x=(\delta
x_{1},\ldots,\delta x_{n})$ for $\delta >0$. This theory has been extensively studied and is by now well understood, see for example the monograph \cite{St}.  On the other hand, the product theory began with the \emph{strong} maximal function and continued with the Marcienkiewicz multiplier. They commute with the
multi-parameter dilations on $\mathbb{R}^{n}$, $\delta \cdot x=(\delta
_{1}x_{1},\ldots,\delta _{n}x_{n})$ for $\delta =(\delta _{1},\ldots,\delta
_{n})\in \mathbb{R}_{+}^{n}$.
Product theory has been studied, for
example, in Gundy and Stein \cite{GS}, R. Fefferman and Stein \cite{FS}, R.
Fefferman \cite{F1, F2, F3}, Chang and R. Fefferman \cite{CF1, CF2, CF3}, Journ\'{e} \cite{J1},
and Pipher \cite{P}. More precisely, R. Fefferman and Stein \cite{FS} studied the $
L^{p}$ boundedness ($1<p<\infty $) for the product convolution singular
integral operators. Journ\'{e} in \cite{J1} introduced non-convolution
product singular integral operators, established the product $T1$ theorem
and proved the $L^{\infty }\rightarrow {\rm BMO}$ boundedness for such operators.
The product Hardy space $H^{p}\left( \mathbb{R}^{n}\times \mathbb{R}
^{m}\right) $ was first introduced by Gundy and Stein \cite{GS}. Later, Chang and
R. Fefferman \cite{CF1, CF2, CF3} developed the atomic
decomposition and established the dual space of the Hardy space $H^{1}\left(
\mathbb{R}^{n}\times \mathbb{R}^{m}\right) $, namely the product BMO space, denoted by ${\rm BMO}\left(
\mathbb{R}^{n}\times \mathbb{R}^{m}\right) $.

Note that the product theory has an \emph{explicit} underlying multi-parameter product structure. However, when the underlying multi-parameter structure is not explicit, but only \emph{implicit}, an appropriate $L^p$ theory, with $1<p<\infty$, has only recently been developed.  To be precise, in \cite{MRS, MRS2}, Muller, Ricci and Stein studied the Marcinkiewicz multipliers on the Heisenberg group $\mathbb H^n$  associated with the sub-Laplacian on $\mathbb H^n$ and the central invariant vector field, and obtained the $L^p$ boundedness for $1<p<\infty$. This is surprising since
these multipliers are invariant under a two parameter
group of dilations on $\mathbb{C}^{n}\times \mathbb{R}$, while there is
\emph{no} two parameter group of \emph{automorphic} dilations on $\mathbb{H}
^{n}$. Moreover, they showed that Marcinkiewicz multipliers can be characterized by
a convolution operator of the form $f\ast K$ where, $K$ is a {\it flag} convolution kernel, which satisfies the size, smoothness conditions lying in between the one-parameter and product singular integrals.
The crucial idea is that Muller, Ricci and Stein introduced and studied
the natural implicit structure on the Heisenberg group
$\mathbb H^n$, named {\it flag setting}, given via the following projection $\pi$ from $\mathbb H^n\times \mathbb R$ onto $\mathbb H^n$:
$f = \pi F$ with $F\big( (z,t),s\big) {\rm\ \  on\ \ } \mathbb H^{n}\times \mathbb R$  and $f$ on $\mathbb H^n$ as follows:
\begin{align*}
F\big( (z,t),s\big) &{\rm\ \  on\ \ } \mathbb H^{n}\times \mathbb R {\rm\ \  with\ \ } (z,t)\in \mathbb H^n=\mathbb C^n\times\mathbb R\ \ {\rm and\ \ } s\in \mathbb R\\
\Big\downarrow \pi\\
f(z,t) &{\rm\ \  on\ \ } \mathbb H^n,
\end{align*}
where the projection
$\pi$, ($f:=\pi F$), is defined as
$$ f(z,t) =\pi F(z,t)= \int_{\mathbb R} F\big( (z,t-s),s\big)\, ds.$$ 

Later, Nagel, Ricci and Stein \cite{NRS} studied the flag singular integrals on Euclidean space and applications
on certain quadratic CR submanifolds
of ${\Bbb{C}}^n.$ Nagel, Ricci, Stein and Wainger \cite{NRSW1, NRSW2} further generalised the
theory of singular integrals with flag kernels to a more general setting, namely, that of homogeneous groups.
They proved that on a homogeneous group singular integral operators with flag kernels are bounded on  $L^p,1<p<\infty,$ and form an algebra.
See also \cite{G1, G2, G3, DLOPW} for related work.

At the endpoint, it is natural to expect that certain Hardy space and $\mathrm{BMO}$ bounds
are available. However, the lack of automorphic dilations underlies the failure of such
multipliers to be in general bounded on the classical Hardy space and also precludes
a pure product Hardy space theory on the Heisenberg group.
This was the original motivation in \cite{HLS} to develop a theory of \emph{flag} Hardy
spaces $H_{flag}^{p}, 0<p\leq 1$ on the Heisenberg group $\mathbb H^n,$ that is, in a
sense `intermediate' between the classical Hardy spaces $H^{p}(\mathbb{H}
^{n})$ and the product Hardy spaces $H_{product}^{p}(\mathbb{C}^{n}\times \mathbb{R})$. The flag $H^p$ theory on the Heisenberg group developed in \cite{HLS} includes the discrete version of the Calder\'on reproducing formula
associated with the given multi-parameter structure and the Plancherel--P\'olya
type inequality in this setting.
They established the flag Hardy spaces $H^p_{flag}(\mathbb H^n)$ via the discrete Littlewood--Paley
square function, and then studied the dual space $CMO^p_{flag}(\mathbb H^n)$ using the corresponding
Carleson measures.  Calder\'on--Zygmund decomposition in terms of functions in $H^p_{flag}(\mathbb H^n)$ and
interpolation has also been developed. In \cite{HHLT} they showed that singular integrals with flag kernels, which include the aforementioned Marcinkiewicz multipliers, are bounded on $H_{flag}^{p}(\mathbb H^n)$, as
well as from $H_{flag}^{p}(\mathbb H^n)$ to $L^{p}(\mathbb H^n)$, for $0<p\leq 1$. Moreover, in \cite{HLS} they constructed a singular
integral with a flag kernel on the Heisenberg group, which is not bounded on the classical Hardy space $H^1(\mathbb{H}^n).$ Since, as pointed out in \cite{HLS}, the flag Hardy space $H_{flag}^p(\mathbb{H}^n)$ is contained in the classical Hardy space $H^{p}(\mathbb{H}^{n}),$
this counterexample implies that $H_{flag}^{1}(\mathbb{H}^{n})\subsetneqq H^{1}(\mathbb{H}^{n}).$

It was well-known that both of the classical and product multi-parameter Hardy spaces can be characterized by the real variable methods, such as, Riesz
transforms, maximal functions, the Littlewood--Paley square function and Lusin area integrals, as well as atomic decompositions, see \cite{FS}.
Thus, a natural question arises:

{\it {\bf \it Q}: Can one develop all these real variable methods in the multi-parameter flag structure setting}?

The main purpose of this paper is to address this question, focusing on the case of  $\mathbb R^n\times \mathbb R^m$  associated with flag structure induced by a projection, which was a simplified model of M\"uller, Ricci and Stein, and was studied by Nagel, Ricci and Stein \cite{NRS}, as well as Nagel, Ricci, Stein and Wainger \cite{NRSW1,NRSW2}
\begin{align*}
F\big( (x,y),z\big) &{\rm\ \  on\ \ } \mathbb R^{n+m}\times \mathbb R^m\\
\Big\downarrow \pi\\
f(x,y) &{\rm\ \  on\ \ } \mathbb R^n\times \mathbb R^m,
\end{align*}
where the projection $\pi$, ($f:=\pi F$), is defined as
$$ f(x,y) =\pi F(x,y):= \int_{\mathbb R^m} F\big( (x,y-z),z\big)\, dz. $$

To be precise, the main results of this paper develop the real variable methods, maximal functions, the Littlewood--Paley square function and the Lusin area integrals, Riesz transforms, as well as  atomic decompositions, in the more complicated multi-parameter flag structure setting. As a consequence, using these real variable methods, we obtain the full characterisations of flag Hardy space $H^1_F(\mathbb R^n\times\mathbb R^m)$.

\subsection{Statement of main results}

To state the main results of this paper, one requires several definitions.  To begin with, we first introduce the Littlewood--Paley square function and Lusin area integrals associated with the flag structure on $\R^n\times \R^m$. For this purpose,
let $\psi^{(1)}\in\mathcal{ S}(\mathbb{ R}^{n+m})$ with  supp $\widehat{\psi^{(1)}}\subset\Big\{\xi:\frac{1}{2}\le |\xi|\le 2\Big\}$ and
\begin{eqnarray*}
\label{calderon condition 1}
\int^\infty_0 |\widehat{\psi^{(1)}}(t\xi)|^2\frac{dt}{t}=1\text{ for all }\ \xi\in
\Bbb R^n\times\Bbb R^m\setminus\{(0,0)\}.
\end{eqnarray*}
Let $\psi^{(2)}\in\mathcal{ S}(\mathbb{ R}^{m})$ with ${\rm supp}\ \widehat{\psi^{(2)}}\subset\Big\{\eta:\frac{1}{2}\le |\eta|\le 2\Big\}$ and
\begin{eqnarray*}
\label{calderon condition 2}
\int^\infty_0|\widehat{\psi^{(2)}}(s\eta)|^2\frac{ds}{s}=1\text{ for all }\ \eta\in\Bbb R^m\setminus\{0\}.
\end{eqnarray*}
Applying the projection of M\"uller, Ricci and Stein, we set
\begin{eqnarray}
\label{psi jk}
\psi_{t,s}(x,y)=\psi^{(1)}_t\ast_{_{\mathbb R^m}}\psi^{(2)}_s(x,y):=\int_{\mathbb R^m}\psi^{(1)}_t(x,y-z)\psi^{(2)}_s(z)dz,
\end{eqnarray}
where $\psi^{(1)}_t(x,y)=t^{-(n+m)}\psi^{(1)}(\frac{x}{t},\frac{y}{t})$ and $\psi^{(2)}_s(z)=s^{-m}\psi^{(2)}(\frac{z}{s})$.

\begin{definition}\label{def-of-gf}
For $f\in L^1(\Bbb R^{n+m})$, the Littlewood--Paley square function $g_F(f)$ is
defined by
\begin{eqnarray*}\label{Flag-g-func}
g_F(f)(x,y)=\bigg\{\int^\infty_0\int^\infty_0 \Big|\psi_{t,s}\ast f(x,y)\Big|^2\frac{dt}{t}\frac{ds}{s}\bigg\}^{1/2},
\end{eqnarray*}
where $\psi_{t,s}(x,y)$ is the same as in \eqref{psi jk}.
\end{definition}

We now introduce the Lusin area integral associated with the flag structure.
\begin{definition}\label{def-of-S-function-Sf}
For $f\in L^1(\mathbb{R}^{n+m})$, the Lusin area integral of $f$ is defined by
\begin{eqnarray*}
S_F(f)(x,y)=\bigg\{\int_{\mathbb{R}_{+}^{n+1}}\int_{\mathbb{R}_{+}^{m+1}}\chi_{t,s}(x-x_1,y-y_1)|\psi_{t,s}\ast
f(x_1,y_1)|^2 {dx_1dt \over t^{n+m+1}}{dy_1ds \over s^{m+1}} \bigg\}^{1/2},\ \
\end{eqnarray*}
where $\chi_{t,s}(x,y)=\chi_t^{(1)}\ast_{\mathbb R^m} \chi_s^{(2)}(x,y)$,
$\chi_t^{(1)}(x,y)=\chi^{(1)}({x\over t},{y\over t})$,
$\chi_s^{(2)}(z)=\chi^{(2)}({z\over s})$, $\chi^{(1)}(x,y)$
and $\chi^{(2)}(z)$ are the indicator functions of the unit balls of
$\mathbb{R}^{n+m}$ and $\mathbb{R}^m$, respectively.
\end{definition}
Note that the projection of M\"uller, Ricci and Stein is involved in
$\chi_{t,s}(x,y)$.

To define maximal functions associated with the flag structure, applying the projection of M\"uller, Ricci and Stein,
we first introduce the following collection of functions that will be used to build the maximal functions.
\begin{definition}\label{def-of-DF}
Let $\phi(x,y)=\phi^{(1)}\ast_{\mathbb R^m}\phi^{(2)}(x,y)$, where
$\phi^{(1)}\in\mathcal {S}(\mathbb{R}^{n+m})$ and
$\phi^{(2)}\in\mathcal{S}(\mathbb{R}^m)$ satisfying
\begin{eqnarray*}
\int_{\mathbb{R}^{n+m}}\phi^{(1)}(x,y)dxdy=\int_{\mathbb{R}^m}\phi^{(2)}(z)dz=1.
\end{eqnarray*}
We denote $\mathscr{D}_F(\mathbb{R}^n\times\mathbb{R}^m)$ by the collection of all functions $\phi$ that satisfies the above conditions.
\end{definition}
The non-tangential maximal function is defined by
\begin{definition}\label{def-of-maximal-function}
Let $\phi\in \mathscr{D}_F(\mathbb{R}^n\times\mathbb{R}^m)$.
For each $f\in L^1(\mathbb{R}^{n+m}) $,
the non-tangential maximal function of $f$ is defined by
\begin{eqnarray*}\label{non-tangential-maximal of f}
M^{*}_{\phi}(f)(x,y)=\sup_{(x_1,y_1,t,s)\in\Gamma(x,y)}|\phi_{t,s}\ast
f(x_1,y_1)|,
\end{eqnarray*}
where $\phi_{t,s}(x,y)=\phi_t^{(1)}\ast_{\Bbb R^m} \phi_s^{(2)}(x,y)$,
$\phi_t^{(1)}(x,y)=t^{-(m+n)}\phi^{(1)}({x\over t}, {y\over t} ),$
$\phi_s^{(2)}(z)=s^{-m}\phi^{(2)}({z\over s})$ and
$\Gamma(x,y)=\{(x_1,y_1,t,s):\ |x-x_1|\leq t,\ |y-y_1|\leq t+s \}$.
\end{definition}
Similarly, we define the radial maximal function as follows.
\begin{definition}\label{def-of-radial-maximal-function}
Let $\phi\in \mathscr{D}_F(\mathbb{R}^n\times\mathbb{R}^m)$.
For any $f\in L^1(\mathbb{R}^{n+m}) $,
the radial maximal function of $f$ is defined by
\begin{eqnarray*}\label{radial-maximal of f}
M^{+}_{\phi}(f)(x,y)=\sup_{t,s>0}|\phi_{t,s}\ast
f(x,y)|,
\end{eqnarray*}
where $\phi_{t,s}(x,y)$ is defined as in Definition \ref{def-of-maximal-function}.
\end{definition}

One of the main results of this paper is the following theorem.
\begin{theorem}\label{main}
All the following norms
$$\|g_F(f)\|_1,\ \|S_F(f)\|_1,\ \|M^{*}_{\phi}(f)\|_1,\ \|M^{+}_{\phi}(f)\|_1$$ are equivalent for $f\in L^1(\R^{n+m}).$
\end{theorem}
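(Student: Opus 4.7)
The strategy is to close the cycle of inequalities
$$\|M^+_\phi f\|_1 \le \|M^*_\phi f\|_1 \lesssim \|S_F f\|_1 \approx \|g_F f\|_1 \lesssim \|M^+_\phi f\|_1.$$
The first inequality is immediate from $(x,y,0,0)\in\Gamma(x,y)$. The equivalence $\|g_F f\|_1 \approx \|S_F f\|_1$ is the first non-trivial step: I would reduce both sides to a common \emph{discrete} flag square function, invoking the discrete Calder\'on reproducing formula and the flag Plancherel--P\'olya inequality announced in the abstract. Concretely, after expanding $f$ in flag-wavelet coefficients $|I|^{1/2}|J|^{1/2}\psi_{j,k}\ast f(x_I,y_J)$ indexed by dyadic rectangles $I\times J$, both continuous quantities can be compared to $\bigl\|\bigl(\sum_{j,k,I,J}|\psi_{j,k}\ast f(x_I,y_J)|^2 \chi_{I\times J}\bigr)^{1/2}\bigr\|_1$, with Plancherel--P\'olya supplying equivalence of the continuous integration in $(t,s)$ and the dyadic summation in $(j,k,I,J)$.

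For $\|M^*_\phi f\|_1 \lesssim \|S_F f\|_1$, the plan is to expand $\phi_{t,s}\ast f(x_1,y_1)$ via the reproducing formula and exploit the near-orthogonality between the flag kernels of $\phi$ and $\psi$. Summing the resulting series, and invoking the rapid decay of the flag kernels together with the enlargement structure of $\Gamma(x,y)$, one can dominate $M^*_\phi f$ pointwise by a vector-valued strong-type maximal function applied to the discrete coefficients; the Fefferman--Stein vector-valued inequality followed by the discrete-to-continuous Plancherel--P\'olya step then closes the estimate against $\|S_F f\|_1$.

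The main obstacle is $\|g_F f\|_1 \lesssim \|M^+_\phi f\|_1$, the direction classically established by Fefferman--Stein through subharmonicity of the Poisson extension. In the flag setting I would use the atomic decomposition via finite speed of propagation and the flag heat semigroup (the third novel ingredient from the abstract) to decompose any $f$ with $\|M^+_\phi f\|_1<\infty$ as $f=\sum_i \lambda_i a_i$ with $\sum_i |\lambda_i|\lesssim \|M^+_\phi f\|_1$, and then verify the uniform bound $\|g_F a\|_1\lesssim 1$ for each flag atom $a$ by exploiting its cancellation against the flag convolution kernels defining $g_F$. The delicate point is that flag atoms must respect the two-scale flag geometry simultaneously, so the off-support decay of $g_F a$ must be checked in each flag cone separately; this is where the flag Plancherel--P\'olya equivalence and the flag Poisson representation re-enter to control the cross terms and complete the cycle.
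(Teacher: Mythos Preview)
Your cycle runs in the opposite direction from the paper's, and the difference is not merely cosmetic. The paper's chain is
\[
\|S_F(f)\|_1 \lesssim \|S_F(u)\|_1 \lesssim \|u^*\|_1 \approx \|M^*_\Phi(f)\|_1,\qquad \|u^*\|_1 \lesssim \|u^+\|_1 \approx \|M^+_\Phi(f)\|_1,
\]
followed by $\|u^+\|_1 \lesssim \sum_{j,k}\|R_{j,k}f\|_1 + \|f\|_1 \lesssim \|g_F(f)\|_1$, the last step via the atomic decomposition. In particular the paper proves $S_F \lesssim M^*$ (area controlled by nontangential maximal, through a flag Merryfield lemma) and $M^* \lesssim M^+$ (via subharmonicity and a flag grand maximal function built from Poisson differences), passes through the Riesz transforms, and only then uses atoms. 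Your equivalence $\|g_F\|_1 \approx \|S_F\|_1$ matches the paper exactly, but the other two links go the other way.

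The serious gap is your final step $\|g_F f\|_1 \lesssim \|M^+_\phi f\|_1$. You propose to invoke the paper's atomic decomposition to write $f=\sum\lambda_i a_i$ with $\sum|\lambda_i|\lesssim\|M^+_\phi f\|_1$; but the decomposition in the paper (Proposition~5.7) is extracted from level sets of the \emph{area function} $S_{F,\triangle^{(1)},\triangle^{(2)}}$ and yields only $\sum|\lambda_i|\lesssim \|S_{F,\triangle^{(1)},\triangle^{(2)}}f\|_1$. Feeding this back into $\|g_F f\|_1 \lesssim \sum|\lambda_i|$ is circular. A maximal-function-based atomic decomposition in the flag setting is not provided anywhere in the paper and is genuinely delicate in multi-parameter geometries; this is precisely why the paper detours through flag Poisson integrals and Riesz transforms to close the loop. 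Your third step $\|M^*_\phi f\|_1\lesssim\|S_F f\|_1$ is also under-specified: since $\phi$ carries no cancellation, the almost-orthogonality estimate $|\phi_{t,s}\ast\psi_{t',s'}|$ decays only when $(t',s')$ is finer than $(t,s)$, so the coarse-scale tail of the reproducing series does not obviously collapse into the square function as you suggest.
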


As a consequence of Theorem \ref{main}, it is natural to introduce the flag Hardy space as follows.
\begin{definition}\label{def-of-hardy-by-han}
The flag Hardy spaces $H_F^1(\mathbb{R}^n\times\mathbb{R}^m)$ is defined to be the collection of  $f\in
L^1(\mathbb{R}^{n+m}) $ such that $ g_F(f)\in
L^1(\mathbb{R}^{n+m}) $. The norm of $H_F^1(\mathbb{R}^n\times\mathbb{R}^m)$  is
defined by
\begin{eqnarray*}\label{Hp norm}
\|f\|_{H_F^1(\mathbb{R}^n\times\mathbb{R}^m)}=\|g_F(f)\|_{1}.
\end{eqnarray*}
\end{definition}

\begin{remark}
Note that the multi-parameter flag structure is involved in the Littlewood--Paley square function $g_F(f)$, the Lusin area integral of $S_F(f)$, the non-tangential and radial maximal function $M^{*}_{\phi}(f)$ and $M^{+}_{\phi}(f).$
Therefore, the multi-parameter flag structure is involved in the flag Hardy space.
Moreover, the flag Hardy space
$H_F^1(\mathbb{R}^n\times\mathbb{R}^m)$ can also be characterized by
the maximal functions, the Littlewood--Paley square function and the Lusin area integrals. We would like to point out that the main results in this paper still hold for all $0<p\leq 1.$ The reason this paper only deals with the case $p=1$ is that we would like to keep the length of this paper more reasonable and present the main ideas necessary for the case $0<p\leq 1$.  The extension to the case for $0<p<1$ is a lengthy technical exercise best left to the interested reader.
\end{remark}

It was well-known that the atomic decomposition is a very important tool to study the boundedness of singular integrals for the classical one parameter and product multi-parameter Hardy spaces. However, the lack of the cancellation was a major difficulty in providing the atomic decomposition for the flag Hardy space. In this paper, we develop a new approach to provide an atomic decomposition for the flag Hardy space, namely the functional calculus, the finite speed propagation and the flag heat semigroups are involved. The one-parameter result was obtain in \cite{HLMMY}. To do this, we introduce the atom as follows.

\begin{definition}\rm\label{def of product atom}
Let $\triangle^{(1)}$ be the Laplacian on $\Bbb R^{n+m}$ and $\triangle^{(2)}$ be the Laplacian on $\Bbb R^m$ and let $M$ be a positive integer.
A function $a(x_1, x_2)\in L^2({\Bbb R}^{n+m})$ is called a
$(1, 2, M)$-atom if it satisfies
\begin{enumerate}
\item[1)] {\rm supp} $a\subset \Omega$, where $\Omega$
is an open set of ${\Bbb R}^{n}\times{\Bbb R}^{m}$ with finite measure;
\item[2)] $a$ can be further decomposed into
$$
a=\sum\limits_{{R=I\times J\in m(\Omega)} \atop {\ell(I)\le \ell(J)}} a_R
$$
\end{enumerate}
where the summation is running over all dyadic rectangles $R=I\times J\subset \mathbb R^n\times \mathbb R^m$ such that $R$ is contained in  $m(\Omega)$ and $\ell(I)\leq\ell(J)$, with $m(\Omega)$ denoting the set of all maximal dyadic
subrectangles contained in $\Omega$, and there exists a series of function $b_R $ belonging to the domain of
$({\triangle^{(1)}})^{k_1}\otimes_2({\triangle^{(2)}})^{k_2}$ in $L^2({\Bbb R}^{n+m})$, for each
$k_1, k_2=1, \cdots, M,$   such that
\begin{enumerate}
\item[(i)] $a_R=\big(({\triangle^{(1)}})^M\otimes_2({\triangle^{(2)}})^M\big) b_R$;
\item[(ii)] supp $\big(({\triangle^{(1)}})^{k_1}\otimes_2({\triangle^{(2)}})^{k_2}\big)b_R\subset
10R$, \ \ $ k_1, k_2=0, 1, \cdots, M$;
\item[(iii)] $\|a\|_2\leq
|\Omega|^{-{1\over 2}}$ and $ k_1, k_2=0, 1, \cdots, M,$
\begin{align*}
\sum_{R=I\times J\in m(\Omega)}\ell(I)^{-4M} \ell(J)^{-4M}
\Big\|\big(\ell(I)^2 \, \triangle^{(1)}\big)^{k_1}\otimes_2\big(\ell(J)^2 \,
\triangle^{(2)}\big)^{k_2} b_R\Big\|_2^2\leq |\Omega|^{-1}.
\end{align*}
\end{enumerate}
\end{definition}

The atomic decomposition for the flag Hardy space is given by the following definition.

\begin{definition}\rm\label{def-of-atomic-product-Hardy-space}
Let $M>m/2$. The Hardy spaces $H^1_{F,at,M}(\mathbb{R}^{n}\times\mathbb{R}^{m})$
is defined as follows.
For $f\in L^2(\mathbb{R}^{n+m}),$ we say that $f=\sum_{j}\lambda_ja_j$ is an atomic
$(1, 2, M)$-representation of $f$ if $\{\lambda_j\}_{j=0}^\infty\in
\ell^1$, each $a_j$ is a $(1, 2, M)$-atom, and the sum converges in
$L^2(\mathbb{R}^{n+m})$. The space $\Bbb H^1_{F,at,M}(\Omega)$ is defined to be
$$\Bbb H^1_{F,at,M}(\mathbb{R}^{n}\times\mathbb{R}^{m})=\{f\in L^2(\mathbb{R}^{n+m}): f\ \mbox{has an atomic $(1,2,M)$-representation}\}$$
with the norm
$$\|f\|_{\Bbb H^1_{F,at,M}}:=\inf\bigg\{\sum_{j=0}^\infty |\lambda_j| :f=\sum_{j=0}^\infty \lambda_ja_j\ \ \text{is an atomic $(1,2,M)$-representation}\bigg\}.$$
The atomic Hardy space $H^1_{F,at,M}(\mathbb{R}^{n}\times\mathbb{R}^{m})$ is defined as the completion of $\Bbb H^1_{F,at,M}(\mathbb{R}^{n}\times\mathbb{R}^{m})$ with respect to this norm.
\end{definition}

\begin{theorem}\label{theorem of Hardy space atomic decom}
Suppose that  $M>m/2$. Then
$$
H^1_F(\mathbb{R}^{n}\times\mathbb{R}^{m})=H^1_{F,at,M}(\mathbb{R}^{n}\times\mathbb{R}^{m}).
$$
Moreover,
$$
\|f\|_{H^1_F(\mathbb{R}^{n}\times\mathbb{R}^{m})}\approx
\|f\|_{\Bbb H^1_{F,at,M}(\mathbb{R}^{n}\times\mathbb{R}^{m})},
$$
where the implicit constants depend only on $M, n$ and
$m$.
\end{theorem}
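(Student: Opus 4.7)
The plan is to prove the two continuous inclusions $H^1_{F,at,M}\hookrightarrow H^1_F$ and $H^1_F\hookrightarrow H^1_{F,at,M}$ with comparable norms, working on the dense subclass $L^2\cap H^1_F$ for the second inclusion and on finite atomic sums for the first, then extending by the respective completions.

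For the inclusion $H^1_{F,at,M}\hookrightarrow H^1_F$, I would fix a $(1,2,M)$-atom $a$ with supporting open set $\Omega$ and aim at a uniform bound $\|g_F(a)\|_1\lesssim 1$. Let $\widetilde{\Omega}$ be the strong-maximal enlargement of $\Omega$, whose measure is comparable to $|\Omega|$. On $\widetilde{\Omega}$, Cauchy--Schwarz combined with the $L^2$ boundedness of $g_F$ (which is a direct Plancherel consequence of the normalization of $\psi^{(1)},\psi^{(2)}$) and the size estimate $\|a\|_2\le |\Omega|^{-1/2}$ yields $\int_{\widetilde{\Omega}} g_F(a)\lesssim 1$. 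On $\widetilde{\Omega}^c$, I write $a=\sum_R a_R$ and use the identity $\psi_{t,s}\ast a_R=\big((\Delta^{(1)})^M(\Delta^{(2)})^M\psi_{t,s}\big)\ast b_R$; the kernel on the right gains factors of $t^{-2M}$ and $s^{-2M}$ together with rapid decay away from $10R$ in both the $\mathbb R^{n+m}$ and $\mathbb R^m$ directions. Integrating $g_F(a_R)$ over $(10R)^c$ against these factors and summing over maximal dyadic rectangles via condition~(iii) and the flag constraint $\ell(I_R)\le \ell(J_R)$ gives convergence of the resulting geometric series exactly when $M>\max\{n,m\}/4$, which is where the hypothesis on $M$ enters.

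For the reverse inclusion $H^1_F\hookrightarrow H^1_{F,at,M}$, I would combine the discrete flag Calder\'on reproducing formula established earlier in the paper with a tent-space decomposition adapted to the flag cone $\Gamma(x,y)$. Given $f\in H^1_F\cap L^2$, introduce the level sets $\Omega_k=\{(x,y):S_F(f)(x,y)>2^k\}$, a suitable thickening $\widetilde{\Omega}_k$, and the flag tents $\widehat{\Omega}_k\subset \mathbb R_+^{n+1}\times\mathbb R_+^{m+1}$, and partition the parameter space into the shells $T_k=\widehat{\Omega}_k\setminus\widehat{\Omega}_{k+1}$. Restricting the reproducing formula to each $T_k$, grouping the contribution according to the maximal dyadic subrectangles $R\in m(\widetilde{\Omega}_k)$ with $\ell(I_R)\le\ell(J_R)$, and rewriting $\psi_{t,s}$ through the flag heat semigroup produces natural candidates $a_R=((\Delta^{(1)})^M\otimes_2(\Delta^{(2)})^M)b_R$ in which $b_R$ is an integral over the part of $T_k$ lying above $R$ of the flag heat kernel applied to $\psi_{t,s}\ast f$. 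The finite speed of propagation of the heat semigroups in each factor delivers the compact support condition~(ii) on all mixed derivatives; the $L^2$ isometry of the square function on the flag tent space yields the quantitative estimate~(iii); and the identification $\sum_k 2^k|\Omega_k|\simeq \|S_F(f)\|_1\simeq \|f\|_{H^1_F}$, which is available via Theorem~\ref{main}, controls the $\ell^1$-sum of the atomic coefficients $\lambda_k\simeq 2^k|\widetilde{\Omega}_k|$.

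The main obstacle is constructing $b_R$ so that both the compact support property (ii) and the quantitative bound (iii) hold simultaneously. In the product theory one typically invokes the band-limited support of $\psi_{t,s}$ and pays for it with lost cancellation, but the flag geometry here couples the $t$- and $s$-scales through convolution in $\mathbb R^m$ only, so the usual product-tent construction is not directly applicable. The remedy is to replace $\psi_{t,s}$ in the reproducing formula by a flag heat-semigroup expression of the form $t^{2M}s^{2M}(\Delta^{(1)})^M(\Delta^{(2)})^M e^{-t^2\Delta^{(1)}}\otimes_2 e^{-s^2\Delta^{(2)}}$, then approximate each heat factor by compactly supported functional-calculus symbols via finite speed propagation; this transfers the required $(\Delta^{(1)})^M(\Delta^{(2)})^M$ prefactor onto $a_R$, preserves the cancellation needed for (iii), and gives (ii). Organising the resulting integrals along the maximal dyadic rectangles compatible with the flag restriction $\ell(I_R)\le\ell(J_R)$, which is dictated by the direction of the convolution in the definition of $\psi_{t,s}$, is the technical crux where the flag structure is used essentially.
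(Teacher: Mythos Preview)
Your second inclusion is essentially the paper's own construction (Proposition~5.6): a tent--space decomposition against the level sets of a flag area function, with $b_R$ built from a heat--semigroup reproducing formula and compactly supported functional--calculus symbols supplying the support condition~(ii) via finite propagation speed. That part is fine.

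The first inclusion, however, has a real gap. You propose to bound $\int_{\widetilde\Omega^c} g_F(a)$ by writing $a=\sum_R a_R$, estimating $g_F(a_R)$ over $(10R)^c$ via the decay coming from $(\Delta^{(1)})^M(\Delta^{(2)})^M\psi_{t,s}$, and then ``summing over maximal dyadic rectangles via condition~(iii) \dots\ gives convergence of the resulting geometric series exactly when $M>\max\{n,m\}/4$.'' This is one--parameter reasoning and it does not survive in the product/flag setting. The maximal dyadic subrectangles of $\Omega$ can be arbitrarily eccentric and overlap badly, so that $\sum_{R\in m(\Omega)}|R|$ is in general \emph{not} comparable to $|\Omega|$; no amount of pointwise decay in $t,s$ fixes this. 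What the paper actually does (Lemma~5.5) is the two--stage Journ\'e enlargement: for each $R=I\times J$ one passes to $\widetilde I\supset I$ maximal with $\widetilde I\times J\subset\widetilde\Omega$, then to $\widetilde J\supset J$ maximal with $\widetilde I\times\widetilde J\subset\widetilde{\widetilde\Omega}$, and integrates the square function of $a_R$ separately over $(100\widetilde I)^c\times\mathbb R^m$ and $\mathbb R^n\times(100\widetilde J)^c$. Each piece produces a factor $\gamma_i(R)^{-\delta}$, and it is Journ\'e's covering lemma $\sum_{R\in m(\Omega)}|R|\gamma_i(R)^{-2\delta}\lesssim|\Omega|$, combined with Cauchy--Schwarz against the $\ell^2$ bound in condition~(iii), that makes the sum converge. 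The threshold $M>\max\{n,m\}/4$ controls only the decay of the individual integrands; summability over $m(\Omega)$ requires Journ\'e and you have not invoked it.

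A minor structural difference: the paper does not bound $g_F(a)$ directly but passes through the heat--kernel square function $S_{F,\Delta^{(1)},\Delta^{(2)}}$, proves $H^1_{F,at,M}=H^1_{F,\Delta^{(1)},\Delta^{(2)}}$ (Theorem~5.3), and then recovers $H^1_F$ via the full chain through Riesz transforms and maximal functions built in Sections~2--4 and~6. Your direct route to $g_F$ is viable in principle, but only after Journ\'e's lemma is inserted.
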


As a consequence of Theorem \ref{theorem of Hardy space atomic decom}, we obtain the Riesz transform characterisation of the  flag Hardy space. For this purpose, we first introduce the flag Riesz transforms. To do this, let $R^{(1)}_j$ be the $j$-th Riesz transform on $\R^{n+m}$, $j=1,2,\ldots,n+m$, and $R^{(2)}_k$ be the $k$-th Riesz transform on $\R^m$, $k=1,2,\ldots,m$, respectively.  Namely, for each $f\in L^1(\mathbb{R}^{n+m})$
$$
R^{(1)}_jf(x)={\rm p.v.}\ c_{n+m}\int_{\mathbb{R}^{n+m}} \frac{x_j-y_j}{\left\vert x-y \right\vert^{n+m+1}} f(y)dy,\quad x\in\mathbb R^{n+m} $$
and  for each $f\in L^1(\mathbb{R}^{m})$
$$R^{(2)}_kf(z)={\rm p.v.}\ c_m\int_{\mathbb{R}^m} \frac{w_j-z_j}{\left\vert w-z \right\vert^{m+1}} f(w)dw, \quad z\in\mathbb R^{m}.
$$
Again applying the projection of M\"uller, Ricci and Stein, we set $R_{j,k}=R^{(1)}_j\ast_{\mathbb R^m} R^{(2)}_k,$ that is, $R_{j,k}$ is the composition of $R^{(1)}_j$ and $R^{(2)}_k$ on ${\mathbb R^m}.$ Notice that the flag structure is involved in the Riesz transforms $R_{j,k}$ for $j=1,2,\ldots,n+m$ and $k=1,2,\ldots,m.$

\begin{theorem}\label{boundedness of Riesz transforms}
$f\in H^1_F(\R^n\times\R^m)$ if and only if $\sum_{j=1}^{n+m}\sum_{k=1}^m\|R_{j,k}(f)\|_1+\|f\|_1<\infty.$ Moreover,
$$\sum_{j=1}^{n+m}\sum_{k=1}^m\|R_{j,k}(f)\|_1+\|f\|_1\approx  \|f\|_{H^1_F(\R^n\times\R^m)}.$$
\end{theorem}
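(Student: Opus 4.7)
I prove the two implications separately: the forward inequality $\|R^{(1)}_j R^{(2)}_k f\|_1 \lesssim \|f\|_{H^1_F}$ via the atomic decomposition of Theorem \ref{theorem of Hardy space atomic decom}, and the converse via the radial maximal function characterization of Theorem \ref{main}, combined with a bi-subharmonic majorant argument built from Cauchy--Riemann systems adapted to the flag structure.

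\textbf{Forward direction.} By Theorem \ref{theorem of Hardy space atomic decom} it suffices to prove $\|R^{(1)}_j R^{(2)}_k a\|_1 \lesssim 1$ uniformly for every $(1,2,M)$-atom $a$ with $M>\max\{n,m\}/4$. Writing $a=\sum_{R\in m(\Omega)}a_R$ with $a_R=(\triangle^{(1)})^M\otimes_2(\triangle^{(2)})^M b_R$ and using that the Riesz transforms are Fourier multipliers commuting with $\triangle^{(1)}$ and $\triangle^{(2)}$, I split the $L^1$ integral over the enlarged shadow $\tilde\Omega:=\bigcup_R 100R$ and its complement. On $\tilde\Omega$, Cauchy--Schwarz together with $|\tilde\Omega|\lesssim|\Omega|$ (by a Journ\'e-type covering for flag dyadic rectangles), the $L^2$-boundedness of $R^{(1)}_j R^{(2)}_k$, and $\|a\|_2\le|\Omega|^{-1/2}$ give the bound. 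Off $\tilde\Omega$, I transfer the $M$ powers of $\triangle^{(1)}$ and $\triangle^{(2)}$ inside $a_R$ onto the Riesz kernel via integration by parts; with $M>\max\{n,m\}/4$ the resulting decay in $\mathrm{dist}(\cdot,R)$, combined with condition (iii) of Definition \ref{def of product atom}, produces a geometric summation in $R$ yielding the uniform $L^1$-bound.

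\textbf{Converse direction.} By Theorem \ref{main} it suffices to bound $\|M^+_\phi(f)\|_1$ for some $\phi\in\mathscr{D}_F$. Choose $\phi^{(1)}$ and $\phi^{(2)}$ to be smooth approximants of the Poisson kernels on $\mathbb R^{n+m}$ and $\mathbb R^m$ respectively. Writing $u:=\phi_{t,s}*f$, $u_{j,0}:=\phi_{t,s}*(R^{(1)}_j f)$, $u_{0,k}:=\phi_{t,s}*(R^{(2)}_k f)$, $u_{j,k}:=\phi_{t,s}*(R^{(1)}_j R^{(2)}_k f)$, and exploiting that $\phi^{(1)}_t$ commutes with $R^{(2)}_k$ while $\phi^{(2)}_s$ commutes with $R^{(1)}_j$, the vector $F:=(u,\{u_{j,0}\}_j,\{u_{0,k}\}_k,\{u_{j,k}\}_{j,k})$ satisfies a generalized Cauchy--Riemann system in $(x,y,t)\in\mathbb R^{n+m+1}_+$ (for fixed $s$) and a second one in $(y,s)\in\mathbb R^{m+1}_+$ (for fixed $(x,t)$). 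Hence $|F|^{q_1}$ and $|F|^{q_2}$ are subharmonic in the respective half-spaces for any $q_1>(n+m-1)/(n+m)$ and $q_2>(m-1)/m$. Iterating one-parameter Poisson-mean majorization in each pair of variables and applying the Hardy--Littlewood strong maximal inequality at exponent $1/q$ with $q=\max(q_1,q_2)<1$ chosen close to $1$, I obtain the pointwise bound
\begin{equation*}
M^+_\phi(f)(x,y)\lesssim \bigl(M_{\mathrm{s}}\bigl(|F(\cdot,\cdot,0,0)|^q\bigr)(x,y)\bigr)^{1/q},
\end{equation*}
where $M_{\mathrm{s}}$ denotes the strong maximal function. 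The single-Riesz boundary components $R^{(1)}_j f$ and $R^{(2)}_k f$ are then absorbed via the second-order Riesz identities $R^{(1)}_j f=-\sum_k R^{(2)}_k(R^{(1)}_j R^{(2)}_k f)$ and $R^{(2)}_k f=-\sum_j R^{(1)}_j(R^{(1)}_j R^{(2)}_k f)$, combined with a truncation/limiting argument that propagates the hypothesized $L^1$-control of $f$ and $R^{(1)}_j R^{(2)}_k f$ to the full majorant, so that $\|M^+_\phi(f)\|_1$ is bounded by $\|f\|_1+\sum_{j,k}\|R^{(1)}_j R^{(2)}_k f\|_1$.

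\textbf{Main obstacle.} The principal difficulty is the converse direction, specifically the subharmonic majorant in the flag setting together with the absorption of single-Riesz terms. In the product case the half-spaces $\{t>0\}$ and $\{s>0\}$ are independent, but here the flag convolution $\ast_2$ couples them through the shared $y$-variable: $\phi^{(1)}_t$ acts jointly on $(x,y)$, while $\phi^{(2)}_s$ acts only on $y$. Consequently the $(y,s)$-Cauchy--Riemann system is available only after the $\phi^{(1)}_t$-smoothing, and one must verify that the bi-subharmonicity survives uniformly in the other parameter with a common admissible exponent $q<1$. The second and more subtle issue is that only the mixed double Riesz transforms $R^{(1)}_j R^{(2)}_k f$ are assumed to be in $L^1$, so the single-Riesz components of the boundary of $F$ cannot be estimated directly and must be reconstructed via the second-order Riesz identities above; rigorously transferring the resulting (a priori only weak-type) bounds into the strong maximal function framework is where the bulk of the technical work concentrates.
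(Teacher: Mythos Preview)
Your forward direction matches the paper exactly: Section 6 (Theorem \ref{Riesz}) proves uniform $L^1$-bounds for $R_{j,k}$ on $(1,2,M)$-atoms by the same near/far splitting with Journ\'e's covering lemma that you outline.

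For the converse, the strategic outline---a matrix-valued $F$ built from $f$ and its Riesz transforms, generalized Cauchy--Riemann systems in each pair of variables, iterated subharmonicity of $|F|^q$---is precisely what the paper does in Section 3.3, but your execution has a genuine gap. The Cauchy--Riemann equations hold for the \emph{actual} Poisson and conjugate-Poisson kernels, not for smooth Schwartz-class approximants: the identity $\partial_t(P^{(1)}_t*g)=-\sum_j\partial_{x_j}(P^{(1)}_t*R^{(1)}_j g)$ depends on the specific algebraic form of $P^{(1)}_t$ and fails for a generic bump $\phi^{(1)}_t$. Thus your $F$, built from $\phi_{t,s}*f$ with $\phi\in\mathscr{D}_F$, does not satisfy the C--R systems you claim, and the subharmonicity step collapses. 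The paper circumvents this by working with $u^+$ (the radial maximal function of the genuine flag Poisson integral, Definition \ref{def-of-radial-maximal of u}), obtaining $\|u^+\|_1\lesssim\sum_{j,k}\|R_{j,k}f\|_1+\|f\|_1$ in Section 3.3 via honest C--R systems, and then proving $\|u^+\|_1\approx\|M^+_\phi f\|_1$ separately in Section 4 by writing the Poisson kernel as a dyadic superposition of Schwartz bumps.

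Your proposed absorption of the single-Riesz terms is also problematic. The identity $R^{(1)}_j f=-\sum_k R^{(2)}_k(R^{(1)}_j R^{(2)}_k f)$ is correct, but passing to $L^1$-norms would require $R^{(2)}_k$ to be bounded on $L^1$, which it is not; the ``truncation/limiting argument'' you invoke does not explain how this obstruction is overcome. Note that the paper's Section 3.3 estimate in fact produces $\sum_{j=0}^{n+m}\sum_{k=0}^m\|R^{(1)}_jR^{(2)}_kf\|_1$ (with $R^{(1)}_0=R^{(2)}_0=I$) on the right side of the subharmonicity bound, so the single-Riesz contributions appear there as well and are not eliminated by the argument as written.
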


As a corollary to the above theorems, we conclude following:
\begin{corollary}
\label{c:equivalentnorms}
Let all the notation be the same as above. The following norms
$$\|g_F(f)\|_1,\ \|S_F(f)\|_1,\ \|M^{*}_{\phi}(f)\|_1,\ \|M^{+}_{\phi}(f)\|_1, \ \ \sum_{j=1}^{n+m}\sum_{k=1}^m\|R_{j,k}(f)\|_1+\|f\|_1 $$ are equivalent for $f\in L^1(\R^{n+m}).$
\end{corollary}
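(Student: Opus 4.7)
The plan is to derive Corollary~\ref{c:equivalentnorms} as a direct consequence of the three principal theorems already stated earlier in this paper; no new analytic input is required beyond bookkeeping. The strategy is to run a single chain of equivalences
\[
\|M^{+}_{\phi}(f)\|_1 \approx \|M^{*}_{\phi}(f)\|_1 \approx \|S_F(f)\|_1 \approx \|g_F(f)\|_1 \approx \sum_{j=1}^{n+m}\sum_{k=1}^{m}\|R^{(1)}_j R^{(2)}_k(f)\|_1 + \|f\|_1,
\]
interpreted for $f\in L^1(\R^{n+m})$ with the convention that both sides of each $\approx$ are simultaneously finite or simultaneously infinite.

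The first four equivalences in the chain are exactly the content of Theorem~\ref{main}. To close the chain I would invoke Definition~\ref{def-of-hardy-by-han}, which identifies the $H^1_F$-norm with $\|g_F(f)\|_1$, together with the Riesz transform characterization stated in Theorem~\ref{boundedness of Riesz transforms}, which gives
\[
\sum_{j=1}^{n+m}\sum_{k=1}^{m}\|R^{(1)}_j R^{(2)}_k(f)\|_1 + \|f\|_1 \;\approx\; \|f\|_{H^1_F(\R^n\times\R^m)} \;=\; \|g_F(f)\|_1.
\]
Concatenating these two blocks of equivalences yields the required five-way comparability, with implicit constants depending only on $M$, $n$, $m$, and the choice of $\phi\in\mathscr{D}_F(\R^n\times\R^m)$.

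The only mildly delicate point is the passage from $L^1$ to $H^1_F$: Theorem~\ref{main} is formulated for arbitrary $f\in L^1(\R^{n+m})$, whereas Theorem~\ref{boundedness of Riesz transforms} is naturally a statement inside $H^1_F$. This is reconciled using Definition~\ref{def-of-hardy-by-han}, namely $H^1_F=\{f\in L^1 : g_F(f)\in L^1\}$. Thus any $f\in L^1$ with $\|g_F(f)\|_1<\infty$ automatically lies in $H^1_F$ and hence falls under the Riesz transform characterization; conversely, if $f\in L^1$ has $\|g_F(f)\|_1=\infty$, then $f\notin H^1_F$ and therefore $\sum_{j,k}\|R^{(1)}_j R^{(2)}_k(f)\|_1$ must also be infinite, for otherwise Theorem~\ref{boundedness of Riesz transforms} would place $f$ back in $H^1_F$. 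Because of this, there is no real obstacle in the argument: the corollary is essentially a packaging of the main results, and the main intellectual effort has already been absorbed into Theorem~\ref{main} and Theorem~\ref{boundedness of Riesz transforms} (the latter relying crucially on the atomic decomposition of Theorem~\ref{theorem of Hardy space atomic decom}).
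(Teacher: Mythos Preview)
Your proposal is correct and follows essentially the same approach as the paper: the corollary is simply a packaging of Theorem~\ref{main} and Theorem~\ref{boundedness of Riesz transforms}, and the paper's ``proof'' amounts to displaying the single circular chain of inequalities (running through $S_F(u)$, $u^*$, $u^+$, the Riesz transforms, and back to $g_F$) that simultaneously yields all three results. Your invocation of Definition~\ref{def-of-hardy-by-han} to reconcile the $L^1$ and $H^1_F$ formulations is a sensible explicit remark that the paper leaves implicit.
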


\subsection{Strategy of proofs of the main results}

In Section \ref{s:2} we prove the equivalence between $\|g_F(f)\|_1$ and $ \|S_F(f)\|_1$ as a first step of this paper.

We recall that
in the classical case to show that the $L^p$ norms, with $p\leq 1,$ of the Littlewood--Paley square function and Lusin area integral are equivalent, the crucial tool is the sup-inf inequality, namely the Plancherel--P\'olya type
inequality. In order to establish such an inequality, one needs to develop the discrete Calder\'on reproducing formula. See \cite{H2} for more details in the setting of spaces of homogeneous type in the sense of Coifman and Weiss. In the present flag setting, to obtain the equivalence between the square function and Lusin area integral,  we will first establish a discrete Calder\'on reproducing formula and then prove the Plancherel--P\'olya type
inequality associated with the flag structure. As a consequence,  we obtain that
\begin{enumerate}
\item[(I)]\quad $\|g_F(f)\|_1\approx \|S_F(f)\|_1$.
\end{enumerate}
Moreover, following the same approach of developing a discrete reproducing formula and Plancherel--P\'olya type inequality, we also obtain
\begin{enumerate}
\item[(II)]\quad $\|S_F(f)\|_1 \lesssim \|S_F(U)\|_1,$ where $S_F(U)$ is defined below in the Definition \ref{def-of-S-function-Su}.
\end{enumerate}

As the second step, we provide the equivalences of the norms among $\|S_F(f)\|_1$, $\|M^*_\phi(f)\|_1$ and $\sum_{j=1}^{n+m}\sum_{k=1}^m\|R_{j,k}(f)\|_1+\|f\|_1$.

We will introduce the Lusin area integral, the non-tangential maximal function, and the radial maximal function via flag Poisson integrals. We introduce  the flag Poisson kernel by
\begin{eqnarray*}
 P(x,y)=P^{(1)}\ast_{\mathbb R^m}
P^{(2)}(x,y)=\int_{\mathbb{R}^m}P^{(1)}(x,y-z)P^{(2)}(z)dz,
\end{eqnarray*}
where, using the projection of M\"uller, Ricci and Stein,
\begin{eqnarray*}
P^{(1)}(x,y)=\frac{\displaystyle c_{n+m}} {\displaystyle
(1+|x|^2+|y|^2)^{(n+m+1)/2} }\ \ {\rm and}\ \
P^{(2)}(z)=\frac{\displaystyle c_{m}} {\displaystyle
(1+|z|^2)^{(m+1)/2} }\ \ \ \
\end{eqnarray*}
are the classical Poisson kernels on $\mathbb{R}^{n+m}$ and $\mathbb{R}^m$,
respectively.

For any $f\in L^1(\mathbb{R}^{n+m})$, we
define the flag Poisson integral of $f$ by
\begin{align}\label{Poisson integral}
U(x,y,t,s):=P_{t,s}\ast f(x,y),
\end{align}
where
\begin{align}\label{Pts}
P_{t,s}(x,y)=P^{(1)}_t\ast_{\mathbb R^m}P^{(2)}_s(x,y).
\end{align}

Since $P_{t,s}(x,y)\in L^1(\mathbb{R}^{n+m})$, it is easy to see
that $U(x,y,t,s)$ is well-defined. Moreover, for any
fixed $t$ and $s$, $P_{t,s}\ast f$ is a bounded $C^{\infty}$
function and the
function $U(x,y,t,s)$ is harmonic
in $(x,y,t)$ and $(y,s)$, respectively.

We now define the flag Lusin area integral of
$U$ as follows.
\begin{definition}\label{def-of-S-function-Su}
For $f\in L^1(\mathbb{R}^{n+m})$ and $U(x,y,t,s)=P_{t,s}\ast f(x,y),$
$S_F(U)$, the flag Lusin area integral of the flag Poisson integral $U(x,y,t,s)$ is
defined by
\begin{eqnarray*}\label{S-function-Su}
S_F(U)(x,y)=\bigg\{\int_{\mathbb{R}_{+}^{n+1}}\int_{\mathbb{R}_{+}^{m+1}}\chi_{t,s}(x-x_1,y-y_1)
|t\nabla^{(1)}s\nabla^{(2)}U(x_1,y_1,t,s)|^2 {dx_1dt \over t^{n+m+1}}{dy_1ds
\over s^{m+1}} \bigg\}_,^{{1\over 2}}\ \ \
\end{eqnarray*}
where $\chi_{t,s}(x,y)$ is the same as in Definition
\ref{def-of-S-function-Sf},
$\nabla^{(1)}=\big(\partial_t,\partial_{x_1},\ldots,\partial_{x_n},\partial_{y_1},\ldots,\partial_{y_m}\big)$
and
$\nabla^{(2)}=\big(\partial_s,\partial_{y_1},\ldots,\partial_{y_m}\big)$.
\end{definition}
Next, we define the non-tangential maximal
function of $U.$
\begin{definition} \label{def-of-non-tangential-maximal of u}
Let $f\in L^1(\mathbb{R}^{n+m})$, the
non-tangential maximal function of $U$
is defined by
\begin{eqnarray*}
U^{*}(x,y)=\sup_{(x_1,y_1,t,s)\in\Gamma(x,y)}|P_{t,s}\ast
f(x_1,y_1)|,
\end{eqnarray*}
where $\Gamma(x,y)=\{(x_1,y_1,t,s):\ |x-x_1|\leq t,\ |y-y_1|\leq t+s
\}$.
\end{definition}
Similarly, the radial maximal
function of $U$ is given by the following
\begin{definition} \label{def-of-radial-maximal of u}
Let $f\in L^1(\mathbb{R}^{n+m})$, the
radial maximal function of $U$
is defined by
\begin{eqnarray*}
U^{+}(x,y)=\sup_{t>0,s>0}|P_{t,s}\ast
f(x,y)|.
\end{eqnarray*}
\end{definition}
In Section 3, we will show the following inequalities:
\begin{equation*}
\begin{aligned}
\|S_F(U)\|_1 \lesssim \|U^{*}\|_1 \lesssim \|U^{+}\|_1
\lesssim \sum_{j=1}^{n+m}\sum_{k=1}^m\|R_{j,k}(f)\|_1+\|f\|_1,
\end{aligned}
\end{equation*}
where the first inequality follows from establishing a flag-type Merryfield's lemma (which is a suitable substitution of the ``good $\lambda$ inequality'' in the one-parameter setting), the second inequality is obtained by combining two approaches: the harmonic function technique (sub-harmonic inequality) and the grand maximal function technique (this is natural here since part of the behaviour of the flag maximal function is like the one-parameter case while the other part is like the tensor product case), and the last inequality follows from flag-type generalised Cauchy--Riemann equations via the techniques of Poisson kernel and conjugate Poisson kernels.

In Section 4, the following estimates will be concluded:
\begin{enumerate}
\item[(III)] $\|U^{*}\|_1\approx \|M^{*}_\phi(f)\|_1,$

\item[(IV)] $\|U^{+}\|_1\approx \|M^{+}_\phi(f)\|_1$,
\end{enumerate}
The main approach here is to introduce a suitable flag-type grand maximal function and to apply a suitable decomposition of Poisson kernel into a series of Schwartz functions, as well as forming a Schwartz function from the Poisson kernel.

In Section 5, the main breakthrough is the flag-type atomic decomposition. We introduce a suitable version of flag atoms, and define the Hardy space via atoms. Then we prove its equivalence with the Hardy space defined via area function, where the key approach is to use functional calculus and the semigroup technique.
Then as a direct application of the atomic decomposition,
we obtain the following estimate
\begin{enumerate}
\item[(V)] $\sum_{j=1}^{n+m}\sum_{k=1}^m\|R_{j,k}(f)\|_1+\|f\|_1\lesssim \|g_F(f)\|_1.$
\end{enumerate}

Indeed, for each $f\in L^1(\mathbb R^{n+m}),$ 
from all the above estimates (I)---(V), we have
the following chain of inequalities: 
\begin{align*}
&\| g_F(f)\|_1\approx \|S_F(f)\|_1 \lesssim  \|S_F(U)\|_1 \hskip6cm{\rm(Littlewood-Paley)}\\
& \lesssim \|U^*\|_1\lesssim \|M^{*}_\phi(f)\|_1
\lesssim  \|U^*\|_1\lesssim \|U^+\|_1\lesssim \|M^{+}_\phi(f)\|_1
\lesssim  \|U^+\|_1 \hskip1cm{\rm(maximal\ function)}\\
&\lesssim \sum_{j=1}^{n+m}\sum_{k=1}^m\|R_{j,k}(f)\|_1+\|f\|_1 \hskip6.85cm{\rm(Riesz\ transform)}\\
&\lesssim \|S_F(f)\|_1.\hskip9.58cm{\rm(Littlewood-Paley)}
\end{align*}
This implies the main result Theorem \ref{main}; it also gives Theorem \ref{boundedness of Riesz transforms} and Corollary \ref{c:equivalentnorms}.

\subsection{Applications and related open questions}

{\bf Application I}:
\smallskip

As a first direct application of our Theorem \ref{boundedness of Riesz transforms},
i.e., the flag Riesz transform characterisation of $H_F^1(\mathbb{R}^n\times\mathbb{R}^m)$, and the duality of  $H^1$ with BMO space studied in \cite{HLS},
we obtain the decomposition of flag BMO space.

\begin{corollary}\label{cor BMO decom}
The following two statements are equivalent.

$(i)$ $\varphi \in {\rm BMO}_F(\R^n\times \R^m) $;

$(ii)$ There exist $g_{j,k}\in L^\infty( \R^{n+m} )$, $j=0,1,\ldots,n+m$, $k=0,1,\ldots,m$, such that
$$ \varphi=\sum_{j=0}^{n+m}\sum_{k=0}^m R_{j,k}(g_{j,k}). \ $$
\end{corollary}

This provides a soft proof of the decomposition of flag BMO space ${\rm BMO}_F(\mathbb{R}^n\times\mathbb{R}^m)$.

\bigskip
\noindent {\bf Application II}:
\smallskip

In this multi-parameter flag setting, concerning the space  ${\rm BMO}_F(\mathbb{R}^n\times\mathbb{R}^m)$ and the flag Riesz transforms, it is natural to study the commutator of $b\in {\rm BMO}_F(\mathbb{R}^n\times\mathbb{R}^m)$ and the flag Riesz transforms.

Here we recall that the classical commutator of a symbol $b$ and the Hilbert transform was first introduced by A. Calder\'on.
Later, Coifman, Rochberg and Weiss \cite{CRW} established the equivalent characterisation of BMO and the boundedness of commutator, which recovers a well-known result of Nehari \cite{Ne}  on Hankel operators in complex analysis. Later,  Ferguson and Lacey \cite{FLa} established the equivalent characterisation of product BMO and the iterated commutator of Hilbert transforms in each variable separately, which connects to the little Hankel operator on the bi-disc setting (see \cite{FSa}). Then Lacey, Petermichl, Pipher and the last author \cite{LPPW} further generalised this to the product setting of $\mathbb R^n\times \mathbb R^m$ for  iterated commutator of Riesz transforms, which bypassed the use of analyticity in \cite{FLa}.

Based on the decomposition of flag BMO space ${\rm BMO}_F(\mathbb{R}^n\times\mathbb{R}^m)$ via flag Riesz transforms as in Corollary \ref{cor BMO decom} above,
we see that the suitable definition of flag iterated commutator is as follows.

Given two functions $b, f\in L^2(\mathbb{R}^{n+m})$,
we first recall the usual definition of commutator
\begin{align}
[b,R_j^{(1)}](f)(x_1,x_2) := b(x_1,x_2) R_j^{(1)} \ast f (x_1,x_2) - R_j^{(1)} \ast (bf)(x_1,x_2).
\end{align}
The commutator can also act only on the second variable:
\begin{align}
[b,R_k^{(2)}]_{2}(f)(x_1,x_2):= b(x_1,x_2) R_k^{(2)} \ast_2 f (x_1,x_2) - R_k^{(2)} \ast_2 (bf)(x_1,x_2).
\end{align}

Our iterated commutator takes the following form:

\begin{definition}\label{def-of-flag-commutator}
Given two functions $b, f\in L^2(\mathbb{R}^{n+m})$, the iterated commutator in the flag setting of $\R^n\times\R^m$ is defined as
\begin{align*}
[[b,R_j^{(1)}],R^{(2)}_k]_2(f) &:= b(x_1,x_2) R_j^{(1)} \ast R^{(2)}_k\ast_2 f(x_1,x_2) - R_j^{(1)} \ast (b\cdot R^{(2)}_k\ast_2 f) (x_1,x_2)\\
&\quad - R^{(2)}_k \ast_2 \big( b\cdot R_j^{(1)} \ast f\big) (x_1,x_2) + R^{(2)}_k\ast_2 R_j^{(1)}\ast (b\cdot f) (x_1,x_2).
\end{align*}
\end{definition}

Parallel to the result in one-parameter setting by Coifman, Rochberg and Weiss \cite{CRW} and tensor product setting by Ferguson and Lacey \cite{FLa} as well as Lacey, Petermichl, Pipher and the last author \cite{LPPW},
it is natural to explore the following:

\smallskip
{\bf Q1:} Is there a weak factorisation of flag Hardy space $H_F^1(\mathbb R^n\times \mathbb R^m)$?

\smallskip
{\bf Q2:} Is there an equivalent characterisation of the flag BMO space ${\rm BMO}_F(\mathbb R^n\times \mathbb R^m)$ via the iterated commutator $[[b,R_j^{(1)}],R^{(2)}_k]_2$? That is, suppose $b\in {\rm BMO}_F(\mathbb R^n\times \mathbb R^m)$,
$$ \|b\|_{{\rm BMO}_F(\mathbb R^n\times \mathbb R^m)}\approx \big\|[[b,R_j^{(1)}],R^{(2)}_k]_2:\ \ L^2(\mathbb R^{n+m})\to L^2(\mathbb R^{n+m})\big\|?  $$

\bigskip

For  {\bf Q1}, we point out that in one parameter there is a soft proof via the lower bound of the commutator \cite{CRW}, and there is a direct proof via construction in terms of atomic decomposition of Hardy space \cite{U}. Whether our flag atoms can be a suitable candidate for weak factorisation is an open question.

\bigskip

For {\bf Q2}, we point out that recently Duong, Ou, Pipher, and the third and fourth authors \cite{DLOPW} have proved the upper bound of the iterated commutator, i.e.,
$$ \big\|[[b,R_j^{(1)}],R^{(2)}_k]_2:\ \ L^2(\mathbb R^{n+m})\to L^2(\mathbb R^{n+m})\big\|\lesssim\|b\|_{{\rm BMO}_F(\mathbb R^n\times \mathbb R^m)}.  $$

\bigskip

We also point out that the lower bound of the iterated commutator is equivalent to Question 1, however, both remain open.

\newpage
\section{Flag Littlewood--Paley estimate: $\|g_F(f)\|_1$, $ \|S_F(f)\|_1$ and $ \|S_F(U)\|_1$ }
\label{s:2}

\subsection{Discrete Calder\'on reproducing formula}

We first recall the following test function space $\widetilde{\mathcal M}_{d}$   with the size and smoothness conditions on $\Bbb R^d$ for arbitrary positive integer $d$,
which was introduced in \cite{H1}.

\begin{definition}\label{pre test}
Fix two exponents $0<\beta < 1$ and $\gamma>0$. We say that $f$ defined on $\mathbb R^d,$ belongs to  $\widetilde{\mathcal M}_{d}(\beta,\gamma,r,x_0)$, $r>0$ and $x_0\in \Bbb R^d$, if
\begin{align}
|f(x)|&\le C\frac {r^\gamma}{(r+|x-x_0|)^{d+\gamma}},  \label{eq05}\\
|f(x) -f(x')|&\le C\Big(\frac{|x-x'|}{r+|x-x_0|}\Big)^\beta
  \frac {r^\gamma}{(r+|x-x_0|)^{d+\gamma}}\label{eq06}
\end{align}
for $|x-x'|\le \frac{r+|x-x_0|}2$.
If $f\in \widetilde{\mathcal M}_d(\beta,\gamma,r,x_0)$,
  then the norm of $f$ is defined by
$$\|f\|_{\widetilde{\mathcal M}_d(\beta,\gamma,r,x_0)}
  =\inf\{C: \eqref{eq05}\ \text{and}\ \eqref{eq06}\ \text{hold}\}.$$
\end{definition}

Then we recall the test function space ${\mathcal M}_{d}(\beta,\gamma,r,x_0)\subset \widetilde{\mathcal M}_{d}(\beta,\gamma,r,x_0)$ on $\Bbb R^d$ with a cancellation condition.
\begin{definition}\label{test}
Fix two exponents $0<\beta < 1$ and $\gamma>0$. We say that $f$ defined on $\mathbb R^d,$ belongs to  ${\mathcal M}_{d}(\beta,\gamma,r,x_0)$, $r>0$ and $x_0\in \Bbb R^d$, if
$f\in \widetilde{\mathcal M}_{d}(\beta,\gamma,r,x_0)$
 and
$$\int_{\Bbb R^d} f(x)\,dx = 0. $$
\end{definition}
 If $f\in {\mathcal M}_d(\beta,\gamma,r,x_0)$,
  then the norm of $f$ is defined by
$$\|f\|_{{\mathcal M}_d(\beta,\gamma,r,x_0)}=\|f\|_{\widetilde{\mathcal M}_d(\beta,\gamma,r,x_0)}
.$$
We now define the test function space on $\Bbb R^{n+m}\times \Bbb R^m$ as follows.

\begin{definition}\label{product test}
Fix two exponents $0<\beta < 1$ and $\gamma>0$. We say that $f$ defined on $\Bbb R^{n+m}\times \Bbb R^m$ belongs to $\widetilde{\mathcal M}_{(n+m)\times m}(\beta,\gamma,r_1,r_2,$ $x_0,y_0,z_0)$, $r_1,r_2>0$ and $(x_0,y_0,z_0)
\in \Bbb R^{n+m}\times \Bbb R^m$, if for each fixed $z\in \mathbb R^m, f(\cdot,\cdot,z)\in \widetilde{\mathcal M}_{n+m}(\beta,\gamma,r_1,x_0,y_0)$
and for each $(x,y)\in \mathbb R^{n+m}, f(x,y,\cdot)\in \widetilde{\mathcal M}_{m}(\beta,\gamma,r_2,z_0)$ and satisfies the following conditions:
\begin{enumerate}
\item[\textnormal{(1)}] $\|f(\cdot,\cdot,z)\|_{\widetilde{\mathcal M}_{n+m}(\beta,\gamma,r_1,x_0,y_0)}\le C\frac{\displaystyle r_2^\gamma}{\displaystyle (r_2+|z-z_0|)^{m+\gamma}}$,
\item[\textnormal{(2)}] $\|f(x,y,\cdot)\|_{\widetilde{\mathcal M}_{m}(\beta,\gamma,r_2,z_0)} \le C\frac {\displaystyle r_1^\gamma}{\displaystyle (r_1+|x-x_0|+|y-y_0|)^{n+m+\gamma}}$,
\item[\textnormal{(3)}] $\|f(\cdot,\cdot,z)-f(\cdot,\cdot,z')\|_{\widetilde{\mathcal M}_{n+m}(\beta,\gamma,r_1,x_0,y_0)}\le C\Big(\frac{\displaystyle |z-z'|}{\displaystyle r_2+|z-z_0|}\Big)^\beta\frac {\displaystyle r_2^\gamma}{\displaystyle (r_2+|z-z_0|)^{m+\gamma}}$
\item[]    for $|z-z'|\le \frac{r_2+|z-z_0|}2$,
\item[\textnormal{(4)}] $\|f(x,y,\cdot)-f(x',y',\cdot)\|_{\widetilde{\mathcal M}_{m}(\beta,\gamma,r_2,z_0)}$
\item[]\quad\quad$\le C\Big(\frac{\displaystyle |x-x'|+|y-y'|}{\displaystyle r_1+|x-x_0|+|y-y_0|}\Big)^\beta\frac {\displaystyle r_1^\gamma}{\displaystyle (r_1+|x-x_0|+|y-y_0|)^{n+m+\gamma}}$
\item[]    for $|x-x'|+|y-y'|\le \frac{r_1+|x-x_0|+|y-y_0|}2$.
\end{enumerate}
If $f\in \widetilde{\mathcal M}_{(n+m)\times m}(\beta,\gamma,r_1,r_2,$ $x_0,y_0,z_0)$, the norm of $f$ is defined by
$$\|f\|_{\widetilde{\mathcal M}_{(n+m)\times m}(\beta,\gamma,r_1,r_2,x_0,y_0,z_0)}
  =\inf\{C: (1)- (4)\  \text{hold}\}.$$

Similarly we have the definition for the test function space ${\mathcal M}_{(n+m)\times m}(\beta,\gamma,r_1,r_2,x_0,y_0,z_0)$
as a subset in $\widetilde{\mathcal M}_{(n+m)\times m}(\beta,\gamma,r_1,r_2,x_0,y_0,z_0)$ and satisfies the corresponding cancellation conditions for the variables $(x,y)$ and for $z$, respectively.
\end{definition}

We would like to point out that if $f_1\in {\mathcal M}_{n+m}(\beta,\gamma,r_1,x_0,y_0)$ and $f_2\in {\mathcal M}_{m}(\beta,\gamma,r_2,z_0)$ then $f(x,y,z)
=f_1(x,y)f_2(z)\in {\mathcal M}_{(n+m)\times m}(\beta,\gamma,r_1,r_2,x_0,y_0,z_0).$

The flag test function is defined by:
\begin{definition}\label{flag test function}
Let $0<\beta,\gamma<1$, $r_1,r_2>0$ and $x_0\in \Bbb R^n$, $y_0\in \Bbb R^m$. We say that a function $f$ defined on $\Bbb R^n\times \Bbb R^m$ belongs to
the flag test function space $\widetilde{\mathcal M}_{flag}(\beta,\gamma,r_1,r_2,x_0,y_0)$ if there exists a function $f^\sharp(x,y,z)\in \widetilde{\mathcal M}_{(n+m)\times m}(\beta,\gamma,r_1,r_2,x_0,y_0,z_0)$
such that
$$f(x,y)=\int_{\Bbb R^m} f^\sharp(x,y-z,z)dz.$$
If $f\in \widetilde{\mathcal M}_{flag}(\beta,\gamma,r_1,r_2,x_0,y_0)$, the norm of $f$ is defined by
$$\|f\|_{\widetilde{\mathcal M}_{flag}(\beta,\gamma,r_1,r_2,x_0,y_0)}=\inf\bigg\{\|f^\sharp\|_{{\mathcal M}_{(n+m)\times m}(\beta,\gamma,r_1,r_2,x_0,y_0,z_0)}: f(x,y)=\int_{\Bbb R^m} f^\sharp(x,y-z,z)dz\bigg\}.$$
Similarly we can define the test function space ${\mathcal M}_{flag}(\beta,\gamma,r_1,r_2,x_0,y_0)$ with the flag cancellation condition as a subset in $ \widetilde{\mathcal M}_{flag}(\beta,\gamma,r_1,r_2,x_0,y_0)$, which is projected from the product test function space ${\mathcal M}_{(n+m)\times m}(\beta,\gamma,r_1,r_2,x_0,y_0,z_0)$.
\end{definition}

Observe that the flag structure is involved in the structure of the flag test function space ${\mathcal M}_{flag}(\beta,\gamma,r_1,r_2,x_0,y_0)$.
We now prove the following discrete Calder\'on reproducing formula.

\begin{theorem}\label{dcrf}
Let $\beta,\gamma\in(0,1)$ and $r_1,r_2,r_3>0$, $x_0$ be fixed point in $\mathbb R^n$, $y_0$ and $z_0$ be fixed points in $\mathbb R^m$. For $j,k\in\mathbb Z$ and a fixed small positive number $\alpha$, let $\psi_{j,k} = \psi_{2^{-\alpha j},2^{-\alpha k}}$, whose definition is given in \eqref{psi jk}. Then there exist functions $\phi_{j,k}(x,y, x_I,y_J)\in {\mathcal M}_{flag}(\beta,\gamma,2^{-j},2^{-k},x_I,y_J)$ and a fixed large integer $N$ such that for the flag test function $f(x,y)=\int_{\mathbb R^m}f_1(x,y-z)f_2(z)dz$ with $f_1\in {\mathcal M}_{n+m}(\beta,\gamma,r_1,r_2,x_0,y_0)$ and $f_2\in {\mathcal M}_{m}(\beta,\gamma,r_3,z_0),$
\begin{align}\label{discrf}
f(x,y)=c_\alpha\sum_{ j,k\in\mathbb Z} \sum_{I,J}|I||J|\ {\phi}_{j,k}(x,y, x_I,y_J)\ \psi_{j,k}\ast f(x_I,y_J),
\end{align}
where the series converges in $L^2(\mathbb R^{n+m})$ and in ${\mathcal M}_{flag}(\beta,\gamma,r_1,r_2,x_0,y_0)$, $I\subset \mathbb{R}^n$ and $J\subset \mathbb{R}^m$ are dyadic cubes with side-lengths $\ell(I)=2^{-j-N}$ and $\ell(J)=2^{-(j\wedge k-N)}$, and $x_I$ and $y_J$ are {\bf any fixed points} in $I$ and $J,$ respectively.
\end{theorem}
Note that for each $f\in L^1(\mathbb R^{n+m}),$ $f\in ({\mathcal M}_{flag}(\beta,\gamma,r_1,r_2,x_0,y_0))^\prime.$ As a consequence of Theorem \ref {dcrf}, by duality, if $\psi_{t,s}$ is the same as in (\ref{psi jk}), $h\in {\mathcal M}_{flag}(\beta,\gamma,r_1,r_2,x_0,y_0)$ and $f\in L^1(\mathbb R^{n+m}),$
\begin{align}\label{discrf1}
\langle f,h \rangle=\bigg\langle c_\alpha\sum_{ j,k\in\mathbb Z} \sum_{I,J}|R|\ {\phi}_{j,k}(\cdot,\cdot, x_I,y_J)\ \psi_{j,k}\ast f(x_I,y_J),h\bigg\rangle.
\end{align}

\begin{remark}\label{remark1}
Indeed, the series in the right-hand side of \eqref{discrf} converges in the test function space ${\mathcal M}_{flag}(\beta,\gamma,r_1,r_2,x_0,y_0)$ and in the distribution space $({\mathcal M}_{flag}(\beta,\gamma,r_1,r_2,x_0,y_0))^\prime,$ the dual of ${\mathcal M}_{flag}(\beta,\gamma,r_1,r_2,x_0,y_0).$
However, the proofs of such results are a little bit complicated. In this paper, we focus only on the Hardy space with $p=1.$ Thus, for our purpose, we only need the convergence in the distribution sense as given in \eqref{discrf1}.
\end{remark}

\begin{proof}[Proof of Theorem \ref{dcrf}]
To show Theorem \ref{dcrf}, observe that if $\psi_{t,s}$ are as in (\ref{psi jk}), by taking the Fourier transform, we have the following Calder\'on's reproducing formula, namely for all $f\in L^2(\mathbb{R}^{n+m}),$
\begin{eqnarray}\label{continuous-Caldeorn-reproducing-formula}
f(x,y)=\int^\infty_0\int^\infty_0\psi_{t,s}\ast\psi_{t,s}\ast f(x,y)\frac{dt}{t}\frac{ds}{s},
\end{eqnarray}
where the series converges in $L^2(\mathbb{R}^{n+m}).$ One can also show this reproducing formula holds in ${\mathcal M}_{flag}(\beta,\gamma,r_1,r_2,x_0,y_0)$ by analysis on the convergence.

Suppose that $f\in {\mathcal M}_{flag}(\beta,\gamma,r_1,r_2,x_0,y_0)$ with $f(x,y)=\int_{\mathbb R^{m}}f_1(x,y-z)f_2(z)dz$ where $f_1\in {\mathcal M}_{n+m}(\beta,\gamma,r_1,r_2,x_0,y_0)$ and $f_2\in {\mathcal M}_{m}(\beta,\gamma,r_3,z_0).$  Fix an arbitrary contant $\alpha>0$.
We first split the continuous reproducing formula \eqref{continuous-Caldeorn-reproducing-formula} into three parts
\begin{eqnarray}\label{disc-Caldeorn-reproducing-formula1 pre}
f(x,y)&=&\int^\infty_0 \int^\infty_0\psi_{t,s}\ast\psi_{t,s}\ast f(x,y)\frac{dt}{t}\frac{ds}{s} \\
&=&\sum_{ j,k\in\mathbb Z} \int^{2^{-\alpha(k-1)}}_{2^{-\alpha k}}\int^{2^{-\alpha(j-1)}}_{2^{-\alpha j}} \psi_{t,s}\ast\psi_{t,s}\ast f(x,y)\frac{dt}{t}\frac{ds}{s} \nonumber\\
&=&c_\alpha\bigg(\sum_{ \substack{j,k\in\mathbb Z\\ j\leq k}}+\sum_{ \substack{j,k\in\mathbb Z\\ j> k}}\bigg)  \psi_{j,k}\ast\psi_{j,k}\ast f(x,y)\nonumber\\
&&+\sum_{ j,k\in\mathbb Z} \int^{2^{-\alpha(k-1)}}_{2^{-\alpha k}}\int^{2^{-\alpha(j-1)}}_{2^{-\alpha j}} (\psi_{t,s}\ast\psi_{t,s} -\psi_{j,k}\ast\psi_{j,k})\ast f(x,y)\frac{dt}{t}\frac{ds}{s}\nonumber\\
&=:&\mathcal T_1 (f)(x,y) +\mathcal T_2 (f)(x,y)+\mathcal R_\alpha (f)(x,y),\nonumber
\end{eqnarray}
where $\psi_{j,k}= \psi_{2^{-\alpha j},2^{-\alpha k}}$ and
$c_\alpha =\int^{2^{-\alpha(k-1)}}_{2^{-\alpha k}}\int^{2^{-\alpha(j-1)}}_{2^{-\alpha j}} \frac{dt}{t}\frac{ds}{s}= 2(\ln2)^2\alpha^2 $.

For the first operator $\mathcal T_1 (f)(x,y)$.  Recalling the definition of $\psi_{t,s} $ in \eqref{psi jk}, we now rewrite
$\mathcal T_1 (f)(x,y)$ as follows.
\begin{eqnarray*}
\mathcal T_1 (f)(x,y)
&=&c_\alpha\sum_{ j\in\mathbb Z} \sum_{ \substack{k\in\mathbb Z\\ j\leq k}}\psi^{(1)}_{j} \ast_{\mathbb R^m} \psi^{(2)}_{k}\ast \psi^{(1)}_{j} \ast_{\mathbb R^m} \psi^{(2)}_{k}\ast f(x,y)\nonumber\\
&=&c_\alpha\sum_{ j\in\mathbb Z} \bigg(\sum_{ \substack{k\in\mathbb Z\\ j\leq k}} \psi^{(1)}_{j} \ast_{\mathbb R^m} (\psi^{(2)}_{k}\ast_{\mathbb R^m}   \psi^{(2)}_{k})\bigg) \ast \psi^{(1)}_{j} \ast f(x,y) \nonumber\\
&=&c_\alpha\sum_{ j\in\mathbb Z}   \tilde\psi^{(1)}_{j}  \ast \psi^{(1)}_{j} \ast f(x,y) ,\nonumber
\end{eqnarray*}
where we denote $ \tilde\psi^{(1)}_{j} := \sum_{ \substack{k\in\mathbb Z\\ j\leq k}} \psi^{(1)}_{j} \ast_{\mathbb R^m} (\psi^{(2)}_{k}\ast_{\mathbb R^m}   \psi^{(2)}_{k}) $, and it is easy to verify that $ \tilde\psi^{(1)}_{j}$ satisfies the same conditions as $\psi^{(1)}_{j}$ does on $\mathbb R^{n+m}$.
Hence,  we see that the performance of the operator $\mathcal T_1$ is a one parameter analogous singular integral operator.

Hence, we now decompose $\mathbb R^n\times \mathbb R^m$ into dyadic cubes of the form $R= I\times J$ with $\ell(I) = 2^{-j-N}$ and $\ell(J) = 2^{-j-N}$, where $N$ is a large fixed positive integer.
Applying Coifman's decomposition of the identity yields
\begin{eqnarray*}\label{main term T1}
\mathcal T_1 (f)(x,y)
&=& c_\alpha\sum_{ \substack{j,k\in\mathbb Z\\ j\leq k}}\psi_{j,k}\ast\psi_{j,k}\ast f(x,y)\\
&= & c_\alpha\sum_{ \substack{j,k\in\mathbb Z\\ j\leq k}} \sum_{I,J} \int_{I\times J}\psi_{j,k}(x-u,y-v)\, \psi_{j,k}\ast f(u,v)\, dudv\\
&= & c_\alpha\sum_{ \substack{j,k\in\mathbb Z\\ j\leq k}} \sum_{I,J}|R|\ \Big({1\over |R|} \int_{I\times J}\psi_{j,k}(x-u,y-v)\,dudv\Big)\ \psi_{j,k}\ast f(x_I,y_J)+\mathcal{R}_N^{(1)}(f)(x,y),
\end{eqnarray*}
where $x_I$ and $y_J$ are {\bf any fixed points} in $I$ and $J$, respectively, and
\begin{eqnarray*}\label{main term R1}
\mathcal{R}_N^{(1)}(f)(x,y)
&=&c_\alpha\sum_{ j} \sum_{I,J} \int_{I\times J} \sum_{ \substack{k\in\mathbb Z\\ j\leq k}} \psi_{j,k}(x-u,y-v)\,    (\psi_{j,k} \ast f(u,v)-\psi_{j,k} \ast f(x_I,y_J))\,dudv .\nonumber
\end{eqnarray*}
We now need to show that
$\mathcal{R}_N^{(1)}(f)$ is bounded on
${\mathcal M}_{flag}(\beta,\gamma,r_1,r_2,x_0,y_0)$
with small norm. To see this, we need to consider the lifting of
$\mathcal{R}_N^{(1)}(f)$ onto the product setting $\mathbb R^{n+m}\times\mathbb R^m$ and then estimate its norm with respect to
$\widetilde{\mathcal M}_{(n+m)\times m}(\beta,\gamma,r_1,r_2,x_0,y_0,z_0)$.

To see this, by observing that
\begin{align*}
&\psi_{j,k}\ast f(u,v)-\psi_{j,k}\ast f(x_I,y_J) \\
&=\int_{\mathbb R^{n+m}}[\psi_{j,k}(u-u',v-v')-\psi_{j,k}(x_I-u',y_J-v')]f(u',v')du'dv',
\end{align*}
we can write
\begin{eqnarray*}
\mathcal{R}_N^{(1)}(f)(x,y)&=&c_\alpha\sum_{ j} \sum_{I,J} \int_{I\times J} \sum_{ \substack{k\in\mathbb Z\\ j\leq k}}  \int_{\mathbb R^{n+m}}\psi_{j,k}(x-u,y-v)\\
&&\times[\psi_{j,k}(u-u',v-v')-\psi_{j,k}(x_I-u',y_J-v')]\  f(u',v')du'dv'\, dudv.
\end{eqnarray*}
Note that  $f(u',v')=\int_{\mathbb R^m}f_1(u',v'-w')f_2( w')dw'$ with $f_1\cdot f_2 \in  {\mathcal M}_{(n+m)\times m}(\beta,\gamma,r_1,r_2,x_0,y_0,z_0)$, and that
\begin{align*}
&\psi_{j,k}(x-u,y-v)=\int_{\mathbb R^m}\psi^{(1)}_j(x-u,y-v-z)\psi^{(2)}_k(z)dz\\
&\psi_{j,k}(u-u',v-v')=\int_{\mathbb R^m}\psi^{(1)}_j(u-u',v-v'-w)\psi^{(2)}_k(w)dw\\
&\psi_{j,k}(x_I-u',y_J-v')= \int_{\mathbb R^m}\psi^{(1)}_j(x_I-u',y_J-v'-w)\psi^{(2)}_k(w)dw
\end{align*}
 Thus, we have
\begin{align*}
&\mathcal{R}_N^{(1)}(f)(x,y)\\
&=\int_{\Bbb R^m} \int_{\mathbb R^{n+m}}\int_{\Bbb R^m}\int_{\Bbb R^m} c_\alpha\sum_{ j} \sum_{I,J}   \int_{I\times J} \sum_{ \substack{k\in\mathbb Z\\ j\leq k}} \psi^{(1)}_{j}(x-u,y-v-z)\psi^{(2)}_k(z)\\
&\qquad\times[\psi^{(1)}_{j}(u-u',v-v'-w)-\psi^{(1)}_{j}(x_I-u',y_J-v'-w)]\psi^{(2)}_k(w)dudv \ dw\\
&\qquad\qquad \times f^\sharp(u',v'-w',w')du'dv'dw'dz\\
&=\int_{\Bbb R^m} \int_{\mathbb R^{n+m}}\int_{\Bbb R^m}\int_{\Bbb R^m} c_\alpha\sum_{ j} \sum_{I,J}   \int_{I\times J} \sum_{ \substack{k\in\mathbb Z\\ j\leq k}} \psi^{(1)}_{j}(x-u,y-v-z)\psi^{(2)}_k(z)\\
&\qquad\times[\psi^{(1)}_{j}(u-u',v-v'-w-w')-\psi^{(1)}_{j}(x_I-u',y_J-v'-w-w')]\psi^{(2)}_k(w)dudv \ dw\\
&\qquad\qquad \times f^\sharp(u',v',w')du'dv'dw'dz,
\end{align*}
where the second equality follows from changing of variable with respect to $v'$.
Next, by changing the variable with respect to $w$, we have
\begin{align*}
&\mathcal{R}_N^{(1)}(f)(x,y)\\
&=\int_{\Bbb R^m} \int_{\mathbb R^{n+m}}\int_{\Bbb R^m}\int_{\Bbb R^m} c_\alpha\sum_{ j} \sum_{I,J}   \int_{I\times J} \sum_{ \substack{k\in\mathbb Z\\ j\leq k}} \psi^{(1)}_{j}(x-u,y-v-z)\psi^{(2)}_k(z)\\
&\qquad\times[\psi^{(1)}_{j}(u-u',v-v'-w)-\psi^{(1)}_{j}(x_I-u',y_J-v'-w)]\psi^{(2)}_k(w-w')dudv \ dw\\
&\qquad\qquad \times f^\sharp(u',v',w')du'dv'dw'dz.
\end{align*}
Then we continue to change the variable with respect to $z$ to get
\begin{align*}
&\mathcal{R}_N^{(1)}(f)(x,y)\\
&=\int_{\Bbb R^m} \int_{\mathbb R^{n+m}}\int_{\Bbb R^m}\int_{\Bbb R^m} c_\alpha\sum_{ j} \sum_{I,J}   \int_{I\times J} \sum_{ \substack{k\in\mathbb Z\\ j\leq k}} \psi^{(1)}_{j}(x-u,y-v-z+w)\psi^{(2)}_k(z-w)\\
&\qquad\times[\psi^{(1)}_{j}(u-u',v-v'-w)-\psi^{(1)}_{j}(x_I-u',y_J-v'-w)]\psi^{(2)}_k(w-w')dudv \ dw\\
&\qquad\qquad \times f^\sharp(u',v',w')du'dv'dw'dz.
\end{align*}
Then we can write
\begin{align*}
\mathcal{R}_N^{(1)}(f)(x,y)&=  \int_{\mathbb R^m} (\mathcal{R}_N^{(1)})^{\sharp}(f^{\sharp})(x,y-z,z) dz,
\end{align*}
where the kernel of $(\mathcal{R}_N^{(1)})^{\sharp}$ is given by
\begin{align}\label{R1sharp}
&(\mathcal{R}_N^{(1)})^{\sharp}(x,y,z;u',v',w')\\
&=\int_{\Bbb R^m} c_\alpha\sum_{ j} \sum_{I,J}   \int_{I\times J} \sum_{ \substack{k\in\mathbb Z\\ j\leq k}} \psi^{(1)}_{j}(x-u,y-v-z+w)\psi^{(2)}_k(z-w)\nonumber\\
&\qquad\times[\psi^{(1)}_{j}(u-u',v-v'-w)-\psi^{(1)}_{j}(x_I-u',y_J-v'-w)]\psi^{(2)}_k(w-w')dudv \ dw.\nonumber
 \end{align}
Similarly, for $\mathcal T_2$, since $j>k$, we point out that this is an analogous of operator of product type. We now decompose $\mathbb R^n\times \mathbb R^m$ into dyadic rectangles of the form $R= I\times J$ with $\ell(I) = 2^{-j-N}$ and $\ell(J) = 2^{-k-N}$, where $N$ is a large fixed positive integer.
\begin{eqnarray*}\label{main term T2}
\mathcal T_2 (f)(x,y)
&=& c_\alpha\sum_{ \substack{j,k\in\mathbb Z\\ j> k}}\psi_{j,k}\ast\psi_{j,k}\ast f(x,y)\\
&= & c_\alpha\sum_{ \substack{j,k\in\mathbb Z\\ j> k}} \sum_{I,J} \int_{I\times J}\psi_{j,k}(x-u,y-v)\, \psi_{j,k}\ast f(u,v)\, dudv\\
&= & c_\alpha\sum_{ \substack{j,k\in\mathbb Z\\ j> k}} \sum_{I,J}|R|\ \Big({1\over |R|} \int_{I\times J}\psi_{j,k}(x-u,y-v)\,dudv\Big)\ \psi_{j,k}\ast f(x_I,y_J)+\mathcal{R}_N^{(2)}(f)(x,y),
\end{eqnarray*}
where $x_I$ and $y_J$ are {\bf any fixed points} in $I$ and $J$, respectively, and
\begin{eqnarray*}\label{main term R2}
\mathcal{R}_N^{(2)}(f)(x,y)
&=&c_\alpha\sum_{ \substack{j,k\in\mathbb Z\\ j> k}}  \sum_{I,J} \int_{I\times J}  \psi_{j,k}(x-u,y-v)\,    (\psi_{j,k} \ast f(u,v)-\psi_{j,k} \ast f(x_I,y_J))\,dudv .\nonumber
\end{eqnarray*}
Then similarly we can write
\begin{align*}
\mathcal{R}_N^{(2)}(f)(x,y)&=  \int_{\mathbb R^m} (\mathcal{R}_N^{(2)})^{\sharp}(f^{\sharp})(x,y-z,z) dz,
\end{align*}
where the kernel of $(\mathcal{R}_N^{(2)})^{\sharp}$ is given by
\begin{align}\label{R2sharp}
&(\mathcal{R}_N^{(2)})^{\sharp}(x,y,z;u',v',w')\\
&=\int_{\Bbb R^m} c_\alpha\sum_{ \substack{j,k\in\mathbb Z\\ j> k}} \sum_{I,J}   \int_{I\times J}  \psi^{(1)}_{j}(x-u,y-v-z+w)\psi^{(2)}_k(z-w)\nonumber\\
&\qquad\times[\psi^{(1)}_{j}(u-u',v-v'-w)-\psi^{(1)}_{j}(x_I-u',y_J-v'-w)]\psi^{(2)}_k(w-w')dudv \ dw.\nonumber
 \end{align}

We need an estimate on $(\mathcal{R}_N^{(1)})^{\sharp}$ and $(\mathcal{R}_N^{(2)})^{\sharp}$ that is contained in the following lemma:
\begin{lemma}
\label{l:claim}
Suppose  $f^\sharp(x,y,z)=f_1(x,y)f_2(z)$, where  $f_1\in {\mathcal M}_{n+m}(\beta,\gamma,r_1,r_2,x_0,y_0)$ and $f_2\in {\mathcal M}_{m}(\beta,\gamma,r_3,z_0)$, then for $i=1,2$,
$(\mathcal{R}_N^{(i)})^{\sharp}(f^\sharp)$ is in ${\mathcal M}_{(n+m)\times m}(\beta,\gamma,r_1,r_2,x_0,y_0,z_0)$ with
\begin{eqnarray}\label{productboundedness}
||(\mathcal{R}_N^{(i)})^{\sharp}(f^\sharp)||_{{\mathcal M}_{(n+m)\times m}(\beta,\gamma,r_1,r_2,x_0,y_0,z_0)}
\le C2^{-N}||f^\sharp||_{{\mathcal M}_{(n+m)\times m}(\beta,\gamma,r_1,r_2,x_0,y_0,z_0)},
\end{eqnarray}
where $N$ is the large fixed positive integer as in Theorem \ref{dcrf} and $C$ is an absolute constant depending only dimensions $n,m$.
\end{lemma}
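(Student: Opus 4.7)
The plan is to view $\mathcal{R}^\sharp$ as the remainder operator in a discrete Calder\'on reproducing formula on the product space $\mathbb R^{n+m}\times \mathbb R^m$ and to exploit the smallness of the grid mesh $2^{-N}$. The core mechanism is almost orthogonality: since $\psi^{(1)}$ and $\psi^{(2)}$ carry cancellation and fast decay, and since $f_1,f_2$ are test functions with both H\"older smoothness and cancellation, the one-parameter convolutions $\psi^{(1)}_t\ast f_1$ and $\psi^{(2)}_s\ast f_2$ each satisfy the classical almost orthogonality bounds. What is new here is to combine those bounds in the tensor-product structure of $\mathcal M_{(n+m)\times m}$ while simultaneously controlling the Riemann-type discretization error that arises when $\int_0^\infty\!\int_0^\infty\frac{dt}{t}\frac{ds}{s}$ is replaced by a sum over $(j,k)$ together with $\{I,J\}$.

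I would first observe that in the formula for $\mathcal R^\sharp(f^\sharp)(x,y,z)$ the variable $z$ enters only through $\psi^{(2)}_s(z)$, so the $z$-dependence is a standard one-parameter Littlewood--Paley piece tested against $f_2$; this alone yields the size and $\beta$-H\"older estimates in $z$ from Definition \ref{product test}(2) and (3) at scale $r_3$ by direct appeal to classical one-parameter almost orthogonality. For the $(x,y)$-estimates corresponding to Definition \ref{product test}(1) and (4), the key quantity to control is
$$\psi^{(1)}_t(u-u',v-w-v')-\psi^{(1)}_t(x_I-u',y_J-w-v')$$
for $(u,v)\in I\times J$. Since $\ell(I)\approx 2^{-N}t$ and $\ell(J)\approx 2^{-N}\max(t,s)$, while $\psi^{(1)}_t$ is smooth at scale $t$, the mean-value inequality yields a factor $2^{-N\beta}$ times a standard size factor in the remaining variables. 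Combining this with the smoothness and cancellation of $f_1$, summation over the dyadic lattice $\{I,J\}$ and over $j$ reassembles an almost orthogonal kernel that reproduces the $(x,y)$-size and smoothness of $f_1$ at scale $r_1$, multiplied by the uniform small factor $2^{-N\beta}$, which after relabeling exponents is the asserted $2^{-N}$.

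The cancellation of $\mathcal R^\sharp(f^\sharp)$ in the $(x,y)$ and $z$ variables, needed for membership in $\mathcal M_{(n+m)\times m}$ rather than only $\widetilde{\mathcal M}_{(n+m)\times m}$, follows immediately from the cancellation of $\psi^{(1)}$ and $\psi^{(2)}$: integrating in $(x,y)$ annihilates every factor $\psi^{(1)}_t(x-u,y-v)$ and integrating in $z$ annihilates $\psi^{(2)}_s(z)$, with Fubini justified by the absolute convergence produced in the preceding step.

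The main difficulty I anticipate is the joint $(j,k)$-summation coupled with the dependence $\ell(J)=2^{-(j\wedge k)-N}$: the cubes $J$ are adapted to the coarser of the two scales, so in the $(x,y)$-almost orthogonality one must split into the regimes $j\le k$ and $j>k$ and, in the second regime, squeeze extra decay in $|k-j|$ out of the smoothness or cancellation of $\psi^{(2)}$ to make the double sum converge uniformly in the external parameters $r_1,r_2,r_3,x_0,y_0,z_0$. This flag-adapted bookkeeping is precisely where the multi-parameter flag structure makes itself felt and is what distinguishes the proof from its one-parameter and pure product analogues.
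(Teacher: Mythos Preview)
Your direct almost-orthogonality approach is viable and correctly isolates the essential mechanism: the $2^{-N\beta}$ gain from the mean-value estimate on the discretization error, combined with the smoothness and cancellation carried by $\psi^{(1)},\psi^{(2)},f_1,f_2$. The paper, however, takes a genuinely different and more abstract route. Rather than estimating $\mathcal R^\sharp(f^\sharp)$ term by term, it introduces a class of \emph{product Calder\'on--Zygmund operators} on $\mathbb R^{n+m}\times\mathbb R^m$ (Definition~\ref{product CZ}) whose kernels satisfy iterated one-parameter size, smoothness and second-difference conditions, and then proves a general boundedness result (Proposition~\ref{prop1}): any such operator $T$ with $T_1(1)=T_2(1)=T_1^*(1)=T_2^*(1)=0$ maps tensor products $f_1\otimes f_2$ into $\mathcal M_{(n+m)\times m}$ with norm controlled by $|||T|||$. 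The proof of Proposition~\ref{prop1} works by freezing one set of variables and repeatedly invoking the known one-parameter result (Lemma~\ref{boundedness1}, from~\cite{H1}). Lemma~\ref{l:claim} then follows once one recognizes $\mathcal R^\sharp$ as such a product operator with $|||\mathcal R^\sharp|||\lesssim 2^{-N}$.

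What the paper's approach buys is modularity: the action on test functions is handled once and for all by Proposition~\ref{prop1}, so one need only verify kernel conditions for $\mathcal R^\sharp$. The flag bookkeeping you correctly flag---the coupling $\ell(J)=2^{-(j\wedge k)-N}$ and the need for extra $|j-k|$-decay when $j>k$---is still present, but it is pushed into the verification that the kernel of $\mathcal R^\sharp$ satisfies Definition~\ref{product CZ}, where the $w$-integration against $\psi^{(2)}_s$ effectively restores the $\max(t,s)$-smoothness in the second variable and absorbs the apparent $2^{j-k}$ loss. Your direct approach would have to carry out the four mixed size/smoothness estimates of Definition~\ref{product test} by hand, which is exactly the labour Proposition~\ref{prop1} packages; it would succeed, but with more explicit bookkeeping.
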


Assuming Lemma \ref{l:claim} for the moment, it implies that for $f(x,y)=\int_{\mathbb R^m}f_1(x,y-z)f_2(z)dz$ with $f_1\in {\mathcal M}_{n+m}(\beta,\gamma,r_1,x_0,y_0)$ and $f_2\in {\mathcal M}_{m}(\beta,\gamma,r_2,z_0)$, for $i=1,2$,  the function
$\mathcal{R}_N^{(i)}(f)$ is in ${\mathcal M_{flag}}(\beta,\gamma,r_1,r_2,x_0,y_0)$ with
\begin{align*}
\|\mathcal{R}_N^{(i)}(f)\|_{\mathcal M_{flag}(\beta,\gamma,r_1,r_2,x_0,y_0)}\le C2^{-N}\|f\|_{\mathcal M_{flag}(\beta,\gamma,r_1,r_2,x_0,y_0)}.
\end{align*}
Hence, by noting that $C$ is an absolute constant depending only dimensions $n,m$ and by choosing $N$ large such that
\begin{align}\label{choice of N}
C2^{-N}<{1\over 8},
\end{align}
we obtain that for $i=1,2$,
\begin{align}\label{remainder R}
\|\mathcal{R}_N^{(i)}(f)\|_{\mathcal M_{flag}(\beta,\gamma,r_1,r_2,x_0,y_0)}\le {1\over 8}\|f\|_{\mathcal M_{flag}(\beta,\gamma,r_1,r_2,x_0,y_0)}.
\end{align}
Following the above approach, we can also establish the estimate for
$\mathcal R_\alpha(f)$:  there exists a small positive number $\alpha$ such that
\begin{align}\label{remainder Ralpha}
\|\mathcal{R}_\alpha(f)\|_{\mathcal M_{flag}(\beta,\gamma,r_1,r_2,x_0,y_0)}\le {1\over 8}\|f\|_{\mathcal M_{flag}(\beta,\gamma,r_1,r_2,x_0,y_0)}.
\end{align}
Thus, from the decomposition of $f$ as in \eqref{disc-Caldeorn-reproducing-formula1 pre} and the split of $\mathcal T_1$ and $\mathcal T_2$, we get that
\begin{eqnarray}\label{disc-Caldeorn-reproducing-formula2 pre}
f(x,y)&=&c_\alpha\sum_{ \substack{j,k\in\mathbb Z\\ j\leq k}} \sum_{\substack{I,J\\ \ell(I)=2^{-j-N}\\ \ell(J)=2^{-j-N} }}|R|\ \Big({1\over |R|} \int_{I\times J}\psi_{j,k}(x-u,y-v)\,dudv\Big)\ \psi_{j,k}\ast f(x_I,y_J)
\\
&&+ c_\alpha\sum_{ \substack{j,k\in\mathbb Z\\ j> k}} \sum_{\substack{I,J\\ \ell(I)=2^{-j-N}\\ \ell(J)=2^{-k-N} }}|R|\ \Big({1\over |R|} \int_{I\times J}\psi_{j,k}(x-u,y-v)\,dudv\Big)\ \psi_{j,k}\ast f(x_I,y_J)\nonumber\\
&&+ \mathcal{R}_N^{(1)}(f)(x,y)+\mathcal{R}_N^{(2)}(f)(x,y)+\mathcal R_\alpha (f)(x,y)\nonumber\\
&=:&\mathcal T(f)(x_1,x_2) + \mathcal{R}_N^{(1)}(f)(x,y)+\mathcal{R}_N^{(2)}(f)(x,y)+\mathcal R_\alpha (f)(x,y) \nonumber,
\end{eqnarray}
which implies that
$Id = \mathcal T+ \mathcal{R}_N^{(1)}+ \mathcal{R}_N^{(2)} + \mathcal R_\alpha$ with
$$\| \mathcal{R}_N^{(1)}+ \mathcal{R}_N^{(2)}+\mathcal{R}_\alpha\|_{\mathcal M_{flag}(\beta,\gamma,r_1,r_2,x_0,y_0)\to \mathcal M_{flag}(\beta,\gamma,r_1,r_2,x_0,y_0)}<{1\over2},$$
and hence $\mathcal T$ is invertible with $$\| \mathcal{T}^{-1}\|_{\mathcal M_{flag}(\beta,\gamma,r_1,r_2,x_0,y_0)\to \mathcal M_{flag}(\beta,\gamma,r_1,r_2,x_0,y_0)}\leq C.$$

Next, by noting that $\Big({1\over |R|} \int_{I\times J}\psi_{j,k}(x-u,y-v)\,dudv\Big)$ is in $ {\mathcal M}_{flag}(\beta,\gamma,2^{-j},2^{-k},x_I,y_J)$, we have that
$\mathcal T^{-1}  \Big({1\over |R|} \int_{I\times J}\psi_{j,k}(\cdot-u,\cdot-v)\,dudv\Big)(x,y)$ is also in $ {\mathcal M}_{flag}(\beta,\gamma,2^{-j},2^{-k},x_I,y_J)$, and we denote it by ${\phi}_{j,k}(x,y, x_I,y_J)$. Hence,
 we get
\begin{eqnarray}\label{disc-Caldeorn-reproducing-formula3 pre}
f(x,y)= \mathcal T^{-1} \cdot \mathcal T f(x,y)
&=&c_\alpha\sum_{ \substack{j,k\in\mathbb Z\\ j\leq k}} \sum_{\substack{I,J\\ \ell(I)=2^{-j-N}\\ \ell(J)=2^{-j-N} }}|R|\ {\phi}_{j,k}(x,y, x_I,y_J)\ \psi_{j,k}\ast f(x_I,y_J)
\nonumber\\
&&+ c_\alpha\sum_{ \substack{j,k\in\mathbb Z\\ j> k}} \sum_{\substack{I,J\\ \ell(I)=2^{-k-N}\\ \ell(J)=2^{-j-N} }}|R|\ {\phi}_{j,k}(x,y, x_I,y_J)\ \psi_{j,k}\ast f(x_I,y_J).\nonumber
\end{eqnarray}

 This then gives the proof of Theorem \ref{dcrf} (assuming Lemma \ref{l:claim}).
\end{proof}

We now turn to demonstrating Lemma \ref{l:claim}. To do this, we introduce the following definition and key estimates.

\begin{definition}\label{operator1}
Let $T$ be a
bounded linear operator on $L^2(\mathbb R^n)$ associated with a
kernel $K(x,y)$ defined on $\{(x,y)\in\mathbb R^n\times \mathbb R^n:\ x\not=y\}$, given initially by
$$ Tf(x)=\int_{\mathbb R^n} K(x,y)f(y)dy,\qquad x\not\in {\rm supp} f$$
for $f\in C^\infty(\mathbb R^n)$ with compact support,
where $K(x,y)$ satisfies the following conditions:
there exists a constant $C>0$ such that for all $x\not=y$,
\begin{enumerate}
\item[\textnormal{(i)}] $|K(x,y)|\le C|x-y|^{-n},$
\item[\textnormal{(ii)}] $|K(x,y)-K(x',y)|\le C |x-x'||x-y|^{-n-1}$
             \qquad if $|x-x'|\le |x-y|/2,$
\item[\textnormal{(iii)}] $|K(x,y)-K(x,y')|\le C |y-y'||x-y|^{-n-1}$
             \qquad if $|y-y'|\le |x-y|/2,$
\item[\textnormal{(iv)}] $|K(x,y)-K(x',y)-K(x,y')+K(x',y')|\le C|x-x'||y-y'||x-y|^{-n-2}$
   \item[] if $|x-x'|\le |x-y|/2$ and $|y-y'|\le |x-y|/2$.
\end{enumerate}
We denote by $\| K\|_{\mathbb R^n}$ the smallest constant $C$ that satisfies ${\rm (i)}$--${\rm (iv)}$ above. The operator norm of $T$ is defined by
$|||T|||:=\|T\|_{L^2(\mathbb R^n)\mapsto L^2(\mathbb R^n)}+ \| K\|_{\mathbb R^n}.$ Here we use $n$ to denote arbitrary positive integer.
\end{definition}
We would like to point out that the classical Calder\'on--Zygmund kernel $K(x,y)$ only needs to satisfy the conditions (i), (ii) and (iii).
For our purpose, namely the boundedness of operators on test function space, condition (iv) is required, see \cite[Chapter 2, Theorem 2.4]{DH} for the classical one parameter case. More precisely, we have the following:
\begin{lemma}\label{boundedness1}
Suppose that $T$ is an operator as in Definition  \ref{operator1} and $T(1)=T^*(1)=0.$ Then $T$ is bounded on the test function space
$\mathcal M_n(\alpha,\beta, r, x_0)$ for $\alpha, \beta\in (0,1), r>0$ and $x_0\in \mathbb R^n.$ Moreover, there exists a constant $C$ such that
$$\|T(f)\|_{\mathcal M_n(\alpha,\beta, r, x_0)}\leq C|||T|||\ \|f\|_{\mathcal M_n(\alpha,\beta, r, x_0)}.$$
\end{lemma}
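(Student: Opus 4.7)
\textbf{Proof plan for Lemma \ref{boundedness1}.} The plan is to verify directly that $T(f)$ satisfies the three defining properties of $\mathcal{M}_d(\alpha,\beta,r,x_0)$: the size decay \eqref{eq05}, the H\"older smoothness \eqref{eq06} of order $\alpha$, and the cancellation $\int T(f)\,dx=0$. Throughout I normalize so that $\|f\|_{\mathcal{M}_d(\alpha,\beta,r,x_0)}=|||T|||=1$. The cancellation is the easiest piece: it is immediate from $T^*(1)=0$ via the formal identity $\int T(f)\,dx=\langle f,T^*(1)\rangle=0$, made rigorous by approximating $1$ by bounded compactly supported functions and using the $L^2$-boundedness of $T$ together with the integrability of $f$.

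For the size estimate $|T(f)(x)|\lesssim r^\beta(r+|x-x_0|)^{-d-\beta}$, I would split into two regimes. In the near regime $|x-x_0|\leq 2r$, where we only need $|T(f)(x)|\lesssim r^{-d}$, I use $T(1)(x)=0$ to rewrite
\[
T(f)(x)=\int_{\mathbb R^d}K(x,y)\bigl(f(y)-f(x)\bigr)\,dy,
\]
and split at $|y-x|=r$: on $\{|y-x|\leq r\}$ pair condition (i) with the H\"older smoothness of $f$ to produce the integrable singularity $|x-y|^{\alpha-d}$, while on $\{|y-x|>r\}$ use (i) together with the direct size decay of $f$. In the far regime $|x-x_0|>2r$, subtract $f$ through a smooth cutoff $\chi$ supported in $B(x_0,|x-x_0|/2)$, writing
\[
T(f)(x)=\int K(x,y)\bigl(f(y)-f(x)\chi(y)\bigr)\,dy+f(x)\int K(x,y)\chi(y)\,dy,
\]
and again invoke $T(1)(x)=0$ on the second piece to convert it into a tail integral, which produces the sharp $r^\beta|x-x_0|^{-d-\beta}$ decay.

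For the smoothness estimate with $|x-x'|\leq (r+|x-x_0|)/2$, apply the analogous subtraction scheme to
\[
T(f)(x)-T(f)(x')=\int\bigl(K(x,y)-K(x',y)\bigr)\bigl(f(y)-c\bigr)\,dy,
\]
where the constant $c$ is chosen so that $(T(1)(x)-T(1)(x'))\cdot c=0$ kills the residual term. Split the integration based on whether $|y-x|\leq 2|x-x'|$ or $|y-x|>2|x-x'|$. In the first case use (ii) together with the smoothness of $f$; in the second case one can perform a second subtraction on $f$ and combine this with condition (iv) on $K$, which converts the first-order cancellation of $f$ into the desired H\"older-$\alpha$ smoothness of $T(f)$ with the correct weight.

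The main obstacle will be this last far-regime smoothness estimate, where condition (iv) is indispensable and the bookkeeping is delicate: one must simultaneously exploit the double-difference kernel bound $|K(x,y)-K(x',y)-K(x,y')+K(x',y')|\lesssim |x-x'|\,|y-y'|\,|x-y|^{-d-2}$ and a well-placed second subtraction of $f$ in order to recover both the H\"older power $(|x-x'|/(r+|x-x_0|))^\alpha$ and the full weighted decay $r^\beta(r+|x-x_0|)^{-d-\beta}$. This is precisely the reason condition (iv), which is absent from the classical Calder\'on--Zygmund $L^p$ theory, is imposed in Definition \ref{operator1}; the argument itself is the direct analogue, in the weighted test function setting, of the one in \cite{H1}.
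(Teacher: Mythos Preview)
The paper does not actually give its own proof of Lemma \ref{boundedness1}; it simply writes ``See \cite{H1} for the definition of $T(1)=T^*(1)=0$ and the proof of Lemma \ref{boundedness1}.'' Your outline---verifying cancellation via $T^*(1)=0$, the size decay via the $T(1)=0$ subtraction and a near/far split, and the smoothness estimate via a second subtraction exploiting the double-difference kernel bound (iv)---is precisely the standard argument from \cite{H1} that the paper is invoking, and you even cite \cite{H1} yourself at the end. So your proposal is correct and coincides with the paper's (cited) approach.
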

See \cite{H1} for the definition of $T(1)=T^*(1)=0$ and the proof of Lemma \ref{boundedness1}.  We now define the product operator as follows.
\begin{definition}\label{product CZ}
The operator $T$ is said to be a product operator on $\Bbb R^{n+m}\times \Bbb R^m$ if $T$ is a bounded linear operator on $L^2(\Bbb R^{n+m}\times \Bbb R^m)$ associated with a kernel $K(x,y,z,u,v,w)$ defined on $\{((x,y,z),(u,v,w))\in (\Bbb R^{n+m}\times \Bbb R^m) \times (\Bbb R^{n+m}\times \Bbb R^m): (x,y)\not=(u,v), z\not=w\}$,
and
$$Tf(x,y,z)=\int_{\Bbb R^{n+m}\times\Bbb R^m} K(x,y,z,u,v,w)f(u,v,w)dudvdw,\quad (x,y,z)\not\in {\rm supp} f$$
for $f\in C^\infty(\Bbb R^{n+m}\times \Bbb R^m)$ with compact support,
where $K(x,y,z,u,v,w)$ satisfies the following conditions: for all $(x,y)\not=(u,v)$ and $z\not=w$
\begin{enumerate}
\item[\textnormal{(1)}] $\|K(\cdot,\cdot,z,\cdot,\cdot,w)\|_{\Bbb R^{n+m}}\le C|z-w|^{-m},$
\item[\textnormal{(2)}] $\|K(x,y,\cdot,u,v,\cdot)\|_{\Bbb R^{m}}\le C(|x-u|+|y-v|)^{-(n+m)},$
\item[\textnormal{(3)}] $\|K(\cdot,\cdot,z,\cdot,\cdot,w)-K(\cdot,\cdot,z',\cdot,\cdot,w)\|_{\Bbb R^{n+m}}\le C\frac{\displaystyle |z-z'|}{\displaystyle |z-w|^{m+1}}$\quad for $|z-z'|\le |z-w|/2,$
\item[\textnormal{(4)}] $\|K(\cdot,\cdot,z,\cdot,\cdot,w)-K(\cdot,\cdot,z,\cdot,\cdot,w')\|_{\Bbb R^{n+m}}\le C\frac{\displaystyle |w-w'|}{\displaystyle |z-w|^{m+1}}$\quad for $|w-w'|\le |z-w|/2,$
\item[\textnormal{(5)}] $\|K(\cdot,\cdot,z,\cdot,\cdot,w)-K(\cdot,\cdot,z',\cdot,\cdot,w)-K(\cdot,\cdot,z,\cdot,\cdot,w)+K(\cdot,\cdot,z',\cdot,\cdot,w')\|_{\Bbb R^{n+m}}\\[4pt]
 \le C\frac{\displaystyle |z-z'||w-w'|}{\displaystyle  |z-w|^{m+2}}\\[3pt] $ \quad for $|z-z'|\le |z-w|/2$ and $|w-w'|\le |z-w|/2,$
\item[\textnormal{(6)}] $\|K(x,y,\cdot,u,v,\cdot)-K(x',y',\cdot,u,v,\cdot)\|_{\Bbb R^{m}}\le C\frac{\displaystyle |x-x'|+|y-y'|}{\displaystyle (|x-u|+|y-v|)^{n+m+1}}\\
           $\quad for $|x-x'|+|y-y'|\le (|x-u|+|y-v|)/2,$
\item[\textnormal{(7)}] $\|K(x,y,\cdot,u,v,\cdot)-K(x,y,\cdot,u',v',\cdot)\|_{\Bbb R^{m}}\le C\frac{\displaystyle |u-u'|+|v-v'|}{\displaystyle (|x-u|+|y-v|)^{n+m+1}}\\
           $\quad for $|u-u'|+|v-v'|\le (|x-u|+|y-v|)/2,$
\item[\textnormal{(8)}] $\|K(x,y,\cdot,u,v,\cdot)-K(x',y',\cdot,u,v,\cdot)-K(x,y,\cdot,u',v',\cdot)+K(x',y',\cdot,u',v',\cdot)\|_{\Bbb R^{m}}\\[4pt]
\le C\frac{\displaystyle (|x-x'|+|y-y'|)(|u-u'+|v-v'|)}{\displaystyle (|x-u|+|y-v|)^{n+m+2}}\\[3pt]
           $\quad for $|x-x'|+|y-y'|\le (|x-u|+|y-v|)/2$ and $|u-u'|+|v-v'|\le (|x-u|+|y-v|)/2.$
\end{enumerate}
We denote by $\|K\|$ the smallest constant $C$ that satisfies ${\rm (1)}$---${\rm (8)}$ above.  The operator norm of $T$ is defined by
$|||T|||=\|T\|_{L^2(\mathbb R^{n+m}\times\mathbb R^m)\mapsto L^2(\mathbb R^{n+m}\times\mathbb R^m)}+ \|K\|.$
\end{definition}

Before stating the result, we first recall the cancellation condition from Journ\'e for the product singular integral $T$:  $$T_1(1)=T_2(1)=T_1^*(1)=T_2^*(1)=0$$
(see \cite[Section 3, page 64--65]{J1} for definitions).
Due to the length of this definition, we do not repeat it here.

\begin{prop}\label{prop1}
Let $\beta,\gamma\in(0,1)$ and $r_1,r_2,r_3>0$, $x_0$ be fixed point in $\mathbb R^n$, $y_0$ and $z_0$ be fixed points in $\mathbb R^m$.
If $T$ is a product operator as in Definition \ref{product CZ} and $T$ satisfies the product type cancellation condition\ $T_1(1)=T_2(1)=T_1^*(1)=T_2^*(1)=0$, then
$$\|Tf\|_{{\mathcal M}_{(n+m)\times m}(\beta,\gamma,r_1,r_2,x_0,y_0,z_0)}\le C|||T|||\ \|f\|_{{\mathcal M}_{(n+m)\times m}(\beta,\gamma,r_1,r_2,x_0,y_0,z_0)}$$
for all $f(x,y,z)=f_1(x,y)f_2(z)$ with $f_1\in {\mathcal M}_{n+m}(\beta,\gamma,r_1,x_0,y_0)$ and $f_2\in {\mathcal M}_{m}(\beta,\gamma,r_2,z_0)$.
\end{prop}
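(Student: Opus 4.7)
The plan is to verify the four conditions (1)--(4) of Definition \ref{product test} for $Tf$ by reducing each one to the one-parameter CZ boundedness result of Lemma \ref{boundedness1}, applied to suitable ``frozen-variable'' auxiliary operators.

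For condition (1), fix $z \in \mathbb{R}^m$ and introduce the auxiliary one-parameter operator $T_1^z$ on $\mathbb{R}^{n+m}$ with kernel
\[
\tilde K^z(x,y,u,v) \;:=\; \int_{\mathbb{R}^m} K(x,y,z,u,v,w)\,f_2(w)\,dw,
\]
so that $(Tf)(\cdot,\cdot,z)=T_1^z(f_1)$. The crux is to verify that $T_1^z$ satisfies Definition \ref{operator1} on $\mathbb{R}^{n+m}$ with operator norm controlled as
\[
|||T_1^z||| \;\le\; C\,|||T|||\,\|f_2\|_{\mathcal M_m(\beta,\gamma,r_2,z_0)}\,\frac{r_2^\gamma}{(r_2+|z-z_0|)^{m+\gamma}},
\]
together with the cancellations $T_1^z(1)=(T_1^z)^*(1)=0$. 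The four kernel conditions (i)--(iv) for $\tilde K^z$ follow from conditions (1), (6), (7), (8) of Definition \ref{product CZ}, which provide the classical CZ estimates for $K$ in the $(x,y,u,v)$-variables with parametric decay $|z-w|^{-m}$, combined with a standard splitting of the $w$-integration into the region $|w-z_0|\leq r_2+|z-z_0|$ (where the cancellation $\int f_2=0$ and the smoothness of $K$ in $w$ from (4), (5) are invoked to tame the non-integrable $|z-w|^{-m}$ singularity) and its complement (where only the pointwise decay of $f_2$ is needed). The cancellations $T_1^z(1)=(T_1^z)^*(1)=0$ descend from $T_1(1)=T_1^*(1)=0$ together with $\int f_2=0$. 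Invoking Lemma \ref{boundedness1} then delivers condition (1). Condition (2) is proved by the mirror-image argument with an auxiliary operator $T_2^{x,y}$ on $\mathbb{R}^m$ having kernel $\int K(x,y,z,u,v,w)f_1(u,v)\,du\,dv$, using now conditions (2), (3), (4), (5) of Definition \ref{product CZ}, the decay and cancellation of $f_1$, and $T_2(1)=T_2^*(1)=0$.

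For the H\"older conditions (3) and (4), the same scheme is applied to the \emph{difference} operators $T_1^z-T_1^{z'}$ (for condition (3), under $|z-z'|\leq (r_2+|z-z_0|)/2$) and $T_2^{x,y}-T_2^{x',y'}$ (for condition (4)). The extra factor $(|z-z'|/(r_2+|z-z_0|))^\beta$ in the operator norm estimate is extracted from the mixed-smoothness conditions (3), (5), (8) of Definition \ref{product CZ}, again via the same geometric splitting of the $w$-integration. A final application of Lemma \ref{boundedness1} to each difference operator produces the required H\"older estimate on $\|(Tf)(\cdot,\cdot,z)-(Tf)(\cdot,\cdot,z')\|_{\widetilde{\mathcal M}_{n+m}}$ and its $(x,y)$-analogue.

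The main obstacle will be the careful bookkeeping in the kernel estimates for $\tilde K^z$ (and its difference), matching the geometric $w$-splitting to the size/smoothness/cancellation trade-off inherent in $f_2\in \mathcal M_m(\beta,\gamma,r_2,z_0)$ so as to produce precisely the normalization $r_2^\gamma/(r_2+|z-z_0|)^{m+\gamma}$ on every one of the four conditions (i)--(iv). A related subtlety is the $L^2$-boundedness component of $|||T_1^z|||$; this is most naturally handled either via a T(1)-type argument from the already-established kernel estimates together with the cancellations $T_1^z(1)=(T_1^z)^*(1)=0$, or through a Fubini/duality reduction to the $L^2$-bounded ambient product operator $T$. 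Once these kernel and $L^2$ estimates are in place for both $T_1^z$ and $T_2^{x,y}$, Lemma \ref{boundedness1} feeds in mechanically to yield all four conditions of Definition \ref{product test}.
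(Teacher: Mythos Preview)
Your approach is correct and structurally dual to the paper's. The paper also iterates Lemma~\ref{boundedness1} twice, but in the opposite order: it first freezes $z,w$ and applies Lemma~\ref{boundedness1} on $\mathbb{R}^{n+m}$ to the operator $L$ with kernel $K(\cdot,\cdot,z,\cdot,\cdot,w)$ acting on $f_1$ (using condition~(1) of Definition~\ref{product CZ} to bound $|||L|||\le C|z-w|^{-m}|||T|||$), thereby obtaining size and smoothness of the resulting kernel $S(z,w):=L(f_1)(x,y)$ on $\mathbb{R}^m$; then it applies Lemma~\ref{boundedness1} a second time, now on $\mathbb{R}^m$, to the operator $S$ acting on $f_2$. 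You instead absorb $f_2$ first and then apply the lemma on $\mathbb{R}^{n+m}$.

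One simplification worth noting: your ``standard splitting of the $w$-integration'' (region $|w-z_0|\le r_2+|z-z_0|$ with cancellation of $f_2$, versus its complement with decay of $f_2$) is precisely the size-estimate portion of the proof of Lemma~\ref{boundedness1} itself. In other words, for fixed $x,y,u,v$ the map $f_2\mapsto\tilde K^z(x,y,u,v)$ is already an instance of Lemma~\ref{boundedness1} on $\mathbb{R}^m$, since conditions~(2), (3), (4), (5) of Definition~\ref{product CZ} say exactly that $K(x,y,\cdot,u,v,\cdot)$ satisfies Definition~\ref{operator1} on $\mathbb{R}^m$ with constant $C(|x-u|+|y-v|)^{-(n+m)}$. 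Invoking the lemma directly, rather than redoing its proof by hand, would streamline your inner step and make the two approaches manifestly symmetric. The paper's ordering has the mild advantage that the intermediate object $S(z,w)$ depends on fewer frozen parameters than your $\tilde K^z(x,y,u,v)$.

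Your flag on the $L^2$-boundedness of the auxiliary slice operators is apt; the paper leaves this equally implicit, effectively folding it into the hypothesis that $T$ is a product operator in the sense of Definition~\ref{product CZ}.
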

\begin{remark}
Indeed, Proposition \ref{prop1} holds for all $f\in {\mathcal M}_{n+m,m}(\beta,\gamma,r_1,r_2,x_0,y_0,z_0).$ The proof for such a result is a little bit complicated. However, Proposition \ref{prop1} is enough to provide a proof for Lemma \ref{l:claim}, which was one of the main ingredients in Theorem \ref{dcrf}.
\end{remark}
\begin{proof}[Proof of Proposition \ref{prop1}]
 Before jumping into the proof, we would like to highlight that
the cancellation condition $T_1(1)=T_2(1)=T_1^*(1)=T_2^*(1)=0$ plays a key role here, without which, the argument
in Proposition \ref{prop1} is not true.

Suppose that $f(x,y,z)=f_1(x,y)f_2(z)$ with $$||f_1||_{{\mathcal M}_{n+m}(\beta,\gamma,r_1,x_0,y_0)}=||f_2||_{{\mathcal M}_{m}(\beta,\gamma,r_2,z_0)}=1.$$ We write
\begin{align*}
Tf(x,y,z)&=\int_{\Bbb R^{n+m+m}}  K(x,y,z,u,v,w)f(u,v,w)dudvdw\\
&= \int_{\Bbb R^{m}} \int_{\Bbb R^{n+m}}  K(x,y,z,u,v,w)f_1(u,v)dudvf_2(w)dw\\
&= \int_{\Bbb R^{m}}  S(z,w)f_2(w)dw,
\end{align*}
where $x,y$ and $f_1$ are fixed, and $S(z,w)=\int_{\Bbb R^{n+m}}  K(x,y,z,u,v,w)f_1(u,v)dudv.$

We claim that for fixed $x\in \Bbb R^n$, $y\in \Bbb R^m$, $S(g)(z)=\int S(z,w)g(w)dw$
is an operator bounded on $\mathcal M_m(\beta,\gamma,r_2,z_0)$ with the kernel $S(z,w)$ satisfying Lemma \ref{boundedness1}. Moreover,
 \begin{enumerate}
\item[(1)] $|S(z,w)|\le C|z-w|^{-m}|||T|||\frac {\displaystyle r_1^\gamma}{\displaystyle (r_1+|x-x_0|+|y-y_0|)^{n+m+\gamma}}$,
\item[(2)] $|S(z,w)-S(z',w)|\le C\frac{\displaystyle |z-z'|}{\displaystyle |z-w|^{m+1}}|||T|||\frac {\displaystyle r_1^\gamma}{\displaystyle (r_1+|x-x_0|+|y-y_0|)^{n+m+\gamma}}$
\item[] for $|z-z'|\le |z-w|/2$,
\item[(3)]  $|S(z,w)-S(z,w')|\le C\frac{\displaystyle |w-w'|}{\displaystyle |z-w|^{m+1}}|||T|||\frac {\displaystyle r_1^\gamma}{\displaystyle (r_1+|x-x_0|+|y-y_0|)^{n+m+\gamma}}$
\item[] for $|w-w'|\le |z-w|/2$,
\item[(4)]  $|S(z,w)-S(z',w)-S(z,w')+S(z',w')|$
\item[]\quad\quad $\le C\frac{\displaystyle |z-z'||w-w'|}{\displaystyle |z-w|^{m+2}}|||T|||\frac {\displaystyle r_1^\gamma}{\displaystyle (r_1+|x-x_0|+|y-y_0|)^{n+m+\gamma}}$
\item[]  for $|z-z'|,|w-w'|\le |z-w|/2$,
\item[(5)]  $S(1)=S^*(1)=0$.
\end{enumerate}
The proof of the claim follows from Lemma \ref{boundedness1}. Indeed, for fixed $z, w\in \mathbb R^m,$ the operator $L$ with the kernel $K(x, y, z,u,v,w)$ is given by
$$L(f_1)(x,y,z,w)=\int_{\Bbb R^{n+m}}  K(x,y,z,u,v,w)f_1(u,v)dudv.$$
By the condition (1) in Definition \ref{product CZ} together with Lemma \ref{boundedness1}, the operator $L$
is bounded on $\mathcal M_{\mathbb R^{n+m}}(\beta,\gamma,r_1,x_0,y_0).$ Thus,
\begin{eqnarray*}
|L(f_1)(x,y,z,w)|\leq C|||T|||\ |z-w|^{-m}\frac {r_1^\gamma}{(r_1+|x-x_0|+|y-y_0|)^{n+m+\gamma}},
\end{eqnarray*}
which implies that $S(z,w)$ satisfies estimate (1) in the above claim, that is,
\begin{eqnarray*}
|S(z,w)|\leq C |z-w|^{-m}|||T|||\frac {r_1^\gamma}{(r_1+|x-x_0|+|y-y_0|)^{n+m+\gamma}}.
\end{eqnarray*}
Similarly, applying conditions (3) and (4) in Definition \ref{product CZ} together with Lemma \ref{boundedness1}, respectively, we conclude that $S(z,w)$ satisfies the estimates in (2) and (3) in the above claim, respectively. The condition (5) in Definition \ref{product CZ} together with Lemma \ref{boundedness1} yields the estimate (5) in the above claim for $S(z,w).$

Based on the estimates on $S(z,w),$ the kernel of $S,$ applying Lemma \ref{boundedness1} gives that the operator $S$ is bounded on ${\mathcal M}_{\mathbb R^m}(\beta,\gamma,r_2,z_0)$ and hence
$$|Tf(x,y,z)|=|S(f_2)(z)|\leq C|||T|||\frac {r_1^\gamma}{(r_1+|x-x_0|+|y-y_0|)^{n+m+\gamma}}\frac {r_2^\gamma}{(r_2+|z-z_0|)^{m+\gamma}}$$
and
\begin{align*}
&|Tf(x,y,z)-T(x,y,z')|=|S(f_2)(z)-S(f_2)(z')|\\
&\leq C|||T|||\frac {r_1^\gamma}{(r_1+|x-x_0|+|y-y_0|)^{n+m+\gamma}}\Big(\frac{|z-z'|}{r_2+|z-z_0|}\Big)^\beta\frac {r_2^\gamma}{(r_2+|z-z_0|)^{m+\gamma}}
\end{align*}
for $|z-z'|\le \frac{r_2+|z-z_0|}{2}.$

Similarly, if write
\begin{align*}
Tf(x,y,z)&=\int_{\Bbb R^{n+m}\times \Bbb R^m}  K(x,y,z,u,v,w)f(u,v,w)dudvdw\\
&= \int_{\Bbb R^{n+m}}  \int_{\Bbb R^{m}}  K(x,y,z,u,v,w)f_2(w)dwf_1(u,v)dudv\\
&= \int_{\Bbb R^{n+m}}  R(x,y,z,u,v)f_1(u,v)dudv,
\end{align*}
where $z$ and $f_2$ are fixed, and $R(x,y,z,u,v)=\int_{\Bbb R^{m}}  K(x,y,z,u,v,w)f_2(w)dw,$ then applying the same proof implies that the operator $R$ is bounded on
$\mathcal M_{n+m}(\beta,\gamma,r_1,x_0,y_0)$ and moreover,
\begin{align*}
&|Tf(x,y,z)-Tf(x',y',z)|\\
&=|R(f_1)(x,y)-R(f_1)(x',y')|\\
&\leq C|||T|||\Big(\frac{|x-x'|+|y-y'|}{r_1+|x-x_0|+|y-y_0|}\Big)^\beta\frac {r_1^\gamma}{(r_1+|x-x_0|+|y-y_0|)^{n+m+\gamma}}\frac {r_2^\gamma}{(r_2+|z-z_0|)^{m+\gamma}}
\end{align*}
for $|x-x'|+|y-y'|\leq (r_1+|x-x_0|+|y-y_0|)/2.$

It remains to show the following estimate:
\begin{align*}
&|Tf(x,y,z)-Tf(x',y',z)- Tf(x,y,z')+Tf(x',y',z')|\\
&\leq C|||T|||\Big(\frac{|x-x'|+|y-y'|}{r_1+|x-x_0|+|y-y_0|}\Big)^\beta\\
&\quad\times\Big(\frac{|z-z'|}{r_2+|z-z_0|}\Big)^\beta\frac {r_1^\gamma}{(r_1+|x-x_0|+|y-y_0|)^{n+m+\gamma}}\frac {r_2^\gamma}{(r_2+|z-z_0|)^{m+\gamma}}
\end{align*}
for $|x-x'|+|y-y'|\leq (
r_1+|x-x_0|+|y-y_0|)/2$ and $|z-z'|\leq (r_2+|z-z_0|)/2.$
To do this, write
\begin{align*}
&Tf(x,y,z)-Tf(x,y,z')\\
&=\int_{\Bbb R^{n+m+m}}  [K(x,y,z,u,v,w)-K(x,y,z',u,v,w)]f(u,v,w)dudvdw\\
&= \int_{\Bbb R^{n+m}}  \int_{\Bbb R^{m}}  [K(x,y,z,u,v,w)-K(x,y,z',u,v,w)]f_2(w)dwf_1(u,v)dudv\\
&= \int_{\Bbb R^{n+m}}  H(x,y,z,z',u,v)f_1(u,v)dudv\\
&=H(f_1)(x,y,z,z'),
\end{align*}
where $z,z'$ and $f_2$ are fixed, and $$H(x,y,z,z',u,v):=\int_{\Bbb R^{m}}  [K(x,y,z,u,v,w)-K(x,y,z',u,v,w)]f_2(w)dw.$$

We claim that the operator $H$ with the kernel $H(x,y,z,z',u,v)$ defined above is bounded on $\mathcal M_{n+m}(\beta,\gamma,r_1,x_0,y_0)$ and moreover,
\begin{align*}
&|Tf(x,y,z)-Tf(x',y',z)- Tf(x,y,z')+Tf(x',y',z')|\\
&=|H(f_1)(x,y,z,z')-H(f_1)(x',y',z,z')|\\
&\leq C|||T|||\Big(\frac{|x-x'|+|y-y'|}{r_1+|x-x_0|+|y-y_0|}\Big)^\beta
\Big(\frac{|z-z'|}{r_2+|z-z_0|}\Big)^\beta\\
&\qquad\times\frac {r_1^\gamma}{(r_1+|x-x_0|+|y-y_0|)^{n+m+\gamma}}\frac {r_2^\gamma}{(r_2+|z-z_0|)^{m+\gamma}}
\end{align*}
for $|x-x'|+|y-y'|\leq (r_1+|x-x_0|+|y-y_0|)/2$ and $|z-z'|\leq (r_2+|z-z_0|)/2.$

To see the claim, note first that by condition (2) in Definition \ref{product CZ} together with Lemma \ref{boundedness1}, for fixed $x,y,u$ and $v,$ the operator $$\int_{\Bbb R^{m}}  K(x,y,z,u,v,w)f_2(w)dw$$ is bounded on $\mathcal M_{ m}(\beta,\gamma,r_2,z_0)$ and hence, for $|z-z'|\leq (r_2+|z-z_0|)/2,$
\begin{align*}
&\bigg|\int_{\Bbb R^{m}}  [K(x,y,z,u,v,w)-K(x,y,z',u,v,w)]f_2(w)dw\bigg|\\
&\leq C(|x-u|+|y-v|)^{-(n+m)}\Big(\frac{|z-z'|}{r_2+|z-z_0|}\Big)^\beta\frac {r_2^\gamma}{(r_2+|z-z_0|)^{m+\gamma}},
\end{align*}
which implies that for $|z-z'|\leq (r_2+|z-z_0|)/2,$
$$|H(x,y,z,z',u,v)|\leq C(|x-u|+|y-v|)^{-(n+m)}\Big(\frac{|z-z'|}{r_2+|z-z_0|}\Big)^\beta\frac {r_2^\gamma}{(r_2+|z-z_0|)^{m+\gamma}}.$$
Write
\begin{align*}
&H(x,y,z,z',u,v)-H(x',y',z,z',u,v)\\
&=\int_{\Bbb R^{m}}  [K(x,y,z,u,v,w)-K(x',y',z,u,v,w)]f_2(w)dw  \\
&\qquad -\int_{\Bbb R^{m}}  [K(x,y,z',u,v,w)-K(x',y',z',u,v,w)]f_2(w)dw.
\end{align*}
By condition (6) in Definition \ref{product CZ} together with Lemma \ref{boundedness1}, for fixed $x,y,x',y',u$ and $v$ with $|x-x'|+|y-y'|\leq (|x-u|+|y-v|)/2,$ the operator
$$\int_{\Bbb R^{m}}  [K(x,y,z,u,v,w)-K(x',y',z,u,v,w)]f_2(w)dw$$ is bounded on $\mathcal M_{m}(\beta,\gamma,r_2,z_0)$
and hence, for $|z-z'|\leq (r_2+|z-z_0|)/2$ and $|x-x'|+|y-y'|\leq (|x-u|+|y-v|)/2,$
\begin{align*}
&|H(x,y,z,z',u,v)-H(x',y',z,z',u,v)|\\
&\leq C\frac{|x-x'|+|y-y'|}{(|x-u|+|y-v|)^{n+m+1}}\Big(\frac{|z-z'|}{r_2+|z-z_0|}\Big)^\beta\frac {r_2^\gamma}{(r_2+|z-z_0|)^{m+\gamma}}.
\end{align*}
Similarly, for $|z-z'|\leq (r_2+|z-z_0|)/2$ and $|u-u'|+|v-v'|\leq (|x-u|+|y-v|)/2,$
\begin{align*}
&|H(x,y,z,z',u,v)-H(x,y,z,z',u',v')|\\
&\leq C\frac{|u-u'|+|v-v'|}{(|x-u|+|y-v|)^{n+m+1}}\Big(\frac{|z-z'|}{r_2+|z-z_0|}\Big)^\beta\frac {r_2^\gamma}{(r_2+|z-z_0|)^{m+\gamma}}.
\end{align*}
Finally, we write
\begin{align*}
&H(x,y,z,z',u,v)-H(x',y',z,z',u,v)-H(x,y,z,z',u',v')+H(x',y',z,z',u',v')\\
&=\int_{\R^m} [K(x,y,z,u,v,w)-K(x',y',z,u,v,w)\\
&\quad\quad\quad\quad\quad\quad\quad\quad-K(x,y,z,u',v',w)+K(x',y',z,u',v',w)]f_2(w)dw\\
&\quad\quad\quad-\int_{\R^m} \Big[K(x,y,z',u,v,w)-K(x',y',z',u,v,w)\\
&\quad\quad\quad\quad\quad\quad\quad\quad-K(x,y,z',u',v',w)+K(x',y',z',u',v',w)\Big]f_2(w)dw.
\end{align*}
Applying condition (7) in Definition \ref{product CZ} together with Lemma \ref{boundedness1}, for fixed $x,y,x',y',u,v,u'$ and $v'$ with $|x-x'|+|y-y'|\leq (|x-u|+|y-v|)/2$ and $|u-u'|+|v-v'|\leq (|x-u|+|y-v|)/2,$ the operator $$\int_{\R^m} [K(x,y,z,u,v,w)-K(x',y',z,u,v,w)-K(x,y,z,u',v',w)+K(x',y',z,u',v',w)]f_2(w)dw$$ is bounded on on $\mathcal M_{m}(\beta,\gamma,r_2,z_0)$
and hence, for $|z-z'|\leq (r_2+|z-z_0|)/2,|x-x'|+|y-y'|\leq (|x-u|+|y-v|)/2$ and $|u-u'|+|v-v'|\leq (|x-u|+|y-v|)/2,$
\begin{align*}
&|H(x,y,z,z',u,v)-H(x',y',z,z',u,v)-H(x,y,z,z',u',v')+H(x',y',z,z',u',v')|\\
&\leq C\frac{(|x-x'|+|y-y'|)(|u-u'|+|v-v'|)}{(|x-u|+|y-v|)^{n+m+2}}\Big(\frac{|z-z'|}{r_2+|z-z_0|}\Big)^\beta\frac {r_2^\gamma}{(r_2+|z-z_0|)^{m+\gamma}}.
\end{align*}
Therefore, the operator
$$\int_{\R^m} [K(x,y,z,u,v,w)-K(x',y',z,u,v,w)-K(x,y,z,u',v',w)+K(x',y',z,u',v',w)]f_2(w)dw$$
is bounded on $\mathcal M_{n+m}(\beta,\gamma,r_1,x_0,y_0)$ and this yields the claim. The proof of Proposition \ref{prop1} is concluded.
\end{proof}

\begin{proof}[Proof of Lemma \ref{l:claim}]
Suppose  $f^\sharp(x,y,z)=f_1(x,y)f_2(z)$, where  $f_1\in {\mathcal M}_{n+m}(\beta,\gamma,r_1,r_2,x_0,y_0)$ and $f_2\in {\mathcal M}_{m}(\beta,\gamma,r_3,z_0)$.

We first consider the estimate for $(\mathcal{R}_N^{(1)})^{\sharp}(f^{\sharp})$.
To verify the norm of $(\mathcal{R}_N^{(1)})^{\sharp}(f^{\sharp})$ with respect to $\widetilde{\mathcal M}_{(n+m)\times m}(\beta,\gamma,r_1,r_2,x_0,y_0,z_0)$, we point out that the key ingredient  is to use
 the size and smoothness of the functions
 $\psi^{(1)}$ and $\psi^{(2)}$ appearing in the kernel of $(\mathcal{R}_N^{(1)})^{\sharp}$ given in
\eqref{R1sharp}.

Based on this observation, we point out that
 $[\psi^{(1)}_{j}(u-u',v-v'-w)-\psi^{(1)}_{j}(x_I-u',y_J-v'-w)] \sim 2^{-N} \psi^{(1)}_{j}(u-u',v-v'-w)$ in terms of the size and smoothness condition, since
 $(u,v)$ and $(x_I,y_J)$ are both in the cube $I\times J$ with side-length $\ell(I)=\ell(J)=2^{-j-N}$.
 As a consequence, we get
\begin{align*}
(\mathcal{R}_N^{(1)})^{\sharp}(x,y,z;u',v',w')
&\sim 2^{-N}\int_{\Bbb R^m} c_\alpha\sum_{ j} \sum_{I,J}   \int_{I\times J} \sum_{ \substack{k\in\mathbb Z\\ j\leq k}} \psi^{(1)}_{j}(x-u,y-v-z+w)\psi^{(2)}_k(z-w)\nonumber\\
&\qquad\times \psi^{(1)}_{j}(u-u',v-v'-w)\psi^{(2)}_k(w-w')dudv \ dw\\
&\sim 2^{-N} c_\alpha\sum_{ j}   \psi^{(1)}_{j}(x-u',y-v')  \sum_{ \substack{k\in\mathbb Z\\ j\leq k}}\int_{\Bbb R^m} \psi^{(2)}_k(z-w)\psi^{(2)}_k(w-w') dw\\
&\sim 2^{-N} c_\alpha\sum_{ j}   \psi^{(1)}_{j}(x-u',y-v')  \psi^{(2)}_j(z-w'),
 \end{align*}
where $\sim$ denotes the equivalence in terms of estimating the size and smoothness conditions of $(\mathcal{R}_N^{(1)})^{\sharp}(x,y,z;u',v',w')$. Note that this is in fact a one-parameter structure with respect to the variables $((x,y),z)$ and $((u',v'),w')$ in $\mathbb R^{n+m}\times\mathbb R^m$, estimating mainly in terms of cubes,  which is a special case of the tensor product setting. Moreover, $(\mathcal{R}_N^{(1)})^{\sharp}(x,y,z;u',v',w')$ satisfies the cancellation in terms of the tensor product setting $\mathbb R^{n+m}\times\mathbb R^m$ with respect to $(x,y)$, $z$, $(u',v')$ and $w'$, respectively. Hence, by applying
Proposition \ref{prop1} we obtain that \eqref{productboundedness} holds for $(\mathcal{R}_N^{(1)})^{\sharp}$.

Next, we consider the estimate for $(\mathcal{R}_N^{(2)})^{\sharp}(f^{\sharp})$.
Again, following similar estimate as above for $(\mathcal{R}_N^{(1)})^{\sharp}(f^{\sharp})$, we obtain that
\begin{align*}
&(\mathcal{R}_N^{(2)})^{\sharp}(x,y,z;u',v',w')\\
&\sim 2^{-N} c_\alpha\sum_{ j}  \sum_{ \substack{k\in\mathbb Z\\ j> k}} \psi^{(1)}_{j}(x-u',y-v')  \int_{\Bbb R^m} \psi^{(2)}_k(z-w)\psi^{(2)}_k(w-w') dw\\
&\sim 2^{-N} c_\alpha\sum_{ j} \sum_{ \substack{k\in\mathbb Z\\ j> k}}  \psi^{(1)}_{j}(x-u',y-v')  \psi^{(2)}_k(z-w').
 \end{align*}
Note that this is a typical tensor product structure with respect to the variables $((x,y),z)$ and $((u',v'),w')$ in $\mathbb R^{n+m}\times\mathbb R^m$, estimating mainly in terms of rectangles $R=I\times J$ with $\ell(I)\leq \ell(J)$. Moreover, $(\mathcal{R}_N^{(2)})^{\sharp}(x,y,z;u',v',w')$ satisfies the cancellation in terms of the tensor product setting $\mathbb R^{n+m}\times\mathbb R^m$ with respect to $(x,y)$, $z$, $(u',v')$ and $w'$, respectively. Hence, by applying
Proposition \ref{prop1} we obtain that \eqref{productboundedness} holds for $(\mathcal{R}_N^{(2)})^{\sharp}$.
The proof of  Lemma \ref{l:claim} is complete.
\end{proof}

Similar to Theorem \ref{dcrf}, one can also establish the following discrete reproducing formula via a modification of the process of the discretization.
\begin{theorem}\label{dcrf 1}
Let $\beta,\gamma,r_1,r_2,r_3>0$, $x_0$ be fixed point in $\mathbb R^n$, $y_0$ and $z_0$ be fixed points in $\mathbb R^m$. Let $\psi_{t,s}$ be the same as in \eqref{psi jk}. Then there exist functions $\phi_{j,k}(x,y, x_I,y_J)$ in the test function space  ${\mathcal M}_{flag}(\beta,\gamma,2^{-j},2^{-k},x_I,y_J)$ and a fixed large integer $N$ such that for $f(x,y)=\int_{\mathbb R^m}f_1(x,y-z)f_2(z)dz$ with $f_1\in {\mathcal M}_{n+m}(\beta,\gamma,r_1,r_2,x_0,y_0)$ and $f_2\in {\mathcal M}_{m}(\beta,\gamma,r_3,z_0),$
\begin{eqnarray}\label{discrf 1}
f(x,y)=\sum_j\sum_k\sum_I\sum_J|I||J|\phi_{j,k}(x,y,x_I,y_J)\int^{2^{-\alpha(k-1)}}_{2^{-\alpha k}}\!\!\!\int^{2^{-\alpha(j-1)}}_{2^{-\alpha j}}\psi_{t,s}\ast f(x_I,y_J)\frac{dt}{t}\frac{ds}{s},\ \ \ \ \
\end{eqnarray}
where the series converges in $L^2(\mathbb R^{n+m})$ and in ${\mathcal M}_{flag}(\beta,\gamma,r_1,r_2,x_0,y_0)$, $I\subset \mathbb{R}^n$ and $J\subset \mathbb{R}^m$ are dyadic cubes with side-lengths $\ell(I)=2^{-j-N}$ and $\ell(J)=2^{-(j\wedge k-N)}$, and $x_I$ and $y_J$ are {\bf any fixed points} in $I$ and $J,$ respectively.
\end{theorem}

\subsection{Flag Plancherel--P\'olya type inequalities}
\label{s:2.2}

Applying the discrete Calder\'on reproducing formula in \eqref{discrf 1} provides the following Plancherel--P\'olya type
inequalities.
\begin{theorem}\label{pp-inequality1}
Suppose $\psi_{t,s}$ is as in \eqref{psi jk}. Let $\alpha$ and $N$ be chosen the same as in Theorem \ref{dcrf}. Then for $f\in L^1(\mathbb R^{n+m}),$
\begin{eqnarray}
&&\bigg\Vert \bigg\lbrace \sum \limits_j\sum \limits_k\int^{2^{-\alpha(k-1)}}_{2^{-\alpha k}}\int^{2^{-\alpha(j-1)}}_{2^{-\alpha j}}\sum
\limits_J\sum \limits_I \sup_{\substack{u\in I\\v\in J}}\vert \psi _{t,s}*f(u,v)
\vert ^2\chi _I(x)\chi _J(y)\frac{dt}{t}\frac{ds}{s}\bigg\rbrace ^{{{1}\over{2}}}\bigg\Vert _{1}\nonumber\\
&\approx&
\bigg\Vert \bigg\lbrace \sum \limits_j\sum \limits_k\int^{2^{-\alpha(k-1)}}_{2^{-\alpha k}}\int^{2^{-\alpha(j-1)}}_{2^{-\alpha j}}\sum \limits_J
\sum \limits_I \inf_{\substack{u\in I\\v\in J}}\vert \psi _{t,s}*f(u,v)
\vert ^2\chi _I(x)\chi _J(y)\frac{dt}{t}\frac{ds}{s}\bigg\rbrace^{{{1}\over{2}}}\bigg\|_{1},\nonumber
\end{eqnarray}
where $I\subset \R^n, J\subset \R^m$ are dyadic cubes with side-lengths $\ell(I)=2^{-j-N}$ and $\ell(J)=2^{-(j\wedge k-N)}$  (the same as in Theorems \ref{dcrf} and \ref{dcrf 1}) and $\chi _I$ and $\chi _J$ are indicator
functions of $I$ and $J$, respectively.
\end{theorem}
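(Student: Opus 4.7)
The direction ``inf-side $\lesssim$ sup-side'' is immediate from the pointwise inequality $\inf_{(u,v)\in I\times J}|\psi_{t,s}*f(u,v)|\le \sup_{(u,v)\in I\times J}|\psi_{t,s}*f(u,v)|$. The real content is the reverse inequality. The plan is to insert the discrete Calder\'on reproducing formula of Theorem \ref{dcrf} inside $\psi_{t,s}*f(u,v)$, prove a bi-parameter almost-orthogonality estimate adapted to the flag structure, and then close the argument by a vector-valued flag strong maximal function inequality.

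\textbf{Step 1 (insertion of the reproducing formula).} Fix $(t,s)$ with $t\in[2^{-j-N},2^{-j-N+1}]$, $s\in[2^{-k-N},2^{-k-N+1}]$, and $(u,v)\in I\times J$. Since $f\in L^1\subset ({\mathcal M}_{flag})'$ and $\psi_{t,s}(u-\cdot,v-\cdot)\in {\mathcal M}_{flag}$, pair the identity in \eqref{discrf} with $\psi_{t,s}(u-\cdot,v-\cdot)$, giving
\begin{equation*}
\psi_{t,s}*f(u,v)=\sum_{j',k'}\sum_{I',J'}|I'||J'|\,(\psi_{t,s}*\phi_{j',k'}(\cdot,\cdot,x_{I'},y_{J'}))(u,v)\cdot\Psi_{j',k'}^{I',J'}(f),
\end{equation*}
where $\Psi_{j',k'}^{I',J'}(f):=\int_{2^{-k'-N}}^{2^{-k'-N+1}}\int_{2^{-j'-N}}^{2^{-j'-N+1}}\psi_{t',s'}*f(x_{I'},y_{J'})\frac{dt'}{t'}\frac{ds'}{s'}$, and one chooses $x_{I'},y_{J'}$ to be points at which $|\psi_{t',s'}*f|$ is essentially minimized on $I'\times J'$ (possible since this coefficient is controlled by the infimum modulo a factor close to $1$).

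\textbf{Step 2 (flag almost-orthogonality).} The technical core is the estimate
\begin{equation*}
|(\psi_{t,s}*\phi_{j',k'}(\cdot,\cdot,x_{I'},y_{J'}))(u,v)|\lesssim 2^{-|j-j'|\beta'}2^{-|k-k'|\beta'}\,\mathcal{K}_{(j\wedge j',k\wedge k')}((u,v),(x_{I'},y_{J'})),
\end{equation*}
where $\mathcal{K}_{(a,b)}$ is the natural flag kernel of scales $(2^{-a},2^{-b})$, i.e.\ the convolution $K^{(1)}_{2^{-a}}*_{\mathbb{R}^m}K^{(2)}_{2^{-b}}$ of Poisson-like pieces with decay $\gamma$. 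This is produced by decomposing $\phi_{j',k'}$ through its $\mathcal{M}_{(n+m)\times m}$-lift $\phi_{j',k'}^{\sharp}$ (cf.\ the definition of the flag test class) and $\psi_{t,s}$ through $\psi^{(1)}_t*_{\mathbb{R}^m}\psi^{(2)}_s$, then exchanging the order of integration to reduce the composition to two one-parameter almost-orthogonality estimates, each proved by the classical trick of subtracting the mean (using the cancellation of the finer scale against the H\"older regularity of the coarser scale). This step is where the bulk of the effort lies; I expect it to be the main obstacle because one must simultaneously track two independent factorizations ($x\leftrightarrow y$ for the first scale, $z$ for the second) and verify that the endpoint geometric sums reconcile with the ``flag'' constraint $\ell(I)\le\ell(J)$ built into the grid of Theorem \ref{dcrf}.

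\textbf{Step 3 (passage to strong maximal functions).} Given (Step 2), standard geometric manipulations of $\mathcal{K}_{(j\wedge j',k\wedge k')}$ allow one to write
\begin{equation*}
\sup_{(u,v)\in I\times J}|\psi_{t,s}*f(u,v)|\lesssim \sum_{j',k'}2^{-|j-j'|\beta'}2^{-|k-k'|\beta'}\,M_s^{\mathrm{flag}}\Bigl[\sum_{I',J'}\inf_{(u',v')\in I'\times J'}|\psi_{t',s'}*f(u',v')|\,\chi_{I'}\chi_{J'}\Bigr]^{1/r}(x,y)^{r}
\end{equation*}
for any $r\in (\tfrac{1}{1+\beta'},1)$ and $(x,y)\in I\times J$, where $M_s^{\mathrm{flag}}$ is the strong maximal operator over flag rectangles $R=I\times J$ with $\ell(I)\le\ell(J)$; note that $M_s^{\mathrm{flag}}$ is controlled by a composition of one-parameter Hardy--Littlewood maximal functions in the $(x,y)$-variables and a second one in the $y$-variable, and is therefore bounded on $L^{p/r}(\mathbb{R}^{n+m})$ for every $p>r$.

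\textbf{Step 4 (closing in $L^1$).} Square, sum over $(j,k,I,J)$, take the square root, integrate against $dx\,dy$, and discretize the $(t,s)$-integral. Apply Minkowski and a H\"older/Cauchy--Schwarz in $(j',k')$ to absorb the geometric factor $2^{-|j-j'|\beta'}2^{-|k-k'|\beta'}$. The resulting bound is an $L^1$-norm of $(M_s^{\mathrm{flag}}[\cdots]^{1/r})^{r}$ applied to the square function built from the infima; choosing $r<1$ and invoking the vector-valued Fefferman--Stein inequality for $M_s^{\mathrm{flag}}$ on $L^{1/r}(\ell^2)$, which reduces by iteration to the scalar $L^{1/r}$-boundedness of one-parameter maximal operators, yields the desired control by the inf-side norm.

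In summary, the argument is the standard Plancherel--P\'olya pipeline (reproducing formula $\to$ almost-orthogonality $\to$ vector-valued maximal), and the genuine novelty is Step 2 in the flag setting, where one must produce the bi-parameter decay through the coupled factorization dictated by $\psi_{t,s}=\psi^{(1)}_t*_{\mathbb{R}^m}\psi^{(2)}_s$ and $\phi_{j',k'}=\int\phi_{j',k'}^{\sharp}(\cdot,\cdot-z,z)\,dz$.
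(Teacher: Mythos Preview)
Your plan is correct and follows essentially the same route as the paper's proof: insert the discrete reproducing formula of Theorem~\ref{dcrf}, use a flag almost-orthogonality estimate to obtain geometric decay $2^{-|j-j'|\beta}2^{-|k-k'|\beta}$ times a product-type kernel with the coupled scale $\min(j\wedge j',k\wedge k')$ in the $\mathbb{R}^m$ variable, dominate by a strong maximal function raised to a power $r<1$, and close with the vector-valued Fefferman--Stein inequality. The only cosmetic differences are that the paper writes the almost-orthogonality bound directly in product form (first factor at scale $2^{-(j\wedge j')}$ in $\mathbb{R}^n$, second at scale $2^{-[(k\wedge k')\wedge(j\wedge j')]}$ in $\mathbb{R}^m$) rather than as your convolution $\mathcal{K}_{(a,b)}$, and uses the ordinary strong maximal $M_s$ on $\mathbb{R}^n\times\mathbb{R}^m$ rather than your $M_s^{\mathrm{flag}}$; also the correct range is $r\in\big(\tfrac{n+m}{n+m+\beta},1\big)$, not $\big(\tfrac{1}{1+\beta'},1\big)$. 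One small point worth tightening in your Step~1: the points $x_{I'},y_{J'}$ in Theorem~\ref{dcrf} are fixed (independent of $t',s'$), so you cannot literally minimize $|\psi_{t',s'}*f|$ for each $(t',s')$; the paper simply notes that the reproducing identity holds for \emph{any} fixed choice and then passes to the infimum in the final bound.
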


\begin{proof}
For $f\in L^1(\mathbb R^{n+m}),$ by using Theorem \ref{dcrf 1}, we get that
\begin{align*}
&\psi _{t,s}*f(u,v)\\
&=\sum_{j'}\sum_{k'} \sum_{I'}\sum_{J'}|I'||J'|\psi _{t,s}*\phi_{j',k'}(u,v)\int^{2^{-\alpha(k'-1)}}_{2^{-\alpha k'}}\int^{2^{-\alpha(j'-1)}}_{2^{-\alpha j'}}\psi_{t',s'}\ast f(x_{I'},y_{J'})\frac{dt'}{t'}\frac{ds'}{s'}.
\end{align*}
For $2^{-\alpha j}<t<2^{-\alpha (j-1)}$ and $2^{-\alpha  k}<s<2^{-\alpha (k-1)}$, from Theorem \ref{dcrf 1}, we see that, as a function of $(x,y)$,
$\phi_{j',k'}(x,y, x_{I'},y_{J'})$ is in the test function space  ${\mathcal M}_{flag}(\beta,\gamma,2^{-j'},2^{-k'},x_{I'},y_{J'})$.
Then we have the following almost orthogonality estimate for $\phi_{j',k'}$ and $\psi_{t,s}$, see \cite[Lemma 6]{HLS}.
\begin{align*}
&|\psi _{t,s}*\big(\phi_{j',k'}(\cdot,\cdot, x_{I'},y_{J'}) \big)(u,v)|\\
&\le C2^{-|j-j'|\beta}2^{-|k-k'|\beta}\frac {2^{-(j\wedge j')\gamma}}{(2^{-(j\wedge j')}+|x_{I'}-u|)^{n+\gamma}}\frac {2^{-[(k\wedge k')\wedge(j\wedge j')])\gamma}}{(2^{-[(k\wedge k')\wedge(j\wedge j')]}+|x_{J'}-v|)^{m+\gamma}}.
\end{align*}

Observe that
\begin{align*}
&\sum_{I'}\sum_{J'}|I'||J'| \frac {2^{-(j\wedge j')\gamma}}{(2^{-(j\wedge j')}+|x_{I'}-u|)^{n+\gamma}}\frac {2^{-[(k\wedge k')\wedge(j\wedge j')])\gamma}}{(2^{-[(k\wedge k')\wedge(j\wedge j')]}+|x_{J'}-v|)^{m+\gamma}}\\
&\qquad\times \int^{2^{-\alpha(k'-1)}}_{2^{-\alpha k'}}\int^{2^{-\alpha(j'-1)}}_{2^{-\alpha j'}}\psi_{t',s'}\ast f(x_{I'},y_{J'})\frac{dt'}{t'}\frac{ds'}{s'}\\
&\le C\bigg\{M_s\bigg(\int^{2^{-\alpha(k'-1)}}_{2^{-\alpha k'}}\int^{2^{-\alpha(j'-1)}}_{2^{-\alpha j'}}\sum_{I'}\sum_{J'}\psi_{t',s'}\ast f(x_{I'},y_{J'})\chi_{I'}\chi_{J'}\frac{dt'}{t'}\frac{ds'}{s'}\bigg)^r(u,v)\bigg\}^{1/r},
\end{align*}
where $\frac {n+m}{n+m+\beta}<r<1$, and $M_s$ is the strong maximal function on $\Bbb R^n\times \Bbb R^m$ defined as
\begin{align}\label{strong maximal}
   M_s(f)(x_1,x_2):=\sup_{R:\ {\rm \ rectangles\ in\ } {\mathbb R}^{n}\times\mathbb{R}^{m},\ (x_1,x_2)\in R}{1\over |R|}\int_R|f(y_1,y_2)|dy_1dy_2.
\end{align}

See \cite[pages 147--148]{FJ} for the proof of the classical case. Note that $x_{I'}$ and $y_{J'}$ are arbitrary points in $I'$ and $J'$, respectively. We have that
\begin{align*}
&\sup_{\substack{u\in I\\v\in J}}|\psi _{t,s}*f(u,v)|\\
&\le C_N\sum_{j'}\sum_{k'}2^{-|j-j'|\beta}2^{-|k-k'|\beta} 2^{-j'n(1-\frac1r)}2^{[(j\wedge j')-j']n(1-{1\over r})}2^{-k'n(1-\frac1r)}2^{[(k\wedge k')-k']m(1-{1\over r})}  \\
&\quad\times\bigg\{M_s\bigg(
  \int^{2^{-\alpha(k'-1)}}_{2^{-\alpha k'}}\int^{2^{-\alpha(j'-1)}}_{2^{-\alpha j'}}\sum_{I'}\sum_{J'}\inf_{\substack{u'\in I'\\v'\in J'}}\psi_{t',s'}\ast f(u',v')\chi_{I'}\chi{J'}\frac{dt'}{t'}\frac{ds'}{s'}\bigg)^r(u,v)\bigg\}^{1/r}.
\end{align*}
Applying H\"older's inequality, together with the facts that
$$\sum \limits_j\sum \limits_k 2^{-|j-j'|\beta}2^{-|k-k'|\beta} 2^{-j'n(1-\frac1r)}2^{[(j\wedge j')-j']n(1-{1\over r})}2^{-k'n(1-\frac1r)}2^{[(k\wedge k')-k']m(1-{1\over r})} \leq C,$$
$$ \sum\limits_J\sum \limits_I \chi _I(x)\chi _J(y)\leq C$$
and
$$\int^{2^{-\alpha(k-1)}}_{2^{-\alpha k}}\int^{2^{-\alpha(j-1)}}_{2^{-\alpha j}}\frac{dt}{t}\frac{ds}{s}\leq C,$$
gives
\begin{align*}
&\bigg\Vert\bigg\lbrace \sum \limits_j\sum \limits_k\int^{2^{-\alpha(k-1)}}_{2^{-\alpha k}}\int^{2^{-\alpha(j-1)}}_{2^{-\alpha j}}\sum
\limits_J\sum \limits_I \sup_{\substack{u\in I\\v\in J}}\vert \psi _{t,s}*f(u,v)
\vert ^2\chi _I(x)\chi _J(y)\frac{dt}{t}\frac{ds}{s}\bigg\rbrace ^{{{1}\over{2}}}\bigg\Vert_{1}\\
&\lesssim \bigg\Vert\sum_{j'}\sum_{k'}\bigg\{M_s\bigg(
  \int^{2^{-\alpha(k'-1)}}_{2^{-\alpha k'}}\!\!\!\int^{2^{-\alpha(j'-1)}}_{2^{-\alpha j'}}\!\sum_{I'}\sum_{J'}\inf_{\substack{u'\in I'\\v'\in J'}}\psi_{t',s'}\ast f(u',v')\chi_{I'}\chi_{J'}\frac{dt'}{t'}\frac{ds'}{s'}\bigg)^r(u,v)\bigg\}^{1\over r}\bigg\Vert_{1}.
\end{align*}
By using the Fefferman--Stein vector-valued maximal function inequality on $L^{1\over r}(\mathbb R^{n+m})$, we get
\begin{eqnarray}
&&\bigg\Vert \bigg\lbrace \sum \limits_j\sum \limits_k\int^{2^{-\alpha(k-1)}}_{2^{-\alpha k}}\int^{2^{-\alpha(j-1)}}_{2^{-\alpha j}}\sum
\limits_J\sum \limits_I \sup_{\substack{u\in I\\v\in J}}\vert \psi _{t,s}*f(u,v)
\vert ^2\chi _I(x)\chi _J(y)\frac{dt}{t}\frac{ds}{s}\bigg\rbrace ^{{{1}\over{2}}}\bigg\Vert _{1}\nonumber\\
&\approx&
\bigg\Vert \bigg\lbrace \sum \limits_j\sum \limits_k\int^{2^{-k+1}}_{2^{-k}}\int^{2^{-j+1}}_{2^{-j}}\sum \limits_J
\sum \limits_I \inf_{\substack{u\in I\\v\in J}}\vert \psi _{t,s}*f(u,v)
\vert ^2\chi _I(x)\chi _J(y)\frac{dt}{t}\frac{ds}{s}\bigg\rbrace^{{{1}\over{2}}}\bigg\|_{1}.\nonumber
\end{eqnarray}
The proof is completed.
\end{proof}

We remark that applying a similar proof, for any fixed constant $C_0$ one can get the following Plancherel--P\'olya type
inequalities:
\begin{eqnarray}\label{P-P inequality}
&&\bigg\Vert \bigg\lbrace \sum \limits_j\sum \limits_k\int^{2^{-\alpha(k-1)}}_{2^{-\alpha k}}\int^{2^{-\alpha(j-1)}}_{2^{-\alpha j}}\sum
\limits_{J}\sum \limits_{I} \sup_{\substack{u\in {C_0I}\\v\in {C_0J}}}\vert \psi _{t,s}*f(u,v)
\vert ^2\chi _I(x)\chi _J(y)\frac{dt}{t}\frac{ds}{s}\bigg\rbrace ^{{{1}\over{2}}}\bigg\Vert _{1}\nonumber\\
&\approx&
\bigg\Vert \bigg\lbrace \sum \limits_j\sum \limits_k\int^{2^{-\alpha(k-1)}}_{2^{-\alpha k}}\int^{2^{-\alpha(j-1)}}_{2^{-\alpha j}}\sum \limits_J
\sum \limits_I \inf_{\substack{u\in I\\v\in J}}\vert \psi _{t,s}*f(u,v)
\vert ^2\chi _I(x)\chi _J(y)\frac{dt}{t}\frac{ds}{s}\bigg\rbrace^{{{1}\over{2}}}\bigg\|_{1},
\end{eqnarray}
where $C_0I\subset \Bbb R^n$ and $C_0J\subset \Bbb R^m,$ are cubes
with side-length $\ell(C_0I)=C_02^{-j-N}$ and
$\ell(C_0J)=C_0\ell(J)=2^{-(j\wedge k)-N},$ respectively.

\subsection{The equivalence of $\|g_F(f)\|_{1}$ and $ \|S_F(f)\|_{1}$ }
\label{s:2.3}
\subsubsection{The proof that $\|S_F(f)\|_{1}\lesssim \|g_F(f)\|_{1}$}

We write
\begin{eqnarray*}
\|S_F(f)(x,y)\|_1
&=&
\bigg\|\bigg\{\sum_{j,k}\sum_{I,J}\int^{2^{-\alpha(k-1)}}_{2^{-\alpha k}}\int^{2^{-\alpha(j-1)}}_{2^{-\alpha j}}
\int_{\mathbb{R}^{n}}\int_{\mathbb{R}^{m}}\chi_{t,s}(x-x_1,y-y_1) \\
&&\qquad\times |\psi_{t,s}\ast
f(x_1,y_1)|^2 \chi _I(x)\chi _J(y){dx_1dt \over t^{n+m+1}}{dy_1ds \over s^{m+1}} \bigg\}^{1/2}\bigg\|_1
\end{eqnarray*}
where $N$ is a fixed large integer as in the Plancherel--P\'olya type
inequalities and $I\subset \R^n$ and $J\subset \R^m$ are dyadic cubes
with side-length $\ell(I)=2^{-j-N}, \ell(J)=\ell(J)=2^{-(j\wedge k)-N},$ and $\chi _I$ and $\chi _J$ are indicator
functions of $I$ and $J$, respectively.

Observe that there exists a fixed constant $C_0$ such that for $2^{-\alpha j}\leq t\leq 2^{-\alpha(j-1)}, 2^{-\alpha k}\leq s\leq 2^{-\alpha(k-1)}$ and $x_1\in \mathbb R^n$ and $y_1\in \mathbb R^m,$
\begin{eqnarray*}
&&\chi_{t,s}(x-x_1,y-y_1)|\psi_{t,s}\ast
f(x_1,y_1)|^2\chi _I(x)\chi _J(y)\\
&&\leq \chi_{t,s}(x-x_1,y-y_1)\sup_{\substack{u\in C_0I\\ v\in C_0J}}|\psi_{t,s}\ast
f(u,v)|^2\chi _I(x)\chi _J(y).
\end{eqnarray*}
Therefore,
\begin{eqnarray*}
\|S_F(f)(x,y)\|_1\leq \bigg\|\bigg\{\sum_{j,k}\sum_{I,J}\int^{2^{-\alpha(k-1)}}_{2^{-\alpha k}}\int^{2^{-\alpha(j-1)}}_{2^{-\alpha j}}
\int_{\mathbb{R}^{n}}\int_{\mathbb{R}^{m}}\chi_{t,s}(x-x_1,y-y_1)\\
\times\sup_{\substack{u\in C_0I\\ v\in C_0J}}|\psi_{t,s}\ast
f(u,v)|^2\chi _I(x)\chi _J(y){dx_1dt \over t^{n+m+1}}{dy_1ds \over s^{m+1}} \bigg\}^{1/2}\bigg\|_1.
\end{eqnarray*}
Applying the estimate $\int_{\mathbb{R}^{n}}\int_{\mathbb{R}^{m}}\chi_{t,s}(x-x_1,y-y_1)dx_1dy_1\leq Ct^{n+m}s^m$ together with the
Plancherel--P\'olya type inequalities in (\ref{P-P inequality}) yields
\begin{eqnarray*}
&&\|S_F(f)(x,y)\|_1 \\
&&\leq \bigg\|\bigg\{\sum_{j,k}\sum_{I,J}\int^{2^{-\alpha(k-1)}}_{2^{-\alpha k}}\int^{2^{-\alpha(j-1)}}_{2^{-\alpha j}}\sup_{\substack{u\in C_0I\\ v\in C_0J}}
|\psi_{t,s}\ast
f(u,v)|^2\chi _I(x)\chi _J(y)\frac{dt}{t}\frac{ds}{s}\bigg\}^{1/2}\bigg\|_1\\
&&\lesssim\bigg\|\bigg\{\sum_{j,k}\sum_{I,J}\int^{2^{-\alpha (k-1)}}_{2^{-\alpha k}}\int^{2^{-\alpha(j-1)}}_{2^{-\alpha j}}|\psi_{t,s}\ast
f(x,y)|^2\chi _I(x)\chi _J(y)\frac{dt}{t}\frac{ds}{s}\bigg\}^{1/2}\bigg\|_1\\
&&=\|g_F(f)\|_1.
\end{eqnarray*}

\subsubsection{The proof that $\|g_F(f)\|_{1}\lesssim\|S_F(f)\|_{1}$}

\medskip
The proof of this part is similar. To see this, write
\begin{eqnarray*}
\|g_F(f)\|_1&&=
\bigg\|\bigg\{\int_0^{\infty}\int_0^{\infty}|\psi_{t,s}\ast
f(x,y)|^2\frac{dt}{t}\frac{ds}{s}\bigg\}^{1/2}\bigg\|_1\\
&&=\bigg\|\bigg\{\sum_{j,k}\sum_{I,J}\int^{2^{-\alpha(k-1)}}_{2^{-\alpha k}}\int^{2^{-\alpha(j-1)}}_{2^{-\alpha j}}|
\psi_{t,s}\ast
f(x,y)|^2\chi _I(x)\chi _J(y)\frac{dt}{t}\frac{ds}{s}\bigg\}^{1/2}\bigg\|_1.
\end{eqnarray*}
By the Plancherel--P\'olya type inequalities in (\ref{P-P inequality}), the last term above is dominated by
\begin{eqnarray*}
&&C\bigg\|\bigg\{\sum_{j,k}\sum_{I,J}\int^{2^{-\alpha(k-1)}}_{2^{-\alpha k}}\int^{2^{-\alpha(j-1)}}_{2^{-\alpha j}}
\inf_{\substack{u\in C_0I\\ v\in C_0J}}|\psi_{t,s}\ast f(u,v)|^2\chi _I(x)\chi _J(y){dt \over t}{ds \over s} \bigg\}^{1/2}\bigg\|_1\\
&&\leq C\bigg\|\bigg\{\sum_{j,k}\sum_{I,J}\int^{2^{-\alpha(k-1)}}_{2^{-\alpha k}}\int^{2^{-\alpha(j-1)}}_{2^{-\alpha j}}
\int_{\mathbb{R}^{n}}\int_{\mathbb{R}^{m}}\chi_{t,s}(x-x_1,y-y_1)\times\\
&&\hskip4cm\inf_{\substack{u\in C_0I\\ v\in C_0J}}|\psi_{t,s}\ast
f(u,v)|^2\chi _I(x)\chi _J(y){dx_1dt \over t^{n+m+1}}{dy_1ds \over s^{m+1}} \bigg\}^{1/2}\bigg\|_1\\
&&\leq C\|S_F(f)\|_1.
\end{eqnarray*}

\subsubsection{The two norms $\|g_F(f)\|_{1}$ and $\|S_F(f)\|_{1}$ are well-defined}

Based on the Plancherel--P\'olya type inequalities in (\ref{P-P inequality}) and the proof of $\|g_F(f)\|_{1}\approx\|S_F(f)\|_{1}$ as in Subsection 2.3.1 and 2.3.2, we also obtain that the two norms $\|g_F(f)\|_{1}$ and $\|S_F(f)\|_{1}$ are well-defined. To be more precise, we have the following
\begin{prop}\label{equivalence of psi varphi}
Suppose $\psi_{t,s}$ is as in \eqref{psi jk}, and $\varphi_{t,s}$ is as in \eqref{psi jk}.
Then we have the flag Littlewood--Paley area function and square functions defined via $\psi_{t,s}$, denoted by
$S_{F,\psi}(f)$ and $g_{F,\psi}(f)$, respectively. And we also  have the flag Littlewood--Paley area function and square functions defined via $\varphi_{t,s}$, denoted by
$S_{F,\varphi}(f)$ and $g_{F,\varphi}(f)$, respectively. Then we have
$$ \|g_{F,\psi}(f)\|_{1} \approx\|g_{F,\varphi}(f)\|_{1} \approx\|S_{F,\psi}(f)\|_{1} \approx\|S_{F,\varphi}(f)\|_{1},$$
where the implicit constants are independent of $\psi_{t,s}$ and $\varphi_{t,s}$.
\end{prop}

As a consequence, we see that the Hardy space $H^1_F(\mathbb R^n\times\mathbb R^m)$ given in Definition \ref{def-of-hardy-by-han} is well-defined, and can be characterized equivalently by $S_F(f)$,

\subsection{The estimate $\|S_F(f)\|_{1} \lesssim \|S_F(U)\|_{1}$}

The estimate $\|S_F(f)\|_1 \lesssim \|S_F(U)\|_1,$
follows from the same ideas in Section \ref{s:2.2} and \ref{s:2.3}.  More precisely, we first need to establish the following discrete Calder\'on reproducing formula.
For this purpose, let $\phi^{(1)}(x,y)\in
\mathcal {S}(\mathbb{R}^{n+m})$ be radial and satisfy the following
conditions:
\begin{enumerate}
\item[$(i)$] ${\rm supp}\ \phi^{(1)} \subset B(0,1)$, where $B(0,1)$ is the
unit ball in $\mathbb{R}^{n+m}$;
\item[$(ii)$]
$\int_{\mathbb{R}^{n+m}}x^{\alpha}y^{\beta}\phi^{(1)}(x,y)dxdy=0$,
where $|\alpha|+|\beta|\leq 2(n \vee m)$;
\item[$(iii)$] $\int_0^{\infty}e^{-u}\widehat{\phi^{(1)}}(u)du=-1$.
\end{enumerate}
In fact, $\phi^{(1)}(x,y)$ can be constructed as follows. Choose
$h^{(1)}\in \mathcal{S}(\mathbb{R}^{n+m})$, radial and supported in
$B(0,1)$. Let $k=4(n \vee m)$ and $\phi^{(1)}(x,y)= \Delta^k h^{(1)}(x,y)$.
Multiplying by an appropriate constant, we can see that such
$\phi^{(1)}(x,y)$ satisfies all the conditions above.

Similarly, choosing $h^{(2)}\in \mathcal{S}( \mathbb{R}^m)$, radial and
supported in $B(0,1)$ and $\phi^{(2)}(z)= \Delta^k h^{(2)}(z)$. Multiplying by an appropriate constant, we obtain that $\phi^{(2)}(z)\in
\mathcal{S}( \mathbb{R}^m)$, is radial and satisfies the following
conditions:
\begin{enumerate}
\item[$(i)$] ${\rm supp}\ \phi^{(2)} \subset B(0,1)$, where $B(0,1)$ is the
unit ball in $\mathbb{R}^m$;
\item[$(ii)$] $\int_{\mathbb{R}^m}z^{\gamma}\phi^{(2)}(z)dz=0$, where
$|\gamma|\leq  2(n \vee m)$;
\item[$(iii)$] $\int_0^{\infty}e^{-u}\widehat{\phi^{(2)}}(u)du=-1$.
\end{enumerate}

Let $\phi(x,y)=\phi^{(1)}*_{\mathbb R^m} \phi^{(2)}(x,y)$ and
$\phi_{t,s}(x,y)=\phi^{(1)}_t*_{\mathbb R^m} \phi^{(2)}_s(x,y)$. Repeating the same proof as in Theorem \ref{dcrf}, leads to
the following statement.

\begin{theorem}\label{prop-of-spcial-discrete-Calderon-reproducing-formula}
There exist $\phi_{j,k,I,J}(x,y)\in {\mathcal M}_{flag}(\beta,\gamma,2^{-j},2^{-k},x_I,y_J)$ and a fixed large integer $N$ such that
\begin{align*}
&f(x,y)\\
&=\sum_j\sum_k\sum_I\sum_J|I||J|\phi_{j,k,I,J}(x,y)\int^{2^{-\alpha(k-1)}}_{2^{-\alpha k}}\int^{2^{-v\alpha(j-1)}}_{2^{-\alpha j}}
\phi_{t,s}*\Big(ts{\partial \over {\partial t}}{\partial \over {\partial s}}P_{t,s}\Big)*f(x_I,y_J)\frac{dt}{t}\frac{ds}{s},
\end{align*}
where $I\subset \mathbb{R}^n$ and $J\subset \mathbb{R}^m$ are dyadic cubes with
side-lengths $\ell(I)=2^{-j-N}$ and $\ell(J)=2^{-(j\wedge k)-N}$, $x_I$ and $y_J$ are {\bf any fixed points} in $I$ and $J,$ respectively. Moreover,
for $f\in L^1(\mathbb R^{n+m})$ and $h\in {\mathcal M}_{flag}(\beta,\gamma,r_1,r_2,x_0,y_0)$,
\begin{equation*}
\begin{aligned}
&\langle f,h \rangle \\
&=\bigg\langle \sum_j\sum_k \sum_I\sum_J|I||J|\phi_{j,k,I,J}(\cdot,\cdot)\int^{2^{-\alpha(k-1)}}_{2^{-\alpha k}}\int^{2^{-\alpha(j-1)}}_{2^{-\alpha j}}\Big(ts{\partial
\over {\partial t}}{\partial \over {\partial
s}}P_{t,s}\Big)*f(x_I,y_J)\frac{dt}{t}\frac{ds}{s},h\bigg\rangle.
\end{aligned}
\end{equation*}
\end{theorem}
Applying the same proof as in Section \ref{s:2.2} gives the following.

\begin{theorem}\label{prop-of-special-P-P}
Let $f\in L^1(\mathbb{R}^{n+m})$, we have
\begin{eqnarray*}
\lefteqn{ \hspace{.5cm}  \bigg\|\sum_j\sum_k\sum_I\sum_J
\int_{2^{-\alpha j}}^{2^{-\alpha(j-1)}}\int_{2^{-\alpha k}}^{2^{-\alpha(k-1)}}\sup_{u\in I,v\in
J}|\psi_{t,s}\ast f(u,v)|^2 {dt\over t}{ds\over s}
\chi_I(x)\chi_J(y)\bigg\|_1         }  \\
&\approx&  \bigg\|\sum_j\sum_k\sum_I\sum_J
\int_{2^{-\alpha j}}^{2^{-\alpha(j-1)}}\int_{2^{-\alpha k}}^{2^{-\alpha (k-1)}}\inf_{u\in I,v\in
J}\Big|\Big(ts{\partial\over{\partial t}}{\partial\over{\partial
s}}P_{t,s}\Big)\ast f(u,v)\Big|^2 {dt\over t}{ds\over s}
\chi_I(x)\chi_J(y)\bigg\|_1,\nonumber
\end{eqnarray*}
where
$I\subset
\mathbb{R}^n$ and $J\subset \mathbb{R}^m$ are dyadic cubes with
side-lengths $\ell(I)=2^{-j-N}$ and $\ell(J)=2^{-(j\wedge k)-N}$, 
respectively.
\end{theorem}

The estimate $\|S_F(f)\|_{1} \lesssim \|S_F(U)\|_{1}$
then follows from Theorem \ref{prop-of-special-P-P} as in Section \ref{s:2.3}. We leave the details to the reader.

\section{Estimates of area function, maximal function and Riesz transform via flag Poisson integral technique}

In this section, we will show the following estimates: let $f\in L^1(\mathbb R^{n+m})$ and $U$ be the flag Poisson integral of $f$ as in \eqref{Poisson integral}, then
\begin{eqnarray*}
\|S_F(U)\|_{1}\lesssim \|U^{*}\|_{1}\lesssim \|U^{+}\|_{1}\lesssim \sum_{j=1}^{n+m}\sum_{k=1}^m\|R_{j,k}(f)\|_1+\|f\|_1.
\end{eqnarray*}

\subsection{The estimate $\|S_F(U)\|_1\lesssim \|U^{*}\|_1$}

We first introduce the following  maximal function associated with
the flag structure.
\begin{definition}\label{def flag HL}
For locally integrable function $f$ on $\mathbb{R}^{n+m}$, we define
the flag-type maximal function $M_F(f)$ by
\begin{eqnarray*}
M_F(f)(x,y):=\sup_{t,s>0,\ (x,y)\in R} \frac{1}{|R|}\int_R
|f(u,v)|dudv,
\end{eqnarray*}
where $R=I\times J$ run over all rectangles with sides parallel to the axes
and $\ell(I)=t$, $\ell(J)=t+s$.
\end{definition}

We now recall the lemma of K. Merryfield.
\begin{lemma}[\cite{M}]\label{lemma-of-Merryfield}
Let $\varphi\in C_0^{\infty}(\mathbb{R}^n)$ satisfy
\begin{enumerate}
\item[\textnormal{(1)}] $\varphi(-x)=\varphi(x)$;
\item[\textnormal{(2)}] ${\rm supp}\ \varphi\subset B_n(0,1) $, where $B_n(0,1)$ is the unit ball
in $\mathbb{R}^n$;
\item[\textnormal{(3)}] $\int_{\mathbb{R}^n}\varphi(x)dx=1$.
\end{enumerate}
Then there exists a function $\psi \in C_0^{\infty}(\mathbb{R}^n)$
that satisfies $supp\ \psi\subset B_n(0,1) $ and
$\int\limits_{\mathbb{R}^n}\psi(x)dx=0$,
such that
\begin{eqnarray*} \label{Merryfield inequality}
\lefteqn{   \int_{\mathbb{R}_+^{n+1}}|\nabla \mathcal P_t\ast f(x)|^2|g\ast
\varphi_t(x)|^2tdxdt    }\\
&\leq& C \int_{\mathbb{R}^n}f(x)^2g(x)^2dx+
\int_{\mathbb{R}_{+}^{n+1}}(\mathcal P_t\ast f(x))^2|g\ast \psi_t(x)|^2
\frac{dxdt}{t},\nonumber
\end{eqnarray*}
where $C$ is independent of $f$ and $g$. Here $\mathcal P(x)$ is the Poisson kernel on $\mathbb R^n$
with $\mathcal P(x)= c_n {1\over (1+|x|^2)^{n+1\over2}}$ and $\mathcal P_t(x) = t^{-n} \mathcal P(x/t)$.
\end{lemma}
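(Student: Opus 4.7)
The plan is to exploit harmonicity of $u(x,t)=P_t\ast f(x)$ in $\mathbb{R}^{n+1}_+$ and move all the derivatives off of $u$ via Green's identity. Since $u$ is harmonic, one has the pointwise identity $2|\nabla u|^2 = \Delta_{x,t}(u^2)$, where $\Delta_{x,t}=\Delta_x+\partial_t^2$. Setting $W(x,t) = t\,|g\ast \varphi_t(x)|^2$, the left-hand side of the inequality becomes $\tfrac12\int_{\mathbb{R}^{n+1}_+}\Delta(u^2)\,W\,dx\,dt$, so the natural step is to apply Green's identity to transfer the Laplacian onto $W$.

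Next, I would carefully account for the boundary contribution. Because $W(x,0)=0$ while $\partial_t W(x,0)=|g(x)|^2$, and $u(x,0)=f(x)$, Green's formula in the half-space produces precisely $\int_{\mathbb{R}^n} f^2(x) g^2(x)\,dx$ as the boundary term at $t=0$, and the boundary at $t=\infty$ vanishes for sufficiently regular $f,g$ (treated first for a dense subclass and passed to the limit). This yields the representation
\[
2\int_{\mathbb{R}^{n+1}_+} |\nabla u|^2\,W\,dx\,dt = \int_{\mathbb{R}^n} f^2 g^2\,dx + \int_{\mathbb{R}^{n+1}_+} u^2\,\Delta W\,dx\,dt.
\]

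Writing $v=g\ast\varphi_t$, one expands $\Delta W = 2t|\nabla v|^2 + 2tv\,\Delta v + 4v\,\partial_t v$. The crucial observation is that differentiating $\varphi_t$ produces mean-zero kernels at the scale $t$: spatially, $\partial_{x_i}\varphi_t = t^{-1}(\partial_i\varphi)_t$ with $\int\partial_i\varphi=0$, and in $t$ the scaling identity $\partial_t\varphi_t = -t^{-1}\tilde\varphi_t$ yields $\tilde\varphi$ with $\int\tilde\varphi=0$ after integrating by parts. Thus $\nabla v = t^{-1}g\ast\eta_t$ for a mean-zero $\eta$, so $t|\nabla v|^2 = |g\ast\eta_t|^2/t$. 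The remaining pieces $tv\,\Delta v$ and $v\,\partial_t v$ I would handle by one further round of integration by parts in $t$ (the identity $2v\,\partial_t v=\partial_t(v^2)$ invites this), again collecting a boundary contribution controlled by $\int f^2g^2$, and then assembling the mean-zero kernels produced by these operations into a single $\psi \in C_0^\infty(\mathbb{R}^n)$ with $\mathrm{supp}\,\psi\subset B_n(0,1)$ and $\int\psi=0$, so that the entire bulk error is dominated by $\int_{\mathbb{R}^{n+1}_+} u^2|g\ast\psi_t|^2\,dx\,dt/t$.

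The main obstacle is the construction and verification step at the end: several different mean-zero convolution kernels arise (from $\partial_i\varphi$, $\tilde\varphi$, $\Delta\varphi$, and their combinations), and one must show that the composite can be encoded as a single $\psi$ supported in the unit ball while preserving the cancellation $\int\psi = 0$. This is essentially a bookkeeping exercise once the decomposition above is in place, but it is where the specific properties of $\varphi$ (symmetry, unit integral, compact support) are genuinely used: symmetry eliminates certain cross terms that would otherwise violate the support condition, and the unit integral ensures the boundary identification $v(x,0)=g(x)$ used in extracting $\int f^2 g^2$.
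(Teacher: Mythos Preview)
The paper does not supply its own proof of this lemma: it is quoted verbatim from Merryfield's thesis \cite{M} and used as a black box to derive the flag version (Lemma~\ref{lemma-of-Merryfield-flag}). So there is no proof in the paper to compare against.

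That said, your outline is the standard Merryfield argument and is correct in its essentials. The identity $2|\nabla u|^2=\Delta_{x,t}(u^2)$ for harmonic $u$, followed by Green's formula with weight $W=t\,(g\ast\varphi_t)^2$, does produce exactly the boundary term $\int f^2g^2$ (using $W(x,0)=0$, $\partial_tW(x,0)=g^2$, $u(\cdot,0)=f$) and the bulk term $\int u^2\,\Delta W$. Your computation of $\Delta W$ and the observation that every derivative hitting $\varphi_t$ manufactures a mean-zero, unit-ball-supported kernel at scale $t$ is right; in particular $\partial_t\varphi_t=-t^{-1}(\operatorname{div}(x\varphi))_t$ and $\partial_{x_i}\varphi_t=t^{-1}(\partial_i\varphi)_t$.

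The one place your sketch is thin is the treatment of the cross terms $2tv\,\Delta v+4v\,\partial_t v$: these are products $(g\ast\varphi_t)(g\ast\eta_t)$ with $\eta$ mean-zero, not squares, and you cannot simply fold them into $|g\ast\psi_t|^2$ for a \emph{single} mean-zero $\psi$ by bookkeeping alone (polarization would reintroduce $\varphi$, which has nonzero integral). In Merryfield's actual argument one either ends up with a finite family $\{\psi^{(i)}\}$ of mean-zero kernels and a sum $\sum_i\int u^2|g\ast\psi^{(i)}_t|^2\,dx\,dt/t$ on the right (which is how the lemma is often stated and is all that the application in this paper needs), or one uses the evenness of $\varphi$ together with a specific algebraic identity to collapse the family. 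Your remark that symmetry ``eliminates certain cross terms'' points in the right direction, but you should expect to either produce a finite collection of $\psi$'s or carry out that collapse explicitly rather than declare it bookkeeping.
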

We also point out that in the application in \cite{M},  the function $f$ is in $L^2(\mathbb R^n)\cap L^1(\mathbb R^n)$ and the function $g$ is a characteristic function of a measurable set in $\mathbb R^n$ such that
$\int_{\mathbb{R}_{+}^{n+1}}|g\ast \psi_t(x)|^2
\frac{dxdt}{t}$ is finite. For the specific functions $f$ and $g$ in application in \cite{M}, we see that the right-hand side of the above inequality is finite.

Now we establish a  Merryfield type lemma in this flag setting as follows.  Let $\varphi^{(1)}(x,y)\in
C_0^{\infty}(\mathbb{R}^{n+m})$ satisfy

\begin{enumerate}
\item[\textnormal{(1)}] $\varphi^{(1)}(-x,-y)=\varphi^{(1)}(x,y)$;
\item[\textnormal{(2)}] ${\rm supp}\ \varphi^{(1)}\subset B_{n+m}(0,1) $, where $B_{n+m}(0,1)$ is the unit
ball in $\mathbb{R}^{n+m}$;
\item[\textnormal{(3)}] $\int_{\mathbb{R}^{n+m}}\varphi(x,y)dxdy=1$.
\end{enumerate}
Let $\varphi^{(2)}(z)\in C_0^{\infty}(\mathbb{R}^{m})$ satisfy
the same conditions as in Lemma \ref{lemma-of-Merryfield}, and $\varphi(x,y)=\varphi^{(1)}\ast_{\mathbb R^m}\varphi^{(2)}(x,y)$. We, applying the projection of M\"uller, Ricci and Stein, define $\varphi_{t,s}(x,y)=\varphi^{(1)}_t\ast_{\mathbb R^m}\varphi^{(2)}_s(x,y)$.

Similarly, we can obtain two functions $\psi^{(1)}(x,y)$ and $\psi^{(2)}(z)$ such that
$\psi^{(1)} \in C_0^{\infty}(\mathbb{R}^{n+m})$
that satisfies $\operatorname{supp}\ \psi^{(1)} \subset B_{n+m}(0,1) $ and
$$\int_{\mathbb{R}^{n+m}} \psi^{(1)}(x,y) dxdy=0,$$ and
$\psi^{(2)} \in C_0^{\infty}(\mathbb{R}^m)$
that satisfies $\operatorname{supp}\ \psi^{(2)}\subset B_{m}(0,1) $ and
$$\int_{\mathbb{R}^m}\psi^{(2)}(z)dz=0.$$
Then we define
$\psi(x,y):=\psi^{(1)}\ast_{\mathbb R^m}\psi^{(2)}(x,y)$ and $\psi_{t,s}(x,y):=\psi^{(1)}_t\ast_{\mathbb R^m}\psi^{(2)}_s(x,y)$.  We arrive at the following technical lemma.
\begin{lemma}\label{lemma-of-Merryfield-flag}
Let all the notation be the same as above, and recall that $P_{t,s}$ is the flag Poisson kernel as defined in \eqref{Pts}. Then there exists a positive absolute constant $C$ such that
\begin{eqnarray}\label{Flag Merryfield inequality}
&&\hskip-.7cm\int_{\mathbb{R}_+^{n+1}}\int_{\mathbb{R}_+^{m+1}}\big|t\nabla^{(1)}s\nabla^{(2)}
P_{t,s}\ast f(x,y)\big|^2\big|g\ast
\varphi_{t,s}(x,y)\big|^2 \frac{dyds}{s}\frac{dxdt}{t}   \\
&\leq& C\bigg\{\int_{\mathbb{R}^n}\int_{\mathbb{R}^m}f(x,y)^2g(x,y)^2dxdy \nonumber\\
&&\ \ \ \ \ +
\int_{\mathbb{R}^n}\int_{\mathbb{R}_{+}^{m+1}}\big|P_s^{(2)}\ast_{\mathbb R^m}
f(x,y)\big|^2\big|\psi_s^{(2)}\ast_{\mathbb R^m} g(x,y)\big|^2
\frac{dyds}{s}dx\nonumber\\
&&\ \ \ \ \
+\int_{\mathbb{R}^m}\int_{\mathbb{R}_+^{n+1}}\big|P_t^{(1)}\ast
f(x,y)\big|^2
\big|\psi_t^{(1)}\ast g(x,y)\big|^2 \frac{dxdt}{t}dy\nonumber\\
&&\ \ \ \ \ +
\int_{\mathbb{R}_{+}^{n+1}}\int_{\mathbb{R}_{+}^{m+1}}|P_{t,s}\ast
f(x,y)|^2|\psi_{t,s}\ast g(x,y)|^2
\frac{dyds}{s}\frac{dxdt}{t}\bigg\},\nonumber
\end{eqnarray}
where $f\in L^1(R^{n+m})\cap  L^2(R^{n+m})$ with $\|U^*\|_1<\infty$ and $g$ is a characteristic function of a measurable set in $\mathbb R^{n+m} $ such that the integrals
$$ \int_{\mathbb{R}_{+}^{m+1}}\big|\psi_s^{(2)}\ast_{\mathbb R^m} g(x,y)\big|^2
\frac{dyds}{s},  \int_{\mathbb{R}_+^{n+1}}
\big|\psi_t^{(1)}\ast g(x,y)\big|^2 \frac{dxdt}{t}, \int_{\mathbb{R}_{+}^{n+1}}\int_{\mathbb{R}_{+}^{m+1}}|\psi_{t,s}\ast g(x,y)|^2
\frac{dyds}{s}\frac{dxdt}{t}$$
are all finite.
\end{lemma}

We point out that in the proof of $\|S_F(U)\|_1\lesssim \|U^{*}\|_1$ below, we will choose two specific functions $f$ and $g$
such that they satisfy above conditions, and that the right-hand side of \eqref{Flag Merryfield inequality} is finite

\begin{proof}[Proof of Lemma \ref{lemma-of-Merryfield-flag}]
Applying Lemma \ref{lemma-of-Merryfield} with $n$ replaced  by $n+m$ gives
\begin{eqnarray*}
\lefteqn{
\int_{\mathbb{R}_+^{n+1}}\int_{\mathbb{R}_+^{m+1}}\Big|t\nabla^{(1)}s\nabla^{(2)}
P_{t,s}\ast f(x,y)\Big|^2\Big|g\ast
\varphi_{t,s}(x,y)\Big|^2 \frac{dyds}{s}\frac{dxdt}{t}   }\\
&=&\int_0^{\infty}\int_0^{\infty}\int_{\mathbb{R}^n}\int_{\mathbb{R}^m}\Big|t\nabla^{(1)}P_t^{(1)}\ast
\big( (s\nabla^{(2)}P_s^{(2)})\ast_{\mathbb R^m} f
\big)(x,y)\Big|^2\Big|\varphi_t^{(1)}\ast\big(\varphi_s^{(2)}\ast_{\mathbb R^m}
g\big)(x,y)\Big|^2 dydx \frac{dt}{t}\frac{ds}{s} \\
&\lesssim&
\int_0^{\infty}\int_{\mathbb{R}^n\times\mathbb{R}^m}|F_s(x,y)|^2|G_s(x,y)|^2dydx
\frac{ds}{s}\\
&&+\int_0^{\infty}\int_0^{\infty}\int_{\mathbb{R}^n\times\mathbb{R}^m}|P_t^{(1)}\ast
F_s(x,y)|^2|\psi_t^{(1)}\ast G_s(x,y)|^2dydx
\frac{dt}{t}\frac{ds}{s}\\
&\triangleq& I_1+I_2,
\end{eqnarray*}
where $F_s(x,y)= (s\nabla^{(2)}P_s^{(2)})\ast_{\mathbb R^m} f (x,y)$
and $G_s(x,y)=\varphi_s^{(2)}\ast_{\mathbb R^m} g(x,y)$, and the implicit constant depends on the constant from Lemma \ref{lemma-of-Merryfield}.

To estimate $I_1$, by using Lemma \ref{lemma-of-Merryfield} on $\mathbb R^m$ we have
\begin{eqnarray*}
\lefteqn{
\int_0^{\infty}\int_{\mathbb{R}^n\times\mathbb{R}^m}|F_s(x,y)|^2|G_s(x,y)|^2dxdy
\frac{ds}{s}   }\\
&=&\int_{\mathbb{R}^n}\int_{\mathbb{R}_{+}^{m+1}}\Big|\Big(s\nabla^{(2)}P_s^{(2)}\ast_{\mathbb R^m}
f (x,\cdot)\Big)(y)\Big|^2\Big|\Big(\varphi_s^{(2)}\ast_{\mathbb R^m}
g(x,\cdot)\Big)(y)\Big|^2
\frac{dyds}{s}dx\\
&\lesssim& \int_{\mathbb{R}^n}\int_{\mathbb{R}^m}f(x,y)^2g(x,y)^2dydx\\
&&+
\int_{\mathbb{R}^n}\int_{\mathbb{R}_{+}^{m+1}}\Big|P_s^{(2)}\ast_{\mathbb R^m}
f(x,y)\Big|^2\Big|\psi_s^{(2)}\ast_{\mathbb R^m} g(x,y)\Big|^2
\frac{dyds}{s}dx,
\end{eqnarray*}
where  the implicit constant depends on the constant from Lemma \ref{lemma-of-Merryfield}.

Similarly, we have
\begin{eqnarray*}
I_2
&=&\int_{\mathbb{R}_{+}^{n+1}}\int_{\mathbb{R}_{+}^{m+1}}\Big|s\nabla^{(2)}P_s^{(2)}\ast_{\mathbb R^m}
\big(P_t^{(1)}\ast f
(x,\cdot)\big)(y)\Big|^2\Big|\varphi_s^{(2)}\ast_{\mathbb R^m}
\big(\psi_t^{(1)}\ast g(x,\cdot)\big)(y)\Big|^2
\frac{dyds}{s}\frac{dxdt}{t}\\
&\lesssim& \int_{\mathbb{R}^m}\int_{\mathbb{R}_+^{n+1}}\big|P_t^{(1)}\ast f(x,y)\big|^2
\big|\psi_t^{(1)}\ast g(x,y)\big|^2 \frac{dxdt}{t}dy\\
&&+
\int_{\mathbb{R}_{+}^{n+1}}\int_{\mathbb{R}_{+}^{m+1}}|P_s^{(2)}\ast_{\mathbb R^m}
P_t^{(1)}\ast f(x,y)|^2|\psi_s^{(2)}\ast_{\mathbb R^m}
\psi_t^{(1)}\ast g(x,y)|^2 \frac{dyds}{s}\frac{dxdt}{t},
\end{eqnarray*}
where  the implicit constant depends on the constant from Lemma \ref{lemma-of-Merryfield}.
 The estimates of term $I_1$ and term $I_2$ yield
\eqref{Flag Merryfield inequality}.

The proof of Lemma \ref{lemma-of-Merryfield-flag} is complete.
\end{proof}

We now begin to prove $\|S_F(U)\|_1\lesssim \|U^{*}\|_1$.  For any $\alpha>0$ and each $f\in L^1(\Bbb R^{n+m})$ satisfying $\|
U^{*}\|_1<\infty$, define
\begin{eqnarray*}
A(\alpha)=\Big\{(x,y)\in \mathbb{R}^n\times\mathbb{R}^m:\
M_F\big(\chi_{\{U^{*}>\alpha\}}\big)(x,y)<{1\over 2000}\Big\}.
\end{eqnarray*}
Then, by changing the order of the integration, we have
\begin{eqnarray*}
\lefteqn{\int_{A(\alpha)}S_F(u)(x,y)^2dxdy}\\
&\leq& \int_{\mathbb{R}_+^{n+1}\times\mathbb{R}_+^{m+1}}
\int_{A(\alpha)}\chi_{t,s}(x-x_1,y-y_1)dxdy \big|t\nabla^{(1)}
s\nabla^{(2)}U(x_1,y_1,t,s)\big|^2{dx_1dt \over t^{n+m+1}}{dy_1ds
\over s^{m+1}}.
\end{eqnarray*}
By the definition of $\chi_{t,s}(x-x_1,y-y_1)$, for
any fixed $(x_1,y_1,t,s)$, if $\chi_{t,s}(x-x_1,y-y_1)\neq 0,$ then $(x,y)$ belongs to $R$, where
$R=R(x_1,y_1,t,s)$ is a rectangle centered at $(x_1,y_1)$ and with
side-length $2t$ and $2t+2s$. This means that to estimate $\int_{A(\alpha)}\chi_{t,s}(x-x_1,y-y_1)dxdy,$ we only need to consider
those $(x,y)\in A(\alpha)\bigcap R(x_1,y_1,t,s).$ As a consequence,
\begin{eqnarray*}
M_F\big(\chi_{\{U^{*}>\alpha\}}\big)(x,y)<{1\over 2000}.
\end{eqnarray*}
Hence for such fixed $(x_1,y_1,t,s)$ mentioned above, we have
\begin{eqnarray*}
\frac{\displaystyle 1}{\displaystyle
|R(x_1,y_1,t,s)|}|A(\alpha)\bigcap R(x_1,y_1,t,s)|<{1\over 2000}.
\end{eqnarray*}
Let $R^{*}=\Big\{(x_1,y_1,t,s):\ \frac{\displaystyle
1}{\displaystyle \ |R(x_1,y_1,t,s)|\ }|A(\alpha)\bigcap
R(x_1,y_1,t,s)|<{1\over 2000} \Big\},$ then we have
\begin{eqnarray}\label{step1}
\int_{A(\alpha)}S_F(U)(x,y)^2dxdy
&\leq&\int_{R^{*}}\big|t\nabla^{(1)}
s\nabla^{(2)}U(x_1,y_1,t,s)\big|^2dx_1dy_1\frac{dt}{t}\frac{ds}{s}.
\end{eqnarray}
Let $ g(x,y)=\chi_{\{u^{*}\leq \alpha\}}(x,y)$ and
$\varphi^{(1)}(x,y)\in C_0^{\infty}(\mathbb{R}^{n+m})$ be a non-negative function satisfying
\begin{enumerate}
\item[\textnormal{(1)}] $\varphi^{(1)}(-x,-y)=\varphi^{(1)}(x,y)$;
\item[\textnormal{(2)}] ${\rm supp}\ \varphi^{(1)}\subset B_{n+m}(0,1) $, where $B_{n+m}(0,1)$ is the unit
ball in $\mathbb{R}^{n+m}$;
\item[\textnormal{(3)}] $\int_{\mathbb{R}^{n+m}}\varphi(x,y)dxdy=1$;
\item[\textnormal{(4)}] $\varphi^{(1)}(x,y)=1$ when $|(x,y)|\leq \frac{1}{3}$.
\end{enumerate}

\noindent Similarly, let $\varphi^{(2)}(z)\in C_0^{\infty}(\mathbb{R}^{m})$ be a non-negative function satisfying
\begin{enumerate}
\item[\textnormal{(1)}] $\varphi^{(2)}(-z)=\varphi^{(2)}(z)$;
\item[\textnormal{(2)}] ${\rm supp}\ \varphi^{(2)}\subset B_m(0,1) $, where $B_m(0,1)$ is the unit
ball in $\mathbb{R}^{m}$;
\item[\textnormal{(3)}] $\int_{\mathbb{R}^{m}}\varphi^{(2)}(z)dz=1$;
\item[\textnormal{(4)}]
$\varphi^{(2)}(z)=1$ when $|z|\leq \frac{1}{3}$.
\end{enumerate}

Set, using the projection of M\"uller, Ricci and Stein, $\varphi(x,y)=\varphi^{(1)}\ast_{\mathbb R^m} \varphi^{(2)}(x,y)$ and $\varphi_{t,s}(x,y)=\varphi_t^{(1)}\ast_{\mathbb R^m} \varphi_s^{(2)}(x,y)$.

We now {\bf claim} that there exists a positive constant $C$ such that for every for
$(x_1,y_1)\in R^{*}$,
\begin{eqnarray} \label{step2}
\varphi_{t,s}\ast g(x_1,y_1)\geq C.
\end{eqnarray}

To see this claim,  for
$(x_1,y_1)\in R^{*}$, we consider
\begin{eqnarray*}
\varphi_{t,s}\ast g(x_1,y_1)&=&\int_{\{u^{*}\leq
\alpha\}}\varphi_{t,s}(x_1-x'_1,y_1-y'_1)dx'_1dy'_1.
\end{eqnarray*}

From the properties of $\varphi^{(1)}$ and $\varphi^{(2)}$ it follows that in the last inequality above,
$|x_1-x'_1|<t$, $|y_1-y'_1-z|<t$ and $|z|<s$, which gives $|y_1-y'_1|<t+s$. Hence, for this fixed
$(x_1,y_1)\in R^{*}$, we see that $(x'_1,y'_1)\in R(x_1,y_1,t,s)$, which shows that
$\varphi_{t,s}(x_1-x'_1,y_1-y'_1)$ as a function of $(x'_1,y'_1)$ is supported in $R(x_1,y_1,t,s)$. Hence,
\begin{eqnarray}\label{step2 pre}
\varphi_{t,s}\ast g(x_1,y_1)&=&\int_{\{u^{*}\leq \alpha\}\bigcap
R(x_1,y_1,t,s)}\varphi_{t,s}(x_1-x'_1,y_1-y'_1)dx'_1dy'_1.
\end{eqnarray}
Also, since $\varphi^{(1)}(x,y)=1$ when $|(x,y)|\leq \frac{1}{3}$ and $\varphi^{(2)}(z)=1$ when $|z|\leq \frac{1}{3}$, we consider the following two cases:

Case i):  $s\leq t$.

In this case, we have  $ |R(x_1,y_1,t,s)|\approx t^{n+m}$.

Since
$$\varphi_{t,s}(x_1-x'_1,y_1-y'_1) = \int_{\mathbb R^m}\varphi^{(1)}_{t}(x_1-x'_1,y_1-y'_1-z) \varphi^{(2)}_s(z)dz,$$
we see that $\varphi^{(1)}_{t}(x_1-x'_1,y_1-y'_1-z)=t^{-n-m}$ when $|x_1-x'_1|\leq t/3$, $|y_1-y'_1-z|\leq t/3$ and that
$\varphi^{(2)}_s(z) =s^{-m}$ when $|z|<s/3$.
Thus, if we choose $x'_1$ with $|x_1-x'_1|\leq t/3$ and choose  $y'_1$ and $z$ with $|y_1-y'_1|<t/6$ and $|z|<s/6$, respectively, then it is direct to get $\varphi^{(1)}_{t}(x_1-x'_1,y_1-y'_1-z) = t^{-n-m}$ and $\varphi^{(2)}_s(z)=s^{-m}$.
Hence, from \eqref{step2 pre} we have
\begin{align*}
&\varphi_{t,s}\ast g(x_1,y_1)\\
&\geq \int_{\{U^{*}\leq \alpha\}\bigcap \{ (x'_1,y'_1): |x_1-x'_1|\leq {t\over3}, |y_1-y'_1|<{t\over6} \} }\int_{|z|<{s\over 3}}\varphi^{(1)}_{t}(x_1-x'_1,y_1-y'_1-z) \varphi^{(2)}_s(z)dzdx'_1dy'_1\nonumber\\
&= \int_{|z|<{s\over 3}} \int_{\{U^{*}\leq \alpha\}\bigcap \{ (x'_1,y'_1): |x_1-x'_1|\leq {t\over3}, |y_1-y'_1|<{t\over6} \} }\varphi^{(1)}_{t}(x_1-x'_1,y_1-y'_1-z)\,dx'_1dy'_1\ \  \varphi^{(2)}_s(z)dz\nonumber\\
& \approx { |\{U^{*}\leq \alpha\} \cap \{ (x'_1,y'_1): |x_1-x'_1|\leq {t/3}, |y_1-y'_1|<{t/6} \} | \over t^{n+m}}\\
& \gtrsim {|\{U^{*}\leq \alpha\} \cap \{ (x'_1,y'_1): |x_1-x'_1|\leq {t/12}, |y_1-y'_1|<{(t+s)/12} \}| \over |R(x_1,y_1,t,s) |}\\
&= {|\{U^{*}\leq \alpha\} \cap {1\over 12}R(x_1,y_1,t,s) | \over |R(x_1,y_1,t,s) |}.
\end{align*}
Since $(x_1,y_1)\in R^{*}$, from the definition of $R^*$ we obtain that
$|\{U^{*}\leq \alpha\} \cap R(x_1,y_1,t,s)  | > {1999\over2000}|R(x_1,y_1,t,s)|$, which gives that \eqref{step2} holds.

\medskip
Case ii):  $s> t$.

In this case, we have  $ |R(x_1,y_1,t,s)|\approx t^{n} s^m$.
By writing
$$\varphi_{t,s}(x_1-x'_1,y_1-y'_1) = \int_{\mathbb R^m}\varphi^{(1)}_{t}(x_1-x'_1,z) \varphi^{(2)}_s(y_1-y'_1-z)dz,$$
we see that $\varphi^{(1)}_{t}(x_1-x'_1,z)=t^{-n-m}$ when $|x_1-x'_1|\leq t/3$, $|z|\leq t/3$ and that
$\varphi^{(2)}_s(y_1-y'_1-z) =s^{-m}$ when $|y_1-y'_1-z|<s/3$.
Thus, if we choose $x'_1$ and $z$ with $|x_1-x'_1|\leq t/3$ and $|z|<t/6$, respectively,  and choose  $y'_1$ with $|y_1-y'_1|<s/6$, then we get $\varphi^{(1)}_{t}(x_1-x'_1,y_1-y'_1-z) =t^{-n-m}$ and $\varphi^{(2)}_s(z)=s^{-m}$.

Again, from \eqref{step2 pre} we have
\begin{align*}
&\varphi_{t,s}\ast g(x_1,y_1)\\
&\geq \int_{|x_1-x'_1|\leq {t\over3}}\int_{|y_1-y'_1|<{s\over6}  }\int_{|z|<{t\over 3}}\chi_{\{U^{*}\leq \alpha\}}(x'_1,y'_1) \varphi^{(1)}_{t}(x_1-x'_1,z) \varphi^{(2)}_s(y_1-y'_1-z)dzdx'_1dy'_1\nonumber\\
& \approx { |\{U^{*}\leq \alpha\} \cap \{ (x'_1,y'_1): |x_1-x'_1|\leq {t/3}, |y_1-y'_1|<{s/6} \} | \over t^{n}s^m}\\
& \gtrsim {|\{U^{*}\leq \alpha\} \cap \{ (x'_1,y'_1): |x_1-x'_1|\leq {t/12}, |y_1-y'_1|<{(t+s)/12} \}| \over |R(x_1,y_1,t,s) |}\\
&= {|\{U^{*}\leq \alpha\} \cap {1\over 12}R(x_1,y_1,t,s) | \over |R(x_1,y_1,t,s) |}
\end{align*}
which, similar to Case i),  gives that \eqref{step2} holds.

Thus, combining these two cases implies that the claim  \eqref{step2} holds.

Next, combining \eqref{step1} and \eqref{step2}, and recalling that  $U(x_1,y_1,t,s)= P_{t,s}\ast f(x_1,y_1)$ we have
\begin{eqnarray*}\label{step3}
\lefteqn{\int_{A(\alpha)}S_F(u)(x,y)^2dxdy}\\
&\lesssim&  \int_{R^{*}}\big|t\nabla^{(1)}
s\nabla^{(2)}U(x_1,y_1,t,s)\big|^2|\varphi_{t,s}\ast
g(x_1,y_1)|^2dx_1dy_1\frac{dt}{t}\frac{ds}{s}\nonumber\\
&\lesssim& \lefteqn{
\int_{\mathbb{R}_+^{n+1}}\int_{\mathbb{R}_+^{m+1}}\big|t\nabla^{(1)}s\nabla^{(2)}
U(x_1,y_1,t,s)\big|^2\big|g\ast
\varphi_{t,s}(x_1,y_1)\big|^2 \frac{dy_1ds}{s}\frac{dx_1dt}{t}   }\nonumber\\
&\lesssim& \bigg\{\int_{\mathbb{R}^n}\int_{\mathbb{R}^m}f(x_1,y_1)^2g(x_1,y_1)^2dy_1dx_1 \nonumber\\
&&\ \ \ \ \ +
\int_{\mathbb{R}^n}\int_{\mathbb{R}_{+}^{m+1}}\big|P_s^{(2)}\ast_{\mathbb R^m}
f(x_1,y_1)\big|^2\big|\psi_s^{(2)}\ast_{\mathbb R^m} g(x_1,y_1)\big|^2
\frac{dy_1ds}{s}dx_1\nonumber\\
&&\ \ \ \ \
+\int_{\mathbb{R}^m}\int_{\mathbb{R}_+^{n+1}}\big|P_t^{(1)}\ast
f(x_1,y_1)\big|^2
\big|\psi_t^{(1)}\ast g(x_1,y_1)\big|^2 \frac{dx_1dt}{t}dy_1\nonumber\\
&&\ \ \ \ \ +
\int_{\mathbb{R}_{+}^{n+1}}\int_{\mathbb{R}_{+}^{m+1}}|P_{t,s}\ast
f(x_1,y_1)|^2|\psi_{t,s}\ast g(x_1,y_1)|^2
\frac{dy_1ds}{s}\frac{dx_1dt}{t}\bigg\}\nonumber\\
&=:& II_1+II_2+II_3+II_4,\nonumber
\end{eqnarray*}
where the last inequality follows from Lemma \ref{lemma-of-Merryfield-flag}, and the implicit constants depend on the constants in \eqref{step2} and in Lemma \ref{lemma-of-Merryfield-flag}.

For the term $II_1$, from the definition of the function $g$ and the non-tangential maximal function $U^*$, we obtain that
\begin{eqnarray*}\label{term I}
|II_1|
&\leq& \int_{\{U^{*}\leq \alpha\}}f(x_1,y_1)^2dx_1dy_1
 \leq
\int_{\{U^{*}\leq \alpha\}}|U^{*}(x_1,y_1)|^2dx_1dy_1.
\end{eqnarray*}

We now consider the term $II_2$.
Note that if $\psi_s^{(2)}\ast_{\mathbb R^m} g(x_1,y_1)=\int
\psi_s^{(2)}(y_1-w)g(x_1,w)dw\neq 0$, then there exists some $w$
such that $|y_1-w|< s$ and $(x_1,w)\in \{u^{*}\leq \alpha\}$. Hence we get that
$|P_s^{(2)}\ast_{\mathbb R^m} f(x_1,y_1)|\leq \alpha$.
Also note that $\psi_s^{(2)}$ satisfies the cancellation condition, so for the constant function $1$ on $\mathbb R^n\times\mathbb R^m$, we have
$$ \psi_s^{(2)}\ast_{\mathbb R^m}1(x_1,y_1) = \int_{\mathbb R^m} \psi_s^{(2)}(y_1-y'_1)dy'_1=0. $$
As a consequence, we see that
\begin{eqnarray}\label{term II}
|II_2|&\leq&  \alpha^2 \int_{\mathbb{R}^n}\int_{\mathbb{R}_+^{m+1}}
|\psi_s^{(2)}\ast_{\mathbb R^m} g(x_1,y_1)|^2 \frac{dy_1ds}{s}dx_1 \\
&=& \alpha^2 \int_{\mathbb{R}^n}\int_{\mathbb{R}_+^{m+1}}
\big|\psi_s^{(2)}\ast_{\mathbb R^m}\big(1-g\big)(x_1,y_1)\big|^2 \frac{dy_1ds}{s}dx_1 \nonumber\\
&\leq& \alpha^2 \int_{\mathbb{R}^n}\int_{\mathbb{R}^{m}} |1-
g(x_1,y_1)|^2 dy_1dx_1 \nonumber\\
&\leq &  \alpha^2 |\{U^{*}>\alpha\}|, \nonumber
\end{eqnarray}
where the equality there follows from the cancellation condition of $\psi_s^{(2)}$.

The estimate for $II_3$ is similar to that of $II_2$ and we omit the details here.

For the last term $II_4$, if $\psi_{t,s}\ast g(x_1,y_1)=\int
\psi_{t,s}(x_1-v,y_1-w)g(v,w)dvdw\neq 0$, similarly as term $II_2$,
there exists $(v,w)$ such that $(v,w)\in
\{U^{*}\leq \alpha\}$ and $|x_1-v|<t$, $|y_1-w|<t+s$. Hence
$|P_{t,s}\ast f(x_1,y_1)|\leq \alpha$. Following the same strategy in the proof of
\eqref{term II}, we have
\begin{eqnarray}\label{term IV}
|II_4|\leq \int_{\mathbb{R}_{+}^{n+1}}\int_{\mathbb{R}_{+}^{m+1}}|P_{t,s}\ast
f(x_1,y_1)|^2|\psi_{t,s}\ast g(x_1,y_1)|^2
\frac{dy_1ds}{s}\frac{dx_1dt}{t}\leq   \alpha^2 |\{U^{*}>\alpha\}|. \nonumber
\end{eqnarray}
Combining all estimates above implies that
\begin{align}\label{part I}
\int_{\{M_F(\chi_{\{U^{*}>\alpha\}})\leq\frac{1}{200}\}}S_F(U)(x,y)^2dxdy
\lesssim \bigg( \alpha^2 |\{U^{*}>\alpha\}|+ \int_{\{U^{*}\leq
\alpha\}}|U^{*}(x_1,y_1)|^2dx_1dy_1\bigg),
\end{align}
where the implicit constant depends on the constants in \eqref{step2} and in Lemma \ref{lemma-of-Merryfield-flag}.

By the definition of the maximal function $M_F$  (Definition \ref{def flag HL}), we have that $M_F(f) \leq M_s(f)$, the strong maximal function on $\Bbb R^n\times \Bbb R^m$ defined as in \eqref{strong maximal},
 and hence, by using the $L^2$ boundedness of the strong maximal function, we have
\begin{eqnarray} \label{part II}
\Big|\Big\{(x,y):\ M_F(\chi_{\{U^{*}>\alpha\}})>\frac{1}{2000}
\Big\}\Big| &\leq&  \Big|\Big\{(x,y):\
M_s(\chi_{\{U^{*}>\alpha\}})(x,y)>\frac{1}{2000} \Big\}\Big|\\
&\lesssim &  \int_{\mathbb R^{n+m}} M_s(\chi_{\{U^{*}>\alpha\}})(x,y)^2dxdy\nonumber\\
&\lesssim &  \int_{\mathbb R^{n+m}}\chi_{\{U^{*}>\alpha\}}(x,y)^2dxdy\nonumber\\
&\lesssim & |\{U^{*}>\alpha\}|.\nonumber
\end{eqnarray}
The estimates in (\ref{part I}) and (\ref{part II}) yield
\begin{align*}
&|\{(x,y):\ S_F(U)(x,y)>\alpha\}|\\
&\leq \Big|\Big\{(x,y):\
M_F(\chi_{\{U^{*}>\alpha\}})>\frac{1}{200}\ {\rm and}\
S_F(U)(x,y)>\alpha\Big\}\Big| \\
&\quad+ \Big|\Big\{(x,y):\ M_F(\chi_{\{U^{*}>\alpha\}})\leq\frac{1}{200}\ {\rm and}\
S_F(U)(x,y)>\alpha\Big\}\Big|\nonumber\\
&\lesssim |\{U^{*}>\alpha\}| + {1\over \alpha^2} \int_{\{M_F(\chi_{\{U^{*}>\alpha\}})\leq\frac{1}{200}\}}S_F(U)(x,y)^2dxdy \\
&\lesssim   |\{U^{*}>\alpha\}| + \alpha^{-2} \int_{\{U^{*}\leq
\alpha\}}U^{*}(x_1,y_1)^2dx_1dy_1,
\end{align*}
which implies that $\|S_F(U)\|_1 \lesssim \|U^{*}\|_1$.

\subsection{The estimate $\|U^{*}\|_1\lesssim \|U^{+}\|_1$}

As mentioned in the introduction, the flag Hardy space is, in some sense, intermediate between the classical one parameter and the
product Hardy spaces. To deal with the flag non-tangential maximal function, we decompose it into the classical one parameter and the product cases. More precisely, we write, for any $(\bar x, \bar y)\in\mathbb R^{n+m}$, that
\begin{eqnarray*}
U^{*}(\bar x,\bar        y)&=& \sup_{(x,y,t,s)\in\Gamma(\bar x,\bar y)}|U(x,y,t,s)|\\
&\leq&  \sup_{(x,y,t,s)\in\Gamma(\bar x,\bar y),\ s\leq t}|U(x,y,t,s)| +\sup_{(x,y,t,s)\in\Gamma(\bar x,\bar y),\ s> t}|U(x,y,t,s)|\\
&=:& U_1^{*}(\bar x,\bar y) + U_2^{*}(\bar x,\bar y),
\end{eqnarray*}
where $\Gamma(\bar x,\bar y)=\{(x,y,t,s):\ |x-\bar x|\leq t,\ |y-\bar y|\leq t+s
\}$.

The main idea to show $\|U_1^{*}\|_1\lesssim \|U^{+}\|_1$ is the following lemma which was proved by Fefferman and Stein in \cite{FS} for the
classical one parameter Hardy space.

\begin{lemma}\label{FSlemma}
Suppose $B$ is a ball in $\mathbb R^{d+1},$ with center $(x_0,t_0).$ Let $u$ be harmonic in $B$ and continuous on the closure of $B.$ For any $p>0,$
$$|u(x_0,t_0)|^p\leq C_p\frac{1}{|B|}\int_B|u(x,t)|^pdxdt.$$
\end{lemma}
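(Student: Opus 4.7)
\medskip

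\noindent\textbf{Proof proposal for Lemma \ref{FSlemma}.}

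This is the classical Fefferman--Stein sub-mean value inequality for $|u|^p$ and a harmonic function $u$; its proof splits into the straightforward regime $p\ge 1$ and the more delicate regime $0<p<1$. In the first case, the plan is simply to invoke the mean value property $u(x_0,t_0)=\frac{1}{|B|}\int_B u(x,t)\,dx\,dt$ and then apply Jensen's inequality to the convex function $r\mapsto |r|^p$; equivalently, one notes that for $p\ge 1$ and harmonic $u$, the function $|u|^p$ is subharmonic on $B$ and therefore satisfies the sub-mean value inequality on every ball compactly contained in (and, by continuity on $\overline{B}$, on) $B$.

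The substantive case is $0<p<1$, where $|u|^p$ need not be subharmonic. The plan is a standard bootstrap on concentric sub-balls. Write $B=B((x_0,t_0),r_0)$, and for $0<r<r_0$ set $N(r):=\sup_{B((x_0,t_0),r)}|u|$, which is finite by continuity on $\overline{B}$. For any $0<r<R<r_0$ and any point $(y,s)\in B((x_0,t_0),r)$, the ball $B((y,s),R-r)$ lies inside $B((x_0,t_0),R)\subset B$, so the $p=1$ case just proved yields
\begin{equation*}
|u(y,s)|\;\le\;\frac{C}{(R-r)^{d+1}}\int_{B((y,s),R-r)}|u(x,t)|\,dx\,dt\;\le\;\frac{C\,N(R)^{1-p}}{(R-r)^{d+1}}\int_B |u(x,t)|^p\,dx\,dt,
\end{equation*}
where we used the trivial bound $|u|=|u|^{1-p}|u|^p\le N(R)^{1-p}|u|^p$ on $B((x_0,t_0),R)$. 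Taking the sup over $(y,s)$ gives
\begin{equation*}
N(r)\;\le\;\frac{C\,A}{(R-r)^{d+1}}\,N(R)^{1-p},\qquad A:=\int_B|u(x,t)|^p\,dx\,dt.
\end{equation*}

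The last ingredient is a routine iteration lemma (apply the estimate at $R_k=r_0(1-2^{-k-1})$, $r_k=R_{k-1}$, and sum a geometric series, using $0<1-p<1$ to ensure convergence) which converts the recursive bound $N(r)\le C A (R-r)^{-(d+1)}N(R)^{1-p}$ into the closed-form estimate $N(r_0/2)\le C_p\,A^{1/p}\,r_0^{-(d+1)/p}$. Since $|u(x_0,t_0)|\le N(r_0/2)$ and $|B|\asymp r_0^{d+1}$, raising to the $p$-th power gives exactly
$$|u(x_0,t_0)|^p\;\le\;\frac{C_p}{|B|}\int_B |u(x,t)|^p\,dx\,dt.$$
The main technical obstacle is the bootstrap in the sub-$1$ range, specifically the careful choice of exponents in the iteration; everything else is a direct consequence of harmonicity and the mean value property.
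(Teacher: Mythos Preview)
Your proof is correct and is the standard Hardy--Littlewood / Fefferman--Stein iteration argument for the sub-mean value inequality. The paper does not actually prove this lemma: it simply records it as a known result ``proved by Fefferman and Stein in \cite{FS}'' and uses it as a black box, so your write-up supplies precisely the details the paper omits.
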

Suppose $f\in L^1(\mathbb{R}^{n+m})$ and $U(x,y,t,s)=P_{t,s}\ast f(x,y).$ Note that
$U(x,y,t,s),$ as a function of $(x,y,t)$ with a fixed $s,$ is harmonic on $\mathbb R_+^{n+m+1}.$  Lemma \ref{FSlemma} implies that for any $r>0$ and $ s\leq t,$
\begin{align*}\label{P1f}
 |U(x,y,t,s)|^r \leq C_r {1\over |B_1|} \int_{B_1} |U(x_1,y_1,t,s)|^rdx_1dy_1dt,
\end{align*}
where $B_1$ is any ball in $\mathbb R^{n+m+1}_+$ with the radius $t$ and the center $(x,y,t)\in \Gamma_1(\bar x, \bar y)$, where
$$\Gamma_1(\bar x, \bar y)=\{(x_1,y_1,t):\ |\bar x-x_1|\leq 2t,\ |\bar y-y_1|\leq 2t \}.$$
Note that the projection of $B_1$ on $\mathbb R^{n+m}$ is contained in the ball centered at $(\bar x, \bar y)$ with radius $4t$. Therefore,
\begin{eqnarray*}
|U(x,y,t,s)|^r &\leq& C_r t^{-n-m} \int_{B((\bar x,\bar y), 4t)} |U(x_1,y_1,t_1,s)|^rdx_1dy_1\\
&\leq & C_r t^{-n-m} \int_{B((\bar x,\bar y), 4t)} |U^+(x_1,y_1)|^rdx_1dy_1\\
&\leq& C_r M_1(|U^+|^r)(\bar x,\bar y),
\end{eqnarray*}
where $M_1$ is the standard Hardy--Littlewood maximal function on $\R^{n+m}$.

As a consequence, this implies that
\begin{eqnarray*}
U_1^{*}(\bar x,\bar y)\leq C\bigg( M_1\big( |U^+|^r \big)(\bar x,\bar y)\bigg)^{1\over r},
\end{eqnarray*}
which, together with the $L^{1\over r}, 0<r<1,$ boundedness of the Hardy--Littlewood maximal function
$M_1(f)$,  implies that
\begin{eqnarray*}\label{eee u2}
 \|U_1^*\|_{1}\leq C\|U^+\|_{1}.
 \end{eqnarray*}

Now we estimate $U_2^{*}(\bar x,\bar y)$. Observe that when $s>t$ the cone $\Gamma(\bar x,\bar y)=\{(x_1,y_1,t):\ |\bar x-x_1|\leq t,\ |\bar y-y_1|\leq t+s \}$ essentially is the cone in the product setting. Therefore, we write that
\begin{eqnarray*}
U_2^{*}(\bar x,\bar y)&=&\sup_{(x,y,t,s)\in\Gamma(\bar x,\bar y), s>t}|P_{t,s}\ast
f(x,y)|\\
&\leq &\sup_{(x,y,t,s)\in\Gamma_2(\bar x,\bar y)}\bigg| \int_{\mathbb R^n\times \mathbb R^m} \ \int_{\mathbb R^m} P^{(1)}_{t}(x-x_1,z)  P^{(2)}_{s}(y-y_1-z)dz \
f(x_1,y_1)\, dx_1dy_1 \bigg|,
\end{eqnarray*}
where
$$\Gamma_2(\bar x, \bar y)=\{(x,y,t,s):\ |\bar x-x|\leq 2t,\ |\bar y-y|\leq 2s \}.$$

The main idea to estimate the last term above is to introduce the following flag grand maximal function $\mathcal G_{\beta,\gamma}(f)(x_0,y_0):$ for $f\in L^1(\mathbb R^{n+m})$ and $(x_0,y_0)\in\mathbb R^n\times \mathbb R^m$,
\begin{equation*}
\mathcal G_{\beta,\gamma}(f)(x_0,y_0):=\sup\{ |\langle f, \varphi \rangle|:\  \|\varphi\|_{\widetilde{\mathcal M}_{flag}(\beta,\gamma,r_1,r_2,x_0,y_0)}\leq 1,\ r_1,r_2>0\}.
\end{equation*}
By Definition \ref{flag test function}, it is easy to see that as a function of $(x_1,y_1)$,
$$\int_{\mathbb R^m} P^{(1)}_{t}(x-x_1,z)  P^{(2)}_{s}(y-y_1-z)dz$$
is in $\widetilde{\mathcal M}_{flag}(1,1,t,s,\bar x,\bar y)$ with $(x,y,t,s)\in\Gamma_2(\bar x,\bar y)$ since $P^{(1)}_{t}(x-x_1,z)\in \widetilde{\mathcal M}_{n+m}(1,1,t,\bar x,0)$ and $ P^{(2)}_{s}(y-y_1) \in  \widetilde{\mathcal M}_{m}(1,1,s,\bar y).$
Moreover, it is also easy to check that
$$\sup_{(x,y,t,s)\in\Gamma_2(\bar x,\bar y)} \bigg\| \int_{\mathbb R^m} P^{(1)}_{t}(x-x_1,z)  P^{(2)}_{s}(y-y_1-z)dz \bigg\|_{\widetilde{\mathcal M}_{flag}(1,1,t,s,\bar x,\bar y)} \leq C, $$
where $C$ is an absolute constant independent of $(\bar x,\bar y)$.

As a consequence, we obtain that
\begin{eqnarray*}
U_2^{*}(\bar x,\bar y)
&= &\sup_{(x,y,t,s)\in\Gamma_2(\bar x,\bar y)}\bigg| \bigg\langle \ \int_{\mathbb R^m} P^{(1)}_{t}(x-\cdot,z)  P^{(2)}_{s}(y-\cdot-z)dz, \
f(\cdot,\cdot)\, \bigg\rangle \bigg|\\
&\leq& C \mathcal G_{1,1}(f)(\bar x,\bar y).
\end{eqnarray*}

It suffices to prove that for $f\in L^1(\mathbb R^{n+m})$ and $r>0,$
\begin{eqnarray}\label{flag domination}
\mathcal G_{1,1}(f)(\bar x,\bar y)\leq C \bigg( M_1\Big(M_2\big( |U^+|^r \big)\Big)(\bar x,\bar y)\Big)\bigg)^{1\over r}+C \bigg( M_2\Big(M_1\big( |U^+|^r \big)\Big)(\bar x,\bar y)\Big)\bigg)^{1\over r},
\end{eqnarray}
where $M_1$ and $M_2$ are the Hardy--Littlewood maximal functions on $\mathbb R^{n+m}$ and $\mathbb R^m,$ respectively.

We first claim that
\begin{align}\label{moser 1}
 |\langle f,\psi\rangle| &\leq C \bigg( M_1\Big(M_2\big( |U^+|^r \big)\Big)(\bar x,\bar y)\Big)\bigg)^{1\over r}
 \end{align}
for $r<1$ and close to 1,  $f\in  L^1(\mathbb R^{n+m})$,
and for every  $\psi\in {\mathcal M}_{flag}(1,1,2^{-j_1},2^{-k_1},\bar x,\bar y)$ with the norm $\|\psi\|_{{\mathcal M}_{flag}(1,1,2^{-j_1},2^{-k_1},\bar x,\bar y)}\leq 1$.

The key idea to show the above claim is to apply the discrete Calder\'on reproducing formula. To see this, consider the following approximations to the identity on $\mathbb R^{n+m}$: For each $j\in\mathbb Z$, define the operator
\begin{equation*}\label{Pj}
{\mathcal P}^{(1)}_j:= P^{(1)}_{2^{-j}}
\end{equation*}
with the kernel ${\mathcal P}^{(1)}_j(x,y):= P^{(1)}_{2^{-j}}(x,y).$

It is easy to see that
$$\lim_{j\to\infty} {\mathcal P}^{(1)}_j = \lim_{j\to\infty}P^{(1)}_{2^{-j}}= Id\quad {\rm and }\quad \lim_{j\to-\infty} {\mathcal P}^{(1)}_j = \lim_{j\to-\infty}P^{(1)}_{2^{-j}}= 0$$
in the sense of $L^2(\mathbb R^{n+m})$. And we further have
$$\int_{\mathbb R^{n+m}}{\mathcal P}^{(1)}_j(x,y) dxdy=1.$$
Set
$Q^{(1)}_j:={\mathcal P}^{(1)}_j-{\mathcal P}^{(1)}_{j-1}.$ Then $Q^{(1)}_j(x,y),$ the kernel of $Q^{(1)}_j$ satisfies the same size and smoothness  conditions as ${\mathcal P}^{(1)}_j(x,y)$ does, and
$$\int_{\mathbb R^{n+m}}Q^{(1)}_j(x,y) dxdy=0.$$
The operators ${\mathcal P}^{(2)}_k$ and $Q^{(2)}_k$ on $\mathbb R^m$ are defined similarly.

Repeating the same proof as in Theorem \ref{dcrf}, we have the following reproducing formula:   there exist functions $\phi_{j,k}(x,y, x_I,y_J)\in {\mathcal M}_{flag}(\beta,\gamma,2^{-j},2^{-k},x_I,y_J)$ and a fixed large integer $N$ such that for $f(x,y)=\int_{\mathbb R^m}f_1(x,y-z)f_2(z)dz$ with $f_1\in {\mathcal M}_{n+m}(\beta,\gamma,r_1,x_0,y_0)$ and $f_2\in {\mathcal M}_{m}(\beta,\gamma,r_2,z_0),$
\begin{eqnarray}\label{discrf2}
f(x,y)=\sum_j\sum_k\sum_I\sum_J|I||J|\phi_{j,k}(x,y,x_I,y_J) Q_{j,k}(f)(x_I,y_J),
\end{eqnarray}
where the series converges in $L^2(\mathbb R^{n+m})$ and in the flag test function space, and $I\subset \mathbb{R}^n, J\subset \mathbb{R}^m$ are dyadic cubes with side-lengths $\ell(I)=2^{-j-N}, \ell(J)=2^{-(j\wedge k-N)}$, $x_I$ and $y_J$ are any fixed points in $I$ and $J,$ respectively, and
$$ Q_{j,k}(f)(x_I,y_J) = \int_{\mathbb R^{n+m}} Q_{j,k}(x_I-x,y_J-y)f(x,y)dxdy $$
with the kernel
$$ Q_{j,k}(x,y) = \int_{\mathbb R^m} Q^{(1)}_j(x,y-z)Q^{(2)}_k(z)dz. $$

Now applying \eqref{discrf2} to the left-hand side of \eqref{moser 1}, we have
\begin{align}\label{eeeee1}
|\langle f,\psi\rangle|&=\bigg|\sum_j\sum_k\sum_I\sum_J|I||J| \langle \psi, \phi_{j,k}(\cdot,\cdot,x_I,y_J) \rangle Q_{j,k}(f)(x_I,y_J)\bigg|\nonumber\\
&\leq C \sum_j\sum_k\sum_I\sum_J|I||J| 2^{-|j-j_1|\beta}2^{-|k-k_1|\beta}\frac {2^{-(j\wedge j_1)\gamma}}{(2^{-j\wedge j_1}+|x_{I}-\bar x|)^{n+\gamma}}\nonumber\\
&\quad\quad\times \frac {2^{-[(k\wedge k_1)\wedge (j\wedge j_1)]\gamma}}{(2^{-[(k\wedge k_1)\wedge(j\wedge j_1)]}+|y_{J}-\bar y|)^{m+\gamma}}\inf_{z_1\in I, z_2\in J}
|U^+(z_1,z_2)|.
 \end{align}
 Here in the last inequality we used the following estimates:
\begin{enumerate}
\item[\textnormal{(1)}] The almost orthogonality estimate:
\begin{align*}
&|\langle \psi, \phi_{j,k}(\cdot,\cdot,x_I,y_J) \rangle|\\
&\leq C2^{-|j-j_1|\beta}2^{-|k-k_1|\beta}\frac {2^{-(j\wedge j_1)\gamma}}{(2^{-j\wedge j_1}+|x_{I}-\bar x|)^{n+\gamma}}\frac {2^{-[(k\wedge k_1)\wedge (j\wedge j_1)]\gamma}}{(2^{-[(k\wedge k_1)\wedge (j\wedge j_1)]}+|y_{J}-\bar y|)^{m+\gamma}}
\end{align*}
for $\beta,\gamma\in(0,1)$. We refer to \cite[Lemma 6]{HLS} for the proof.

\medskip
\item[\textnormal{(2)}] The fact that $x_I$ and $y_J$ are any fixed points in $I$ and $J,$ implies that we can choose $x_I\in I$ and $y_J\in J$ such that
 \begin{align*}
&|Q_{j,k}(f)(x_I,y_J)|\\
 &\leq 2 \inf_{z_1\in I, z_2\in J}  |Q_{j,k}(f)(z_1,z_2)|\\
&= 2 \inf_{z_1\in I, z_2\in J}\bigg|\int_{\mathbb R^{n+m}} \int_{\mathbb R^m} Q^{(1)}_j(z_1-x,z_2-y-z)Q^{(2)}_k(z)dz\ f(x,y)dxdy  \bigg|
\\
&= 2 \inf_{z_1\in I, z_2\in J}\bigg|\int_{\mathbb R^{n+m}} \int_{\mathbb R^m} \bigg(P^{(1)}_j(z_1-x,z_2-y-z)-P^{(1)}_{j-1}(z_1-x,z_2-y-z)\bigg)\\
&\hskip5cm\times \bigg(P^{(2)}_k(z)-P^{(2)}_{k-1}(z)\bigg)dz\ f(x,y)dxdy  \bigg|
\\
&\leq 8 \inf_{z_1\in I, z_2\in J}
 |U^+(z_1,z_2)|.
 \end{align*}
 \end{enumerate}

To estimate the last term in (\ref{eeeee1}), observe that for $0<r<1,$
\begin{align*}
&\sum_j\sum_k\sum_I\sum_J |I||J|2^{-|j-j_1|\beta}2^{-|k-k_1|\beta}\frac {2^{-(j\wedge j_1)\gamma}}{(2^{-j\wedge j_1}+|x_{I}-\bar x|)^{n+\gamma}}\frac {2^{-[(k\wedge k_1)\wedge (j\wedge j_1)]\gamma}}{(2^{-[(k\wedge k_1)\wedge (j\wedge j_1)]}+|y_{J}-\bar y|)^{m+\gamma}} \\
&\hskip3cm \times \inf_{z_1\in I, z_2\in J}|U^+(z_1,z_2)|\\
&\le \bigg\{ \sum_j\sum_k\sum_I\sum_J |I|^r|J|^r2^{-|j-j_1|\beta r}2^{-|k-k_1|\beta r}\frac {2^{-(j\wedge j_1)\gamma r}}{(2^{-j\wedge j_1}+|x_{I}-\bar x|)^{(n+\gamma)r}}\\
&\hskip3cm \times \frac {2^{-[(k\wedge k_1)\wedge (j\wedge j_1)]\gamma r}}{(2^{-[(k\wedge k_1)\wedge (j\wedge j_1)]}+|y_{J}-\bar y|)^{(m+\gamma)r}} \inf_{z_1\in I, z_2\in J}|U^+(z_1,z_2)|^r\bigg\}^{1/r}.
\end{align*}

Note that $\ell(I)=2^{-j-N}$ and $\ell(J)=2^{-(j\wedge k-N)}$. Write
\begin{align*}
&\sum_I\sum_J |I|^r|J^r|\frac {2^{-(j\wedge j_1)\gamma r}}{(2^{-j\wedge j_1}+|x_{I}-\bar x|)^{(n+\gamma)r}}\frac {2^{-[(k\wedge k_1)\wedge (j\wedge j_1)]\gamma r}}{(2^{-[(k\wedge k_1)\wedge (j\wedge j_1)]}+|y_{J}-\bar y|)^{(m+\gamma)r}}\inf_{z_1\in I, z_2\in J}|U^+(z_1,z_2)|^r\\
&=C2^{-jn(r-1)}2^{(j \wedge k)m(r-1)}\sum_I\sum_J |I||J|\frac {2^{-(j\wedge j_1)\gamma r}}{(2^{-j\wedge j_1}+|x_{I}-\bar x|)^{(n+\gamma)r}}  \\
&\hskip5cm \times \frac {2^{-[(k\wedge k_1)\wedge (j\wedge j_1)]\gamma r}}{(2^{-[(k\wedge k_1)\wedge (j\wedge j_1)]}+|y_{J}-\bar y|)^{(m+\gamma)r}} \inf_{z_1\in I, z_2\in J}|U^+(z_1,z_2)|^r\\
&\le C2^{-jn(r-1)}2^{(j \wedge k)m(r-1)}\int_{\Bbb R^n\times \Bbb R^m} \frac {2^{-(j\wedge j_1)\gamma r}}{(2^{-j\wedge j_1}+|x-\bar x|)^{(n+\gamma)r}}\frac {2^{-[(k\wedge k_1)\wedge (j\wedge j_1)]\gamma r}}{(2^{-[(k\wedge k_1)\wedge (j\wedge j_1)]}+|y-\bar y|)^{(m+\gamma)r}}\\
&\hskip6cm \times |U^+(x,y)|^rdxdy\\
&\le C2^{-jn(r-1)}2^{(j \wedge k)m(r-1)}2^{-(j\wedge j_1)\gamma r}2^{-(j\wedge j_1)[n-(n+\gamma)r]}2^{-[(k\wedge k_1)\wedge (j\wedge j_1)]\gamma r} \\
&\qquad\times 2^{-[(k\wedge k_1)\wedge (j\wedge j_1)][m-(m+\gamma)r]}\Big(M_1\big(M_2\big(|U^+|^r\big)\big)\Big)(\bar x,\bar y).
\end{align*}
A direct computation shows that if $\frac{m+n}{m+n+\beta}<r<1$, then
$$\sum_j\sum_k 2^{-|j-j_1|\beta r}2^{-|k-k_1|\beta r}2^{-jn(r-1)}2^{-(j\wedge j_1)(n-nr)}2^{(j \wedge k)m(r-1)}2^{-[(k\wedge k_1)\wedge (j\wedge j_1)][m-mr]}\le C.$$
Thus, we obtain that
the right-hand side of \eqref{eeeee1} is bounded by
$$\bigg( M_1\Big(M_2\big( |U^+|^r \big)\Big)(\bar x,\bar y)\bigg)^{1\over r},$$
which implies \eqref{moser 1}.

We now prove \eqref{flag domination}. For every $\varphi$ with $$\varphi (x,y) = \int_{\mathbb R^m}  \varphi^{(1)} (x,y-z)  \varphi^{(2)} (z) dz,  $$
where
$\varphi^{(1)}(x,y) \in \widetilde{\mathcal M}_{n+m}(1,1,t,\bar x,0) $
with
$ \|\varphi^{(1)}\|_{\widetilde{\mathcal M}_{n+m}(1,1,t,\bar x,0)}\leq 1$,
and $\varphi^{(2)}(z)
\in \widetilde{\mathcal M}_{m}(1,1,s,\bar y) $
with
$ \|\varphi^{(2)}\|_{\widetilde{\mathcal M}_{m}(1,1,s,\bar y)}\leq 1$,
we have $\varphi(x,y) \in \widetilde{\mathcal M}_{flag}(1,1,t,s,\bar x,\bar y)$ with
$\|\varphi \|_{ \widetilde{\mathcal M}_{flag}(1,1,t,s,\bar x,\bar y)}\leq1$.

Let
$$\sigma_1:=\int_{\mathbb R^{n+m}}\varphi^{(1)} (x,y)dxdy,\quad \sigma_2:=\int_{\mathbb R^m} \varphi^{(2)} (z) dz.$$
It is obvious that $|\sigma_1|, |\sigma_2|\leq C$.
We set
$$\psi^{(1)}(x,y):={1\over 1+\sigma_1 C}\bigg[ \varphi^{(1)} (x,y)- \sigma_1 {\mathcal P}^{(1)}_{j_1}(\bar x- x, y) \bigg],$$
$$ \psi^{(2)}(z):={1\over 1+\sigma_2 C}\bigg[  \varphi^{(2)} (z)- \sigma_2 {\mathcal P}^{(2)}_{ k_1}(z-\bar y) \bigg],$$
where $j_1 := \lfloor\log_2 t \rfloor+1$ and $k_1 := \lfloor\log_2 s \rfloor+1$ .

Then for an appropriate constant $C,$ the function $\psi (x,y) = \int_{\mathbb R^m}  \psi^{(1)} (x,y-z)  \psi^{(2)} (z) dz$ is in  ${\mathcal M}_{flag}(1,1,t,s,\bar x,\bar y)$ with
$\|\psi \|_{ {\mathcal M}_{flag}(1,1,t,s,\bar x,\bar y)}\leq1.$

Based on the definition of $\psi$, we have
\begin{align*}
|\langle f, \varphi\rangle|&=\bigg| \int_{\mathbb R^{n+m}} f(x,y)\varphi (x,y)  dxdy \bigg|\\
&=\bigg| \int_{\mathbb R^{n+m}} f(x,y)\int_{\mathbb R^m}  \varphi^{(1)} (x,y-z)  \varphi^{(2)} (z) dz  dxdy \bigg|\\
&=\bigg| \int_{\mathbb R^{n+m}} f(x,y)\int_{\mathbb R^m} \bigg[ (1+\sigma_1 C)\psi^{(1)}(x,y-z) + \sigma_1 {\mathcal P}^{(1)}_{j_1}(\bar x- x, y-z) \bigg] \\
&\hskip2cm\times  \bigg[ (1+\sigma_2 C)\psi^{(2)}(z) + \sigma_2 {\mathcal P}^{(2)}_{k_1}(z-\bar  y) \bigg] dz  dxdy \bigg|\\
&\leq \bigg| \int_{\mathbb R^{n+m}} f(x,y)\int_{\mathbb R^m}  (1+\sigma_1 C)\psi^{(1)}(x,y-z)  (1+\sigma_2 C)\psi^{(2)}(z) dz  dxdy \bigg|\\
&\quad+\bigg| \int_{\mathbb R^{n+m}} f(x,y)\int_{\mathbb R^m}  \sigma_1 {\mathcal P}^{(1)}_{j_1}(\bar x- x, y-z)    (1+\sigma_2 C)\psi^{(2)}(z) dz  dxdy \bigg|\\
&\quad+\bigg| \int_{\mathbb R^{n+m}} f(x,y)\int_{\mathbb R^m}  (1+\sigma_1 C)\psi^{(1)}(x,y-z)  \sigma_2 {\mathcal P}^{(2)}_{k_1}(z-\bar  y)  dz  dxdy \bigg|\\
&\quad+\bigg| \int_{\mathbb R^{n+m}} f(x,y)\int_{\mathbb R^m}  \sigma_1 {\mathcal P}^{(1)}_{j_1}(\bar x- x, y-z)  \sigma_2 {\mathcal P}^{(2)}_{k_1}(z-\bar  y) \bigg] dz  dxdy \bigg|\\
&=:A_1+A_2+A_3+A_4.
\end{align*}

For the term $A_1$, from \eqref{moser 1} we obtain that
\begin{align*}
A_1 &\leq C \bigg( M_1\Big(M_2\big( |U^+|^r \big)\Big)(\bar x,\bar y)\bigg)^{1\over r}.
 \end{align*}

For the term $A_4$, by definition we have
\begin{align*}
A_4 \leq C U^+(\bar x, \bar y) = C\Big(|U^+(\bar x, \bar y)|^r \Big)^{1\over r}\leq C \bigg( M_1\Big(M_2\big( |U^+|^r \big)\Big)(\bar x,\bar y)\bigg)^{1\over r}.
 \end{align*}

As for $A_2$, we write
\begin{align*}
A_2&=\bigg| \int_{\mathbb R^m}\ \int_{\mathbb R^{n+m}} f(x,y)  \sigma_1 P^{(1)}_{j_1}(\bar x- x, y-z) dxdy\   (1+\sigma_2 C)\psi^{(2)}(z) dz   \bigg|\\
&=\bigg| \int_{\mathbb R^m}\ F_{\bar x, j_1}(z)  (1+\sigma_2 C)\psi^{(2)}(z) dz   \bigg|,
\end{align*}
where
$$F_{\bar x, j_1}(z):= \int_{\mathbb R^{n+m}} f(x,y)  \sigma_1 P^{(1)}_{j_1}(\bar x- x, y-z) dxdy.$$
Then following the same approach as above, by using the reproducing formula in terms of $Q^{(2)}_{k}$ and the almost orthogonality estimates, we obtain that
\begin{align*}
A_2&\leq C\Bigg(
M_2 \bigg(\sup_{s>0}\bigg| \int_{\mathbb R^m}\ F_{\bar x, j_1}(z)  P_{s}^{(2)}(z) dz   \bigg|^r\bigg)(\bar y) \Bigg)^{1\over r}\\
&\leq C\Bigg(
M_2 \bigg(\sup_{s>0}\bigg| \int_{\mathbb R^m} \int_{\mathbb R^{n+m}} f(x,y)  \sigma_1 P^{(1)}_{j_1}(\bar x- x, y-z) dxdy\  P_{s}^{(2)}(z) dz   \bigg|^r\bigg)(\bar y) \Bigg)^{1\over r}\\
&\leq C\bigg( M_2\Big(M_1\big( |U^+|^r \big)\Big)(\bar x,\bar y)\bigg)^{1\over r}.
\end{align*}
Symmetrically, we obtain that
\begin{align*}
A_3 &\leq C \bigg( M_1\Big(M_2\big( |U^+|^r \big)\Big)(\bar x,\bar y)\bigg)^{1\over r}.
 \end{align*}
Combining the estimates of $A_1$, $A_2$, $A_3$ and $A_4$, we obtain that \eqref{flag domination} holds.

\subsection{The estimate $\|U^{+}\|_{1}\lesssim \sum_{j=1}^{n+m}\sum_{k=1}^m\|R_{j,k}(f)\|_1+\|f\|_1$}

Let $P^{(1)}_t $ be the Poisson kernel on $\R^{n+m}$ and $Q^{(1)}_{j,t}$ be the $j$-th conjugate Poisson kernel on  $\R^{n+m}$.
Then following \cite[Section 8]{FS}, it is easy to verify that
$u:=u_0^{(1)}=P^{(1)}_t * f$, $u_j^{(1)}=Q^{(1)}_{j,t}* f = P^{(1)}_t * (R^{(1)}_j *f)$, $j=1,2,\ldots,n+m$ is a $(n+m+1)$-tuple of
harmonic functions that satisfy the following system of equations:
\begin{equation}\label{CR1}
\begin{cases}
\frac{\displaystyle \partial u^{(1)}_j}{\displaystyle \partial x_j}=\frac{\displaystyle\partial u^{(1)}_i}{\displaystyle\partial x_j}, \ \  0\leq i, j \leq n+m;  \\[10pt]
\sum\limits_{j=0}^{n+m}\frac{\displaystyle \partial u_j^{(1)}}{\displaystyle \partial x_j}=0.
\end{cases}
\end{equation}
Here we use $R^{(1)}_j $ to denote the $j$th Riesz transform on $\mathbb{R}^{n+m}$, $j=1,2,\ldots,n+m$.
Similarly, we use $P^{(2)}_s $ to denote the Poisson kernel on $\R^{m}$ and $Q^{(2)}_{k,s}$ to denote the $k$-th conjugate Poisson kernel on  $\R^{m}$.

Again, following \cite[Section 8]{FS}, we can verify that
$u:=u_0^{(2)}=P^{(2)}_s \ast_{\mathbb R^m} f$, $u_k^{(2)}=Q^{(2)}_{k,s}\ast_{\mathbb R^m} f= P^{(2)}_s \ast_{\mathbb R^m} (R^{(2)}_k \ast_{\mathbb R^m} f)$, $k=1,2,\ldots,m$ is a $(m+1)$-tuple of
harmonic functions that satisfy the following system of equations:
\begin{equation}\label{CR2}
\begin{cases}
\frac{\displaystyle \partial u^{(2)}_j}{\displaystyle \partial x_j}=\frac{\displaystyle\partial u^{(2)}_i}{\displaystyle\partial x_j}, \ \  0\leq i, j \leq m;  \\[10pt]
\sum\limits_{j=0}^{m}\frac{\displaystyle \partial u^{(2)}_j}{\displaystyle \partial x_j}=0.
\end{cases}
\end{equation}
Here we use $R^{(2)}_k $ to denote the $k$th Riesz transform on $\mathbb{R}^{m}$, $k=1,2,\ldots,m$.

We now set $U(x,y,t,s)=u_{0,0}(x,y,t,s)=P^{(1)}_t *_{\mathbb R^m}  P^{(2)}_s *f(x,y)$. Then we define
$$ u_{1,0}(x,y,t,s)=Q^{(1)}_{1,t} *_{\mathbb R^m} P^{(2)}_s *f(x,y)\quad{\rm and}\quad u_{0,1}(x,y,t,s)=P^{(1)}_{t} *_{\mathbb R^m}  Q^{(2)}_{1,s} *f(x,y), $$
and similarly,
$$ u_{j,k}(x,y,t,s)=Q^{(1)}_{j,t}  *_{\mathbb R^m}  Q^{(2)}_{k,s} *f(x,y), $$
for $j=1,\ldots,n+m$ and $k=1,\ldots,m$.

We first point out that for $k=1,\ldots,m$,  the tuple
$ (u_{0,k}, u_{1,k},\ldots, u_{n+m,k}) $
satisfies the Cauchy--Riemann equation in \eqref{CR1},  and that for $j=1,\ldots,n+m$
the tuple
$ (u_{j,0}, u_{j,1},\ldots, u_{j,m}) $
satisfies the Cauchy--Riemann equation in \eqref{CR2}.

Following the idea in \cite[Section 8]{FS}, we consider the matrix-valued function
\begin{align*}
F&=
  \begin{bmatrix}
    u_{0,0} &  \ldots  & u_{0,m} \\
    \ldots & \ldots & \ldots\\
    u_{n+m,0}\ \ & \ldots &\ \ u_{n+m,m}
  \end{bmatrix}
  =P^{(1)}_t *_{\mathbb R^m}  P^{(2)}_s * \widetilde F,
  \end{align*}
where we denote
\begin{align*}
\widetilde F=\begin{bmatrix}
    f &  \ldots  & R^{(2)}_m \ast_{\mathbb R^m}f \\
    \ldots & \ldots & \ldots\\
    R^{(1)}_{m+n} \ast f\ \ & \ldots &\ \ R^{(1)}_{m+n}*_{\mathbb R^m}  R^{(2)}_m *f
  \end{bmatrix}.
\end{align*}

We obtain
\begin{equation*}
\begin{aligned}
 &\sup_{t>0} \sup_{s>0}\int_{\mathbb R^{n}}\int_{\mathbb R^{m}} |F(x,y,t,s)|dxdy\\
 &\leq\sup_{t>0} \sup_{s>0}\int_{\mathbb R^{n}}\int_{\mathbb R^{m}} \Big( \sum_{j=0}^{n+m} \sum_{k=0}^{m} |u_{j,k}(x,y,t,s)|^2 \Big)^{1\over2}dxdy\\
 &\leq C \sum_{j=0}^{n+m} \sum_{k=0}^{m}\sup_{t>0} \sup_{s>0}\int_{\mathbb R^{n}}\int_{\mathbb R^{m}}   \Big|Q^{(1)}_{j,t}  *_{\mathbb R^m}  Q^{(2)}_{k,s} *f(x,y)\Big| dxdy \\
 &\leq C \sum_{j=0}^{n+m} \sum_{k=0}^{m}\sup_{t>0} \sup_{s>0}\int_{\mathbb R^{n}}\int_{\mathbb R^{m}}  \Big|P^{(1)}_{t}  *_{\mathbb R^m}  P^{(2)}_{s} *(R^{(1)}_j * R^{(2)}_k \ast_{\mathbb R^m} f) (x,y)\Big|dxdy \\
 &\leq C \sum_{j=0}^{n+m} \sum_{k=0}^{m}\int_{\mathbb R^{n}}\int_{\mathbb R^{m}}  \Big|(R^{(1)}_j \ast_{\R^m} R^{(2)}_k \ast) (f) (x,y)\Big|dxdy,
\end{aligned}
\end{equation*}
where the last inequality follows from the fact that
$$ \int_{\mathbb R^{n+m}} P^{(1)}_{t}(x-x_1,y-y_1)dxdy =C_{n+m} \quad {\rm\ and\ }\quad\int_{\mathbb R^{m}} P^{(2)}_{s}(y-y_1)dy =C_{m}$$
for all $t,s>0$, $x_1\in\mathbb R^n$ and $y_1\in \mathbb R^m$.

Next it suffices to show
\begin{align}\label{claim eee}
 \|U^+\|_{1} \leq C\sup_{t>0} \sup_{s>0}\int_{\R^{n}}\int_{\R^{m}} |F(x,y,t,s)|dxdy.
\end{align}
To see this, we have that for $q<1,$
\begin{align*}
|F(x,y,t+\epsilon,s+\epsilon_2)|^q&=\Big|P^{(1)}_{t+\epsilon_1} *  P^{(2)}_{s+\epsilon_2} *_{\mathbb R^m} \widetilde F(x,y)\Big|^q \\
&=\Big|P^{(1)}_{t} *P^{(1)}_{\epsilon_1} *  P^{(2)}_{s+\epsilon_2} *_{\mathbb R^m} \widetilde F(x,y)\Big|^q\\
&=\Big|P^{(1)}_t\ast F(x,y,\epsilon_1,s+\epsilon_2)\Big|^q\\
&\leq C_{q,m}\sum_{k=0}^m \Big|P^{(1)}_t\ast F_k(x,y,\epsilon_1,s+\epsilon_2)\Big|^q,
\end{align*}
where for each $k$, $F_k$ is the $k$th column in the matrix $F$. Since $P^{(1)}_t\ast F_k$ satisfies the generalised Cauchy--Riemann equations in \eqref{CR1} for the variable $(x,y,t)$, we get that $|P^{(1)}_t\ast F_k|^q$ is subharmonic for $q\geq {n+m-1\over n+m}$. Then from the subharmonic inequality \cite[Equation (59), Section 4.2, Chapter 3]{St} we have that for $q\geq {n+m-1\over n+m}$, $x\in \mathbb R^{n}, y\in\mathbb R^m,$ $t>0$ and $\epsilon_1>0$,
\begin{align*}
\Big|P^{(1)}_t\ast F_k(x,y,\epsilon_1,s+\epsilon_2)\Big|^q\leq P^{(1)}_t\ast |F_k(x,y,\epsilon_1,s+\epsilon_2)|^q,
\end{align*}
which implies that
\begin{align}\label{eee1}
|F(x,y,t+\epsilon_1,s+\epsilon_2)|^q
&\leq C_{q,m}\sum_{k=0}^m P^{(1)}_t\ast |F_k(x,y,\epsilon_1,s+\epsilon_2)|^q\\
&\leq C_{q,m} P^{(1)}_t\ast |F(x,y,\epsilon_1,s+\epsilon_2)|^q.\nonumber
\end{align}
And we use the basic fact that
$|F|^q=(\sum_{k=0}^m |F_k|^2)^{q\over 2} \approx  \sum_{k=0}^m|F_k|^q$.

Again, for $F(x,y,\epsilon_1,s+\epsilon_2)$, we have
\begin{align*}
|F(x,y,\epsilon_1,s+\epsilon_2)|^q&=|P^{(2)}_s\ast_{\mathbb R^m} F(x,y,\epsilon_1,\epsilon_2)|^q\\
&\leq C_{q,n+m}\sum_{j=0}^{n+m} \Big|P^{(2)}_s\ast_{\mathbb R^m} \widetilde F_j(x,y,\epsilon_1,\epsilon_2)\Big|^q,
\end{align*}
where for each $j$, $\widetilde F_j$ is the $j$th row in the matrix $F$. Since $P^{(2)}_s\ast_{\mathbb R^m}\widetilde F_j$ satisfies the generalised Cauchy--Riemann equations in \eqref{CR1} for the variable $(y,s)$, we get that $|P^{(2)}_s\ast_{\mathbb R^m}\widetilde F_j|^q$ is subharmonic for $q\geq {m-1\over m}$. Then again, from the subharmonic inequality \cite[Equation (59), Section 4.2, Chapter 3]{St} we have that for $q\geq {m-1\over m}$, $y\in\mathbb R^m,$ $s>0$ and $\epsilon_2>0$,
\begin{align}\label{eee2}
|F(x,y,\epsilon_1,s+\epsilon_2)|^q
&\leq C_{q,n+m}\sum_{j=0}^{n+m} P^{(2)}_s\ast_{\mathbb R^m} |\widetilde F_j(x,y,\epsilon_1,\epsilon_2)|^q\\
&\leq C_{q,n+m} P^{(2)}_s\ast_{\mathbb R^m} |F(x,y,\epsilon_1,\epsilon_2)|^q.\nonumber
\end{align}
And we use the basic fact that
$|F|^q=(\sum_{j=0}^{n+m} |\widetilde F_j|^2)^{q\over 2} \approx  \sum_{j=0}^{n+m}|\widetilde F_j|^q$.

Combining the estimates of \eqref{eee1} and \eqref{eee2}, we obtain that
\begin{align*}
|F(x,y,t+\epsilon_1,s+\epsilon_2)|^q \leq C_{q,n,m}P^{(1)}_t\ast P^{(2)}_s\ast_{\mathbb R^m} |F(x,y,\epsilon_1,\epsilon_2)|^q.
\end{align*}

Then, following the convergence argument in \cite[Section 8]{FS}, also in \cite[Section 4.2]{St}, we obtain that
\begin{align*}
\|U^+\|_1 \leq  C_{m,n}\sup_{t>0} \sup_{s>0}\int_{\R^{n}}\int_{\R^{m}} |F(x,y,t,s)|dydx.
\end{align*}
which implies that the claim \eqref{claim eee} holds.

\section{Flag maximal functions: from Poisson kernel to general Schwartz kernels }

In this section, the following estimates will be established:
\begin{enumerate}
\item[(II)] $\|U^{*}\|_1\approx \|M^{*}_\phi(f)\|_1,$

\item[(III)] $\|U^{+}\|_1\approx \|M^{+}_\phi(f)\|_1$,
\end{enumerate}

\subsection{The equivalence $\|U^{*}\|_{1}\approx \|M^{*}_\phi(f)\|_{1}$}

We first show
\begin{eqnarray*}
\label{u star-leq-M star f}
\|U^{*}\|_{1}\leq C\|M^{*}_\phi(f)\|_{1}.
\end{eqnarray*}
To do this, we introduce the ``tangential" maximal function
$M^{**}_N$ (depending on a parameter N) by
\begin{eqnarray*}
\label{tangential-maximal-function}
M_{N}^{**}(f)(x,y)=\sup_{u\in\mathbb{R}^n,v\in\mathbb{R}^m,t,s>0}|f\ast\phi_{t,s}(x-u,y-v)|
\frac{\displaystyle 1}{\displaystyle \Big(1+{|u|\over t}\Big)^N\Big(1+{|v|\over
{t+s}}\Big)^N}.
\end{eqnarray*}
Obviously,
$$M^+_{\phi}(f)(x,y)\leq M_{\phi}^{*}(f)(x,y)\leq 2^{2N}M_N^{**}(f)(x,y).$$

Next, we introduce the grand maximal functions. For this
purpose, we first note that on $\mathcal {S}(\mathbb{R}^{n+m})$ one
has a denumerable collection of seminorms
$\|\cdot\|_{\alpha_1,\alpha_2,\beta_1,\beta_2}$ given by
$$\|\phi\|_{\alpha_1,\alpha_2,\beta_1,\beta_2}=\sup_{(x,y)\in\mathbb{R}^{n+m}}\Big|x^{\alpha_1}y^{\alpha_2}
\partial_x^{\beta_1}\partial_y^{\beta_2}\phi(x,y)\Big|.$$
Similarly, on $\mathcal {S}(\mathbb{R}^{m})$, seminorms
$\|\cdot\|_{\alpha,\beta}$ are given by
$$\|\phi\|_{\alpha,\beta}=\sup_{z\in\mathbb{R}^{m}}\Big|z^{\alpha}
\partial_z^{\beta}\phi(z)\Big|.$$
Let
$\mathcal{F}^{(1)}=\{\|\cdot\|_{\alpha_1^i,\alpha_2^i,\beta_1^i,\beta_2^i}\}$
be any finite collections of seminorms on $\mathcal
{S}(\mathbb{R}^{n+m})$ and
$\mathcal{F}^{(2)}=\{\|\cdot\|_{\alpha^i,\beta^i}\}$ be any finite
collections of seminorms on $\mathcal {S}(\mathbb{R}^{m})$. Applying the projection of M\"uller, Ricci and Stein, set
\begin{align*}
\mathscr{F}=\Big\{\phi\in\mathscr{D}_F(\mathbb{R}^n\times\mathbb{R}^m):
{\rm for}\ {\rm all}\
\phi^{\sharp}\in\mathcal{S}(\mathbb{R}^{n+m}\times\mathbb{R}^m) \
{\rm satisfying}\
\phi(x,y)=\int_{\mathbb{R}^m}\phi^{\sharp}(x,y-z,z)dz,\\
\|\phi^{\sharp}(\cdot,\cdot,z)\|_{\alpha_1,\alpha_2,\beta_1,\beta_2}\leq
1 \ {\rm for }\ {\rm all}\ z\in\mathbb{R}^m {\rm and}
\|\cdot\|_{\alpha_1,\alpha_2,\beta_1,\beta_2}\in
\mathcal{F}^{(1)};\\
\|\phi^{\sharp}(x,y,\cdot)\|_{\alpha,\beta}\leq 1\
 {\rm for }\
{\rm all}\ (x,y)\in\mathbb{R}^{n+m}\ {\rm and}\
\|\cdot\|_{\alpha,\beta}\in \mathcal{F}^{(2)}\Big\}.
\end{align*}
We then define
\begin{eqnarray*}
M_{\mathscr{F}}(f)(x,y)=\sup_{\phi\in \mathscr{F}} M^+_{\phi}(f)(x,y).
\end{eqnarray*}
We need the following results.
\begin{lemma}\label{lemma-of-tangential-maximal-leq-nontangential-maximal}
If $M_{\phi}^{*}(f)\in L^1(\mathbb{R}^{n+m})$ and
$N>2(n\vee m)$, then $M_N^{**}(f)\in
L^1(\mathbb{R}^{n+m})$ with
\begin{eqnarray}
\label{tangential-maximal-leq-nontangential-maximal}
\|M_N^{**}(f)\|_1\leq
C_{N,p}\|M_{\phi}^{*}(f)\|_1.
\end{eqnarray}
\end{lemma}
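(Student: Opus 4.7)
The plan is to establish the pointwise majorization
\begin{equation*}
M_N^{**}(f)(x,y) \leq C_{N,\delta}\,\bigl[M_s\bigl((M_\phi^*(f))^\delta\bigr)(x,y)\bigr]^{1/\delta}
\end{equation*}
for a suitable exponent $\delta \in \bigl((n\vee m)/N,\,1\bigr)$, where $M_s$ denotes the strong maximal function on $\mathbb{R}^n\times\mathbb{R}^m$. Once this is in hand, the $L^1$ bound drops out in one line:
\begin{equation*}
\|M_N^{**}(f)\|_1 \leq C\,\|M_s((M_\phi^*f)^\delta)\|_{1/\delta}^{\,1/\delta} \leq C\,\|(M_\phi^*f)^\delta\|_{1/\delta}^{\,1/\delta} = C\,\|M_\phi^*(f)\|_1,
\end{equation*}
the middle step being the Jessen--Marcinkiewicz--Zygmund $L^{1/\delta}$ boundedness of $M_s$, which is legal because $1/\delta>1$. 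The admissible range for $\delta$ is non-empty provided $N>n\vee m$, so the hypothesis $N>2(n\vee m)$ provides a comfortable margin.

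To establish the pointwise inequality I would fix $(x,y)$ together with a quadruple $(u,v,t,s)$ and consider the rectangle
\begin{equation*}
R := \bigl\{(x',y')\in\mathbb{R}^n\times\mathbb{R}^m : |x'-(x-u)|\leq t,\ |y'-(y-v)|\leq t+s\bigr\}.
\end{equation*}
By the very definition of $\Gamma(x',y')$, for every $(x',y')\in R$ the quadruple $(x-u,y-v,t,s)$ lies in $\Gamma(x',y')$, so that
\begin{equation*}
|\phi_{t,s}\ast f(x-u,y-v)| \leq M_\phi^*(f)(x',y').
\end{equation*}
Raising to the $\delta$-th power and averaging over $R$, the left-hand side is constant and hence
\begin{equation*}
|\phi_{t,s}\ast f(x-u,y-v)|^\delta \leq \frac{1}{|R|}\int_R (M_\phi^*f)^\delta.
\end{equation*}
Enclosing $R$ in the rectangle $R'$ centered at $(x,y)$ with half-side-lengths $t+|u|$ and $t+s+|v|$, a direct computation gives $|R'|/|R| = (1+|u|/t)^n (1+|v|/(t+s))^m$, and therefore
\begin{equation*}
\frac{1}{|R|}\int_R (M_\phi^*f)^\delta \leq \Bigl(1+\tfrac{|u|}{t}\Bigr)^n \Bigl(1+\tfrac{|v|}{t+s}\Bigr)^m\, M_s\bigl((M_\phi^*f)^\delta\bigr)(x,y).
\end{equation*}
Taking $\delta$-th roots, multiplying through by the tangential weight, and using $n/\delta\leq N$ and $m/\delta\leq N$ converts the two growth factors into non-positive exponents; the supremum over $(u,v,t,s)$ then yields the desired pointwise bound.

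There is no genuine technical obstacle here; the only delicate point is the two-sided constraint on $\delta$. The lower bound $\delta>(n\vee m)/N$ is what forces the growth factor $|R'|/|R|$ to be absorbed by the tangential decay, while the upper bound $\delta<1$ is what permits the strong maximal theorem to be applied on $L^{1/\delta}$. This two-sided constraint is precisely what dictates the size hypothesis on $N$. Notably, no flag-specific cancellation or reproducing formula enters the argument: since $M_s$ dominates the averages over \emph{all} rectangles containing $(x,y)$ irrespective of aspect ratio, no case-split on the relative sizes of $|u|$ and $s+|v|$ is needed, and the flag geometry is felt only through the definition of the cone $\Gamma(x',y')$ in the very first step.
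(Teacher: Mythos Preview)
Your argument is correct and takes a genuinely different (and arguably more direct) route than the paper's own proof. The paper proceeds by introducing aperture-dependent nontangential maximal functions $M^*_{\phi,a,b}(f)$ associated to wider cones $\Gamma_{a,b}(x,y)=\{|x-x_1|\le at,\ |y-y_1|\le b(t+s)\}$, invokes the $L^1$ comparison
\[
\int |M^*_{\phi,a,b}(f)|\ \le\ C_{n,m}(1+a)^n(1+b)^m\int |M^*_\phi(f)|
\]
(imported from Stein's book), and then writes a pointwise dyadic decomposition
\[
\frac{|f\ast\phi_{t,s}(x-u,y-v)|}{(1+|u|/t)^{N}(1+|v|/(t+s))^{N}}\ \le\ \sum_{k,\ell\ge 0}2^{(1-k)N}2^{(1-\ell)N}\,M^*_{\phi,2^{k+1},2^{\ell+1}}(f)(x,y),
\]
so that the $L^1$ bound follows by summing a double geometric series.

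Your approach bypasses both the aperture functions and the dyadic sum: you go straight to a pointwise majorization $M_N^{**}(f)\le C[M_s((M_\phi^*f)^\delta)]^{1/\delta}$ and finish with one application of the strong maximal theorem. In effect you have inlined the proof of the aperture comparison (which is itself typically proved by exactly this averaging trick) and absorbed the dyadic summation into the single factor $(1+|u|/t)^{n/\delta}(1+|v|/(t+s))^{m/\delta}$. Your threshold $N>n\vee m$ matches what the paper's series actually requires for $p=1$; the stated hypothesis $N>2(n\vee m)$ is comfortably sufficient for either argument. The only cosmetic point is that your set $R$ is a product of Euclidean balls rather than a rectangle; this costs a harmless dimensional constant when comparing to $M_s$.
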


\begin{proof}
We point out that if
$$M_{\phi,a,b}^{*}(f)(x,y)=\sup_{(x_1,y_1,t,s)\in\Gamma_{a,b}(x,y)}|\phi_{t,s}\ast
f(x_1,y_1)|,
$$
where $\Gamma_{a,b}(x,y)=\{(x_1,y_1,t,s):\ |x-x_1|\leq at,\
|y-y_1|\leq b(t+s) \}$, then
\begin{eqnarray}\label{comparison-of-nontangential-maximal-with-aperture}
\int_{\mathbb{R}^n\times\mathbb{R}^m}
|M_{\phi,a,b}^{*}(f)(x,y)|^pdxdy\leq
C_{n,m}(1+a)^n(1+b)^m\int_{\mathbb{R}^n\times\mathbb{R}^m}
|M_{\phi}^{*}(f)(x,y)|^pdxdy.
\end{eqnarray}
This can be obtained by mimicking the proof in \cite[$\S 2.5$, Chapter 2]{St}. Observing that
$$
\frac{|f\ast\phi_{t,s}(x-u,y-v)|}{\displaystyle \Big(1+{|u|\over
t}\Big)^{N}\Big(1+{|v|\over {t+s}}\Big)^{N}}\leq
\sum_{k=0}^{\infty}\sum_{\ell=0}^{\infty}2^{(1-k)N}2^{(1-\ell) N}
|M_{\phi,2^{k+1},2^{\ell+1}}^{*}(f)(x,y)|$$ for all $u\in
\mathbb{R}^n, v\in \mathbb{R}^m$, $t,s>0$ and $N>0$, and using
(\ref{comparison-of-nontangential-maximal-with-aperture}), we then
get (\ref{tangential-maximal-leq-nontangential-maximal}) with
$$C_{N}^p=c_{n,m}\sum_{k=0}^{\infty}\sum_{\ell=0}^{\infty}(1+2^k)^n\cdot(1+2^{\ell})^m\cdot
2^{(1-k)N}\cdot2^{(1-\ell) N},$$ which is finite if $N>2(n\vee
m)$. The proof of the Lemma 4.3 is concluded.
\end{proof}

Next we recall the following lemma from \cite{St} which will be used to pass from one approximation of the identity to
another.
\begin{lemma}[{\cite[Lemma 2, $\S 1.3$]{St}}]
\label{lemma-of-stein-decom-of-Schwartz-function}
Suppose we are given $\phi$ and $\psi\in \mathcal{S}(\mathbb R^d)$ with
$\int_{\mathbb R^d}\phi=1$. Then there is a sequence
$\{\eta^{(k)}\}\subset\mathcal{S}(\mathbb R^d)$ so that
\begin{eqnarray}\label{decom-of-schwartz-function}
\psi=\sum_{k=0}^{\infty} \eta^{(k)}\ast \phi_{2^{-k}}
\end{eqnarray}
with $\eta^{(k)}\rightarrow 0$ rapidly, in the sense that whenever
$\|\cdot\|_{\alpha,\beta}$ is a seminorm and $M\geq0$ is fixed, then
$$\|\eta^{(k)}\|_{\alpha,\beta}=O(2^{-kM})\ \ \ \ \ \ \ \ {\rm as}\ k\rightarrow \infty. $$
\end{lemma}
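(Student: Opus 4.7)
The plan is to work on the Fourier side. Since $\phi\in\mathcal{S}(\mathbb{R}^d)$ with $\int \phi = 1$ gives $\widehat{\phi}(0)=1$, continuity of $\widehat{\phi}$ yields some $\rho>0$ with $|\widehat{\phi}(\eta)|\ge 1/2$ for all $|\eta|\le 2\rho$. The key idea is then to reduce \eqref{decom-of-schwartz-function} to a dyadic frequency decomposition of $\widehat{\psi}$, chosen carefully so that dividing by $\widehat{\phi}(2^{-k}\cdot)$ is harmless on every piece.

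Concretely, I would fix a smooth cutoff $\chi_0\in C_c^\infty(\mathbb{R}^d)$ with $\chi_0\equiv 1$ on $\{|\xi|\le \rho\}$ and $\operatorname{supp}\chi_0\subset\{|\xi|\le 2\rho\}$, and set
$$
\chi_k(\xi):=\chi_0(2^{-k}\xi)-\chi_0(2^{-(k-1)}\xi),\quad k\ge 1.
$$
The telescoping identity yields $\sum_{k=0}^\infty \chi_k(\xi)=\lim_{K\to\infty}\chi_0(2^{-K}\xi)=1$ pointwise, and for $k\ge 1$ one has $\operatorname{supp}\chi_k\subset\{\rho\,2^{k-1}\le|\xi|\le 2\rho\,2^k\}$. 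On each such support, $|2^{-k}\xi|\le 2\rho$, so $|\widehat{\phi}(2^{-k}\xi)|\ge 1/2$, and hence
$$
\widehat{\eta^{(k)}}(\xi):=\frac{\chi_k(\xi)\,\widehat{\psi}(\xi)}{\widehat{\phi}(2^{-k}\xi)}
$$
(set to $0$ outside $\operatorname{supp}\chi_k$) is a smooth, compactly supported function. Taking inverse Fourier transforms yields $\eta^{(k)}\in\mathcal{S}(\mathbb{R}^d)$, and
$$
\sum_{k=0}^\infty \widehat{\eta^{(k)}}(\xi)\,\widehat{\phi}(2^{-k}\xi)=\sum_{k=0}^\infty\chi_k(\xi)\widehat{\psi}(\xi)=\widehat{\psi}(\xi),
$$
which is exactly \eqref{decom-of-schwartz-function} after taking inverse Fourier transforms (the series converging in $\mathcal{S}$ by the estimates below).

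Finally, to obtain the rapid decay $\|\eta^{(k)}\|_{\alpha,\beta}=O(2^{-kM})$ for any fixed seminorm and any $M\ge 0$, I would substitute $\eta=2^{-k}\xi$ to get, for $k\ge 1$,
$$
\widehat{\eta^{(k)}}(2^k\eta)=\frac{[\chi_0(\eta)-\chi_0(2\eta)]\,\widehat{\psi}(2^k\eta)}{\widehat{\phi}(\eta)}.
$$
On the support of $\chi_0(\eta)-\chi_0(2\eta)$ we have $|\eta|\ge \rho/2$, so $|2^k\eta|\gtrsim 2^k$; since $\widehat{\psi}\in\mathcal{S}$, $|\widehat{\psi}(2^k\eta)|\le C_N\,2^{-kN}$ for any $N$, and the same type of estimate holds for all derivatives of $\widehat{\eta^{(k)}}$. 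Passing back through the Fourier transform and using the localization of $\widehat{\eta^{(k)}}$ in the annulus $\{|\xi|\sim 2^k\}$ yields $\|\eta^{(k)}\|_{\alpha,\beta}=O(2^{-kM})$ for arbitrary $M$.

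The main obstacle is the division by $\widehat{\phi}(2^{-k}\xi)$, which is only legitimate where $\widehat{\phi}$ stays away from zero; this is precisely why the dyadic partition of unity must be built from a single bump $\chi_0$ whose support lies inside the neighborhood of the origin where $|\widehat{\phi}|\ge 1/2$, and why the definition of $\chi_k$ pushes supports out to frequency $2^k$ while keeping the arguments of $\widehat{\phi}$ bounded by $2\rho$. Everything else is a routine Schwartz-class estimate.
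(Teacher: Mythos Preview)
The paper does not prove this lemma but cites it directly from Stein's book \cite[Lemma 2, \S 1.3]{St}. Your Fourier-side construction---choosing a bump $\chi_0$ supported where $|\widehat{\phi}|\ge 1/2$, forming the dyadic partition $\chi_k(\xi)=\chi_0(2^{-k}\xi)-\chi_0(2^{-(k-1)}\xi)$, and defining $\widehat{\eta^{(k)}}=\chi_k\widehat{\psi}/\widehat{\phi}(2^{-k}\cdot)$---is exactly the argument given there, and your decay estimates are correct.
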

From Lemma \ref{lemma-of-stein-decom-of-Schwartz-function}, we obtain the following estimate
\begin{eqnarray}\label{grand-maximal-function-leq-nontangential-function}
\|M_{\mathscr{F}}(f)\|_{1}\leq
C\|M_{\phi}^{*}(f)\|_{1}.
\end{eqnarray}
Indeed, for any
$\phi=\phi^{(1)}\ast_{\mathbb R^m}\phi^{(2)}\in\mathcal{S}_F(\mathbb{R}^n\times\mathbb{R}^m)$,
by \eqref{decom-of-schwartz-function} on $\phi^{(1)}$ and
$\phi^{(2)}$ we have
\begin{eqnarray*}
M_{\phi}(f)(x,y)&\leq& \sup_{t,s>0}\sum_{k=0}^{\infty}\sum_{\ell=0}^{\infty}
\Big|f\ast \big(\phi_{2^{-k}t}^{(1)}\ast_{\mathbb R^m}
\phi_{2^{-\ell}s}^{(2)}\big) \ast\big(\eta^{(1),(k)}_t\ast_{\mathbb R^m}
\eta^{(2),(\ell)}_s\big) \Big|\\
&\leq& M_{N}^{**}(f)(x,y)
\sup_{t,s>0}\sum_{k=0}^{\infty}\sum_{\ell=0}^{\infty}
\int_{\mathbb{R}^n\times\mathbb{R}^m} \Big(1+{{|u|} \over
{2^{-k}t}}\Big)^N\Big(1+{{|v|} \over {2^{-\ell}(t+s)}}\Big)^N\\
&&\times \big|\eta^{(1),(k)}_t\ast_{\mathbb R^m} \eta^{(2),(\ell)}_s(u,v)\big|
dudv \\
&\leq& CM_{N}^{**}(f)(x,y),
\end{eqnarray*}
where the last inequality holds if $\phi$ belongs to an appropriate
chosen $\mathscr{F}$. Thus
$$M_{\mathscr{F}}(f)(x,y)=\sup_{\phi\in \mathscr{F}}M^+_\phi(f)(x,y)\leq CM_{N}^{**}(f)(x,y)$$
for all $x\in\mathbb{R}^n,y\in\mathbb{R}^m$; taking $N>{2(n\vee
m)}$ as in (\ref{tangential-maximal-leq-nontangential-maximal})
yields (\ref{grand-maximal-function-leq-nontangential-function}).

Next, we will show that
\begin{eqnarray}\label{nontangential-maximal-leq-verticle-maximal-function}
\|M_{\phi}^{*}(f)\|_{1}\leq
C\|M^+_{\phi}(f)\|_{1}.
\end{eqnarray}
Let $\mathscr{F}$ be the same as in
(\ref{grand-maximal-function-leq-nontangential-function}) and for
any fixed $\lambda>0$, let
$$F=F_{\lambda}=\big\{(x,y):M_{\mathscr{F}}(f)(x,y)\leq \lambda M_{\phi}^{*}(f)(x,y)\big\}.$$
We prove (\ref{nontangential-maximal-leq-verticle-maximal-function})
by showing that, for any $q>0$,
\begin{eqnarray*}
\label{nontangential-maximal-leq-HL andverticle-maximal}
M_{\phi}^{*}(f)(x,y)\leq C\, M_s\Big( \big(M^+_{\phi}(f)\big)^q\Big)^{1\over q}\ \ \ \ \ \
\ {\rm for}\ (x,y)\in F,
\end{eqnarray*}
 where $M_s$ is the strong maximal function. Now for any $(x,y)$,
 there exists $(x_1,y_1,t,s)$ with $|x-x_1|<t,$ $|y-y_1|<t+s$ and $f\ast \phi_{t,s}(x_1,y_1)
 \geq {1\over 2}M_{\phi}^{*}(f)(x,y) $. Choose $r_1$ small and
 consider the ball centered at $x_1$ of radius $r_1t$, i.e. the
 points $u$ so that $|x_1-u|<r_1t$. We have that
 $$|f\ast \phi_{t,s}(x_1,y_1)-f\ast \phi_{t,s}(u,y_1)|\leq r_1t\sup_{|u-x_1|<r_1t}|\nabla_u f\ast \phi_{t,s}(u,y_1)|.$$
Similarly, choose $r_2$ small and
 consider the ball centered at $y_1$ of radius $r_2(t+s)$, i.e. the
 points $v$ so that $|y_1-v|<r_2(t+s)$. We have that
 $$|f\ast \phi_{t,s}(x_1,y_1)-f\ast \phi_{t,s}(x_1,v)|\leq r_2(t+s)
 \sup_{|v-y_1|<r_2(t+s)}|\nabla_v f\ast \phi_{t,s}(x_1,v)|.$$
 Combining the above two cases, we have
\begin{eqnarray*}
\lefteqn{  \hspace{-.1cm}|f\ast \phi_{t,s}(x_1,y_1)-f\ast
\phi_{t,s}(u,y_1)-f\ast
\phi_{t,s}(x_1,v)+f\ast \phi_{t,s}(u,v)| }\\
&\leq& C r_1t \cdot r_2(t+s)\ \
 \sup_{|u-x_1|<r_1t,\ |v-y_1|<r_2(t+s)}|\nabla_u\nabla_v f\ast
 \phi_{t,s}(u,v)|.
\end{eqnarray*}
However, $\frac{\displaystyle
\partial}{\displaystyle \partial u_i} f\ast \phi_{t,s}(u,v)= f\ast \widetilde{\phi}^{\ i}_{t,s}(u,v),$ where
\begin{eqnarray*}
\widetilde{\phi}^{\
i}_{t,s}(u,v)=\int_{\mathbb{R}^m}\frac{\displaystyle
\partial}{\displaystyle \partial u_i}\phi^{(1)}_t(u,v-w)\phi^{(2)}_s(w)dw
= \frac{\displaystyle 1}{\displaystyle
t}\int_{\mathbb{R}^m}\big(\frac{\displaystyle
\partial}{\displaystyle \partial u_i}\phi^{(1)}\big)_t(u,v-w)\phi^{(2)}_s(w)dw.
\end{eqnarray*}
And
$\frac{\displaystyle
\partial}{\displaystyle \partial v_j}f\ast \phi_{t,s}(u,v)= f\ast \overline{\phi}^{\
j}_{t,s}(u,v),$ where
$$\overline{\phi}^{\
j}_{t,s}(u,v)={1\over t}\int_{\mathbb{R}^m}\big(\frac{\displaystyle
\partial}{\displaystyle \partial
v_j}\phi^{(1)}\big)_t(u,v-w)\phi^{(2)}_s(w)dw \ \ \ \ {\rm if}\ t>s;
$$
$$\overline{\phi}^{\
j}_{t,s}(u,v)={1\over
s}\int_{\mathbb{R}^m}\phi^{(1)}_t(u,w)\big(\frac{\displaystyle
\partial}{\displaystyle \partial
v_j}\phi^{(2)}\big)_s(v-w)dw \ \ \ \ {\rm if}\ t\leq s.
$$
Note that the set of functions of the form $\widetilde{\phi}^{\
i}(x+h_1,y+h_2)$ and $\overline{\phi}^{\ j}(x+h_1,y+h_2)$,
$|h_1|\leq 1+r_1$, $|h_2|\leq 1+r_2$, $i=1,\ldots,n$,
$j=1,\ldots,m$, is a compact set in
$\mathcal{S}_F(\mathbb{R}^n\times\mathbb{R}^m)$, hence we have
$c\widetilde{\phi}^{\ i}(x+h_1,y+h_2)$ and $c\overline{\phi}^{\
j}(x+h_1,y+h_2)\in \mathscr{F}$, where $c$ is a constant independent
of $\phi$, $h_1$ and $h_2$. Thus $|f\ast \phi_{t,s}(x_1,y_1)-f\ast
\phi_{t,s}(u,y_1)|\leq cr_1M_{\mathscr{F}}(f)(x,y)\leq cr_1\lambda
M_{\phi}^{*}(f)(x,y)$, if $(x,y)\in F$.

By considering the case $t>s$,  if $(x,y)\in F$ then we can obtain that  $$|f\ast
\phi_{t,s}(x_1,y_1)-f\ast \phi_{t,s}(x_1,v)|\leq cr_2 \lambda
M_{\phi}^{*}(f)(x,y),$$  and that $$ |f\ast
\phi_{t,s}(x_1,y_1)-f\ast \phi_{t,s}(u,y_1)-f\ast
\phi_{t,s}(x_1,v)+f\ast \phi_{t,s}(u,v)| \leq C r_1 \cdot r_2\
 \lambda
M_{\phi}^{*}(f)(x,y).$$

So if we take $r_1$ and $r_2$ so small that $cr_1\lambda$,
$cr_2\lambda$, $cr_1r_2\lambda< 1/16,$  then we have
$$|f\ast \phi_{t,s}(u,v)|>{1\over 4}M_{\phi}^{*}(f)(x,y)\ \ \ \ \ \ {\rm for\ all\ } u\in B(x_1,r_1t)\ {\rm
and}\ v\in B(y_1,r_2t).
$$
 Thus we get that
\begin{eqnarray*}
 {1\over 4^q}|M_{\phi}^{*}(f)(x,y)|^q&\leq& {1\over
{|B(x_1,r_1t)|\times |B(y_1,r_2t)|}}\int_{B(x_1,(1+r_1)t)\times
B(y_1,(1+r_2)t)}|f\ast \phi_{t,s}(u,v)|^qdudv\\
&\leq& \Big({1+r_1 \over r_1}\Big)^n \Big({1+r_2 \over r_2}\Big)^m
M_s[(M^+_{\phi}(f))^q](x,y),
\end{eqnarray*}
which is
(\ref{nontangential-maximal-leq-verticle-maximal-function}).
Similarly, we can obtain this result when considering the case
$t\leq s$.

Then using the maximal theorem (for $M_s$) with $q<1$ leads to
\begin{eqnarray}\label{nontangential-maximal-leqverticle-maximal-on-F}
\int_F M_{\phi}^{*}(f)(x,y)dxdy\leq\!
C\!\int_{\mathbb{R}^n\times\mathbb{R}^m}\!\!\!\big(M_s[(M^+_{\phi}(f))^q](x,y)\big)^{1\over
q}dxdy \leq \!C\!
\int_{\mathbb{R}^n\times\mathbb{R}^m}\!\!\!M^+_{\phi}(f)(x,y)dxdy.\
\ \ \ \ \ \
\end{eqnarray}
Hence, to prove (\ref{nontangential-maximal-leq-verticle-maximal-function}), it suffices to prove that the left-hand side of
\eqref{nontangential-maximal-leq-verticle-maximal-function} is controlled by the left-hand side of \eqref{nontangential-maximal-leqverticle-maximal-on-F}.
So, we now claim that
\begin{eqnarray}\label{claim}
\int_{\mathbb{R}^n\times \mathbb{R}^m}
 M_{\phi}^{*}(f)(x,y) dxdy\leq 2 \int_F
     M_{\phi}^{*}(f)(x,y)dxdy.
\end{eqnarray}
To see this, observe that 
\begin{align*}
\int_{F^c}
 M_{\phi}^{*}(f)(x,y)dxdy\leq \lambda^{-1}\int_{F^c}
        M_{\mathscr{F}}(f)(x,y) dxdy\leq
{\bar C}\lambda^{-1}\int_{\mathbb{R}^n\times \mathbb{R}^m}
   M_{\phi}^{*}(f)(x,y) dxdy,
\end{align*}
   where the last inequality
follows from
(\ref{grand-maximal-function-leq-nontangential-function}) and $\bar C$ is an absolute constant. Recall that $\lambda$ is any fixed positive constant. Thus, by taking $\lambda \geq 2\bar C$,
we see that
\begin{align*}
\int_{\mathbb{R}^n\times \mathbb{R}^m}
 M_{\phi}^{*}(f)(x,y) dxdy &\leq \int_F M_{\phi}^{*}(f)(x,y) dxdy  + \int_{F^c }M_{\phi}^{*}(f)(x,y) dxdy   \\
 &\leq \int_F M_{\phi}^{*}(f)(x,y) dxdy  + {1\over2}\int_{\mathbb{R}^n\times \mathbb{R}^m }M_{\phi}^{*}(f)(x,y) dxdy,
 \end{align*}
which shows the claim (\ref{claim}).
This, together with
(\ref{nontangential-maximal-leqverticle-maximal-on-F}), yields
(\ref{nontangential-maximal-leq-verticle-maximal-function}).

We recall the result that if $P^{(1)}(x,y)$ is the Poisson
kernel on $\mathbb{R}^{n+m}$, then
\begin{eqnarray*}
\label{P1}
P^{(1)}(x,y)=\frac{\displaystyle c_{n+m}} {\displaystyle
(1+|x|^2+|y|^2)^{(n+m+1)/2}
}=\sum_{k=0}^{\infty}2^{-k}\phi^{(1),(k)}_{2^k}(x,y),
\end{eqnarray*}
where
$\{\phi^{(1),(k)}\}$ is a bounded collection of functions in
$\mathcal{S}(\mathbb{R}^{n+m})$. Similarly, if $P^{(2)}(z)$ is the
Poisson kernel on $\mathbb{R}^{m}$, then
\begin{eqnarray*}\label{P2}
P^{(2)}(z)=\frac{\displaystyle c_{m}} {\displaystyle
(1+|z|^2)^{(m+1)/2}
}=\sum_{\ell=0}^{\infty}2^{-\ell}\phi^{(2),(\ell)}_{2^{\ell}}(x,y),
\end{eqnarray*}
where $\{\phi^{(2),(\ell)}\}$ is a bounded collection of functions
in $\mathcal{S}(\mathbb{R}^{m})$. Then for the Poisson kernel
$P_{t,s}(x,y)$, we have that
\begin{eqnarray*}
P_{t,s}(x,y)=P^{(1)}_t\ast_{\mathbb R^m}
P^{(2)}_s(x,y)=\sum_{k=0}^{\infty}\sum_{\ell=0}^{\infty}2^{-k}2^{-\ell}\phi^{(1),(k)}_{2^kt}
\ast_{\mathbb R^m}\phi^{(2),(\ell)}_{2^{\ell }s}(x,y),
\end{eqnarray*}
where obviously, $\Big\{\phi^{(k),(\ell)}_{2^kt,2^{\ell
}s}\Big\}=\Big\{\phi^{(1),(k)}_{2^kt} \ast_{\mathbb R^m}\phi^{(2),(\ell)}_{2^{\ell }s}\Big\}
$ is a bounded collection of functions in
$\mathscr{D}_F(\mathbb{R}^{n}\times\mathbb{R}^m)$. Thus, we have
\begin{align*}
\|U^*\|_{1}&\leq
\sum_{k=0}^{\infty}\sum_{\ell=0}^{\infty}2^{-k}2^{-\ell}\big\|M^*_{\phi^{(k),(\ell)}_{2^kt,2^{\ell
}s}} f\big\|_{1}
\leq C\|M_{\mathscr{F}}(f)\|_{1}
\leq C\|M_{\Phi}^*(f)\|_{1}.
\end{align*}

We now prove
\begin{eqnarray*}
\|M^{*}_\phi(f)\|_{1} \leq C \|U^{*}\|_{1}.
\end{eqnarray*}
Following \cite[Chapter III, $\S$ 1.7]{St}, for the Poisson kernel $P_t^{(1)}(x,y)$, there exists
a functions $\eta^{(1)}$ defined on $(1,\infty)$ such that
$$\int_1^\infty \eta^{(1)}(s)ds=1, \quad{\rm and}\quad \int_1^\infty s^k\eta^{(1)}(s)ds=0,\quad k=1,2,\ldots.$$
We now set
$$\Phi^{(1)}(x,y):=\int_1^\infty \eta^{(1)}(t)P_t^{(1)}(x,y)dt.$$

Similarly,
 for the Poisson kernel $P_t^{(2)}(z)$, there exists
a functions $\eta^{(2)}$ defined on $(1,\infty)$ such that
$$\int_1^\infty \eta^{(2)}(s)ds=1, \quad{\rm and}\quad \int_1^\infty s^k\eta^{(2)}(s)ds=0,\quad k=1,2,\ldots.$$
We now set
$$\Phi^{(2)}(z):=\int_1^\infty \eta^{(1)}(s)P_s^{(2)}(z)ds.$$

Then we have $\Phi^{(1)}(x,y) \in\mathcal{S}(\mathbb{R}^{n+m})$ and $\Phi^{(2)}(z) \in\mathcal{S}(\mathbb{R}^{m})$. Moreover, we have
$$\int_{\mathbb{R}^{n+m}} \Phi^{(1)}(x,y)dxdy=\int_1^\infty \eta^{(1)}(t)dt=1$$
and
$$\int_{\mathbb{R}^{m}} \Phi^{(2)}(z)dz=\int_1^\infty \eta^{(2)}(s)ds=1.$$
Hence, define
$$ \widetilde{\Phi}(x,y)= \Phi^{(1)}\ast_{\mathbb{R}^m} \Phi^{(2)} (x,y),$$
then
 we obtain that
$$M^*_{\widetilde{\Phi}}(f)(x,y)\le
     U^*(x,y) \int_1^\infty \eta(t)^{(1)}dt \int_1^\infty \eta^{(2)}(s)ds= U^*(x,y).$$
As a consequence, we obtain that for arbitrary $\phi\in \mathcal{D}_F(\mathbb{R}^n\times \mathbb{R}^m)$,
$$ \|M^*_{\phi}(f)\|_{1}\leq C \| M^*_{\widetilde{\Phi}}(f)\|_{1} \le  \|U^*\|_{1}.$$

\subsection{The equivalence $\| U^{+} \|_{1}\approx \| M^{+}_\phi(f) \|_{1}$}

It is clear that $U^+(x)\le U^*(x)$ for $x\in\Bbb R^n$. From Section 4.2, we get that $\|U^{*}\|_{1}\lesssim \|M^{*}_{\phi}(f)\|_{1}$, which together with \eqref{nontangential-maximal-leq-verticle-maximal-function}, gives
$$\| U^{+} \|_{1}\lesssim \| M^{+}_{\phi}(f) \|_{1}.$$
On the other hand, from the estimates
 $\|U^{*}\|_{1}\lesssim \|U^{+}\|_{1}$
 and $\|M^{*}_{\Phi}(f)\|_{1}\lesssim \|U^{*}\|_{1}$, we get
$$\| M^{+}_{\phi}(f) \|_{1}\lesssim \|M^{*}_{\phi}(f)\|_{1}\lesssim \| U^{+} \|_{1}.$$

\newpage
\section{Atomic decompositions of flag Hardy spaces}

\subsection{Heat kernel and finite speed propagation}

Assume that $L$ is a non-negative self-adjoint second order differential operator on
$L^2(\mathbb R^n)$, whose heat kernel $h_t(x,y)$ of $e^{-tL}$ satisfies the
Gaussian upper bound:
\begin{align}\label{ge123}
|h_t(x,y)|\leq {C\over t^n} e^{-{|x-y|^2\over ct}}, \quad t>0,
\end{align}
where $c$ and $C$ are two positive constants independent of $x,y$ and $t$.

Let $E_{L}(\lambda)$ denote its spectral decomposition. Then, for every bounded
Borel function $F:[0,\infty)\to{\mathbb{C}}$, one defines the bounded operator
$F(L): L^2(\mathbb R^n)\to L^2(\mathbb R^n)$ by the formula
$$
F(L)=\int_0^{\infty}F(\lambda)\,dE_{L}(\lambda).
$$
In particular, the operator $\cos(t\sqrt{L})$ is then well-defined and bounded
on $L^2(\mathbb R^n)$. Moreover, it follows from \cite[Theorem 3]{CS} that if the corresponding
heat kernels $p_{t}(x,y)$ of $e^{-tL}$ satisfy Gaussian bounds \eqref{ge123}, then there exists a finite
positive constant $c_0$ such that the Schwartz
kernel $K_{\cos(t\sqrt{L})}$ of $\cos(t\sqrt{L})$ satisfies
\begin{eqnarray}\label{e3.12} \hspace{1cm}
{\rm supp}K_{\cos(t\sqrt{L})}\subset
\big\{(x,y)\in \Omega\times \Omega: |x-y|\leq c_0 t\big\}.
\end{eqnarray}
See also \cite{Si2}. By the Fourier inversion
formula, whenever $F$ is an even, bounded, Borel function with its Fourier transform
$\widehat{F}\in L^1(\mathbb{R})$, we can write $F(\sqrt{L})$ in terms of
$\cos(t\sqrt{L})$. More specifically,  we have
$$
F(\sqrt{L})=(2\pi)^{-1}\int_{-\infty}^{\infty}{\widehat F}(t)\cos(t\sqrt{L})\,dt,
$$
which, combined with \eqref{e3.12}, gives
\begin{eqnarray}\label{e3.13} \hspace{1cm}
K_{F(\sqrt{L})}(x,y)=(2\pi)^{-1}\int_{|t|\geq c_0^{-1}|x-y|}{\widehat F}(t)
K_{\cos(t\sqrt{L})}(x,y)\,dt,\qquad \forall\,x,y\in\Omega.
\end{eqnarray}

The following  result
(see \cite[Lemma~3.5]{HLMMY}) is useful for certain estimates later.

\begin{lemma}\label{lemma finite speed} Let $\varphi\in C^{\infty}_0(\mathbb R)$ be
even and satisfy $\mbox{\rm supp}\,\varphi \subset (-c_0^{-1}, c_0^{-1})$, where $c_0$ is
the constant in  \eqref{e3.12}. Let $\Phi$ denote the Fourier transform of
$\varphi$. Then for every $\kappa=0,1,2,\dots$, and for every $t>0$,
the kernel $K_{(t^2L)^{\kappa}\Phi(t\sqrt{L})}(x,y)$ of the operator
$(t^2L)^{\kappa}\Phi(t\sqrt{L})$, defined by spectral theory, satisfies
$$
{\rm supp}\ \! K_{(t^2L)^{\kappa}\Phi(t\sqrt{L})}(x,y) \subset
\Big\{(x,y)\in \Bbb R^n\times \Bbb R^n: |x-y|\leq t\Big\}.
$$
\end{lemma}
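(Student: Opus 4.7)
The plan is to reduce the claim to the finite speed propagation property \eqref{e3.12} via a Fourier inversion formula, along the exact lines of \eqref{e3.13} but incorporating the polynomial factor $\lambda^{2\kappa}$ into the symbol. First I would set $F(\lambda):=\lambda^{2\kappa}\Phi(\lambda)$; by the functional calculus one has $(t^{2}L)^{\kappa}\Phi(t\sqrt{L})=F(t\sqrt{L})$, so it suffices to verify that the Schwartz kernel of $F(t\sqrt{L})$ is supported in $\{(x,y):|x-y|\le t\}$. Since $\varphi$ is even, $\Phi=\widehat{\varphi}$ is even and real-valued, and hence so is $F$.

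The next step is to identify the ``inverse Fourier pre-image'' of $F$ as a function sharing the support of $\varphi$. Using the standard identity $\xi\widehat{\varphi}(\xi)=-i\widehat{\varphi'}(\xi)$ iterated $2\kappa$ times gives
\[
F(\xi)=\xi^{2\kappa}\widehat{\varphi}(\xi)=(-1)^{\kappa}\widehat{\varphi^{(2\kappa)}}(\xi).
\]
Setting $\psi:=(-1)^{\kappa}\varphi^{(2\kappa)}$, this function $\psi$ is even, belongs to $C_{0}^{\infty}(\mathbb R)$, and crucially satisfies $\mathrm{supp}\,\psi\subset\mathrm{supp}\,\varphi\subset(-c_{0}^{-1},c_{0}^{-1})$. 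By Fourier inversion and the evenness of $\psi$,
\[
F(\mu)=\int_{-\infty}^{\infty}\psi(s)\cos(s\mu)\,ds,
\]
and applying this to $\mu=t\sqrt{L}$ via the spectral theorem,
\[
F(t\sqrt{L})=\int_{-\infty}^{\infty}\psi(s)\cos(st\sqrt{L})\,ds.
\]

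To conclude, I would invoke the finite speed propagation estimate \eqref{e3.12} inside the integrand. For each fixed $s$, the kernel of $\cos(st\sqrt{L})$ is supported in $\{(x,y):|x-y|\le c_{0}|s|t\}$. Since $\psi(s)$ vanishes unless $|s|\le c_{0}^{-1}$, in the effective range of integration one has $c_{0}|s|t\le t$. Integrating against $\psi(s)\,ds$ preserves this containment, so the kernel of $F(t\sqrt{L})=(t^{2}L)^{\kappa}\Phi(t\sqrt{L})$ is supported in $\{(x,y):|x-y|\le t\}$, as required.

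The only point that deserves any care is the legitimacy of the integral representation of $F(t\sqrt{L})$; this is standard because $\psi$ is smooth and compactly supported, and can be justified by approximation in $L^{2}$ or by interpreting the integral as an operator-valued Bochner integral. Beyond this bookkeeping, there is no genuine obstacle, as the lemma is essentially a refinement of \eqref{e3.13} exploiting the observation that the polynomial weight $\lambda^{2\kappa}$ corresponds on the Fourier side to differentiating $\varphi$, which preserves compact support.
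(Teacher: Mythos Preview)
Your proof is correct. The paper does not actually prove this lemma; it simply cites \cite[Lemma~3.5]{HLMMY} and moves on. Your argument is precisely the standard one that underlies that reference: rewrite $(t^{2}L)^{\kappa}\Phi(t\sqrt{L})$ as $F(t\sqrt{L})$ with $F(\lambda)=\lambda^{2\kappa}\Phi(\lambda)$, observe that $F=\widehat{\psi}$ for $\psi=(-1)^{\kappa}\varphi^{(2\kappa)}$ which inherits the compact support of $\varphi$, and then invoke the representation \eqref{e3.13} together with the finite speed propagation \eqref{e3.12}. There is nothing to add.
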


For $s>0$, we define
$$
{\Bbb F}(s)=\Big\{\psi:{\Bbb C}\to{\Bbb C}\ {\rm measurable}: \
|\psi(z)|\leq C {|z|^s\over ({1+|z|^{2s}})}\Big\}.
$$
Then for any non-zero function $\psi\in {\Bbb F}(s)$, we have
$\int_0^{\infty}|{\psi}(t)|^2\frac{dt}{t}<\infty$.
Denote by  $\psi_t(z)=\psi(tz)$. It follows from the spectral theory
in \cite{Yo} that, for any $f\in L^2(\Bbb R^n)$,
\begin{align*}
\bigg\{\int_0^{\infty}\|\psi(t\sqrt{L})f\|_{L^2(\Bbb R^n)}^2{dt\over t}\bigg\}^{1/2}
&=\bigg\{\int_0^{\infty}\big\langle\,\overline{ \psi}(t\sqrt{L})\,
    \psi(t\sqrt{L})f, f\big\rangle_{L^2(\Bbb R^n)} {dt\over t}\bigg\}^{1/2}\\
&=\bigg\{\bigg\langle \int_0^{\infty}|\psi|^2(t\sqrt{L}) {dt\over t}f, f\bigg\rangle_{L^2(\Bbb R^n)}\bigg\}^{1/2}\\
&\leq \kappa \|f\|_{L^2(\Bbb R^n)},
\end{align*}
where $\kappa=C_L\big\{\int_0^{\infty}|{\psi}(t)|^2 {dt/t}\big\}^{1/2}$.

\subsection{Atomic decomposition for $H^1_F(\mathbb R^n\times \mathbb R^m)$.}

\begin{definition}\label{def-of-S-function-Sf L}
Let $\triangle^{(1)}$ be the Laplacian on $\Bbb R^{n+m}$ and $\triangle^{(2)}$ be the Laplacian on $\Bbb R^m$.
For $f\in L^1(\mathbb{R}^{n+m})$, the Lusin area integral of $f$ associated with these Laplacians is defined by
\begin{equation}\label{esf}
\begin{aligned}
&S_{F,\triangle^{(1)},\triangle^{(2)}}(f)(x_1,x_2)= \bigg( \int_{\Bbb R^{n+1}_+}\int_{\Bbb R^{m+1}_+} \chi_{t_1,t_2}(x_1-y_1,x_2-y_2)  \\
&\hskip2cm\times \big|(t_1^2\triangle^{(1)}e^{-t_1^2\triangle^{(1)}}\otimes_2t_2^2\triangle^{(2)}e^{-t_2^2\triangle^{(2)}} )f(y_1,y_2)\big|^2\
{dy_1 \ \! dt_1\over t_1^{n+m+1}}{dy_2 \ \! dt_2\over t_2^{m+1}}\bigg)^{1/2},
\end{aligned}
\end{equation}
where $\chi_{t_1,t_2}(x_1,x_2):=\chi_{t_1}^{(1)}\ast_{\Bbb R^m} \chi_{t_2}^{(2)}(x_1,x_2)$,
$\chi_{t_1}^{(1)}(x_1,x_2):=\chi^{(1)}({x_1\over t_1},{x_2\over t_1})$ and
$\chi_{t_2}^{(2)}(z):=\chi^{(2)}({z\over t_2})$, with $\chi^{(1)}(x_1,x_2)$
and $\chi^{(2)}(z)$  the indicator function of the unit balls of
$\mathbb{R}^{n+m}$ and $\mathbb{R}^{m}$, respectively.
\end{definition}

Based on the discrete reproducing formula as in Theorem \ref{dcrf} and the Plancherel--P\'olya type inequalities
as in Theorem \ref{pp-inequality1}, we can obtain the estimate
\begin{align}\label{SFLaplace SF}
\|S_{F,\triangle^{(1)},\triangle^{(2)}}(f)\|_{1}\lesssim \|S_F(f)\|_1.
\end{align}
Since this argument is similar to the estimates as in Section 2.3, we omit it here.

We now define the flag Hardy space $H^1_{F,\triangle^{(1)},\triangle^{(2)}}(\mathbb R^n\times \mathbb R^m)$
associated with $\triangle^{(1)}$ and $\triangle^{(2)}$ as follows.
\begin{definition}\label{def-of-Hardy-space L}
Let all the notation be the same as above. We define
$$H^1_{F,\triangle^{(1)},\triangle^{(2)}}(\mathbb R^n\times \mathbb R^m):=\{ f\in L^1(\Bbb R^{n+m}): \|S_{F,\triangle^{(1)},\triangle^{(2)}} f\|_{1}<\infty \}$$
with the norm
$$
\|f\|_{H^1_{F,\triangle^{(1)},\triangle^{(2)}}(\mathbb{R}^{n+m})}:=\|S_{F,\triangle^{(1)},\triangle^{(2)}}f\|_{1}.
$$
\end{definition}

We will later show that this Hardy space $H^1_{F,\triangle^{(1)},\triangle^{(2)}}(\mathbb R^n\times \mathbb R^m)$ is equivalent to $H_F^1(\mathbb R^n\times \mathbb R^m)$ as in Definition \ref{def-of-hardy-by-han}.

We are now recalling the  atomic Hardy space $H^1_{F,at,M}(\mathbb R^n\times \mathbb R^m)$ as in Definition \ref{def-of-atomic-product-Hardy-space},
and we will later prove that $H^1_{F,at,M}(\mathbb R^n\times \mathbb R^m)$ is equivalent to $H^1_{F,\triangle^{(1)},\triangle^{(2)}}(\mathbb R^n\times \mathbb R^m)$ above i.e., Theorem \ref{theorem of Hardy space atomic decom again} below. Then eventually we show that they are both equivalent to the space
$H^1_F(\mathbb R^n\times \mathbb R^m)$ via square functions, i.e., Theorem \ref{theorem of Hardy space atomic decom}.

For the convenience of the readers, we repeat the definition of $H^1_{F,at,M}(\mathbb R^n\times \mathbb R^m)$ here.

\begin{definition}\rm\label{def-of-atomic-product-Hardy-space again}
Let $M>m/2$. The Hardy spaces $H^1_{F,at,M}(\mathbb{R}^{n}\times\mathbb{R}^{m})$
is defined as follows.  For $f\in L^2(\mathbb{R}^{n+m}),$ we   say that $f=\sum_{j}\lambda_ja_j$ is an atomic
$(1, 2, M)$-representation of $f$ if $\{\lambda_j\}_{j=0}^\infty\in
\ell^1$, each $a_j$ is a $(1, 2, M)$-atom, and the sum converges in
$L^2(\mathbb{R}^{n+m})$. The space $\Bbb H^1_{F,at,M}(\mathbb R^n\times \mathbb R^m)$ is defined to be
$$\Bbb H^1_{F,at,M}(\mathbb{R}^{n}\times\mathbb{R}^{m})=\{f\in L^2(\mathbb{R}^{n+m}): f\ \mbox{has an atomic $(1,2,M)$-representation}\}$$
with the norm
$$\|f\|_{\Bbb H^1_{F,at,M}(\mathbb{R}^{n}\times\mathbb{R}^{m})}:=\inf\bigg\{\sum_{j=0}^\infty |\lambda_j| :f=\sum_{j=0}^\infty \lambda_ja_j\ \ \text{is an atomic $(1,2,M)$-representation}\bigg\}.$$
The atomic Hardy space $H^1_{F,at,M}(\mathbb{R}^{n}\times\mathbb{R}^{m})$ is defined as the completion of $\Bbb H^1_{F,at,M}(\mathbb{R}^{n}\times\mathbb{R}^{m})$ with respect to this norm.
\end{definition}

\begin{theorem}\label{theorem of Hardy space atomic decom again}
Suppose that  $M>m/2$. Then
$$
H^1_{F,\triangle^{(1)},\triangle^{(2)}}(\mathbb R^n\times \mathbb R^m)=H^1_{F,at,M}(\mathbb{R}^{n}\times\mathbb{R}^{m}).
$$
Moreover,
$$
\|f\|_{H^1_{F,\triangle^{(1)},\triangle^{(2)}}(\mathbb R^n\times \mathbb R^m)}\approx
\|f\|_{\Bbb H^1_{F,at,M}(\mathbb{R}^{n}\times\mathbb{R}^{m})},
$$
where the implicit constants depend only on $M, n$ and
$m$.
\end{theorem}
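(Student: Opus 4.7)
The plan is to prove the two inclusions separately, following the template developed by Coifman--Meyer--Stein for tent spaces and later adapted by Hofmann--Lu--Mitrea--Mitrea--Yan to Hardy spaces associated to operators. The starting point is the heat-kernel Calder\'on reproducing formula coming from spectral calculus: for a suitable normalizing constant $c_M$,
\begin{equation*}
f(x_1,x_2) = c_M \int_0^\infty\!\!\int_0^\infty \bigl(t_1^2\triangle^{(1)}\bigr)^M e^{-t_1^2\triangle^{(1)}} \otimes_2 \bigl(t_2^2\triangle^{(2)}\bigr)^M e^{-t_2^2\triangle^{(2)}}\bigl(Q_{t_1,t_2}^{(1)} Q_{t_1,t_2}^{(2)} f\bigr)(x_1,x_2)\,\frac{dt_1}{t_1}\frac{dt_2}{t_2},
\end{equation*}
where $Q_{t_1,t_2}^{(i)} := (t_i^2\triangle^{(i)}) e^{-t_i^2\triangle^{(i)}}$, with convergence in $L^2$. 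Given the tensor-product form of the operators, this formula naturally respects the flag structure on $\mathbb R^{n+m}\times\mathbb R^m$.

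For the inclusion $H^1_{F,at,M} \hookrightarrow H^1_{F,\triangle^{(1)},\triangle^{(2)}}$, I would verify that each $(1,2,M)$-atom $a$ satisfies $\|S_{F,\triangle^{(1)},\triangle^{(2)}}(a)\|_1 \leq C$ uniformly. Decomposing $a=\sum_{R\in m(\Omega)} a_R$ with $a_R=(\triangle^{(1)})^M\otimes_2(\triangle^{(2)})^M b_R$ supported in $10R$, I would split the integral defining $S_{F,\triangle^{(1)},\triangle^{(2)}}$ over the enlarged Journ\'e-type shadow of $\Omega$ and its complement. On the shadow, Cauchy--Schwarz together with the $L^2$-boundedness of the square function and condition (iii) in the atom definition yields the desired $L^1$-bound. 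On the complement, Gaussian off-diagonal bounds applied to $(t_i^2\triangle^{(i)})^{M+1} e^{-t_i^2\triangle^{(i)}}$, combined with the fact that $a_R=(\triangle^{(1)})^M\otimes_2(\triangle^{(2)})^M b_R$ with $b_R$ controlled by $\ell(I_R)^{2M}\ell(J_R)^{2M}$ in $L^2$, produce enough decay in $\ell(I_R)/t_1$ and $\ell(J_R)/t_2$ to sum to a bound depending only on $|\Omega|^{-1/2}\cdot|\Omega|^{1/2}=1$.

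For the reverse inclusion, I would apply the Calder\'on reproducing formula and make the standard tent-space-style decomposition: set $\Omega_k=\{(x_1,x_2):S_{F,\triangle^{(1)},\triangle^{(2)}}(f)>2^k\}$ and let $\widetilde\Omega_k$ be its dilate under a strong maximal operator so that $|\widetilde\Omega_k|\lesssim |\Omega_k|$. Decompose $\widetilde\Omega_k$ into maximal dyadic subrectangles $R\in m(\widetilde\Omega_k)$ using the flag-adapted covering, and split the time--space integration region of the reproducing formula into pieces $T_{R,k}:=\widehat R \cap (T(\widetilde\Omega_k)\setminus T(\widetilde\Omega_{k+1}))$, where $\widehat R$ is the associated tent. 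Define
\begin{equation*}
b_{R,k} := c_M \iint_{T_{R,k}} e^{-t_1^2\triangle^{(1)}}\otimes_2 e^{-t_2^2\triangle^{(2)}}\bigl(Q_{t_1,t_2}^{(1)}Q_{t_1,t_2}^{(2)} f\bigr)\,\frac{dt_1}{t_1}\frac{dt_2}{t_2},
\end{equation*}
and $a_{R,k}=(\triangle^{(1)})^M\otimes_2(\triangle^{(2)})^M b_{R,k}$. Finite speed propagation (Lemma \ref{lemma finite speed}) applied through the Fourier inversion trick \eqref{e3.13} ensures that $b_{R,k}$ and all its derivatives live in $10R$, giving condition (ii). Pairing with an $L^2$ test function and using the area-function characterization together with the localization of $T_{R,k}$ inside the tent over $\widetilde\Omega_k\setminus\widetilde\Omega_{k+1}$ yields the $L^2$-control in condition (iii), with $\lambda_k\approx 2^k|\widetilde\Omega_k|$ and $a_k:=\lambda_k^{-1}\sum_{R\in m(\widetilde\Omega_k)} a_{R,k}$ being a $(1,2,M)$-atom; summing in $k$ gives $\sum_k|\lambda_k|\lesssim \|f\|_{H^1_{F,\triangle^{(1)},\triangle^{(2)}}}$ by the layer-cake formula.

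The main obstacle will be the multi-parameter flag covering step and the accompanying size estimate (iii). Unlike the pure product case where Chang--R.\,Fefferman provide a ready-made tool, here the rectangles are subject to the constraint $\ell(I_R)\le \ell(J_R)$ inherent in the flag structure (reflecting that the second variable feels both the $\mathbb R^{n+m}$ and $\mathbb R^m$ scales), so one must carefully design the maximal subrectangle decomposition of $\widetilde\Omega_k$ compatibly with this flag geometry, and verify that the pieces $b_{R,k}$ obey the asymmetric scaling $\ell(I_R)^{-4M}\ell(J_R)^{-4M}$-weighted $L^2$ bound. Once this is controlled, the equivalence of norms follows by combining the two inclusions with \eqref{SFLaplace SF} and the reverse estimate (which is obtained analogously using the full discrete Calder\'on reproducing formula and Plancherel--P\'olya inequalities of Section \ref{s:2}).
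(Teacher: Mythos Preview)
Your outline for Step 1 (atoms $\Rightarrow$ Hardy) is essentially the paper's argument: split $\mathbb R^n\times\mathbb R^m$ into the Journ\'e shadow $\cup\widetilde R$ and its complement, use Cauchy--Schwarz and $L^2$-boundedness on the shadow, and use the Gaussian bounds together with the factorization $a_R=(\triangle^{(1)})^M\otimes_2(\triangle^{(2)})^M b_R$ on the complement. That direction is fine.

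There is, however, a genuine gap in your Step 2. You define
\[
b_{R,k} \;=\; c_M \iint_{T_{R,k}} e^{-t_1^2\triangle^{(1)}}\otimes_2 e^{-t_2^2\triangle^{(2)}}\bigl(Q^{(1)}_{t_1,t_2}Q^{(2)}_{t_1,t_2}f\bigr)\,\frac{dt_1}{t_1}\frac{dt_2}{t_2}
\]
and then invoke ``finite speed propagation (Lemma~\ref{lemma finite speed}) applied through the Fourier inversion trick \eqref{e3.13}'' to place the support of $(\triangle^{(1)})^{k_1}\otimes_2(\triangle^{(2)})^{k_2}b_{R,k}$ inside $10R$. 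But Lemma~\ref{lemma finite speed} applies to operators of the form $\Phi(t\sqrt{L})$ with $\Phi=\widehat\varphi$ and $\varphi$ \emph{compactly supported}. The heat semigroup $e^{-t^2L}$ corresponds to $\Phi(\lambda)=e^{-t^2\lambda^2}$, whose inverse Fourier transform is a Gaussian---not compactly supported---so neither \eqref{e3.12} nor \eqref{e3.13} gives any support restriction. With your definition, $b_{R,k}$ will in general be supported on all of $\mathbb R^{n+m}$, and condition (ii) of Definition~\ref{def of product atom} fails.

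The paper resolves this by using a \emph{mixed} reproducing formula: the outer operator is not the heat semigroup but $\psi^{(i)}(t_i\sqrt{\triangle^{(i)}})$, which factors as $(t_i^2\triangle^{(i)})^M\varphi^{(i)}(t_i\sqrt{\triangle^{(i)}})$ with $\varphi^{(i)}$ as in Lemma~\ref{lemma finite speed}. Thus in the paper's $b_{\ell,\bar R}$ (see \eqref{b ell}) the kernels $K_{\varphi^{(i)}(t_i\sqrt{\triangle^{(i)}})}$ are genuinely supported in $\{|x-y|<t_i\}$, and the support computation in \eqref{suppK1}--\eqref{eq1005} goes through. The inner factor remains the heat-type operator $t_i^2\triangle^{(i)}e^{-t_i^2\triangle^{(i)}}$, which is what produces the square function $S_{F,\triangle^{(1)},\triangle^{(2)}}$ in the $\ell^1$ estimate for the coefficients. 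To repair your argument you must likewise replace the outer $e^{-t_i^2\triangle^{(i)}}$ by such a finite-propagation multiplier; once that substitution is made, the rest of your level-set/tent decomposition matches the paper's Proposition~\ref{prop-product H-SL subset H-at}.
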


\subsection{Proof of  the atomic decomposition}  

We now proceed to the proof of Theorem \ref{theorem of Hardy space atomic decom again}. The basic strategy
is as follows: by density, it is enough to show that
$$\Bbb H^1_{F,at,M}(\mathbb R^n\times \mathbb R^m)= H^1_{F,\triangle^{(1)},\triangle^{(2)}}(\mathbb R^n\times \mathbb R^m) \cap L^2(\mathbb R^{n+m}) \qquad\text{for}\  M>m/2,$$
with equivalent of norms. The proof of this proceeds in two steps.

\medskip\noindent
{\bf Step 1.} \ ${\Bbb H}^1_{F,at,M}(\mathbb{R}^{n}\times\mathbb{R}^{m})\subset H^1_{F,\triangle^{(1)},\triangle^{(2)}}(\mathbb R^n\times \mathbb R^m)\cap L^2(\mathbb{R}^{n+m})$ for $M>m/2$.

\medskip\noindent
{\bf Step 2.} \ $H^1_{F,\triangle^{(1)},\triangle^{(2)}}(\mathbb R^n\times \mathbb R^m)\cap L^2(\mathbb{R}^{n+m})\subset {\Bbb H}^1_{F,at,M}(\mathbb{R}^{n}\times\mathbb{R}^{m})$ for every $M\in{\mathbb N}$.
\medskip

The conclusion of Step 1 is an immediate consequence of the following lemma and proposition.

\begin{lemma}\label{lemma-of-T-bd-on Hp}
Fix $M\in{\mathbb N}$. Assume that $T$ is a linear
operator or a nonnegative sublinear operator, satisfying the
weak-type {\rm (2,2)} bound
\begin{eqnarray*}
\big|\{x\in \Bbb R^n\times \Bbb R^m: |Tf(x)|>\eta\} \big|\le C_T\eta^{-2}\|f\|_2^2,\qquad \forall\ \eta>0.
\end{eqnarray*}
If there is an absolute constant $C>0$ such that
\begin{eqnarray}\label{e4.111}
\|Ta\|_1\leq C\qquad \text{for every $(1, 2, M)$-atom $a$},
\end{eqnarray}
then $T$ is bounded from
${\Bbb H}^1_{F,at,M}(\Bbb R^n\times \Bbb R^m)$ to $L^1(\Bbb R^{n+m})$ and
$$\|Tf\|_1\leq C\|f\|_{{\Bbb H}^1_{F,at,M}(\Bbb R^n\times \Bbb R^m)}.$$
Consequently, by density, $T$ extends to a bounded operator from
$H^1_{F,at,M}(\Bbb R^n\times \Bbb R^m)$ to $L^1(\Bbb R^{n+m})$.
\end{lemma}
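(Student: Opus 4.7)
The plan is to pass from uniform control on atoms to a bound on general $f\in \Bbb H^1_{F,at,M}(\Bbb R^n\times\Bbb R^m)$ via a partial-sum approximation argument, using the weak-$(2,2)$ hypothesis to identify limits in measure and using monotone/dominated control to bound the $L^1$ norm by Fatou. Concretely, given $f\in \Bbb H^1_{F,at,M}$ and $\varepsilon>0$, I fix an atomic representation $f=\sum_{j=0}^\infty \lambda_j a_j$ with convergence in $L^2(\Bbb R^{n+m})$ and $\sum_j|\lambda_j|\le \|f\|_{\Bbb H^1_{F,at,M}}+\varepsilon$. Writing $F_N:=\sum_{j=0}^N \lambda_j a_j$, the convergence is in $L^2$, hence by the weak-$(2,2)$ hypothesis the sequence $\{TF_N\}$ converges to $Tf$ in measure: for linear $T$ one has $T(F_N-f)=TF_N-Tf$ and $|\{|TF_N-Tf|>\eta\}|\le C_T\eta^{-2}\|F_N-f\|_2^2\to 0$; for a nonnegative sublinear $T$, the standard reverse-triangle inequality $|TF_N-Tf|\le T(F_N-f)$ gives the same conclusion. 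Extract a subsequence $\{N_k\}$ with $TF_{N_k}\to Tf$ almost everywhere.

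Next, define the dominating function
\begin{equation*}
S(x):=\sum_{j=0}^\infty |\lambda_j|\,|Ta_j(x)|.
\end{equation*}
By (sub)linearity, for every $N$ one has $|TF_N(x)|\le \sum_{j=0}^N|\lambda_j|\,|Ta_j(x)|\le S(x)$ pointwise. By monotone convergence and the hypothesis \eqref{e4.111},
\begin{equation*}
\|S\|_{L^1(\Bbb R^{n+m})}\le \sum_{j=0}^\infty |\lambda_j|\,\|Ta_j\|_{L^1(\Bbb R^{n+m})}\le C\sum_{j=0}^\infty |\lambda_j|\le C\bigl(\|f\|_{\Bbb H^1_{F,at,M}}+\varepsilon\bigr),
\end{equation*}
so $S\in L^1$ and hence finite a.e. Applying Fatou's lemma to the a.e.\ convergent subsequence $TF_{N_k}\to Tf$ dominated by $S$,
\begin{equation*}
\|Tf\|_{L^1(\Bbb R^{n+m})}\le \liminf_{k\to\infty}\|TF_{N_k}\|_{L^1(\Bbb R^{n+m})}\le \|S\|_{L^1(\Bbb R^{n+m})}\le C\bigl(\|f\|_{\Bbb H^1_{F,at,M}}+\varepsilon\bigr).
\end{equation*}
Letting $\varepsilon\downarrow 0$ and taking the infimum over atomic representations yields $\|Tf\|_{L^1}\le C\|f\|_{\Bbb H^1_{F,at,M}}$.

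Finally, to obtain the extension to the completion $H^1_{F,at,M}(\Bbb R^n\times\Bbb R^m)$, I invoke the standard density argument: $T$ is bounded from the dense subspace $\Bbb H^1_{F,at,M}$ into the Banach space $L^1$, so it admits a unique bounded extension to $H^1_{F,at,M}$ with the same operator norm. The only delicate point in this plan, and the step I expect to warrant the most care, is justifying the pointwise (or in-measure) control $|TF_N-Tf|\le T(F_N-f)$ in the sublinear case; this rests on the standing assumption that $T$ satisfies the usual quasi-triangle inequality $T(g+h)\le Tg+Th$ with $T(-h)=Th$, which is routine for the sublinear operators (square and maximal functions) to which the lemma will be applied. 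Once that is in place, the rest is formal measure-theoretic bookkeeping.
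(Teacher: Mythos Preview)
Your proof is correct and follows essentially the same approach as the paper. The only organizational difference is that the paper establishes the pointwise inequality $|Tf(x)|\le \sum_{j}|\lambda_j|\,|Ta_j(x)|$ a.e.\ directly---by observing that for every $\eta>0$ and every $N$, sublinearity gives $|Tf|-\sum_{j\le N}|\lambda_j|\,|Ta_j|\le |T(f-F_N)|$, and then the weak-$(2,2)$ bound forces the set $\{|Tf|-S>\eta\}$ to have measure zero---whereas you pass through an a.e.\ convergent subsequence and Fatou; both routes use the same ingredients and yield the same bound.
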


\begin{proof}
Given $f \in \mathbb{H}^1_{F,at,M}(\mathbb{R}^{n}\times\mathbb{R}^{m})$.
Then $f = \sum_{j} \lambda_j a_j$
is an atomic $(1, 2, M)$-representation such that

$$
\|f\|_{ \mathbb{H}^1_{F,at,M}(\mathbb{R}^{n}\times\mathbb{R}^{m})} \approx  \sum_{j=0}^\infty |\lambda_j| .
$$

\noindent
 Since the sum converges in $L^2$
(by definition), and since $T$ is of weak-type $(2,2)$, we have that
at almost every point,

\begin{equation}\label{eq4.44}|T(f)| \leq \sum_{j=0}^\infty |\lambda_j| \,|T(a_j)|.
\end{equation}

\noindent
Indeed, for every $\eta >0$, we have that, if $f^N:= \sum_{j>N} \lambda_j a_j$, then,

 \begin{eqnarray*}
\big|\ \{x: |Tf(x)| - \sum_{j=0}^\infty |\lambda_j| \,|T a_j(x)| >\eta\}\big|\,&\leq& \limsup_{N\to \infty}
\big|  \{x:  |Tf^N(x)|>\eta\}\big|\\
&\leq& \,C_T\,\,\eta^{-2} \,\limsup_{N\to \infty} \|f^N\|_2^2 =0,
\end{eqnarray*}
from which (\ref{eq4.44}) follows.
In turn, (\ref{eq4.44}) and (\ref{e4.111}) imply the desired $L^1$ bound for $Tf$.
\end{proof}

We now provide the key Proposition of {\bf Step 1}.
\begin{prop}\label{leAtom}
Let $S_{F,\triangle^{(1)},\triangle^{(2)}}$ be the square function defined by \eqref{esf} and $M>m/2$. Then
$$
\|S_{F,\triangle^{(1)},\triangle^{(2)}}a\|_1\leq C\qquad \text{for every $(1, 2, M)$-atom $a$},
$$
where $C$ is a positive constant independent of $a$.
\end{prop}

By Proposition \ref{leAtom}, we may apply Lemma \ref{lemma-of-T-bd-on Hp} with $T=S_{F,\triangle^{(1)},\triangle^{(2)}}$ to obtain
$$
\|f\|_{H^1_{F,\triangle^{(1)},\triangle^{(2)}}(\Bbb R^n\times \Bbb R^m)}=\|S_{F,\triangle^{(1)},\triangle^{(2)}} f\|_1\leq
C\|f\|_{{\Bbb H}^1_{F,at,M}(\Bbb R^n\times \Bbb R^m)}$$
and Step 1 follows.

Suppose $\Omega\subset {\Bbb R}^{n}\times{\Bbb R}^{m}$
is open of finite measure. Denote by $m(\Omega)$ the maximal dyadic
subrectangles of $\Omega$. Let $m_1(\Omega)$  denote those dyadic
subrectangles $R\subseteq \Omega, R=I\times J$ that are maximal in
the $x_1$ direction. In other words if $S=I'\times J\supseteq R$ is
a dyadic subrectangle of $\Omega$, then $I=I'.$ Define $m_2(\Omega)$
similarly.   Let
$${\widetilde \Omega}=\big\{x\in {\mathbb R}^{n}\times\mathbb{R}^{m}:
M_s(\chi_{\Omega})(x)>{1\over 2}\big\},
$$
where $M_s$ is the strong maximal operator on ${\mathbb R}^{n}\times\mathbb{R}^{m}$  defined as in \eqref{strong maximal}.

\noindent For any $R=I\times J\in m_1(\Omega)$, we  set
$\gamma_1(R)=\gamma_1(R, \Omega)=\sup{|l|\over |I|},$ where the
supremum is taken over all dyadic intervals $l: I\subset l$ so that
$l\times J\subset {\widetilde \Omega}$. Define $\gamma_2$ similarly.
Then Journ\'e's lemma, (in one of its forms) says, for  any
$\delta>0$,
\begin{eqnarray*}
\sum_{R\in m_2(\Omega)} |R|\gamma_1^{-\delta}(R)\leq
c_{\delta}|\Omega|
 \ \ \ {\rm and}\ \ \
 \sum_{R\in m_1(\Omega)} |R|\gamma_2^{-\delta}(R)\leq c_{\delta}|\Omega|
 \end{eqnarray*}
for some  $c_{\delta}$ depending only on $\delta$, not on
$\Omega.$

\begin{proof}[Proof of Proposition \ref{leAtom}]
Given any $(1,2,M)$-atom $a$, suppose that
 $
a=\sum\limits_{R\in m(\Omega)} a_R
$  is supported in an open set $\Omega$ with finite measure.
For any $R=I \times J\in m(\Omega)$, let $\widetilde{I}$ be the
biggest dyadic cube containing  $I$, so that $\widetilde{I}\times
J\subset\widetilde{\Omega}$, where
$\widetilde{\Omega}=\{x\in\mathbb{R}^{n}\times\mathbb{R}^{m}:\
M_s(\chi_{\Omega})(x)>1/2\}$. Next, let $\widetilde{J}$ be the
biggest dyadic cube containing $J$, so that $\widetilde{I}\times
\widetilde{J}\subset\widetilde{\widetilde{\Omega}}$, where
$\widetilde{\widetilde{\Omega}}=\{x\in\mathbb{R}^{n}\times\mathbb{R}^{m}:\
M_s(\chi_{\widetilde{\Omega}})(x)>1/2\}$. Now let $\widetilde{R}$ be
the 100-fold dilate of $\widetilde{I}\times \widetilde{J}$ concentric with
$\widetilde{I}\times \widetilde{J}$. Clearly, an application of the
strong maximal function theorem shows that
$\big|\cup_{R\subset\Omega} \widetilde{R}\big|\leq
C|\widetilde{\widetilde{\Omega}}|\leq C|\widetilde{\Omega}|\leq
C|\Omega|$. From property (iii) of the
$(1,2,M)$-atom,

\begin{equation}\label{SL alpha uniformly bd on inside of Omega}
\begin{aligned}
\int_{\cup \widetilde{R}}|S_{F,\triangle^{(1)},\triangle^{(2)}}(a)(x_1,x_2)|dx_1dx_2
&\leq  |\cup\widetilde{R} |^{1/2}\|S_{F,\triangle^{(1)},\triangle^{(2)}}(a)\|_2\\
 &\leq  C |\Omega|^{1/2}\|a\|_2\\
 &\leq    C |\Omega|^{1/2}|\Omega|^{-1/2}
 \leq  C.
 \end{aligned}
\end{equation}
We now prove

\begin{eqnarray}\label{SL alpha uniformly bd on outside of Omega}
\int_{(\bigcup \widetilde{R})^c}|S_{F,\triangle^{(1)},\triangle^{(2)}}(a)(x_1,x_2)|dx_2dx_1\leq C.
\end{eqnarray}
From the definition of $a$, we write

\begin{align}\label{DE}
&\int_{(\bigcup \widetilde{R})^c}|S_{F,\triangle^{(1)},\triangle^{(2)}}(a)(x_1,x_2)|dx_2dx_1\\
&\leq \sum_{R\in m(\Omega) } \int_{\widetilde{R}^c}|S_{F,\triangle^{(1)},\triangle^{(2)}}(a_R)(x_1,x_2)|dx_2dx_1\nonumber\\
&\leq \sum_{R\in m(\Omega) } \int_{(100\widetilde{I})^c\times\mathbb{R}^{m}} |S_{F,\triangle^{(1)},\triangle^{(2)}}(a_R)(x_1,x_2)|dx_2dx_1\nonumber\\
&+ \sum_{R\in m(\Omega) } \int_{\mathbb{R}^{n}\times (100\widetilde{J})^c}  |S_{F,\triangle^{(1)},\triangle^{(2)}}(a_R)(x_1,x_2)|dx_2dx_1\nonumber\\
&= \textrm{I}+\textrm{II}.\nonumber
\end{align}
For the term $\textrm{I}$, we have
\begin{eqnarray*}
\int_{(100\widetilde{I})^c\times\mathbb{R}^{m}}|S_{F,\triangle^{(1)},\triangle^{(2)}}(a_R)(x_1,x_2)|dx_2dx_1
 &  =&
\int_{(100\widetilde{I})^c\times 100J} |S_{F,\triangle^{(1)},\triangle^{(2)}}(a_R)(x_1,x_2)|dx_2dx_1 \\[2pt]
&&+  \int_{(100\widetilde{I})^c\times (100J)^c} |S_{F,\triangle^{(1)},\triangle^{(2)}}(a_R)(x_1,x_2)|dx_2dx_1 \\[2pt]
&=& \textrm{I}_1+\textrm{I}_2.
\end{eqnarray*}
 Let us first estimate the term $\textrm{I}_1$.  Set $a_{R,2}=(1\!\!1_1\otimes_2 ({\triangle^{(2)}})^M)b_R$, that is,
$a_R= (({\triangle^{(1)}})^M \otimes_2 1\!\!1_2)a_{R,2}.$
Using H\"older's inequality,
\begin{eqnarray}\label{estimate D1}
\textrm{I}_1 &\leq& C |J|^{{1/2}} \int_{(100\widetilde{I})^c }\Big( \int_{100J}|S_{F,\triangle^{(1)},\triangle^{(2)}}(a_R)(x_1,x_2)|^2dx_2\Big)^{1/2}dx_1.
\end{eqnarray}

Hence, from the definition of $S_{F,\triangle^{(1)},\triangle^{(2)}}$, we have that

\begin{align*}
&\int_{100J}|S_{F,\triangle^{(1)},\triangle^{(2)}}(a_R)(x_1,x_2)|^2dx_2 \\
&\leq \int_{100J}  \int_{\Bbb R^{n+1}_+}\int_{{\Bbb R^{m+1}_+}}\ \int_{\Bbb R^m}\chi^{(1)}_{t_1}(x_1-y_1,x_2-z_2)\chi^{(2)}_{t_2}(z_2-y_2)dz_2 \\
&\qquad\times     \big|(t_2^2\triangle^{(2)}e^{-t_2^2\triangle^{(2)}})\otimes_2\big((t_1^2\triangle^{(1)}e^{-t_1^2\triangle^{(1)}})a_{R}(y_1,\cdot)\big)(y_2)\big|^2{dy_2dt_2\over t_2^{m+1}}{dy_1dt_1\over t_1^{n+m+1}} dx_2 \\
&\leq  \int_{\Bbb R^{n+1}_+}\ \int_{\Bbb R^m}\ \int_{100J}\chi^{(1)}_{t_1}(x_1-y_1,x_2-z_2)dx_2  \\
&\quad\times  \int_{{\Bbb R^{m+1}_+}}    \chi^{(2)}_{t_2}(z_2-y_2) \big|(t_2^2\triangle^{(2)}e^{-t_2^2\triangle^{(2)}})\otimes_2\big((t_1^2\triangle^{(1)}e^{-t_1^2\triangle^{(1)}})a_{R}(y_1,\cdot)\big)(y_2)\big|^2{dy_2dt_2\over t_2^{m+1}}dz_2\ {dy_1dt_1\over t_1^{n+m+1}}  \\
&\leq  \int_0^\infty\int_{|x_1-y_1|\le t_1}\\
&\qquad \int_{\Bbb R^m}
  \int_0^\infty\int_{|z_2-y_2|\le t_2}     \big|(t_2^2\triangle^{(2)}e^{-t_2^2\triangle^{(2)}})\otimes_2\big((t_1^2\triangle^{(1)}e^{-t_1^2\triangle^{(1)}})a_{R}(y_1,\cdot)\big)(y_2)\big|^2{dy_2dt_2\over t_2^{m+1}}dz_2\ {dy_1dt_1\over t_1^{n+1}}  \\
&\leq  \int_0^\infty\int_{|x_1-y_1|\le t_1} \ \int_{\mathbb R^m}      \big| \big((t_1^2\triangle^{(1)}e^{-t_1^2\triangle^{(1)}})a_{R}(y_1,\cdot)\big)(z_2)\big|^2dz_2\ {dy_1dt_1\over t_1^{n+1}},
\end{align*}
where the fourth inequality follows from the Littlewood--Paley $L^2$ estimate of the area function with respect to $\Delta^{(2)}$.
 We then split the range of $t_1$ according to the side-length of $I$  to obtain
\begin{align*}
&\int_{100J}|S_{F,\triangle^{(1)},\triangle^{(2)}}(a_R)(x_1,x_2)|^2dx_2 \\
&\leq C  \int_{\mathbb R^m}  \int_{0}^{\ell(I)}  \int_{|x_1-y_1|<t_1} \\
&\quad\times    \Big[\int_{10I}\int_{10J}  t_1^{-n-m} \exp\Big(-{|y_1-u_1|^2+|z_2-u_2|^2\over ct_1^2}\Big)
|a_{R}(u_1,u_2)|du_1du_2\Big]^2\ {dy_1dz_2 \,dt_1\over t_1^{n+1}}  \\
&+ C  \int_{\mathbb R^m}  \int_{\ell(I)}^\infty  \int_{|x_1-y_1|<t_1}    \big|  \big(t_1^2\triangle^{(1)}\big)^{M+1}e^{-t_1^2\triangle^{(1)}}a_{R,2}(y_1,z_2)\big|^2\ {dy_1dz_2 \,dt_1\over t_1^{n+4M+1}}  \\
 &=:   D_{1} (a_R)(x_1) +D_{2} (a_R)(x_1),
\end{align*}
and the  inequality follows from the kernel estimate of $t_1^2\triangle^{(1)}e^{-t_1^2\triangle^{(1)}}$ and from the property of the atom $a_R$ that  $a_R= (({\triangle^{(1)}})^M \otimes_2 1\!\!1_2)a_{R,2}$.

Let us estimate the term $D_{1} (a_R)(x_1)$. Note that for $x_1\not\in 100\widetilde{I},
$ $0<t_1<\ell(I)$, $|x_1-y_1|<t_1$ and $u_1\in 10I$, then $|y_1-u_1|\geq |x_1-x_I|/2$, where $x_I$ denotes the center of the cube $I$.
Hence
\begin{align*}
  D_{1} (a_R)(x_1)
&\leq C \int_0^{\ell(I)} \int_{|x_1-y_1|<t_1}dy_1\cdot \ t_1^{-2n} \exp\Big(-{|x_1-x_I|^2\over 2ct_1^2}\Big)\\
 &\qquad\times \int_{\mathbb{R}^{m}}\Bigg[\int_{\Bbb R^m}t_1^{-m}\exp\Big(-{|z_2-u_2|^2\over ct_1^2}\Big)
\Big(\int_{10I}|a_{R}(u_1,u_2)|du_1\Big)du_2\Bigg]^2  dz_2 {dt_1\over t_1^{n+1}}\\
&\leq C \int_0^{\ell(I)} t_1^n\cdot \ t_1^{-2n} \exp\Big(-{|x_1-x_I|^2\over 2ct_1^2}\Big) \int_{\mathbb{R}^{m}} \Big[M_2\Big(
\int_{10I}|a_{R}(u_1,\cdot)|du_1\Big)(z_2)\Big]^2  dz_2 {dt_1\over t_1^{n+1}}\\
&\leq C \int_0^{\ell(I)}  t_1^{-2n} \exp\Big(-{|x_1-x_I|^2\over 2ct_1^2}\Big) \int_{\mathbb{R}^{m}}\Big(
\int_{10I}|a_{R}(u_1,z_2)|du_1 \Big)^2 dz_2 {dt_1\over t_1}\\
&\leq C \int_0^{\ell(I)} t_1^{-2n} \exp\Big(-{|x_1-x_I|^2\over 2ct_1^2}\Big) |I|\int_{\mathbb{R}^{m}}
\int_{10I}|a_{R}(u_1,z_2)|^2du_1 dz_2 {dt_1\over t_1}\\
 &\leq  C|I|\int_0^{\ell(I)} t_1^{-2n} \exp\Big(-{|x_1-x_I|^2\over 2ct_1^2}\Big) {dt_1\over t_1} \|a_{R}\|^2_2,
\end{align*}
where in the second inequality, $M_2$ denotes the Hardy--Littlewood maximal function on $\mathbb R^m$.
We then use the fact that $e^{-s}\leq Cs^{-k}$  for any $k>0$ to obtain
\begin{align*}
  D_{1} (a_R)(x_1)
&\leq C|I|
 \int_0^{\ell(I)} t_1^{-2n-1} \Big({ t_1\over |x_1-x_I|}\Big)^{2(n +{1\over 2} )}
   {dt_1 } \|a_{R}\|^2_2\\
&\leq C|I|
 {\ell(I) \over |x_1-x_I|^{2(n +{1\over 2})}}
   \|a_{R}\|^2_2\\
&\leq C  { |I| \ell(I)\over |x_1-x_I|^{2n+1}}
\|a_{R}\|^2_2.
\end{align*}
 In order to estimate the second term $D_{2} (a_R)(x_1)$, we first note that
$$\text{supp}\ a_{R,2}\subset \text{supp}\  (1\!\!1_1\otimes_2 ({\triangle^{(2)}})^M)b_R\subset 10R=10(I\times J).$$
Next we  observe that
  for $x_1\not\in 100\widetilde{I},
$ $\ell(I)\leq t_1 <|x_1-x_I|/4$, $|x_1-y_1|<t_1$ and $u_1\in 10I$, then $|y_1-u_1|\geq  |x_1-x_I|/4$.
Hence, from the kernel estimate of $(t_1^2\triangle^{(1)})^{M+1}e^{-t_1^2\triangle^{(1)}}$ and following similar estimates as in $D_{1} (a_R)(x_1)$ via the Hardy--Littlewood maximal function $M_2$, we split the range of $t_1$ according to $|x_1-x_I|/4$ and continue the estimate of  $D_{2} (a_R)(x_1)$ as follows.
\begin{align*}
 D_{2} (a_R)(x_1)
 &\leq  C|I|\int_{\ell(I)}^{|x_1-x_I|\over4} t_1^{-2n} \exp\Big(-{|x_1-x_I|^2\over 8ct_1^2}\Big) {dt_1\over t_1^{1+4M}} \|a_{R,2}\|^2_2\\
 &\quad+ C|I|\int_{|x_1-x_I|\over4}^{\infty} t_1^{-2n} \exp\Big(-{|x_1-x_I|^2\over 8ct_1^2}\Big) {dt_1\over t_1^{1+4M}} \|a_{R,2}\|^2_2\\
&\leq  C|I| \bigg(
 \int_{\ell(I)}^{\infty} t_1^{-2n-1-4M} \Big({ t_1\over |x_1-x_I|}\Big)^{2(n+2M -{1\over 2})} {dt_1 }\\
&\hskip3cm+  \int_{|x_1-x_I|\over 4}^{\infty} t_1^{-2n-1-4M}    dt_1 \bigg)
 \|a_{R,2}\|^2_2\\[2pt]
&\leq C  {|I|\ell(I) \over |x_1-x_I|^{2n+1}} \ell(I)^{-4M}\ell(J)^{-4M}
\|(1\!\!1_1\otimes_2
(\ell(J)^2{\triangle^{(2)}})^M)b_{R}\|^2_2,
\end{align*}
where in the second inequality we use the condition that $|x_1-x_I|>\ell(I)$, and use the fact that  $e^{-s}\leq Cs^{-k}$  for any $k>0$ for the first term and
the fact that $ t_1^{-2n} \exp\big(-{|x_1-x_I|^2\over 8ct_1^2}\big) \leq C$ when $t_1\geq{|x_1-x_I|\over 4}$.

Combining the estimates of $D_{1} (a_R)(x_1)$ and  $D_{2} (a_R)(x_1)$, we obtain
\begin{align}\label{estimate D11}
 &\int_{100J}|S_{F,\triangle^{(1)},\triangle^{(2)}}(a_R)(x_1,x_2)|^2dx_2\\
 &\lesssim   { |I| \ell(I)\over |x_1-x_I|^{2n+1}}
\|a_{R}\|^2_{L^2(\mathbb{R}^{n}\times\mathbb{R}^{m})}\nonumber\\
&\quad+ {|I|\ell(I) \over |x_1-x_I|^{2n+1}} \ell(I)^{-4M}\ell(J)^{-4M}
\|(1\!\!1_1\otimes_2
(\ell(J)^2{\triangle^{(2)}})^M)b_{R}\|^2_2.\nonumber
\end{align}
Putting \eqref{estimate D11} into the term $\textrm{I}_1$ in (\ref{estimate D1}),
  we have
\begin{align*}
 \textrm{I}_1
 &\lesssim |R|^{1/2} \int_{(100\widetilde{I})^c }   {  \ell(I)^{1\over2}\over |x_1-x_I|^{n+1/2}} dx_1
\|a_{R}\|_2\\
 &\quad+|R|^{1/2} \int_{(100\widetilde{I})^c } {\ell(I)^{1/2 } \over |x_1-x_I|^{n+1/2}}  dx_1  \ell(I)^{-2M}\ell(J)^{-2M}
\|(1\!\!1_1\otimes_2
(\ell(J)^2{\triangle^{(2)}})^M)b_{R}\|_2\\
&\lesssim  |R|^{1/2} \gamma_1(R)^{-1/2} \|a_{R}\|_2
\\
&\quad+|R|^{1/2} \gamma_1(R)^{-1/2}  \ell(I)^{-2M}\ell(J)^{-2M}
\|(1\!\!1_1\otimes_2
(\ell(J)^2{\triangle^{(2)}})^M)b_{R}\|_2.
\end{align*}
Now we turn to estimate the term $\textrm{I}_2$.

One can write

\begin{align*}
\textrm{I}_2&=\int_{(100\widetilde{I})^c\times (100J)^c} |S_{F,\triangle^{(1)},\triangle^{(2)}}(a_R)(x_1,x_2)|dx_1dx_2 \\
   &\leq\sum_{j_1=\tilde j}^\infty\sum_{j_2=6}^\infty\int_{|x_1-x_I|\approx 2^{j_1}\ell(I)} \int_{|x_2-x_J|\approx 2^{j_2}\ell(J)}|S_{F,\triangle^{(1)},\triangle^{(2)}}(a_R)(x_1,x_2)|dx_2dx_1 \\
   &\le \sum_{j_1=\tilde j}^\infty\sum_{j_2=6}^\infty (2^{j_1}\ell(I))^{n/2}(2^{j_2}\ell(J))^{m/2} \\
   &\qquad\times \bigg(\int_{|x_1-x_I|\approx 2^{j_1}\ell(I)} \int_{|x_2-x_J|\approx 2^{j_2}\ell(J)}|S_{F,\triangle^{(1)},\triangle^{(2)}}(a_R)(x_1,x_2)|^2dx_2dx_1\bigg)^{1/2},
\end{align*}
where $\tilde j$ is the smallest integer such that
$2^{\tilde j} I \cap (100\tilde I)^c\not=\emptyset$. We consider the four cases.
\begin{align*}
&\int_{|x_1-x_I|\approx 2^{j_1}\ell(I)} \int_{|x_2-x_J|\approx 2^{j_2}\ell(J)}|S_{F,\triangle^{(1)},\triangle^{(2)}}(a_R)(x_1,x_2)|^2dx_1dx_2 \\
&= \int_{|x_1-x_I|\approx 2^{j_1}\ell(I)} \int_{|x_2-x_J|\approx 2^{j_2}\ell(J)}\Big( \int_0^{\ell(I)}\!\!\int_0^{\ell(J)} +\int_0^{\ell(I)}\!\!\int_{\ell(J)}^\infty +
\int_{\ell(I)}^{\infty}\int_0^{\ell(J)}+ \int_{\ell(I)}^{\infty}\int_{\ell(J)}^\infty \Big)\\
&\quad\times \int_{\mathbb R^{n}}\int_{\mathbb R^{m}}\ \  \int_{\Bbb R^m}\chi^{(1)}_{t_1}(x_1-y_1,x_2-w_2)\chi^{(2)}_{t_2}(w_2-y_2)dw_2\\
&\qquad\times \Big|\big( t_1^2{\triangle^{(1)}}e^{-t_1^2{\triangle^{(1)}}}\,
t_2^2{\triangle^{(2)}}e^{-t_2^2{\triangle^{(2)}}}\big) (a_R) (y_1,y_2)\Big|^2{dy_2dt_2\over t_2^{m+1}} {dy_1dt_1\over
t_1^{n+m+1}}dx_2dx_1\\
&=:\textrm{I}_{21}+\textrm{I}_{22}+\textrm{I}_{23}+\textrm{I}_{24}.
\end{align*}
We first estimate the term $\textrm{I}_{21}$.
For $x_1\not\in 100\widetilde{I}$, $|x_1-y_1|<t_1<\ell(I)$ and $z_1\in 10I$, we have that $|y_1-z_1|\geq |x_1-x_I|/2$. Hence,
from the kernel estimate of $t_1^2\triangle^{(1)}e^{-t_1^2\triangle^{(1)}}$, we have
\begin{align*}
&\textrm{I}_{21}\\
&=\int_{|x_1-x_I|\approx 2^{j_1}\ell(I)} \int_{|x_2-x_J|\approx 2^{j_2}\ell(J)}\int_0^{\ell(I)}\!\!\int_0^{\ell(J)}
    \int_{\mathbb R^{n}}\int_{\mathbb R^{m}} \int_{\Bbb R^m}\chi^{(1)}_{t_1}(x_1-y_1,x_2-w_2)\chi^{(2)}_{t_2}(w_2-y_2)dw_2\\
&\quad\times \Big|\big( t_1^2{\triangle^{(1)}}e^{-t_1^2{\triangle^{(1)}}}\
t_2^2{\triangle^{(2)}}e^{-t_2^2{\triangle^{(2)}}}\big) a_{R} (y_1,y_2)\Big|^2 {dy_2dt_2\over t_2^{m+1}}{dy_1dt_1\over
t_1^{n+m+1}}dx_2dx_1\\
&\lesssim  \int_{|x_1-x_I|\approx 2^{j_1}\ell(I)} \int_{|x_2-x_J|\approx 2^{j_2}\ell(J)}\int_0^{\ell(I)}\!\!\int_0^{\ell(J)} \int_{\mathbb R^{n}}\int_{\mathbb R^{m}} \int_{\Bbb R^m}\chi^{(1)}_{t_1}(x_1-y_1,x_2-w_2)\chi^{(2)}_{t_2}(w_2-y_2)dw_2\\
&\quad\times\Big|\int_{10I}\int_{\Bbb R^m} t_1^{-n-m} \exp\Big(-{|y_1-z_1|^2+|y_2-z_2|^2\over ct_1^2}\Big)
\big(t_2^2{\triangle^{(2)}}e^{-t_2^2{\triangle^{(2)}}}a_{R} (z_1,z_2)\big)dz_2\,dz_1\Big|^2 \\
&\quad\times {dy_2dt_2\over t_2^{m+1}}{dy_1dt_1\over
t_1^{n+m+1}}dx_2dx_1\\
&\lesssim \int_{|x_1-x_I|\approx 2^{j_1}\ell(I)} \int_{|x_2-x_J|\approx 2^{j_2}\ell(J)}\int_0^{\ell(I)}\!\!\int_0^{\ell(J)} \int_{\mathbb R^{n}}\int_{\mathbb R^{m}} \int_{\Bbb R^m}\chi^{(1)}_{t_1}(x_1-y_1,x_2-w_2)\chi^{(2)}_{t_2}(w_2-y_2)dw_2\\
&\quad\times t_1^{-2n}\exp\Big(-{2|x_1-x_I|^2\over ct_1^2}\Big)\Big|\int_{10I}\int_{\Bbb R^m} t_1^{-m} \exp\Big(-{|y_2-z_2|^2\over ct_1^2}\Big)
t_2^2{\triangle^{(2)}}e^{-t_2^2{\triangle^{(2)}}}a_{R} (z_1,z_2)dz_2dz_1\Big|^2 \\
&\quad\times {dy_2dt_2\over t_2^{m+1}} {dy_1dt_1\over
t_1^{n+m+1}}dx_2dx_1.
\end{align*}
It is clear that
\begin{align*}
&\bigg|\int_{10I}\int_{\Bbb R^m} t_1^{-m} \exp\Big(-{|y_2-z_2|^2\over ct_1^2}\Big)
         t_2^2{\triangle^{(2)}}e^{-t_2^2{\triangle^{(2)}}}a_{R} (z_1,z_2)dz_2dz_1\bigg|\\
&=\bigg|\int_{10I}\int_{12J}  t_1^{-m}  \exp\Big(-{|y_2-z_2|^2\over ct_1^2}\Big)
         t_2^2{\triangle^{(2)}}e^{-t_2^2{\triangle^{(2)}}}a_{R} (z_1,z_2)dz_2dz_1\bigg|\\
&\qquad +\bigg| \int_{10I}\int_{(12J)^c}  t_1^{-m}  \exp\Big(-{|y_2-z_2|^2\over ct_1^2}\Big)
         t_2^2{\triangle^{(2)}}e^{-t_2^2{\triangle^{(2)}}}a_{R} (z_1,z_2)dz_2dz_1\bigg|\\
&\le \bigg|\int_{12J} \exp\Big(-{|y_2-z_2|^2\over 2ct_1^2}\Big)\ \  t_1^{-m}  \exp\Big(-{|y_2-z_2|^2\over 2ct_1^2}\Big)
        \int_{10I} t_2^2{\triangle^{(2)}}e^{-t_2^2{\triangle^{(2)}}}a_{R} (z_1,z_2)dz_1\ dz_2\bigg|\\
&\qquad + \int_{10I}\int_{(12J)^c}\ \int_{10J} t_1^{-m}  \exp\Big(-{|y_2-z_2|^2\over ct_1^2}\Big)
          t_2^{-m}  \exp\Big(-{|z_2-u_2|^2\over ct_2^2}\Big)|a_{R} (z_1,u_2)|du_2\,dz_2dz_1.
\end{align*}
Next we point out that there exists a positive constant $C$ such that for every $\alpha>0$,
\begin{align}\label{t12}
\hskip-.5cm\int_{\mathbb R^m}  t_1^{-m} \exp\Big(-{|y_2-z_2|^2\over 2ct_1^2}\Big)t_2^{-m} \exp\Big(-{|z_2-u_2|^2\over 2ct_2^2}\Big) dz_2
\leq  { C\cdot  (\max\{t_1,t_2\})^\alpha\over ( \max\{t_1,t_2\} + |y_2-u_2| )^{m+\alpha}  }.
\end{align}
Note that for $|x_2-x_J|> 100\ell(J)$ and $|x_2-y_2|<{t_1+t_2}<2\ell(J)$, if $z_2\in 12J$, then we have that $|y_2-z_2|>|x_2-x_J|/2$;
if $z_2\in (12J)^c$, since $u_2\in 10J$, we have that $|z_2-u_2|>\ell(J)$. Hence, combining these two cases and the almost orthogonality estimate \eqref{t12}, we have that
\begin{align*}
&\bigg|\int_{10I}\int_{\Bbb R^m} t_1^{-m} \exp\Big(-{|y_2-z_2|^2\over ct_1^2}\Big)
         t_2^2{\triangle^{(2)}}e^{-t_2^2{\triangle^{(2)}}}a_{R} (z_1,z_2)dz_1dz_2\bigg|\\
&\lesssim \exp\Big(-{|x_2-x_J|^2\over 8ct_1^2}\Big) M_2\bigg(\int_{10I} |t_2^2{\triangle^{(2)}}e^{-t_2^2{\triangle^{(2)}}}a_{R} (z_1,\cdot)|dz_1\bigg)(y_2)\\
&\qquad +\int_{10I}\int_{10J}  \exp\Big(-{\ell(J)^2\over ct_2^2}\Big){(\max\{t_1,t_2\})^\alpha\over ( \max\{t_1,t_2\} + |y_2-u_2| )^{m+\alpha}  } |a_{R} (z_1,u_2)|du_2dz_1.
\end{align*}
Plugging the above inequality into $\textrm{I}_{21}$, we have
\begin{align*}
&\textrm{I}_{21}\\
&\lesssim \int_{|x_1-x_I|\approx 2^{j_1}\ell(I)} \int_{|x_2-x_J|\approx 2^{j_2}\ell(J)}\int_0^{\ell(I)}\!\!\int_0^{\ell(J)} \int_{\mathbb R^{n}}\int_{\mathbb R^{m}} \int_{\Bbb R^m}\chi^{(1)}_{t_1}(x_1-y_1, w_2-y_2)\chi^{(2)}_{t_2}(x_2-w_2)dw_2\\
&\quad\times t_1^{-2n}\exp\Big(-{2|x_1-x_I|^2\over ct_1^2}\Big)\exp\Big(-{|x_2-x_J|^2\over 4ct_1^2}\Big) \bigg|M_2\bigg(\int_{10I} |t_2^2{\triangle^{(2)}}e^{-t_2^2{\triangle^{(2)}}}a_{R} (z_1,\cdot)|dz_1\bigg)(y_2)\bigg|^2\\
&\quad \times{dy_1dt_1\over
t_1^{n+m+1}}{dy_2dt_2\over t_2^{m+1}}dx_2dx_1\\
&\ + \int_{|x_1-x_I|\approx 2^{j_1}\ell(I)} \int_{|x_2-x_J|\approx 2^{j_2}\ell(J)}\int_0^{\ell(I)}\!\!\int_0^{\ell(J)} \int_{\mathbb R^{n}}\int_{\mathbb R^{m}} \int_{\Bbb R^m}\chi^{(1)}_{t_1}(x_1-y_1,x_2-w_2)\chi^{(2)}_{t_2}(w_2-y_2)dw_2\\
&\quad\times t_1^{-2n}\exp\Big(-{2|x_1-x_I|^2\over ct_1^2}\Big)|R| \exp\Big(-2{\ell(J)^2\over ct_2^2}\Big){(\max\{t_1,t_2\})^{2\alpha}\over ( \max\{t_1,t_2\} + {|x_2-x_J|} )^{2m+2\alpha}  }  \|a_{R}\|_2^2\\
&\quad \times {dy_1dt_1\over t_1^{n+m+1}}{dy_2dt_2\over t_2^{m+1}}dx_2dx_1\\
&=:\textrm{I}_{211}+\textrm{I}_{212}.
\end{align*}

To estimate $\textrm{I}_{211}$, we use the $L^2({\Bbb R^m})$-boundedness of Hardy--Littlewood maximal function $M_2$ and the $L^2({\Bbb R^m})$-boundedness of Littlewood--Paley square-function to get
\begin{align*}
&\textrm{I}_{211}\\
&\lesssim \int_{|x_1-x_I|\approx 2^{j_1}\ell(I)} \int_{|x_2-x_J|\approx 2^{j_2}\ell(J)}\int_0^{\ell(I)}\!\!\int_0^{\ell(J)} t_1^{-2n}\exp\Big(-{2|x_1-x_I|^2\over ct_1^2}\Big)\exp\Big(-{|x_2-x_J|^2\over 4ct_1^2}\Big) \\
&\qquad\times\int_{\mathbb R^{m}}\bigg| M_2\bigg(\int_{10I} |t_2^2{\triangle^{(2)}}e^{-t_2^2{\triangle^{(2)}}}a_{R} (z_1,\cdot)|dz_1\bigg)(y_2)\bigg|^2dy_2{dt_1\over t_1^{m+1}}{dt_2\over t_2}dx_2dx_1\\
&\lesssim \int_{|x_1-x_I|\approx 2^{j_1}\ell(I)} \int_{|x_2-x_J|\approx 2^{j_2}\ell(J)}\int_0^{\ell(I)}\!\!\int_0^{\ell(J)} \frac{t_1^{2\alpha_1}}{|x_1-x_I|^{2\alpha_1}}\frac{t_1^{2\alpha_2}}{|x_2-x_J|^{2\alpha_2}} \\
&\qquad\times\int_{\mathbb R^{m}}\bigg|\int_{10I} |t_2^2{\triangle^{(2)}}e^{-t_2^2{\triangle^{(2)}}}a_{R} (z_1,y_2)|dz_1\bigg|^2dy_2{dt_1\over t_1^{2n+m+1}}{dt_2\over t_2}dx_2dx_1\\
&\lesssim \int_{|x_1-x_I|\approx 2^{j_1}\ell(I)} \int_{|x_2-x_J|\approx 2^{j_2}\ell(J)}\int_0^{\ell(I)} \frac{t_1^{2\alpha_1}}{|x_1-x_I|^{2\alpha_1}}\frac{t_1^{2\alpha_2}}{|x_2-x_J|^{2\alpha_2}} \\
&\qquad\times|I|\int_{10I}\ \ \int_{\mathbb R^{m}}\Bigg| \bigg(\int_0^\infty|t_2^2{\triangle^{(2)}}e^{-t_2^2{\triangle^{(2)}}}a_{R} (z_1,y_2)|^2{dt_2\over t_2}\bigg)^{1\over2}\Bigg|^2\ dy_2dz_1{dt_1\over t_1^{2n+m+1}}dx_2dx_1\\
&\lesssim \int_{|x_1-x_I|\approx 2^{j_1}\ell(I)} \int_{|x_2-x_J|\approx 2^{j_2}\ell(J)}\int_0^{\ell(I)} \frac{t_1^{2\alpha_1}}{|x_1-x_I|^{2\alpha_1}}\frac{t_1^{2\alpha_2}}{|x_2-x_J|^{2\alpha_2}} \\
&\qquad\times|I| \int_{10I} \int_{10J} |a_{R} (z_1,y_2)|^2dy_2dz_1{dt_1\over t_1^{2n+m+1}}dx_2dx_1.
\end{align*}

Choosing $\alpha_1=n+1/2$ and $\alpha_2=m+1/2$, we have
\begin{align*}
\textrm{I}_{211}&\lesssim \int_{|x_1-x_I|\approx 2^{j_1}\ell(I)}\int_{|x_2-x_J|\approx 2^{j_2}\ell(J)} {dx_1\over (2^{j_1}\ell(I))^{2n+1}}{dx_2\over (2^{j_2}\ell(J))^{2m+1}}  \int_0^{\ell(I)} t_1^{m+1} dt_1
|I| \|a_R\|_2^2\\
&\lesssim (2^{j_1}\ell(I))^{-n-1} (2^{j_2}\ell(J))^{-m-1} \ell(I)^{m+2}\, |I|\|a_{R}\|_2^2\\
&\lesssim (2^{j_1}\ell(I))^{-n} 2^{-j_1} (2^{j_2}\ell(J))^{-m}2^{-j_2}  {\ell(I)^{2} \over \ell(I)\ell(J)}\,|I| \ell(I)^m \|a_{R}\|_2^2\\
&\lesssim (2^{j_1}\ell(I))^{-n} 2^{-j_1} (2^{j_2}\ell(J))^{-m}2^{-j_2} \ |R|\|a_{R}\|_2^2,
\end{align*}
where in the last inequality we use the fact that $\ell(I)\leq \ell(J)$.

We then estimate the term $\textrm{I}_{212}$. We first note that
\begin{align}\label{chi1chi2}
 \int_{\mathbb R^{n}}\int_{\mathbb R^{m}} \int_{\Bbb R^m}\chi^{(1)}_{t_1}(x_1-y_1,x_2-w_2)\chi^{(2)}_{t_2}(w_2-y_2) dy_2\ dw_2dy_1\leq C t_1^{n+m}t_2^m.
\end{align}
Then we have
\begin{align*}
\textrm{I}_{212}
&\lesssim \int_{|x_1-x_I|\approx 2^{j_1}\ell(I)} \int_{|x_2-x_J|\approx 2^{j_2}\ell(J)}\int_0^{\ell(I)}t_1^{-2n}\exp\Big(-{2|x_1-x_I|^2\over ct_1^2}\Big)\int_0^{\ell(J)}\exp\Big(-{\ell(J)^2\over ct_2^2}\Big){dt_2\over t_2}\\
&\qquad\times|R|{ \ell(J)^{2\alpha}\over |x_2-x_J|^{2m+2\alpha}  } \|a_{R}\|_2^2{dt_1\over t_1}dx_2dx_1\\
&\lesssim \int_{|x_1-x_I|\approx 2^{j_1}\ell(I)} \int_{|x_2-x_J|\approx 2^{j_2}\ell(J)}\int_0^{\ell(I)}t_1^{-2n}\exp\Big(-{2|x_1-x_I|^2\over ct_1^2}\Big)\int_0^{\ell(J)} \frac{t_2}{\ell(J)}{dt_2\over t_2}\\
&\qquad\times|R|{ \ell(J)^{2\alpha}\over |x_2-x_J|^{2m+2\alpha}  }  \|a_{R}\|_2^2{dt_1\over t_1}dx_2dx_1\\
&\lesssim \int_{|x_1-x_I|\approx 2^{j_1}\ell(I)} \int_{|x_2-x_J|\approx 2^{j_2}\ell(J)}\int_0^{\ell(I)}t_1^{-2n}\exp\Big(-{2|x_1-x_I|^2\over ct_1^2}\Big)\\
&\qquad\times|R|{  \ell(J)^{2\alpha}\over |x_2-x_J|^{2m+2\alpha}  } \|a_{R}\|_2^2{dt_1\over t_1}dx_2dx_1.
\end{align*}
We use the fact that $e^{-s}\leq Cs^{-k}$  for any $k>0$ to obtain
\begin{align*}
\textrm{I}_{212}\lesssim |R|\|a_{R}\|_2^2\int_{|x_1-x_I|\approx 2^{j_1}\ell(I)} \int_{|x_2-x_J|\approx 2^{j_2}\ell(J)}\int_0^{\ell(I)}\frac{t_1^{2\alpha_1}}{|x_1-x_I|^{2\alpha_1}}{   \ell(J)^{2\alpha}\over |x_2-x_J|^{2m+2\alpha}  } {dt_1\over t_1^{2n+1}}dx_2dx_1.
\end{align*}
Choosing $\alpha_1=n+1/2$ and $\alpha=1/2$, we have
\begin{align*}
\textrm{I}_{212}
&\lesssim |R| \|a_{R}\|_2^2\, (2^{j_1}\ell(I))^{-n-1} (2^{j_2}\ell(J))^{-m-1}\ell(J)\int_0^{\ell(I)}dt_1\\
&\lesssim |R| \|a_{R}\|_2^2\, (2^{j_1}\ell(I))^{-n}2^{-j_1} (2^{j_2}\ell(J))^{-m}2^{-j_2}.
\end{align*}
We then estimate the term $\textrm{I}_{22}$. Set $a_{R,1}=(({\triangle^{(1)}})^M \otimes_2 1\!\!1_2 )b_R$, that is,
$a_R= (1\!\!1_1  \otimes_2 ({\triangle^{(2)}})^M )a_{R,2}.$ Note that  supp $a_{R,1}\subset 10R=10(I\times J)$.
For $x_1\not\in 100\widetilde{I}$, $|x_1-y_1|<t_1<\ell(I)$ and $z_1\in 10I$, we have that $|y_1-z_1|\geq |x_1-x_I|/2$. By the almost orthogonality estimate in \eqref{t12}, we have
\begin{align*}
\textrm{I}_{22}
&=\int_{|x_1-x_I|\approx 2^{j_1}\ell(I)} \int_{|x_2-x_J|\approx 2^{j_2}\ell(J)}\int_0^{\ell(I)}\!\!\int_{\ell(J)}^\infty
    \int_{\mathbb R^{n}}\int_{\mathbb R^{m}} \int_{\Bbb R^m}\chi^{(1)}_{t_1}(x_1-y_1,x_2-w_2)\chi^{(2)}_{t_2}(w_2-y_2)dw_2\\
&\qquad\times \Big|\big( t_1^2{\triangle^{(1)}}e^{-t_1^2{\triangle^{(1)}}}\otimes_2
(t_2^2{\triangle^{(2)}})^{M+1}e^{-t_2^2{\triangle^{(2)}}}\big) a_{R,1} (y_1,y_2)\Big|^2 {dy_2dt_2\over t_2^{m+1+4M}}{dy_1dt_1\over
t_1^{n+m+1}}dx_2dx_1\\
&\lesssim  \int_{|x_1-x_I|\approx 2^{j_1}\ell(I)} \int_{|x_2-x_J|\approx 2^{j_2}\ell(J)}\int_0^{\ell(I)}\!\!\int_{\ell(J)}^\infty \int_{\mathbb R^{n}}\int_{\mathbb R^{m}} \int_{\Bbb R^m}\chi^{(1)}_{t_1}(x_1-y_1,x_2-w_2)\chi^{(2)}_{t_2}(w_2-y_2)dw_2\\
&\qquad\times\Big|\int_{10I}\int_{\Bbb R^m}\int_{\Bbb R^m} t_1^{-n-m} \exp\Big(-{|y_1-z_1|^2+|y_2-z_2|^2\over ct_1^2}\Big)\\
&\qquad\times t_1^{-m} \exp\Big(-{|z_2-u_2|^2\over ct_1^2}\Big) a_{R,1} (z_1,u_2)dz_1dz_2du_2\Big|^2{dy_2dt_2\over t_2^{m+1+4M}} {dy_1dt_1\over
t_1^{n+m+1}}dx_2dx_1\\
&\lesssim \int_{|x_1-x_I|\approx 2^{j_1}\ell(I)} \int_{|x_2-x_J|\approx 2^{j_2}\ell(J)}\int_0^{\ell(I)}\int_{\ell(J)}^\infty \int_{\mathbb R^{n}}\int_{\mathbb R^{m}} \int_{\Bbb R^m}\chi^{(1)}_{t_1}(x_1-y_1,x_2-w_2)\chi^{(2)}_{t_2}(w_2-y_2)dw_2\\
&\qquad\times t_1^{-2n}\exp\Big(-{2|x_1-x_I|^2\over ct_1^2}\Big)\Big|\int_{10I}\int_{\Bbb R^m}{(\max\{t_1,t_2\})^\alpha\over ( \max\{t_1,t_2\} + |y_2-u_2| )^{m+\alpha}  } |a_{R,1} (z_1,u_2)|dz_1du_2\Big|^2 \\
&\qquad\times{dy_2dt_2\over t_2^{m+1+4M}} {dy_1dt_1\over
t_1^{n+m+1}}dx_2dx_1.
\end{align*}
Note that if $|x_2-x_J|>100\ell(J)$, $t_1<t_2<|x_2-x_J|/4$, $|x_2-y_2|<t_1+t_2$ and $u_2\in 10J$, then $|y_2-u_2|\geq  |x_2-x_J|/4$,
and hence we see that $${(\max\{t_1,t_2\})^\alpha\over ( \max\{t_1,t_2\} + |y_2-u_2| )^{m+\alpha}  } \lesssim { t_2^\alpha\over ( t_2 + |x_2-x_J| )^{m+\alpha}  }.$$ If $t_2>|x_2-x_J|/4$, then we see that $${(\max\{t_1,t_2\})^\alpha\over ( \max\{t_1,t_2\} + |y_2-u_2| )^{m+\alpha}  } \lesssim { t_2^\alpha\over ( t_2  )^{m+\alpha}  } \lesssim { 1\over  t_2^{m}  }.$$
Based on these observation, we then use the fact that $e^{-s}\leq Cs^{-k}$  for any $k>0$ to obtain
\begin{align*}
\textrm{I}_{22}
&\lesssim \int_{|x_1-x_I|\approx 2^{j_1}\ell(I)} \int_{|x_2-x_J|\approx 2^{j_2}\ell(J)}\int_0^{\ell(I)}\Big(\int_{\ell(J)}^{|x_2-x_J|\over 4}+\int_{|x_2-x_J|\over 4}^\infty \Big)\\
&\qquad \int_{\mathbb R^{n}}\int_{\mathbb R^{m}} \int_{\Bbb R^m}\chi^{(1)}_{t_1}(x_1-y_1,x_2-w_2)\chi^{(2)}_{t_2}(w_2-y_2)dw_2\\
&\qquad\times t_1^{-2n}\exp\Big(-{2|x_1-x_I|^2\over ct_1^2}\Big)\Big|\int_{10I}\int_{\Bbb R^m}{ t_2^\alpha\over ( t_2 + |x_2-x_J| )^{m+\alpha}  } |a_{R,1} (z_1,u_2)|du_2dz_1\Big|^2 \\
&\qquad\times {dy_1dt_1\over t_1^{n+m+1}}{dy_2dt_2\over t_2^{m+1+4M}}dx_2dx_1\\
&\lesssim \int_{|x_1-x_I|\approx 2^{j_1}\ell(I)} \int_{|x_2-x_J|\approx 2^{j_2}\ell(J)}\int_0^{\ell(I)}\int_{\ell(J)}^{|x_2-x_J|\over 4} \\
&\qquad\times t_1^{-2n}\exp\Big(-{2|x_1-x_I|^2\over ct_1^2}\Big) { t_2^{2\alpha}\over ( t_2 + |x_2-x_J| )^{2m+2\alpha}  } |R|\|a_{R,1}\|_2^2{dt_1\over t_1}{dt_2\over t_2^{1+4M}}dx_2dx_1\\
&\quad+  \int_{|x_1-x_I|\approx 2^{j_1}\ell(I)} \int_{|x_2-x_J|\approx 2^{j_2}\ell(J)}\int_0^{\ell(I)}\int_{|x_2-x_J|\over 4}^\infty \\
&\qquad\times t_1^{-2n}\exp\Big(-{2|x_1-x_I|^2\over ct_1^2}\Big) |R|\|a_{R,1}\|_2^2 {dt_1\over t_1}{dt_2\over t_2^{2m+1+4M}}dx_2dx_1\\
&\lesssim |R|\|a_{R,1}\|_2^2\int_{|x_1-x_I|\approx 2^{j_1}\ell(I)} \int_{|x_2-x_J|\approx 2^{j_2}\ell(J)}\int_0^{\ell(I)}\int_{\ell(J)}^\infty\frac{t_1^{2\alpha_1}}{|x_1-x_I|^{2\alpha_1}} \\
&\qquad\times     { t_2^{2\alpha}\over  |x_2-x_J|^{2m+2\alpha}  }{dt_1\over t_1^{2n+1}}{dt_2\over t_2^{1+4M}}dx_2dx_1\\
&\quad+  |R|\|a_{R,1}\|_2^2\int_{|x_1-x_I|\approx 2^{j_1}\ell(I)} \int_{|x_2-x_J|\approx 2^{j_2}\ell(J)}\int_0^{\ell(I)}\int_{|x_2-x_J|\over4}^\infty \frac{t_1^{2\alpha_1}}{|x_1-x_I|^{2\alpha_1}}\\
&\qquad\times  {dt_1\over t_1^{2n+1}}{dt_2\over t_2^{2m+1+4M}}dx_2dx_1.
\end{align*}
We now choose $\alpha_1=n+{1\over2}$ and $\alpha={1\over2}$, then we obtain that
\begin{align*}
\textrm{I}_{22}
&\lesssim |R|\|a_{R,1}\|_2^2(2^{j_1}\ell(I))^{-n-1}(2^{j_2}\ell(J))^{-m-1}\int_0^{\ell(I)} dt_1 \int_{\ell(J)}^\infty t_2^{-4M}dt_2 \\
&\quad+   |R|\|a_{R,1}\|_2^2(2^{j_1}\ell(I))^{-n-1}(2^{j_2}\ell(J))^{m}\int_0^{\ell(I)} t_1^{2\alpha_1-2n-1}dt_1 \int_{2^{j_2-2}\ell(J)}^\infty t_2^{-2m-1-4M}dt_2 \\
&\lesssim |R| \ell(J)^{-4M} \|a_{R,1}\|_2^2 \ (2^{j_1}\ell(I))^{-n} 2^{-j_1} (2^{j_2}\ell(J))^{-m} 2^{-j_2}  \\
&\quad+  |R|\ell(J)^{-4M}\|a_{R,1}\|_2^2\ \ (2^{j_1}\ell(I))^{-n} 2^{-j_1} (2^{j_2}\ell(J))^{-m} 2^{-4M j_2},
\end{align*}
where we only require that $4M>1$.

We now to estimate $\textrm{I}_{23}$. 
\begin{align*}
\textrm{I}_{23}
&=\int_{|x_1-x_I|\approx 2^{j_1}\ell(I)} \int_{|x_2-x_J|\approx 2^{j_2}\ell(J)}\int_{\ell(I)}^\infty\!\!\int_0^{\ell(J)}
  \!\! \int_{\mathbb R^{n}}\int_{\mathbb R^{m}} \int_{\Bbb R^m}\chi^{(1)}_{t_1}(x_1-y_1,x_2-w_2)\chi^{(2)}_{t_2}(w_2-y_2)dw_2\\
&\qquad\times \Big|\big( (t_1^2{\triangle^{(1)}})^{M+1}e^{-t_1^2{\triangle^{(1)}}}\otimes_2
t_2^2{\triangle^{(2)}}e^{-t_2^2{\triangle^{(2)}}}\big) a_{R,2} (y_1,y_2)\Big|^2 {dy_2dt_2\over t_2^{m+1}}{dy_1dt_1\over
t_1^{n+m+1+4M}}dx_2dx_1\\
&\lesssim  \int_{|x_1-x_I|\approx 2^{j_1}\ell(I)} \int_{|x_2-x_J|\approx 2^{j_2}\ell(J)}\int_{\ell(I)}^\infty\!\!\int_0^{\ell(J)}\!\! \int_{\mathbb R^{n}}\int_{\mathbb R^{m}} \int_{\Bbb R^m}\chi^{(1)}_{t_1}(x_1-y_1,x_2-w_2)\chi^{(2)}_{t_2}(w_2-y_2)dw_2\\
&\qquad\times\Big|\int_{10I}\int_{\Bbb R^m} t_1^{-n-m} \exp\Big(-{|y_1-z_1|^2+|y_2-z_2|^2\over ct_1^2}\Big)
t_2^2{\triangle^{(2)}}e^{-t_2^2{\triangle^{(2)}}}a_{R,2} (z_1,z_2)dz_1dz_2\Big|^2 \\
&\qquad\times {dy_2dt_2\over t_2^{m+1}}{dy_1dt_1\over
t_1^{n+m+1+4M}}dx_2dx_1\\
&=  \int_{|x_1-x_I|\approx 2^{j_1}\ell(I)} \int_{|x_2-x_J|\approx 2^{j_2}\ell(J)}\bigg(\int_{\ell(I)}^{\frac{|x_1-x_I|+|x_2-x_J|}8}+ \int_{\frac{|x_1-x_I|+|x_2-x_J|}8}^\infty \bigg)\!\!\int_0^{\ell(J)}\!\! \int_{\mathbb R^{n}}\int_{\mathbb R^{m}}\\
&\qquad \times \int_{\Bbb R^m}\chi^{(1)}_{t_1}(x_1-y_1,x_2-w_2)\chi^{(2)}_{t_2}(w_2-y_2)dw_2\\
&\qquad\times\Big|\int_{10I}\int_{\Bbb R^m} t_1^{-n-m} \exp\Big(-{|y_1-z_1|^2+|y_2-z_2|^2\over ct_1^2}\Big)
t_2^2{\triangle^{(2)}}e^{-t_2^2{\triangle^{(2)}}}a_{R,2} (z_1,z_2)dz_1dz_2\Big|^2 \\
&\qquad\times {dy_2dt_2\over t_2^{m+1}}{dy_1dt_1\over
t_1^{n+m+1+4M}}dx_2dx_1\\
&=:\textrm{I}_{231}+\textrm{I}_{232}.
\end{align*}
We first estimate the term $\textrm{I}_{231}$ and consider two cases.

\noindent Case (1):  $|x_1-x_I|\ge |x_2-x_J|$.

In this case we have $t_1<(|x_1-x_I|+|x_2-x_J|)/8\le |x_1-x_I|/4$.
Since $|x_1-x_I|>100\ell(I)$, $|x_1-y_1|<t_1$ and $z_1\in 10I$, then $|y_1-z_1|\geq  |x_1-x_I|/4\ge (|x_1-x_I|+|x_2-x_J|)/8$. Hence we have
\begin{align*}
\textrm{I}_{231}
&\lesssim\int_{|x_1-x_I|\approx 2^{j_1}\ell(I)} \int_{|x_2-x_J|\approx 2^{j_2}\ell(J)}\int_{\ell(I)}^{\frac{|x_1-x_I|+|x_2-x_J|}8}\!\!\int_0^{\ell(J)}\!\! \int_{\mathbb R^{n}}\int_{\mathbb R^{m}}
                \\
&\quad \times\!\! \int_{\Bbb R^m}\!\chi^{(1)}_{t_1}(x_1-y_1,x_2-w_2)\chi^{(2)}_{t_2}(w_2-y_2)dw_2 t_1^{-2n}\exp\Big(-{|x_1-x_I|^2\over 8ct_1^2}\Big)\exp\Big(-{|x_2-x_J|^2\over 8ct_1^2}\Big) \\
&\quad \times \Big|\int_{\Bbb R^m} t_1^{-m} \exp\Big(-{|y_2-z_2|^2\over ct_1^2}\Big)
\Big(\int_{10I}t_2^2{\triangle^{(2)}}e^{-t_2^2{\triangle^{(2)}}}  a_{R,2} (z_1,z_2)dz_1\Big)dz_2\Big|^2\\
&\qquad{dy_1dt_1\over
t_1^{n+m+1+4M}}{dy_2dt_2\over t_2^{m+1}}dx_2dx_1\\
&\lesssim\int_{|x_1-x_I|\approx 2^{j_1}\ell(I)} \int_{|x_2-x_J|\approx 2^{j_2}\ell(J)}\int_{\ell(I)}^\infty\!\!\int_0^{\ell(J)} t_1^{-2n}\frac{t_1^{2\alpha_1}}{|x_1-x_I|^{2\alpha_1}}\frac{t_1^{2\alpha_2}}{|x_2-x_J|^{2\alpha_2}} \\
&\quad\times \int_{\Bbb R^m} \Bigg| M_2\bigg(\int_{10I} |t_2^2{\triangle^{(2)}}e^{-t_2^2{\triangle^{(2)}}}a_{R,2} (z_1,\cdot)|dz_1\bigg)(y_2)\Bigg|^2dy_2{dt_1\over
t_1^{m+1+4M}}{dt_2\over t_2}dx_2dx_1\\
&\lesssim\int_{|x_1-x_I|\approx 2^{j_1}\ell(I)} \int_{|x_2-x_J|\approx 2^{j_2}\ell(J)}\int_{\ell(I)}^\infty\!\!\int_0^{\ell(J)} \frac{t_1^{2\alpha_1}}{|x_1-x_I|^{2\alpha_1}}\frac{t_1^{2\alpha_2}}{|x_2-x_J|^{2\alpha_2}} \\
&\quad\times \int_{\Bbb R^m} \bigg|\bigg(\int_{10I} |t_2^2{\triangle^{(2)}}e^{-t_2^2{\triangle^{(2)}}}a_{R,2} (z_1,\cdot)|dz_1\bigg)(y_2)\bigg|^2dy_2{dt_1\over
t_1^{2n+m+1+4M}}{dt_2\over t_2}dx_2dx_1\\
&\lesssim\int_{|x_1-x_I|\approx 2^{j_1}\ell(I)} \int_{|x_2-x_J|\approx 2^{j_2}\ell(J)}\int_{\ell(I)}^\infty \frac{t_1^{2\alpha_1}}{|x_1-x_I|^{2\alpha_1}}\frac{t_1^{2\alpha_2}}{|x_2-x_J|^{2\alpha_2}} \\
&\quad\times |I| \int_{10I} \ \int_{\Bbb R^m} \int_0^\infty |t_2^2{\triangle^{(2)}}e^{-t_2^2{\triangle^{(2)}}}a_{R,2} (z_1,y_2)|^2{dt_2\over t_2}dy_2\ dz_1{dt_1\over
t_1^{2n+m+1+4M}}dx_2dx_1\\
&\lesssim |I|\|a_{R,2}\|_2^2\int_{|x_1-x_I|\approx 2^{j_1}\ell(I)} \int_{|x_2-x_J|\approx 2^{j_2}\ell(J)}\int_{\ell(I)}^\infty \frac{t_1^{2\alpha_1}}{|x_1-x_I|^{2\alpha_1}}\frac{t_1^{2\alpha_2}}{|x_2-x_J|^{2\alpha_2}} {dt_1dx_2dx_1\over t_1^{2n+m+1+4M}},
\end{align*}
where the second inequality follows from the fact that $e^{-s}\leq Cs^{-k}$  for any $k>0$, the third inequality follows from
the $L^2(\mathbb R^m)$-boundedness of the Hardy--Littlewood maximal function $M_2$, the fourth inequality follows from
H\"older's inequality and the last inequality follows from the $L^2(\mathbb R^m)$-boundedness of the Littlewood--Paley square function
with respect to $t_2^2{\triangle^{(2)}}e^{-t_2^2{\triangle^{(2)}}}$.

To continue, we choose $\alpha_1>n$ and $\alpha_2>m$ and $2\alpha_1+2\alpha_2<2n+m+4M$, then we have
\begin{align*}
\textrm{I}_{231}
&\lesssim |I|\|a_{R,2}\|_2^2 (2^{j_1}\ell(I))^{n-2\alpha_1}(2^{j_2}\ell(J))^{m-2\alpha_2}\int_{\ell(I)}^\infty t_1^{2\alpha_1+2\alpha_2-2n-m-4M-1} dt_1\\
&\lesssim |I|\|a_{R,2}\|_2^2 (2^{j_1}\ell(I))^{-n}(2^{j_1}\ell(I))^{2n-2\alpha_1} (2^{j_2}\ell(J))^{-m}(2^{j_2}\ell(J))^{2m-2\alpha_2} \ell(I)^{2\alpha_1+2\alpha_2-2n-m-4M} \\
&\lesssim |I| \ell(I)^{-4M}\|a_{R,2}\|_2^2 (2^{j_1}\ell(I))^{-n}2^{(2n-2\alpha_1)j_1} (2^{j_2}\ell(J))^{-m}2^{(2m-2\alpha_2)j_2} \ell(J)^{m} \\
&\lesssim |R| \ell(I)^{-4M}\|a_{R,2}\|_2^2 (2^{j_1}\ell(I))^{-n}2^{(2n-2\alpha_1)j_1} (2^{j_2}\ell(J))^{-m}2^{(2m-2\alpha_2)j_2}.
\end{align*}
We also note that from these conditions of $\alpha_1$ and $\alpha_2$, we obtain that for the order of the cancellation of the atom $a$, we require that $M>m/4$.

\bigskip

\noindent Case (2):  $|x_1-x_I|< |x_2-x_J|$.

In this case we have $t_1+t_2<(|x_1-x_I|+|x_2-x_J|)/8+\ell(J)\le |x_2-x_J|/2$ and $|x_1-x_I|/4<(|x_1-x_I|+|x_2-x_J|)/8$.
Also note that $|x_1-x_I|>100\ell(I)$, $|x_1-y_1|<t_1<|x_1-x_I|/4$ and that $z_1\in 10I$, so we have $|y_1-z_1|\ge |x_1-x_I|/4$.
Hence
\begin{align*}
\textrm{I}_{231}
&\lesssim \int_{|x_1-x_I|\approx 2^{j_1}\ell(I)} \int_{|x_2-x_J|\approx 2^{j_2}\ell(J)}\int_{\ell(I)}^{\frac{|x_1-x_I|}4}\int_0^{\ell(J)}\!\! \int_{\mathbb R^{n}}\int_{\mathbb R^{m}}
                \\
&\quad\times \int_{\Bbb R^m}\chi^{(1)}_{t_1}(x_1-y_1,x_2-w_2)\chi^{(2)}_{t_2}(w_2-y_2)dw_2\ t_1^{-2n}\exp\Big(-{|x_1-x_I|^2\over 16ct_1^2}\Big) \\
&\quad\times \Big|\int_{10I}\int_{\Bbb R^m} t_1^{-m} \exp\Big(-{|y_2-z_2|^2\over ct_1^2}\Big)
t_2^2{\triangle^{(2)}}e^{-t_2^2{\triangle^{(2)}}}a_{R,2} (z_1,z_2)dz_2dz_1\Big|^2\\
&\qquad{dy_1dt_1\over
t_1^{n+m+1+4M}}{dy_2dt_2\over t_2^{m+1}}dx_2dx_1\\
&+ \int_{|x_1-x_I|\approx 2^{j_1}\ell(I)} \int_{|x_2-x_J|\approx 2^{j_2}\ell(J)}\int_{\frac{|x_1-x_I|}4}^{\frac{|x_1-x_I|+|x_2-x_J|}8}\int_0^{\ell(J)}\!\! \int_{\mathbb R^{n}}\int_{\mathbb R^{m}}
                \\
&\quad\times \int_{\Bbb R^m}\chi^{(1)}_{t_1}(x_1-y_1,x_2-w_2)\chi^{(2)}_{t_2}(w_2-y_2)dw_2\ t_1^{-2n}\\
&\quad\times \Big|\int_{10I}\int_{\Bbb R^m} t_1^{-m} \exp\Big(-{|y_2-z_2|^2\over ct_1^2}\Big)
t_2^2{\triangle^{(2)}}e^{-t_2^2{\triangle^{(2)}}}a_{R,2} (z_1,z_2)dz_2dz_1\Big|^2\\
&\qquad{dy_1dt_1\over
t_1^{n+m+1+4M}}{dy_2dt_2\over t_2^{m+1}}dx_2dx_1\\
&=:\textrm{I}_{2311}+\textrm{I}_{2312}.
\end{align*}
Note that
\begin{align*}
&\bigg|\int_{10I}\int_{\Bbb R^m} t_1^{-m} \exp\Big(-{|y_2-z_2|^2\over ct_1^2}\Big)
         t_2^2{\triangle^{(2)}}e^{-t_2^2{\triangle^{(2)}}}a_{R,2} (z_1,z_2)dz_2dz_1\bigg|\\
&\le \bigg|\int_{10I}\int_{12J}  t_1^{-m}  \exp\Big(-{|y_2-z_2|^2\over ct_1^2}\Big)
         t_2^2{\triangle^{(2)}}e^{-t_2^2{\triangle^{(2)}}}a_{R,2} (z_1,z_2)dz_2dz_1\bigg|\\
&\quad + \bigg|\int_{10I}\int_{(12J)^c}  t_1^{-m}  \exp\Big(-{|y_2-z_2|^2\over ct_1^2}\Big)
         \int_{10J} t_2^{-m}  \exp\Big(-{|z_2-u_2|^2\over ct_2^2}\Big)|a_{R,2} (z_1,u_2)|du_2\ dz_2dz_1\bigg|.
\end{align*}
Since $|x_2-x_J|>100\ell(J)$ and $|x_2-y_2|<t_1+t_2$, if $z_2\in 12I$, then we have $|y_2-z_2|\geq  |x_2-x_J|/4\ge (|x_1-x_I|+|x_2-x_J|)/8$; if $z_2\in (12J)^c$ then for $u_2\in 10J$ we have  $|z_2-u_2|>\ell(J)$. As a consequence, from the almost orthogonality estimate \eqref{t12} we get that
\begin{equation}\label{i23}
\begin{aligned}
&\bigg|\int_{10I}\int_{\Bbb R^m} t_1^{-m} \exp\Big(-{|y_2-z_2|^2\over ct_1^2}\Big)
         t_2^2{\triangle^{(2)}}e^{-t_2^2{\triangle^{(2)}}}a_{R,2} (z_1,z_2)dz_2dz_1\bigg|\\
&\lesssim \exp\Big(-{|x_2-x_J|^2\over 2ct_1^2}\Big) M_2\bigg(\int_{10I} |t_2^2{\triangle^{(2)}}e^{-t_2^2{\triangle^{(2)}}}a_{R,2} (z_1,\cdot)|dz_1\bigg)(y_2)\\
&\qquad + C\int_{10I}\int_{10J}  \exp\Big(-{\ell(J)^2\over 2ct_2^2}\Big){(\max\{t_1,t_2\})^\alpha\over ( \max\{t_1,t_2\} + |y_2-u_2| )^{m+\alpha}  } |a_{R,2} (z_1,u_2)|du_2dz_1.
\end{aligned}
\end{equation}
Plugging \eqref{i23} into $\textrm{I}_{2311}$, we have
\begin{align*}
&\textrm{I}_{2311} \\
&\lesssim \int_{|x_1-x_I|\approx 2^{j_1}\ell(I)} \int_{|x_2-x_J|\approx 2^{j_2}\ell(J)}\int_{\ell(I)}^{\frac{|x_1-x_I|}4}\!\!\int_0^{\ell(J)}\!\! \int_{\mathbb R^{n}}\int_{\mathbb R^{m}}
                \\
&\quad\times \int_{\Bbb R^m}\chi^{(1)}_{t_1}(x_1-y_1,x_2-w_2)\chi^{(2)}_{t_2}(w_2-y_2)dw_2 t_1^{-2n}\exp\Big(-{|x_1-x_I|^2\over 16ct_1^2}\Big)\exp\Big(-{|x_2-x_J|^2\over 2ct_1^2}\Big) \\
&\quad\times  \bigg|M_2\bigg(\int_{10I} |t_2^2{\triangle^{(2)}}e^{-t_2^2{\triangle^{(2)}}}a_{R,2} (z_1,\cdot)|dz_1\bigg)(y_2)\bigg|^2{dy_2dt_2\over t_2^{m+1}}{dy_1dt_1\over t_1^{n+m+1+4M}}dx_2dx_1\\
&\ \,+C \int_{|x_1-x_I|\approx 2^{j_1}\ell(I)} \int_{|x_2-x_J|\approx 2^{j_2}\ell(J)}\int_{\ell(I)}^{\frac{|x_1-x_I|}4}\!\!\int_0^{\ell(J)} \int_{\mathbb R^{n}}\int_{\mathbb R^{m}}\\
&\quad\times \int_{\Bbb R^m}\chi^{(1)}_{t_1}(x_1-y_1,x_2-w_2)\chi^{(2)}_{t_2}(w_2-y_2)dw_2 t_1^{-2n}\exp\Big(-{|x_1-x_I|^2\over 16ct_1^2}\Big)|R| \exp\Big(-{\ell(J)^2\over ct_2^2}\Big)\\
&\quad \times
    {(\max\{t_1,t_2\})^{2\alpha}\over ( \max\{t_1,t_2\} + |x_2-x_J| )^{2m+2\alpha}  } \|a_{R,2}\|_2^2{dy_2dt_2\over t_2^{m+1}}{dy_1dt_1\over t_1^{n+m+1+4M}}dx_2dx_1\\
&=:\textrm{I}_{23111}+\textrm{I}_{23112}.
\end{align*}
To estimate $\textrm{I}_{23111}$, we use the same method in case (1) to obtain
$$\textrm{I}_{23111}
\lesssim |R| \ell(I)^{-4M}\|a_{R,2}\|_2^2 (2^{j_1}\ell(I))^{-n}2^{(2n-2\alpha_1)j_1} (2^{j_2}\ell(J))^{-m}2^{(2m-2\alpha_2)j_2}
,$$
by choosing $n<\alpha_1$, $m<\alpha_2$ and $2\alpha_1+2\alpha_2<2n+m+4M$.

For the term $\textrm{I}_{23112}$, we use the fact that $e^{-s}\leq Cs^{-k}$  for any $k>0$ to show
\begin{align*}
\textrm{I}_{23112}
&\lesssim |R|\int_{|x_1-x_I|\approx 2^{j_1}\ell(I)} \int_{|x_2-x_J|\approx 2^{j_2}\ell(J)}\int_{\ell(I)}^{\frac{|x_1-x_I|}4}t_1^{-2n}\frac{t_1^{2\alpha_1}}{|x_1-x_I|^{2\alpha_1}}\\
&\quad\times\!\!\int_0^{\ell(J)}\exp\Big(-2{\ell(J)^2\over ct_2^2}\Big){dt_2\over t_2}
    {(\max\{t_1,\ell(J)\})^{2\alpha}\over |x_2-x_J|^{2m+2\alpha}  } \|a_{R,2}\|_2^2{dt_1\over t_1^{1+4M}}dx_2dx_1.
\end{align*}
Let $\alpha={1\over2}$ and {$0<2\alpha_1-2n<4M-1$}.
\begin{align*}
\textrm{I}_{23112}
&\lesssim |R|\|a_{R,2}\|_2^2\int_{|x_1-x_I|\approx 2^{j_1}\ell(I)} \int_{|x_2-x_J|\approx 2^{j_2}\ell(J)}\int_{\ell(I)}^{\frac{|x_1-x_I|}4}t_1^{-2n}\frac{t_1^{2\alpha_1}}{|x_1-x_I|^{2\alpha_1}}\\
&\qquad\times
    {t_1+\ell(J) \over  |x_2-x_J| ^{2m+1}  } {dt_1\over t_1^{1+4M}}dx_2dx_1\\
    &\lesssim |R| \|a_{R,2}\|_2^2 (2^{j_1}\ell(I))^{n-2\alpha_1} (2^{j_2}\ell(J))^{-m-1}\int_{\ell(I)}^\infty t_1^{2\alpha_1-2n-4M}dt_1\\
&\qquad + |R| \|a_{R,2}\|_2^2 (2^{j_1}\ell(I))^{n-2\alpha_1} (2^{j_2}\ell(J))^{-m-1}\ell(J)\int_{\ell(I)}^\infty t_1^{2\alpha_1-2n-4M-1}dt_1\\
&\lesssim |R| \ell(I)^{-4M}\|a_{R,2}\|_2^2 (2^{j_1}\ell(I))^{-n} 2^{(2n-2\alpha_1)j_1} (2^{j_2}\ell(J))^{-m}2^{-j_2}. 
\end{align*}
This finishes the estimate for the term $\textrm{I}_{2311}$.

We plug \eqref{i23} into $\textrm{I}_{2312}$ to get
\begin{align*}
&\textrm{I}_{2312} \\
&\lesssim \int_{|x_1-x_I|\approx 2^{j_1}\ell(I)} \int_{|x_2-x_J|\approx 2^{j_2}\ell(J)}\int_{\frac{|x_1-x_I|}4}^{\frac{|x_1-x_I|+|x_2-x_J|}8}\!\!\int_0^{\ell(J)}\!\! \int_{\mathbb R^{n}}\int_{\mathbb R^{m}}
                \\
&\quad\times \int_{\Bbb R^m}\chi^{(1)}_{t_1}(x_1-y_1,x_2-w_2)\chi^{(2)}_{t_2}(w_2-y_2)dw_2\ t_1^{-2n}\exp\Big(-{|x_2-x_J|^2\over ct_1^2}\Big) \\
&\qquad\times  \bigg|M_2\bigg(\int_{10I} |t_2^2{\triangle^{(2)}}e^{-t_2^2{\triangle^{(2)}}}a_{R,2} (z_1,\cdot)|dz_1\bigg)(y_2)\bigg|^2{dy_1dt_1\over t_1^{n+m+1+4M}}{dy_2dt_2\over t_2^{m+1}}dx_2dx_1\\
&\ \ + \int_{|x_1-x_I|\approx 2^{j_1}\ell(I)} \int_{|x_2-x_J|\approx 2^{j_2}\ell(J)}\int_{\frac{|x_1-x_I|}4}^{\frac{|x_1-x_I|+|x_2-x_J|}8}\!\!\int_0^{\ell(J)} \int_{\mathbb R^{n}}\int_{\mathbb R^{m}}\\
&\quad\times \int_{\Bbb R^m}\chi^{(1)}_{t_1}(x_1-y_1,x_2-w_2)\chi^{(2)}_{t_2}(w_2-y_2)dw_2\ t_1^{-2n} \exp\Big(-{\ell(J)^2\over ct_2^2}\Big)\\
&\qquad \times
    \bigg|\int_{10I}\int_{10J}  {(\max\{t_1,t_2\})^{\alpha}\over ( \max\{t_1,t_2\} + |y_2-u_2| )^{m+\alpha}  }   |a_{R,2} (z_1,u_2)|du_2dz_1\bigg|^2\ {dy_1dt_1\over t_1^{n+m+1+4M}}{dy_2dt_2\over t_2^{m+1}}dx_2dx_1\\
&=:\textrm{I}_{23121}+\textrm{I}_{23122}.
\end{align*}

To estimate $\textrm{I}_{23121}$, we use the same method in case (1) to obtain
\begin{align*}
\textrm{I}_{23121}
&\lesssim  |I|(2^{j_1}\ell(I))^{2\alpha_2-2n-m-4M}(2^{j_2}\ell(J))^{m-2\alpha_2} \|a_{R,2}\|_2^2\\
&\lesssim |R| \ell(I)^{-4M}\|a_{R,2}\|_2^2 (2^{j_1}\ell(I))^{-n}2^{(2\alpha_2-n-m-4M)j_1} (2^{j_2}\ell(J))^{-m}2^{(2m-2\alpha_2)j_2}
\end{align*}
by choosing $m<\alpha_2$ and $2\alpha_2<2n+m+4M$.

We then estimate the term $\textrm{I}_{23122}$. Note that in this case
$|x_2-y_2|<t_1+t_2<\frac{|x_1-x_I|}4+\ell(J) < \frac{|x_2-x_J|}4+\ell(J)$. Hence, we have
$|y_2-u_2|> |x_2-x_J| - |x_2-y_2|-|u_2-x_J|> |x_2-x_J| -\frac{|x_2-x_J|}4- 2\ell(J) >\frac{|x_2-x_J|}2 $.
Thus, we get that
\begin{align*}
\textrm{I}_{23122}
&\lesssim \int_{|x_1-x_I|\approx 2^{j_1}\ell(I)} \int_{|x_2-x_J|\approx 2^{j_2}\ell(J)}\int_{\frac{|x_1-x_I|}4}^\infty\!\!\int_0^{\ell(J)}  \exp\Big(-2{\ell(J)^2\over ct_2^2}\Big)\\
&\qquad \times  { \max\{t_1^{2\alpha},t_2^{2\alpha}\}\over  {|x_2-x_J|} ^{2m+2\alpha}  }  |R|\|a_{R,2}\|_2^2{dt_1\over t_1^{2n+1+4M}}{dt_2\over t_2}dx_2dx_1\\
&\lesssim  |R| \|a_{R,2}\|_2^2 (2^{j_1}\ell(I))^{-n}(2^{j_2}\ell(J))^{-m}  (2^{j_1}\ell(I))^{2n}(2^{j_2}\ell(J))^{-2\alpha}\\
&\qquad\times \int_{\frac{|x_1-x_I|}4}^\infty {1\over t_1^{2n+1+4M-2\alpha}} dt_1\int_0^{\ell(J)}\exp\Big(-2{\ell(J)^2\over ct_2^2}\Big){dt_2\over t_2}  \\
&\quad+ |R| \|a_{R,2}\|_2^2 (2^{j_1}\ell(I))^{-n}(2^{j_2}\ell(J))^{-m}  (2^{j_1}\ell(I))^{2n}(2^{j_2}\ell(J))^{-2\alpha}\\
&\qquad\times \int_{\frac{|x_1-x_I|}4}^\infty {1\over t_1^{2n+1+4M}} dt_1\int_0^{\ell(J)}\exp\Big(-2{\ell(J)^2\over ct_2^2}\Big) t_2^{2\alpha}{dt_2\over t_2} \\
&\lesssim |R| \|a_{R,2}\|_2^2 (2^{j_1}\ell(I))^{-n}(2^{j_2}\ell(J))^{-m}  2^{(-4M+2\alpha)j_1}2^{-2\alpha j_2},
\end{align*}
where we choose $0<\alpha<2M$, and  the last inequality follows from the fundamental estimates that
$\int_0^{\ell(J)}\exp\big(-2{\ell(J)^2\over ct_2^2}\big) {dt_2\over t_2}\lesssim 1$ and that $\int_0^{\ell(J)}\exp\big(-2{\ell(J)^2\over ct_2^2}\big) t_2^{2\alpha}{dt_2\over t_2}\lesssim \ell(J)^{2\alpha}$.

Now we consider the term $\textrm{I}_{232}$. Note that in this case, there is no lower bound for $|y_1-z_1|$, and hence we can only use the fact that $\exp\big(-{|y_1-z_1|^2\over ct_1^2}\big)\leq1$. Then, from this observation and from the $L^2(\mathbb R^m)$-boundedness of the Hardy--Littlewood Maximal function and the Littlewood--Paley square function, we obtain that
\begin{align*}
\textrm{I}_{232}
&\lesssim  \int_{|x_1-x_I|\approx 2^{j_1}\ell(I)} \int_{|x_2-x_J|\approx 2^{j_2}\ell(J)}\int_{\frac{|x_1-x_I|+|x_2-x_J|}8}^\infty\!\!\int_0^{\ell(J)}\!\! \int_{\mathbb R^{n}}\int_{\mathbb R^{m}}\\
&\qquad \times \int_{\Bbb R^m}\chi^{(1)}_{t_1}(x_1-y_1,x_2-w_2)\chi^{(2)}_{t_2}(w_2-y_2)dw_2\\
&\qquad\times\Big|\int_{10I}\int_{\Bbb R^m} t_1^{-n-m} \exp\Big(-{|y_1-z_1|^2+|y_2-z_2|^2\over ct_1^2}\Big)
t_2^2{\triangle^{(2)}}e^{-t_2^2{\triangle^{(2)}}}a_{R,2} (z_1,z_2)dz_2dz_1\Big|^2 \\
&\qquad\times {dy_2dt_2\over t_2^{m+1}}{dy_1dt_1\over
t_1^{n+m+1+4M}}dx_2dx_1\\
&\lesssim  \int_{|x_1-x_I|\approx 2^{j_1}\ell(I)} \int_{|x_2-x_J|\approx 2^{j_2}\ell(J)}\int_{\frac{|x_1-x_I|+|x_2-x_J|}8}^\infty\!\!\int_0^{\ell(J)}\!\! \int_{\mathbb R^{m}} t_1^n t_2^m \\
&\qquad\times  t_1^{-2n}\bigg|\int_{\Bbb R^m} t_1^{-m} \exp\Big(-{ |y_2-z_2|^2\over ct_1^2}\Big)
\bigg(\int_{10I}t_2^2{\triangle^{(2)}}e^{-t_2^2{\triangle^{(2)}}}a_{R,2} (z_1,z_2)dz_1\bigg)\ dz_2\bigg|^2 \\
&\qquad\times {dy_2dt_2\over t_2^{m+1}}{dt_1\over
t_1^{n+m+1+4M}}dx_2dx_1\\
&\lesssim  \int_{|x_1-x_I|\approx 2^{j_1}\ell(I)} \int_{|x_2-x_J|\approx 2^{j_2}\ell(J)}\int_{\frac{|x_1-x_I|+|x_2-x_J|}8}^\infty \\
&\qquad\times  \int_{\mathbb R^{m}}\, \int_0^\infty \Big|M_2\Big(\int_{10I}|t_2^2{\triangle^{(2)}}e^{-t_2^2{\triangle^{(2)}}}a_{R,2} (z_1,\cdot)|dz_1\Big)(y_2)\Big|^2{dt_2\over t_2} \, dy_2\ {dt_1\over t_1^{2n+m+1+4M}}dx_2dx_1\\
&\lesssim  \int_{|x_1-x_I|\approx 2^{j_1}\ell(I)} \int_{|x_2-x_J|\approx 2^{j_2}\ell(J)}\int_{\frac{|x_1-x_I|+|x_2-x_J|}8}^\infty \\
&\qquad\times  \int_{\mathbb R^{m}}\, \Big|\int_{10I}|a_{R,2} (z_1,y_2)|dz_1\Big|^2 dy_2\ {dt_1dx_2dx_1\over t_1^{2n+m+1+4M}}\\
&\lesssim |I|\|a_{R,2}\|_2^2 \int_{|x_1-x_I|\approx 2^{j_1}\ell(I)} \int_{|x_2-x_J|\approx 2^{j_2}\ell(J)} \frac1{(|x_1-x_I|+|x_2-x_J|)^{2n+m+4M}} dx_2dx_1\\
&\lesssim  |I| \|a_{R,2}\|_2^2 \frac{ (2^{j_1}\ell(I))^n (2^{j_2}\ell(J))^m }{(2^{j_1}\ell(I)+2^{j_2}\ell(J))^{2n+m+4M}} \\
&\lesssim |R| \ell(I)^{-4M}\|a_{R,2}\|_2^2 (2^{j_1}\ell(I))^{-n} (2^{j_2}\ell(J))^{-m}    2^{-2Mj_1}2^{-(m-2M)j_2},
\end{align*}
where in the last inequality we use the fact that $\ell(I)\leq \ell(J)$ and we also require that $M>{m\over2}$.

Finally we estimate the term $\textrm{I}_{24}$. Note that  $a_R=(({\triangle^{(1)}})^M\otimes_2({\triangle^{(2)}})^M)b_R$ and supp $b_{R}\subset 10R=10(I\times J)$.
Let $A(x_1,x_2)=\frac{|x_1-x_I|+|x_2-x_J|}8.$ Then
\begin{align*}
&\textrm{I}_{24}\\
&= \int_{|x_1-x_I|\approx 2^{j_1}\ell(I)} \int_{|x_2-x_J|\approx 2^{j_2}\ell(J)}\bigg(\int_{\ell(I)}^{A(x_1,x_2)}\int_{\ell(J)}^{A(x_1,x_2)}+\int_{\ell(I)}^{A(x_1,x_2)}\int_{A(x_1,x_2)}^\infty +\int_{A(x_1,x_2)}^\infty\int_{\ell(J)}^{A(x_1,x_2)}\\
 &\quad   +\int_{A(x_1,x_2)}^\infty\int_{A(x_1,x_2)}^\infty \bigg)   \int_{\mathbb R^{n}}\int_{\mathbb R^{m}} \int_{\Bbb R^m}\chi^{(1)}_{t_1}(x_1-y_1,x_2-w_2)\chi^{(2)}_{t_2}(w_2-y_2)dw_2\\
&\qquad\times \Big|\big((t_1^2{\triangle^{(1)}})^{M+1}e^{-t_1^2{\triangle^{(1)}}}\otimes_2 (t_2^2{\triangle^{(2)}})^{M+1}e^{-t_2^2{\triangle^{(2)}}}\big) b_R (y_1,y_2)\Big|^2 {dy_2dt_2\over t_2^{m+4M+1}}{dy_1dt_1\over
t_1^{n+m+4M+1}} dx_2dx_1\\
&=:\textrm{I}_{241}+\textrm{I}_{242}+\textrm{I}_{243}+\textrm{I}_{244}.
\end{align*}
To estimate the terms $\textrm{I}_{241}$, 
we consider two cases:

\noindent Case(3):  $|x_1-x_I|\ge |x_2-x_J|$.

In this case we see that  $t_1<(|x_1-x_I|+|x_2-x_J|)/8\le |x_1-x_I|/4$.
Since $|x_1-x_I|>100\ell(I)$, $|x_1-y_1|<t_1$ and $z_1\in 10I$, then $|y_1-z_1|\geq  |x_1-x_I|/4\ge (|x_1-x_I|+|x_2-x_J|)/8$. Based on these observations, we get that
\begin{align*}
&\textrm{I}_{241}\\
&\lesssim \int_{|x_1-x_I|\approx 2^{j_1}\ell(I)} \int_{|x_2-x_J|\approx 2^{j_2}\ell(J)}\int_{\ell(I)}^{A(x_1,x_2)}\!\!\int_{\ell(J)}^\infty\!\! \int_{\mathbb R^{n}}\int_{\mathbb R^{m}} \\
&\quad\times \int_{\Bbb R^m}\chi^{(1)}_{t_1}(x_1-y_1,x_2-w_2)\chi^{(2)}_{t_2}(w_2-y_2)dw_2 t_1^{-2n}\exp\Big(-{|x_1-x_I|^2\over ct_1^2}\Big)\exp\Big(-{|x_2-x_J|^2\over ct_1^2}\Big) \\
&\quad\times \Big|\int_{10I}\int_{\Bbb R^m}\int_{\Bbb R^m} t_1^{-m} \exp\Big(-{|y_2-z_2|^2\over ct_1^2}\Big)
t_2^{-m} \exp\Big(-{|z_2-u_2|^2\over ct_1^2}\Big)|b_{R} (z_1,u_2)|du_2dz_2dz_1\Big|^2\\
&\qquad{dy_1dt_1\over
t_1^{n+m+1+4M}}{dy_2dt_2\over t_2^{4M+1}}dx_2dx_1\\
&\lesssim \int_{|x_1-x_I|\approx 2^{j_1}\ell(I)} \int_{|x_2-x_J|\approx 2^{j_2}\ell(J)}\int_{\ell(I)}^\infty\!\!\int_{\ell(J)}^\infty t_1^{-2n}\exp\Big(-{|x_1-x_I|^2\over ct_1^2}\Big)\exp\Big(-{|x_2-x_J|^2\over ct_1^2}\Big) \\
&\quad\times {\max\{t_1,t_2\}^{2\alpha}\over (\max\{t_1,t_2\})^{2m+2\alpha}} |R|\|b_R\|_2^2 {dt_1\over t_1^{4M+1}}{dt_2\over t_2^{4M+1}}dx_2dx_1,
\end{align*}
where the last inequality follows by using the almost orthogonality estimate as in \eqref{t12} and by skipping the $|y_2-u_2|$ in this estimate.
We then use the fact that $e^{-s}\leq Cs^{-k}$  for any $k>0$ to obtain
\begin{align*}
\textrm{I}_{241}
&\lesssim \int_{|x_1-x_I|\approx 2^{j_1}\ell(I)} \int_{|x_2-x_J|\approx 2^{j_2}\ell(J)}\int_{\ell(I)}^\infty\!\!\int_{\ell(J)}^\infty {1\over|x_1-x_I|^{2\alpha_1}} {1\over|x_2-x_J|^{2\alpha_2}} \\
&\quad\times  (t_1^{-2m}+t_2^{-2m}) |R|\|b_R\|_2^2 {dt_1\over t_1^{2n-2\alpha_1-2\alpha_2+1+4M}}{dt_2\over t_2^{4M+1}}dx_2dx_1\\
&\lesssim  |R|\ell(I)^{-4M}\ell(J)^{-4M}\|b_R\|_2^2\,(2^{j_1}\ell(I))^{-n}(2^{j_2}\ell(J))^{-m} 2^{(2n-2\alpha_1)j_1}2^{(2m-2\alpha_2)j_2},
\end{align*}
where the last inequality follows from the standard integration estimate, and we require that $n<\alpha_1$, $m<\alpha_2$ and $2\alpha_1+2\alpha_2<2n+4M$. This also implies that $M>{m\over2}$.

\smallskip
\noindent Case (4):  $|x_1-x_I|< |x_2-x_J|$.

In this case we have $t_1+t_2<(|x_1-x_I|+|x_2-x_J|)/4\le |x_2-x_J|/2$ and $|x_1-x_I|/4<(|x_1-x_I|+|x_2-x_J|)/8$.
Now we consider the lower bound for $|y_1-z_1|$. We first consider the case $t_1<|x_1-x_I|/4$. Since $|x_1-x_I|>100\ell(I)$, $|x_1-y_1|<t_1<|x_1-x_I|/4$ and $z_1\in 10I$, then $|y_1-z_1|\ge |x_1-x_I|/4$. For the case  $t_1>|x_1-x_I|/4$, although we know that $t_1<(|x_1-x_I|+|x_2-x_J|)/8$, there is no specific estimate for the lower bound for $|y_1-z_1|$, thus we can only use the fact that $\exp\big(-{|y_1-z_1|^2\over ct_1^2}\big)\leq1$.
Moreover, since $|x_2-x_J|>100\ell(J)$, $|x_2-y_2|<t_1+t_2< |x_2-x_J|/2$, then for $u_2\in 10J$ we have $|y_2-u_2|\geq  |x_2-x_J|/4\ge (|x_1-x_I|+|x_2-x_J|)/8$. Now based on these observations, by splitting the range of $t_1$ and by using the almost orthogonality estimate as in \eqref{t12} similar to Case (3) above, we have that
\begin{align*}
\textrm{I}_{241}
&\lesssim \int_{|x_1-x_I|\approx 2^{j_1}\ell(I)} \int_{|x_2-x_J|\approx 2^{j_2}\ell(J)}\bigg(\int_{\ell(I)}^{\frac{|x_1-x_I|}4}\int_{\ell(J)}^{A(x_1,x_2)}+\int_{\frac{|x_1-x_I|}4}^{A(x_1,x_2)}\int_{\ell(J)}^{A(x_1,x_2)} \bigg) \int_{\mathbb R^{n}}\int_{\mathbb R^{m}}\\
 &\qquad\times   \int_{\Bbb R^m}\chi^{(1)}_{t_1}(x_1-y_1,x_2-w_2)\chi^{(2)}_{t_2}(w_2-y_2)dw_2\\ 
&\qquad\times \Big|\int_{10I}\int_{\Bbb R^m}\int_{\Bbb R^m} t_1^{-n-m} \exp\Big(-{|x_1-x_I|^2+|y_2-z_2|^2\over ct_1^2}\Big)
\\
&\qquad\qquad\qquad \ t_2^{-m} \exp\Big(-{|z_2-u_2|^2\over ct_1^2}\Big)|b_{R} (z_1,u_2)|du_2dz_2dz_1\Big|^2{dy_2dt_2\over t_2^{m+4M+1}}{dy_1dt_1\over
t_1^{n+m+4M+1}}dx_2dx_1\\
&\lesssim  \int_{|x_1-x_I|\approx 2^{j_1}\ell(I)} \int_{|x_2-x_J|\approx 2^{j_2}\ell(J)}\int_{\ell(I)}^{\frac{|x_1-x_I|}4}\int_{\ell(J)}^{A(x_1,x_2)} t_1^{-2n}\exp\Big(-{|x_1-x_I|^2\over ct_1^2}\Big)\\
&\qquad\times \frac{(\max\{t_1,t_2\})^{2\alpha}}{|x_2-x_J|^{2m+2\alpha}}|R|\|b_R \|_2^2 {dt_1\over t_1^{4M+1}}{dt_2\over t_2^{4M+1}}dx_2dx_1\\
&\quad + \int_{|x_1-x_I|\approx 2^{j_1}\ell(I)} \int_{|x_2-x_J|\approx 2^{j_2}\ell(J)}\\
&\qquad\qquad\int_{\frac{|x_1-x_I|}4}^{A(x_1,x_2)}\int_{\ell(J)}^\infty\frac{(\max\{t_1,t_2\})^{2\alpha}}{|x_2-x_J|^{2m+2\alpha}}|R|\|b_R \|_2^2 {dt_1\over t_1^{2n+4M+1}}{dt_2\over t_2^{4M+1}}dx_2dx_1.
\end{align*}
We use the facts that $e^{-s}\leq Cs^{-k}$  for any $k>0$ and that $\ell(I)\leq \ell(J)$ to obtain that
\begin{align*}
\textrm{I}_{241}
&\lesssim  |R| \ell(I)^{-4M}\ell(J)^{-4M} \|b_R \|_2^2(2^{j_1}\ell(I))^{-n}(2^{j_2}\ell(J))^{-m} 2^{-(2n-2\alpha_1)j_1} 2^{-2\alpha j_2}\\
&\quad+  |R| \ell(I)^{-4M}\ell(J)^{-4M} \|b_R \|_2^2(2^{j_1}\ell(I))^{-n}(2^{j_2}\ell(J))^{-m} 2^{-(4M-2\alpha)j_1} 2^{-2\alpha j_2}\\
&\quad+  |R| \ell(I)^{-4M}\ell(J)^{-4M} \|b_R \|_2^2(2^{j_1}\ell(I))^{-n}(2^{j_2}\ell(J))^{-m} 2^{-4Mj_1} 2^{-2\alpha j_2}
\end{align*}
for $n<\alpha_1$, $0<\alpha$, $\alpha<2M$ and $\alpha_1+\alpha<n+2M$.

We then estimate the term $\textrm{I}_{242}$. Similar to the estimate of  the term $\textrm{I}_{241}$, we also consider Case (3) and  Case (4) for the comparison of $|x_1-x_I|$ and $|x_2-x_J|$. And we also
 note that $t_1<(|x_1-x_I|+|x_2-x_J|)/8 \le t_2$. Hence, there is no lower bound for $|y_2-u_2|$ and the almost orthogonality estimate
 appearing in   the estimates for term $\textrm{I}_{242}$ will be replaced by
 ${1\over t_2^{2m}}$. Then we have
\begin{align*}
\textrm{I}_{242}
&\lesssim \int_{|x_1-x_I|\approx 2^{j_1}\ell(I)} \int_{|x_2-x_J|\approx 2^{j_2}\ell(J)}\int_{\ell(I)}^\infty\!\!\int_{|x_1-x_I|+|x_2-x_J|\over8}^\infty t_1^{-2n}\exp\Big(-{|x_1-x_I|^2\over ct_1^2}\Big) \\
&\quad\times {1\over t_2^{2m}} |R|\|b_R\|_2^2 {dt_1\over t_1^{4M+1}}{dt_2\over t_2^{4M+1}}dx_2dx_1
\\
&\quad + \int_{|x_1-x_I|\approx 2^{j_1}\ell(I)} \int_{|x_2-x_J|\approx 2^{j_2}\ell(J)}\int_{\frac{|x_1-x_I|}4}^{A(x_1,x_2)}\\
&\qquad\qquad\int_{{|x_1-x_I|+|x_2-x_J|\over8}}^\infty\frac{1}{ t_2^{2m}}|R|\|b_R \|_2^2 {dt_1\over t_1^{2n+4M+1}}{dt_2\over t_2^{4M+1}}dx_2dx_1
\\
&\lesssim  |R| \ell(I)^{-4M}\ell(J)^{-4M} \|b_R \|_2^2(2^{j_1}\ell(I))^{-n}(2^{j_2}\ell(J))^{-m} 2^{-(2\alpha_1-2n)j_1} 2^{-4M j_2}\\
&\quad + |R| \ell(I)^{-4M}\ell(J)^{-4M} \|b_R \|_2^2(2^{j_1}\ell(I))^{-n}(2^{j_2}\ell(J))^{-m} 2^{-4Mj_1} 2^{-4M j_2}.
\end{align*}
Here we require that {$n<\alpha_1<4M+2n.$}

We now consider the term $\textrm{I}_{243}$. And we also
 note that in this case $t_1> t_2$. Hence, there is no lower bound for $|y_1-z_1|$. So we will use the fact that $\exp\big(-{|y_1-z_1|^2\over ct_1^2}\big)\leq1$. Moreover, there is also no lower bound for $|y_2-z_2|$.  And hence the almost orthogonality estimate
 appearing in  the estimates for term $\textrm{I}_{242}$ will be replaced by
 ${1\over t_1^{2m}}$. Then we have
\begin{align*}
\textrm{I}_{243}
&\lesssim  |R|\|b_R\|_2^2\int_{|x_1-x_I|\approx 2^{j_1}\ell(I)} \int_{|x_2-x_J|\approx 2^{j_2}\ell(J)}\int_{A(x_1,x_2)}^\infty {dt_1\over t_1^{2n+2m+4M+1}} \int_{\ell(J)}^\infty {dt_2\over t_2^{4M+1}}dx_1dx_2\\
&\lesssim  |R|\|b_R\|_2^2 (2^{j_1}\ell(I))^n ( 2^{j_2}\ell(J))^m {1\over (2^{j_1}\ell(I)+ 2^{j_2}\ell(J))^{2n+2m+4M}} {1\over \ell(J)^{4M}}\\
&\lesssim  |R|\ell(I)^{-4M}\ell(J)^{-4M}\|b_R\|_2^2 (2^{j_1}\ell(I))^{-n}(2^{j_2}\ell(J))^{-m}  2^{-2Mj_1}2^{-2Mj_2}.
\end{align*}

We finally estimate the term $\textrm{I}_{244}$. And we also
 note that in this case there are no lower bounds for $|y_1-z_1|$ or $|y_2-z_2|$. So we will use the fact that $\exp\big(-{|y_1-z_1|^2\over ct_1^2}\big)\leq1$.  And the almost orthogonality estimate
 appearing in  the estimates for term $\textrm{I}_{241}$ will be replaced by
 ${1\over \max\{ t_1,t_2\}^{2m}}$. Then we have
\begin{align*}
\textrm{I}_{244}
&\lesssim  \int_{|x_1-x_I|\approx 2^{j_1}\ell(I)} \int_{|x_2-x_J|\approx 2^{j_2}\ell(J)}\int_{A(x_1,x_2)}^\infty\int_{A(x_1,x_2)}^\infty t_1^{-2n}(\max\{t_1,t_2\})^{-2m}\\
&\qquad\times  |R|\|b_R\|_2^2{dt_1\over t_1^{4M+1}}{dt_2\over t_2^{4M+1}}dx_1dx_2\\
&\lesssim  |R|\ell(I)^{-4M}\ell(J)^{-4M}\|b_R\|_2^2 (2^{j_1}\ell(I))^{-n}(2^{j_2}\ell(J))^{-m}  2^{-2Mj_1}2^{-2Mj_2}\\
&\quad+|R|\ell(I)^{-4M}\ell(J)^{-4M}\|b_R\|_2^2 (2^{j_1}\ell(I))^{-n}(2^{j_2}\ell(J))^{-m}  2^{-4Mj_1}2^{-4Mj_2}.
\end{align*}

Combing the estimates of $\textrm{I}_{2i}$ for $i=1,2,3, 4$, we can show that
\begin{align*}
\textrm{I}_2&\lesssim |R|^{1/2} \gamma_1(R)^{-\delta}\ell(I)^{-2M}\ell(J)^{-2M}\Big(\|( (\ell(I)^2{\triangle^{(1)}})^M) \otimes_2
(\ell(J)^2{\triangle^{(2)}})^M)b_{R}\|_2\\
& +  \|((\ell(I)^2{\triangle^{(1)}})^M\otimes_2
1\!\!1_2) b_{R}\|_2+ \|(1\!\!1_1\otimes_2
(\ell(J)^2{\triangle^{(2)}})^M)b_{R}\|_2+ \|b_{R}\big\|_2\Big)
\end{align*}
for $\delta>0$. Estimates of $\textrm{I}_1$ and $\textrm{I}_2$, together with H\"older's inequality and Journ\'e's covering lemma, show that
\begin{align*}
\textrm{I}&\leq \sum_{R\in m(\Omega) }
 \int_{(100\widetilde{I})^c\times\mathbb{R}^{m}} |S_{F,\triangle^{(1)},\triangle^{(2)}}(a_R)(x_1,x_2)|dx_2dx_1\\
&\lesssim \sum_{R\in m(\Omega) }|R|^{1/2} \gamma_1(R)^{-\delta}
  \ell(I)^{-2M}\ell(J)^{-2M}\Big(\|( (\ell(I)^2{\triangle^{(1)}})^M) \otimes_2
(\ell(J)^2{\triangle^{(2)}})^M)b_{R}\|_2\\
  &+ \|((\ell(I)^2{\triangle^{(1)}})^M\otimes_2
1\!\!1_2) b_{R}\|_2+\|(1\!\!1_1\otimes_2
(\ell(J)^2\triangle^{(2)})^M)b_{R}\|_2+\|b_{R}\big\|_2\Big)\\
&\lesssim  \bigg(\sum_{R\in m(\Omega) } \ell(I)^{-4M}\ell(J)^{-4M} \Big(\|( (\ell(I)^2{\triangle^{(1)}})^M) \otimes_2
(\ell(J)^2{\triangle^{(2)}})^M)b_{R}\|^2_2\\
&\qquad +  \|((\ell(I)^2{\triangle^{(1)}})^M\otimes_2
1\!\!1_2) b_{R}\|^2_2 + \|(1\!\!1_1\otimes_2
(\ell(J)^2\triangle^{(2)})^M)b_{R}\|^2_2\\
&\qquad   +\|b_{R}\big\|^2_2\Big)\bigg)^{1/2}\Big(\sum_{R\in m(\Omega) }|R|  \gamma_1(R)^{-2\delta} \Big)^{1/2}  \\
&\lesssim  |\Omega|^{-{1\over 2}}|\Omega|^{{1\over 2}}\lesssim  1.
\end{align*}
For the term $\textrm{II}$, we have
\begin{eqnarray*}
\int_{\mathbb{R}^{n}\times (100\widetilde{J})^c}|S_{F,\triangle^{(1)},\triangle^{(2)}}(a_R)(x_1,x_2)|dx_2dx_1
 &  =&
\int_{100I \times (100\widetilde{J})^c} |S_{F,\triangle^{(1)},\triangle^{(2)}}(a_R)(x_1,x_2)|dx_2dx_1 \\[2pt]
&&+  \int_{(100I)^c\times (100\widetilde{J})^c} |S_{F,\triangle^{(1)},\triangle^{(2)}}(a_R)(x_1,x_2)|dx_2dx_1 \\[2pt]
&=& \textrm{II}_1+\textrm{II}_2.
\end{eqnarray*}
The estimate of $\textrm{II}_2$ is symmetric to the estimate of $\textrm{I}_2$, since
one can write
\begin{align*}
\textrm{II}_2&=\int_{(100I)^c\times (100\widetilde{J})^c} |S_{F,\triangle^{(1)},\triangle^{(2)}}(a_R)(x_1,x_2)|dx_2dx_1 \\
   &\leq\sum_{j_1=6}^\infty\sum_{j_2=\tilde j}^\infty\int_{|x_1-x_I|\approx 2^{j_1}\ell(I)} \int_{|x_2-x_J|\approx 2^{j_2}\ell(J)}|S_{F,\triangle^{(1)},\triangle^{(2)}}(a_R)(x_1,x_2)|dx_2dx_1 \\
   &\le \sum_{j_1=6}^\infty\sum_{j_2=\tilde j}^\infty (2^{j_1}\ell(I))^{n/2}(2^{j_2}\ell(J))^{m/2} \\
   &\qquad\times \bigg(\int_{|x_1-x_I|\approx 2^{j_1}\ell(I)} \int_{|x_2-x_J|\approx 2^{j_2}\ell(J)}|S_{F,\triangle^{(1)},\triangle^{(2)}}(a_R)(x_1,x_2)|^2dx_2dx_1\bigg)^{1/2},
\end{align*}
where $\tilde j$ is the smallest integer such that
$2^{\tilde j} J \cap (100\tilde J)^c\not=\emptyset$. Hence, following the approach and technique in the estimate of $\textrm{I}_2$,
we obtain that
\begin{align*}
\textrm{II}_2&\lesssim |R|^{1/2} \gamma_1(R)^{-\delta}\ell(I)^{-2M}\ell(J)^{-2M}\Big(\|( (\ell(I)^2{\triangle^{(1)}})^M) \otimes_2
(\ell(J)^2{\triangle^{(2)}})^M)b_{R}\|^2_2\\
& \qquad\qquad+  \|(1\!\!1_1\otimes_2
(\ell(J)^2{\triangle^{(2)}})^M)b_{R}\|^2_2+ \|b_{R}\big\|^2_2\Big)
\end{align*}
for $\delta>0$.
So we just estimate the term $\textrm{II}_1$.
We note that the estimate for this term is essentially different from the tensor product setting. We will use the 
techniques that we developed in the estimate of the term $\textrm{I}_2$. We begin by using H\"older's
inequality.
{
\begin{eqnarray*}
\textrm{II}_1
 &  \leq& \sum_{j_2= \tilde{\tilde j}}^\infty
\int_{100I} \int_{ |x_2-x_J|\approx 2^{j_2}\ell(J) } |S_{F,\triangle^{(1)},\triangle^{(2)}}(a_R)(x_1,x_2)|dx_2dx_1\\
 &  \lesssim&\sum_{j_2= \tilde{\tilde j}}^\infty  |I|^{1\over2} |2^{j_2}\ell(J) |^{m\over2} 
\bigg(\int_{100I} \int_{ |x_2-x_J|\approx 2^{j_2}\ell(J) } |S_{F,\triangle^{(1)},\triangle^{(2)}}(a_R)(x_1,x_2)|^2dx_2dx_1\bigg)^{1\over2},
\end{eqnarray*}
where $\tilde{\tilde j}$ is the smallest integer such that
$2^{\tilde{\tilde j}} I \cap (100\tilde J)^c\not=\emptyset$. We consider the four cases.
\begin{align*}
&\int_{100I} \int_{|x_2-x_J|\approx 2^{j_2}\ell(J)}|S_{F,\triangle^{(1)},\triangle^{(2)}}(a_R)(x_1,x_2)|^2dx_2dx_1 \\
&= \int_{100I} \int_{|x_2-x_J|\approx 2^{j_2}\ell(J)}\Big( \int_0^{\ell(I)}\!\!\int_0^{\ell(J)} +\int_0^{\ell(I)}\!\!\int_{\ell(J)}^\infty +
\int_{\ell(I)}^{\infty}\int_0^{\ell(J)}+ \int_{\ell(I)}^{\infty}\int_{\ell(J)}^\infty \Big)\\
&\quad\times \int_{\mathbb R^{n}}\int_{\mathbb R^{m}}\ \  \int_{\Bbb R^m}\chi^{(1)}_{t_1}(x_1-y_1,x_2-w_2)\chi^{(2)}_{t_2}(w_2-y_2)dw_2\\
&\qquad\times \Big|\big( t_1^2{\triangle^{(1)}}e^{-t_1^2{\triangle^{(1)}}}\,
t_2^2{\triangle^{(2)}}e^{-t_2^2{\triangle^{(2)}}}\big) (a_R) (y_1,y_2)\Big|^2{dy_2dt_2\over t_2^{m+1}} {dy_1dt_1\over
t_1^{n+m+1}}dx_2dx_1\\
&=:\textrm{II}_{11}+\textrm{II}_{12}+\textrm{II}_{13}+\textrm{II}_{14}.
\end{align*}

We first consider the term $\textrm{II}_{11}$.
For $t_1<\ell(I)$ and $t_2< \ell(J)$, we have that $\chi^{(1)}_{t_1}(x_1-y_1,x_2-w_2)\chi^{(2)}_{t_2}(w_2-y_2)$ gives
$|x_2-y_2|\le t_1+t_2\le 2\ell(J)$. Since $|x_2-x_J|\approx 2^{j_2}\ell(J)$, we have $|y_2-x_J|\approx 2^{j_2}\ell(J)$
and $|y_2-z_2|\ge |y_2-x_J|-|z_2-x_J|\ge |x_2-x_J|/4$ if $z_2\in B(x_J, |x_2-x_J|/4)$. 
Hence, from the estimate in \eqref{t12} and the $L^2(\Bbb R^{m+n})$-boundedness of area square function,
\begin{align*}
&\textrm{II}_{11} \\
&\leq\int_{100I} \int_{|x_2-x_J|\approx 2^{j_2}\ell(J)} \int_0^{\ell(I)}\!\!\int_0^{\ell(J)}
\int_{\mathbb R^{n}}\int_{\Bbb R^m}  \int_{\Bbb R^m}\chi^{(1)}_{t_1}(x_1-y_1,x_2-w_2)\chi^{(2)}_{t_2}(w_2-y_2)dw_2\\
&\qquad\times \Big|\big( t_1^2{\triangle^{(1)}}e^{-t_1^2{\triangle^{(1)}}}\ \chi_{B(x_J, |x_2-x_J|/4)}\ 
t_2^2{\triangle^{(2)}}e^{-t_2^2{\triangle^{(2)}}}\big) (a_R) (y_1,y_2)\Big|^2{dy_2dt_2\over t_2^{m+1}} {dy_1dt_1\over
t_1^{n+m+1}}dx_2dx_1\\
&\ \ +\int_{100I} \int_{|x_2-x_J|\approx 2^{j_2}\ell(J)} \int_0^{\ell(I)}\!\!\int_0^{\ell(J)}
\int_{\mathbb R^{n}}\int_{\Bbb R^m}  \int_{\Bbb R^m}\chi^{(1)}_{t_1}(x_1-y_1,x_2-w_2)\chi^{(2)}_{t_2}(w_2-y_2)dw_2\\
&\qquad\times \Big|\big( t_1^2{\triangle^{(1)}}e^{-t_1^2{\triangle^{(1)}}}\ \chi_{B(x_J, |x_2-x_J|/4)^c}\ 
t_2^2{\triangle^{(2)}}e^{-t_2^2{\triangle^{(2)}}}\big) (a_R) (y_1,y_2)\Big|^2{dy_2dt_2\over t_2^{m+1}} {dy_1dt_1\over
t_1^{n+m+1}}dx_2dx_1\\
&\lesssim  \int_{100I} \int_{|x_2-x_J|\approx 2^{j_2}\ell(J)}\int_0^{\ell(I)}\!\!\int_0^{\ell(J)} \int_{\mathbb R^{n}}\int_{\mathbb R^{m}} \int_{\Bbb R^m}\chi^{(1)}_{t_1}(x_1-y_1,x_2-w_2)\chi^{(2)}_{t_2}(w_2-y_2)dw_2\\
&\quad\times\Big|\int_{10I}\int_{B(x_J, |x_2-x_J|/4)} t_1^{-n-m} \exp\Big(-{|y_1-z_1|^2+|y_2-z_2|^2\over ct_1^2}\Big)
\big(t_2^2{\triangle^{(2)}}e^{-t_2^2{\triangle^{(2)}}}a_{R} (z_1,z_2)\big)dz_2\,dz_1\Big|^2 \\
&\quad\times {dy_2dt_2\over t_2^{m+1}}{dy_1dt_1\over
t_1^{n+m+1}}dx_2dx_1\\
&\ \ + \int_{|x_2-x_J|\approx 2^{j_2}\ell(J)} \!\!\int_0^{\ell(J)} \ \ 
\\
&\quad\times \int_{\mathbb R^{n}}\int_{\Bbb R^m} \int_0^{\ell(I)}  \Big|\big( t_1^2{\triangle^{(1)}}e^{-t_1^2{\triangle^{(1)}}}\ \chi_{B(x_J, |x_2-x_J|/4)^c}\ 
t_2^2{\triangle^{(2)}}e^{-t_2^2{\triangle^{(2)}}}\big) (a_R) (y_1,y_2)\Big|^2 {dt_1\over
t_1}dy_1dy_2 {dt_2dx_2\over t_2^{m+1}} \\
&\lesssim \int_{100I} \int_{|x_2-x_J|\approx 2^{j_2}\ell(J)}\int_0^{\ell(I)}\!\!\int_0^{\ell(J)} \int_{\mathbb R^{n}}\int_{\mathbb R^{m}} \int_{\Bbb R^m}\chi^{(1)}_{t_1}(x_1-y_1, w_2-y_2)\chi^{(2)}_{t_2}(x_2-w_2)dw_2\\
&\quad\times t_1^{-2n}\exp\Big(-{|x_2-x_J|^2\over 4ct_1^2}\Big) \bigg|M_2\bigg(\int_{10I} |t_2^2{\triangle^{(2)}}e^{-t_2^2{\triangle^{(2)}}}a_{R} (z_1,\cdot)|dz_1\bigg)(y_2)\bigg|^2\\
&\quad \times{dy_1dt_1\over
t_1^{n+m+1}}{dy_2dt_2\over t_2^{m+1}}dx_2dx_1\\
&\ \ + \int_{|x_2-x_J|\approx 2^{j_2}\ell(J)} \int_0^{\ell(J)}  
 \int_{\mathbb R^{n}}\int_{\Bbb R^m}  \Big| \chi_{B(x_J, |x_2-x_J|/4)^c}
t_2^2{\triangle^{(2)}}e^{-t_2^2{\triangle^{(2)}}} (a_R) (y_1,y_2)\Big|^2dy_1dy_2 {dt_2dx_2\over t_2^{m+1}} \\
&=:\textrm{II}_{111}+\textrm{II}_{112}.
\end{align*}

For the term $\textrm{II}_{111}$, by using the $L^2(\mathbb R^m)$ boundedness of the Hardy--Littlewood maximal function, we obtain that
\begin{align*}
\textrm{II}_{111}
&\lesssim |I| \int_{|x_2-x_J|\approx 2^{j_2}\ell(J)}\int_0^{\ell(I)}\!\!\int_0^{\ell(J)}t_1^{-2n} \Big({t_1 \over |x_2-x_J|}\Big)^\alpha   \\
&\quad\times \int_{\mathbb R^{m}}\bigg|M_2\bigg(\int_{10I} |t_2^2{\triangle^{(2)}}e^{-t_2^2{\triangle^{(2)}}}a_{R} (z_1,\cdot)|dz_1\bigg)(y_2)\bigg|^2dy_2{dt_1\over
t_1^{m+1}}{dt_2\over t_2}dx_2\\
&\lesssim |I| \int_{|x_2-x_J|\approx 2^{j_2}\ell(J)}\Big({1 \over 2^{j_2}\ell(J)}\Big)^\alpha  \int_0^{\ell(I)}\!\!\int_0^{\ell(J)}    \\
&\quad\times \int_{\mathbb R^{m}}\bigg|\int_{10I} |t_2^2{\triangle^{(2)}}e^{-t_2^2{\triangle^{(2)}}}a_{R} (z_1,y_2)|dz_1\bigg|^2dy_2{dt_1\over
t_1^{2n+m-\alpha+1}}{dt_2\over t_2}dx_2\\
&\lesssim |I| (2^{j_2}\ell(J))^m \Big({1 \over 2^{j_2}\ell(J)}\Big)^\alpha  \ell(I)^{\alpha-2n-m}   |I|   \\
&\quad\times \int_{10I} \int_0^{\ell(J)} \int_{\mathbb R^{m}} |t_2^2{\triangle^{(2)}}e^{-t_2^2{\triangle^{(2)}}}a_{R} (z_1,y_2)|^2 dy_2{dt_2\over t_2}dz_1\\
&\lesssim  \Big({1 \over 2^{j_2}}\Big)^{\alpha-m}  \int_{10I}\int_{\mathbb R^{m}} |a_{R} (z_1,y_2)|^2 dy_2dz_1\\
&= \Big({1 \over 2^{j_2}}\Big)^{\alpha-m}  \|a_{R} \|_2^2,
\end{align*}
where we chose $\alpha>2n+2m$, and we applied the Littlewood--Paley estimate in the last inequality.

For the term $\textrm{II}_{112}$, we have $w_2\in B(x_J, |x_2-x_J|/4)^c$ and $u_2\in 10J$ give $|w_2-u_2|\ge |x_2-x_J|/8$.
Hence
\begin{align*}
\textrm{II}_{112}
&\lesssim \int_0^{\ell(J)}\int_{|x_2-x_J|\approx 2^{j_2}\ell(J)} \int_{10I} \int_{B(x_J, |x_2-x_J|/4)^c} \\
&\qquad\times  \bigg|\int_{10J} t_2^{-m}\exp\Big(-{|w_2-u_2|^2\over 4ct_2^2}\Big)a_R(x_1,u_2)\bigg|^2 dx_1dw_2dx_2{dt_2\over t_2^{m+1}}\\
&\lesssim \int_0^{\ell(J)}\int_{|x_2-x_J|\approx 2^{j_2}\ell(J)}\int_{B(x_J, |x_2-x_J|/4)^c}  t_2^{-m}\exp\Big(-{|w_2-u_2|^2\over 4ct_2^2}\Big)dw_2 \\
&\qquad\times  |J|\|a_R\|_2^2\frac{t_2^{2\alpha}}{|x_2-x_J|^{2\alpha}} dx_2{dt_2\over t_2^{2m+1}}\\
&\lesssim ( 2^{j_2}\ell(J))^{m-2\alpha_2}|J|\|a_R\|_2^2\ell(J)^{2\alpha_2-2m} \\
&\lesssim ( 2^{j_2})^{m-2\alpha_2}\|a_R\|_2^2,
\end{align*}
for $\alpha_2>m.$

To estimate $\textrm{II}_{12}$, we write
\begin{align*}
&\textrm{II}_{12}= \int_{100I} \int_{|x_2-x_J|\approx 2^{j_2}\ell(J)}\int_0^{\ell(I)}\Big(\int_{\ell(J)}^{\frac{|x_2-x_J|}8} +\int_{\frac{|x_2-x_J|}8}^\infty \Big)\\
&\quad\times \int_{\mathbb R^{n}}\int_{\mathbb R^{m}}\ \  \int_{\Bbb R^m}\chi^{(1)}_{t_1}(x_1-y_1,x_2-w_2)\chi^{(2)}_{t_2}(w_2-y_2)dw_2\\
&\qquad\times \Big|\big( t_1^2{\triangle^{(1)}}e^{-t_1^2{\triangle^{(1)}}}\,
t_2^2{\triangle^{(2)}}e^{-t_2^2{\triangle^{(2)}}}\big) (a_R) (y_1,y_2)\Big|^2{dy_2dt_2\over t_2^{m+1}} {dy_1dt_1\over
t_1^{n+m+1}}dx_2dx_1\\
&=:\textrm{II}_{121}+\textrm{II}_{122}.
\end{align*}

For $t_1<\ell(I)$ and $t_2< \frac{|x_2-x_J|}8$, we have that $\chi^{(1)}_{t_1}(x_1-y_1,x_2-w_2)\chi^{(2)}_{t_2}(w_2-y_2)$ gives
$|x_2-y_2|\le t_1+t_2\le  \frac{|x_2-x_J|}4$. Since $|x_2-x_J|\approx 2^{j_2}\ell(J)$, we have $|y_2-x_J|\ge |x_2-x_J|-|x_2-y_2|\ge  \frac{|x_2-x_J|}2$
and $|y_2-z_2|\ge |y_2-x_J|-|z_2-x_J|\ge |x_2-x_J|/4$ if $z_2\in B(x_J, |x_2-x_J|/4)$. 
Hence, from the estimate in \eqref{t12} and the $L^2(\Bbb R^{m+n})$-boundedness of area square function,
\begin{align*}
&\textrm{II}_{121} \\
&\leq\int_{100I} \int_{|x_2-x_J|\approx 2^{j_2}\ell(J)} \int_0^{\ell(I)}\!\!\int_{\ell(J)}^{\frac{|x_2-x_J|}8} 
\int_{\mathbb R^{n}}\int_{\Bbb R^m}  \int_{\Bbb R^m}\chi^{(1)}_{t_1}(x_1-y_1,x_2-w_2)\chi^{(2)}_{t_2}(w_2-y_2)dw_2\\
&\qquad\times \Big|\big( t_1^2{\triangle^{(1)}}e^{-t_1^2{\triangle^{(1)}}}\ \chi_{B(x_J, |x_2-x_J|/4)}\ 
t_2^2{\triangle^{(2)}}e^{-t_2^2{\triangle^{(2)}}}\big) (a_R) (y_1,y_2)\Big|^2{dy_2dt_2\over t_2^{m+1}} {dy_1dt_1\over
t_1^{n+m+1}}dx_2dx_1\\
&\ \ +\int_{100I} \int_{|x_2-x_J|\approx 2^{j_2}\ell(J)} \int_0^{\ell(I)}\!\!\int_{\ell(J)}^{\frac{|x_2-x_J|}8} 
\int_{\mathbb R^{n}}\int_{\Bbb R^m}  \int_{\Bbb R^m}\chi^{(1)}_{t_1}(x_1-y_1,x_2-w_2)\chi^{(2)}_{t_2}(w_2-y_2)dw_2\\
&\qquad\times \Big|\big( t_1^2{\triangle^{(1)}}e^{-t_1^2{\triangle^{(1)}}}\ \chi_{B(x_J, |x_2-x_J|/4)^c}\ 
t_2^2{\triangle^{(2)}}e^{-t_2^2{\triangle^{(2)}}}\big) (a_R) (y_1,y_2)\Big|^2{dy_2dt_2\over t_2^{m+1}} {dy_1dt_1\over
t_1^{n+m+1}}dx_2dx_1\\
&\lesssim  \int_{100I} \int_{|x_2-x_J|\approx 2^{j_2}\ell(J)}\int_0^{\ell(I)}\!\!\int_{\ell(J)}^{\frac{|x_2-x_J|}8}  \int_{\mathbb R^{n}}\int_{\mathbb R^{m}} \int_{\Bbb R^m}\chi^{(1)}_{t_1}(x_1-y_1,x_2-w_2)\chi^{(2)}_{t_2}(w_2-y_2)dw_2\\
&\quad\times\Big|\int_{10I}\int_{B(x_J, |x_2-x_J|/4)} t_1^{-n-m} \exp\Big(-{|y_1-z_1|^2+|y_2-z_2|^2\over ct_1^2}\Big)
\big(t_2^2{\triangle^{(2)}}e^{-t_2^2{\triangle^{(2)}}}a_{R} (z_1,z_2)\big)dz_2\,dz_1\Big|^2 \\
&\quad\times {dy_2dt_2\over t_2^{m+1}}{dy_1dt_1\over
t_1^{n+m+1}}dx_2dx_1\\
&\ \ + \int_{|x_2-x_J|\approx 2^{j_2}\ell(J)} \!\!\int_{\ell(J)}^{\frac{|x_2-x_J|}8} \ \ 
\\
&\quad\times \int_{\mathbb R^{n}}\int_{\Bbb R^m} \int_0^{\ell(I)}  \Big|\big( t_1^2{\triangle^{(1)}}e^{-t_1^2{\triangle^{(1)}}}\ \chi_{(12J)^c}\ 
t_2^2{\triangle^{(2)}}e^{-t_2^2{\triangle^{(2)}}}\big) (a_R) (y_1,y_2)\Big|^2 {dt_1\over
t_1}dy_1dy_2 \ dx_2{dt_2\over t_2^{m+1}} \\
&\lesssim \int_{100I} \int_{|x_2-x_J|\approx 2^{j_2}\ell(J)}\int_0^{\ell(I)}\!\!\int_{\ell(J)}^{\frac{|x_2-x_J|}8} \int_{\mathbb R^{n}}\int_{\mathbb R^{m}} \int_{\Bbb R^m}\chi^{(1)}_{t_1}(x_1-y_1, w_2-y_2)\chi^{(2)}_{t_2}(x_2-w_2)dw_2\\
&\quad\times t_1^{-2n}\exp\Big(-{|x_2-x_J|^2\over 4ct_1^2}\Big) \bigg|M_2\bigg(\int_{10I} |t_2^2{\triangle^{(2)}}e^{-t_2^2{\triangle^{(2)}}}a_{R} (z_1,\cdot)|dz_1\bigg)(y_2)\bigg|^2\\
&\quad \times{dy_1dt_1\over
t_1^{n+m+1}}{dy_2dt_2\over t_2^{m+1}}dx_2dx_1\\
&\ \ + \int_{|x_2-x_J|\approx 2^{j_2}\ell(J)} \int_{\ell(J)}^{\frac{|x_2-x_J|}8}
 \int_{\mathbb R^{n}}\int_{\Bbb R^m}  \Big| \chi_{B(x_J, |x_2-x_J|/4)^c}
t_2^2{\triangle^{(2)}}e^{-t_2^2{\triangle^{(2)}}} (a_R) (y_1,y_2)\Big|^2dy_1dy_2 \ dx_2{dt_2\over t_2^{m+1}} \\
&=:\textrm{II}_{1211}+\textrm{II}_{1212}.
\end{align*}

For the term $\textrm{II}_{1211}$, by using the $L^2(\mathbb R^m)$ boundedness of the Hardy--Littlewood maximal function, we obtain that
\begin{align*}
\textrm{II}_{1211}
&\lesssim |I| \int_{|x_2-x_J|\approx 2^{j_2}\ell(J)}\int_0^{\ell(I)}\!\!\int_{\ell(J)}^{\frac{|x_2-x_J|}8}t_1^{-2n} \Big({t_1 \over |x_2-x_J|}\Big)^\alpha   \\
&\quad\times \int_{\mathbb R^{m}}\bigg|M_2\bigg(\int_{10I} |t_2^2{\triangle^{(2)}}e^{-t_2^2{\triangle^{(2)}}}a_{R} (z_1,\cdot)|dz_1\bigg)(y_2)\bigg|^2dy_2{dt_1\over
t_1^{m+1}}{dt_2\over t_2}dx_2\\
&\lesssim |I| \int_{|x_2-x_J|\approx 2^{j_2}\ell(J)}\Big({1 \over 2^{j_2}\ell(J)}\Big)^\alpha  \int_0^{\ell(I)}\!\!\int_{\ell(J)}^{\frac{|x_2-x_J|}8}    \\
&\quad\times \int_{\mathbb R^{m}}\bigg|\int_{10I} |t_2^2{\triangle^{(2)}}e^{-t_2^2{\triangle^{(2)}}}a_{R} (z_1,y_2)|dz_1\bigg|^2dy_2{dt_1\over
t_1^{2n+m-\alpha+1}}{dt_2\over t_2}dx_2\\
&\lesssim |I| (2^{j_2}\ell(J))^m \Big({1 \over 2^{j_2}\ell(J)}\Big)^\alpha  \ell(I)^{\alpha-2n-m}   |I|   \\
&\quad\times \int_{10I} \int_0^\infty\int_{\mathbb R^{m}} |t_2^2{\triangle^{(2)}}e^{-t_2^2{\triangle^{(2)}}}a_{R} (z_1,y_2)|^2 dy_2{dt_2\over t_2}dz_1\\
&\lesssim  \Big({1 \over 2^{j_2}}\Big)^{\alpha-m}  \int_{10I}\int_{\mathbb R^{m}} |a_{R} (z_1,y_2)|^2 dy_2dz_1\\
&= \Big({1 \over 2^{j_2}}\Big)^{\alpha-m}  \|a_{R} \|_2^2,
\end{align*}
where we chose $\alpha>2n+2m$, and we applied the Littlewood--Paley estimate in the last inequality.

For the term $\textrm{II}_{1212}$, we have $w_2\in B(x_J, |x_2-x_J|/4)^c$ and $u_2\in 10J$ give $|w_2-u_2|\ge |x_2-x_J|/8$.
Hence
\begin{align*}
&\textrm{II}_{1212}\\
&=\int_{|x_2-x_J|\approx 2^{j_2}\ell(J)} \int_{\ell(J)}^{\frac{|x_2-x_J|}8}
 \int_{\mathbb R^{n}}\int_{\Bbb R^m}\\
&\quad\times   \Big| \chi_{B(x_J, |x_2-x_J|/4)^c}(y) 
(t_2^2{\triangle^{(2)}})^{M+1}e^{-t_2^2{\triangle^{(2)}}} (a_{R,1}) (y_1,y_2)\Big|^2dy_1dy_2 dx_2{dt_2\over t_2^{m+4M+1}}\\
&\lesssim \int_{\ell(J)}^{\frac{|x_2-x_J|}8}\int_{|x_2-x_J|\approx 2^{j_2}\ell(J)} \int_{10I} \int_{B(x_J, |x_2-x_J|/4)^c} \\
&\qquad\times  \bigg|\int_{10J} t_2^{-m}\exp\Big(-{|w_2-u_2|^2\over 4ct_2^2}\Big)a_{R,1}(x_1,u_2)\bigg|^2 
              dx_1dw_2dx_2{dt_2\over t_2^{m+4M+1}}\\
&\lesssim  \int_{\ell(J)}^\infty\int_{|x_2-x_J|\approx 2^{j_2}\ell(J)}\int_{B(x_J, |x_2-x_J|/4)^c}  t_2^{-m}\exp\Big(-{|w_2-u_2|^2\over     4ct_2^2}\Big)dw_2  dx_2{dt_2\over t_2^{2m+4M+1}} dx_2{dt_2\over t_2^{2m+4M+1}}\\
&\lesssim ( 2^{j_2}\ell(J))^{m-2\alpha_2}|J|\|a_{R,1}\|_2^2\ell(J)^{2\alpha_2-2m-4M} \\
&\lesssim ( 2^{j_2})^{m-2\alpha_2}\ell(J)^{-4M}\|a_{R,1}\|_2^2
\end{align*}
for $m<\alpha_2<2M$.

By the $L^2(\Bbb R^{m+n})$-boundedness of square function,
\begin{align*}
\textrm{II}_{122}
&\lesssim \int_{10I} \int_{|x_2-x_J|\approx 2^{j_2}\ell(J)}\int_{\Bbb R^m}\int_{\frac{|x_2-x_J|}8}^\infty \Big|t_2^2{\triangle^{(2)}}e^{-t_2^2{\triangle^{(2)}}} (a_{R,1}) (x_1,w_2)\Big|^2{dt_2\over t_2^{m+1}} dx_2dx_1dw_2\\
&\lesssim  \int_{|x_2-x_J|\approx 2^{j_2}\ell(J)}\int_{\frac{|x_2-x_J|}8}^\infty\int_{\Bbb R^m}t_2^{-m}\exp\Big(-{|w_2-u_2|^2\over 4ct_2^2}\Big)dw_2 |J| \|a_{R,1}\|_2^2 dx_2{dt_2\over t_2^{2m+4M+1}} \\
&\lesssim (2^{j_2}\ell(J))^{-m-4M} |J| \|a_{R,1}\|_2^2\\
&\lesssim (2^{j_2})^{-m-4M}\ell(J)^{-4M} \|a_{R,1}\|_2^2.
\end{align*}

We now to estimate $\textrm{II}_{13}$. 
\begin{align*}
\textrm{II}_{13}
&=\int_{100I} \int_{|x_2-x_J|\approx 2^{j_2}\ell(J)}\int_{\ell(I)}^\infty\!\!\int_0^{\ell(J)}
  \!\! \int_{\mathbb R^{n}}\int_{\mathbb R^{m}} \int_{\Bbb R^m}\chi^{(1)}_{t_1}(x_1-y_1,x_2-w_2)\chi^{(2)}_{t_2}(w_2-y_2)dw_2\\
&\qquad\times \Big|\big( (t_1^2{\triangle^{(1)}})^{M+1}e^{-t_1^2{\triangle^{(1)}}}\otimes_2
t_2^2{\triangle^{(2)}}e^{-t_2^2{\triangle^{(2)}}}\big) a_{R,2} (y_1,y_2)\Big|^2 {dy_2dt_2\over t_2^{m+1}}{dy_1dt_1\over
t_1^{n+m+1+4M}}dx_2dx_1\\
&\lesssim  \int_{100I} \int_{|x_2-x_J|\approx 2^{j_2}\ell(J)}\int_{\ell(I)}^\infty\!\!\int_0^{\ell(J)}\!\! \int_{\mathbb R^{n}}\int_{\mathbb R^{m}} \int_{\Bbb R^m}\chi^{(1)}_{t_1}(x_1-y_1,x_2-w_2)\chi^{(2)}_{t_2}(w_2-y_2)dw_2\\
&\qquad\times\Big|\int_{10I}\int_{\Bbb R^m} t_1^{-n-m} \exp\Big(-{|y_1-z_1|^2+|y_2-z_2|^2\over ct_1^2}\Big)
t_2^2{\triangle^{(2)}}e^{-t_2^2{\triangle^{(2)}}}a_{R,2} (z_1,z_2)dz_1dz_2\Big|^2 \\
&\qquad\times {dy_2dt_2\over t_2^{m+1}}{dy_1dt_1\over
t_1^{n+m+1+4M}}dx_2dx_1\\
&=  \int_{100I} \int_{|x_2-x_J|\approx 2^{j_2}\ell(J)}\bigg(\int_{\ell(I)}^{\frac{|x_2-x_J|}8}+ \int_{\frac{|x_2-x_J|}8}^\infty \bigg)\!\!\int_0^{\ell(J)}\!\! \int_{\mathbb R^{n}}\int_{\mathbb R^{m}}\\
&\qquad \times \int_{\Bbb R^m}\chi^{(1)}_{t_1}(x_1-y_1,x_2-w_2)\chi^{(2)}_{t_2}(w_2-y_2)dw_2\\
&\qquad\times\Big|\int_{10I}\int_{\Bbb R^m} t_1^{-n-m} \exp\Big(-{|y_1-z_1|^2+|y_2-z_2|^2\over ct_1^2}\Big)
t_2^2{\triangle^{(2)}}e^{-t_2^2{\triangle^{(2)}}}a_{R,2} (z_1,z_2)dz_1dz_2\Big|^2 \\
&\qquad\times {dy_2dt_2\over t_2^{m+1}}{dy_1dt_1\over
t_1^{n+m+1+4M}}dx_2dx_1\\
&=:\textrm{II}_{131}+\textrm{II}_{132}.
\end{align*}
We first estimate the term $\textrm{II}_{131}$.

Recall that $\ell(I)\leq \ell(J)< |x_2-x_J|$.
We have $t_1+t_2<|x_2-x_J|/8+\ell(J)\le |x_2-x_J|/2$.
Hence
\begin{align*}
\textrm{II}_{131}
&\lesssim \int_{100I} \int_{|x_2-x_J|\approx 2^{j_2}\ell(J)}\int_{\ell(I)}^{\frac{|x_2-x_J|}8}\int_0^{\ell(J)}\!\! \int_{\mathbb R^{n}}\int_{\mathbb R^{m}}
                \\
&\quad\times \int_{\Bbb R^m}\chi^{(1)}_{t_1}(x_1-y_1,x_2-w_2)\chi^{(2)}_{t_2}(w_2-y_2)dw_2\ t_1^{-2n} \\
&\quad\times \Big|\int_{10I}\int_{\Bbb R^m} t_1^{-m} \exp\Big(-{|y_2-z_2|^2\over ct_1^2}\Big)
t_2^2{\triangle^{(2)}}e^{-t_2^2{\triangle^{(2)}}}a_{R,2} (z_1,z_2)dz_2dz_1\Big|^2\\
&\qquad{dy_1dt_1\over
t_1^{n+m+1+4M}}{dy_2dt_2\over t_2^{m+1}}dx_2dx_1.
\end{align*}
Note that
\begin{align*}
&\bigg|\int_{10I}\int_{\Bbb R^m} t_1^{-m} \exp\Big(-{|y_2-z_2|^2\over ct_1^2}\Big)
         t_2^2{\triangle^{(2)}}e^{-t_2^2{\triangle^{(2)}}}a_{R,2} (z_1,z_2)dz_2dz_1\bigg|\\
&\le \bigg|\int_{10I}\int_{12J}  t_1^{-m}  \exp\Big(-{|y_2-z_2|^2\over ct_1^2}\Big)
         t_2^2{\triangle^{(2)}}e^{-t_2^2{\triangle^{(2)}}}a_{R,2} (z_1,z_2)dz_2dz_1\bigg|\\
&\quad + \bigg|\int_{10I}\int_{(12J)^c}  t_1^{-m}  \exp\Big(-{|y_2-z_2|^2\over ct_1^2}\Big)
         \int_{10J} t_2^{-m}  \exp\Big(-{|z_2-u_2|^2\over ct_2^2}\Big)|a_{R,2} (z_1,u_2)|du_2\ dz_2dz_1\bigg|.
\end{align*}
Since $|x_2-x_J|>100\ell(J)$ and $|x_2-y_2|<t_1+t_2$, if $z_2\in 12I$, then we have $|y_2-z_2|\geq  |x_2-x_J|/4\ge (|x_1-x_I|+|x_2-x_J|)/8$; if $z_2\in (12J)^c$ then for $u_2\in 10J$ we have  $|z_2-u_2|>\ell(J)$. As a consequence, from the almost orthogonality estimate \eqref{t12} we get that
\begin{equation}\label{ii131}
\begin{aligned}
&\bigg|\int_{10I}\int_{\Bbb R^m} t_1^{-m} \exp\Big(-{|y_2-z_2|^2\over ct_1^2}\Big)
         t_2^2{\triangle^{(2)}}e^{-t_2^2{\triangle^{(2)}}}a_{R,2} (z_1,z_2)dz_2dz_1\bigg|\\
&\lesssim \exp\Big(-{|x_2-x_J|^2\over 2ct_1^2}\Big) M_2\bigg(\int_{10I} |t_2^2{\triangle^{(2)}}e^{-t_2^2{\triangle^{(2)}}}a_{R,2} (z_1,\cdot)|dz_1\bigg)(y_2)\\
&\qquad + \int_{10I}\int_{10J}  \exp\Big(-{\ell(J)^2\over 2ct_2^2}\Big){(\max\{t_1,t_2\})^\alpha\over ( \max\{t_1,t_2\} + |y_2-u_2| )^{m+\alpha}  } |a_{R,2} (z_1,u_2)|du_2dz_1.
\end{aligned}
\end{equation}
Plugging \eqref{ii131} into the estimate of $\textrm{II}_{131}$ above, we have
\begin{align*}
\textrm{II}_{131} 
&\lesssim \int_{100I} \int_{|x_2-x_J|\approx 2^{j_2}\ell(J)}\int_{\ell(I)}^{\frac{|x_2-x_J|}8}\!\!\int_0^{\ell(J)}\!\! \int_{\mathbb R^{m}}
                \\
&\quad\times \int_{\mathbb R^{n}}\int_{\Bbb R^m}\chi^{(1)}_{t_1}(x_1-y_1,x_2-w_2)\chi^{(2)}_{t_2}(w_2-y_2)dw_2dy_1\  \ t_1^{-2n}\exp\Big(-{|x_2-x_J|^2\over 2ct_1^2}\Big) \\
&\quad\times  \bigg|M_2\bigg(\int_{10I} |t_2^2{\triangle^{(2)}}e^{-t_2^2{\triangle^{(2)}}}a_{R,2} (z_1,\cdot)|dz_1\bigg)(y_2)\bigg|^2{dy_2dt_2\over t_2^{m+1}}{dt_1\over t_1^{n+m+1+4M}}dx_2dx_1\\
&\ \,+ \int_{100I} \int_{|x_2-x_J|\approx 2^{j_2}\ell(J)}\int_{\ell(I)}^{\frac{|x_2-x_J|}8}\!\!\int_0^{\ell(J)} \\
&\quad\times \int_{\mathbb R^{n}}\int_{\mathbb R^{m}}\int_{\Bbb R^m}\chi^{(1)}_{t_1}(x_1-y_1,x_2-w_2)\chi^{(2)}_{t_2}(w_2-y_2)dw_2dy_2dy_1\ \ t_1^{-2n}\exp\Big(-{|x_1-x_I|^2\over 16ct_1^2}\Big)\\
&\quad \times|R| \exp\Big(-{\ell(J)^2\over ct_2^2}\Big)
    {(\max\{t_1,t_2\})^{2\alpha}\over ( \max\{t_1,t_2\} + |x_2-x_J| )^{2m+2\alpha}  } \|a_{R,2}\|_2^2{dy_2dt_2\over t_2^{m+1}}{dt_1\over t_1^{n+m+1+4M}}dx_2dx_1\\
&=:\textrm{II}_{1311}+\textrm{II}_{1312}.
\end{align*}
To estimate $\textrm{II}_{1311}$, by using the $L^2$ boundedness of the Hardy--Littlewood maximal function, we get that
\begin{align*}
\textrm{II}_{1311}
&\lesssim\int_{100I} \int_{|x_2-x_J|\approx 2^{j_2}\ell(J)}\int_{\ell(I)}^\infty\!\!\int_0^{\ell(J)} t_1^{-2n}\frac{t_1^{2\alpha_2}}{|x_2-x_J|^{2\alpha_2}} \\
&\quad\times \int_{\Bbb R^m} \Bigg| M_2\bigg(\int_{10I} |t_2^2{\triangle^{(2)}}e^{-t_2^2{\triangle^{(2)}}}a_{R,2} (z_1,\cdot)|dz_1\bigg)(y_2)\Bigg|^2dy_2{dt_2\over t_2}{dt_1\over
t_1^{m+1+4M}}dx_2dx_1\\
&\lesssim\int_{100I} \int_{|x_2-x_J|\approx 2^{j_2}\ell(J)}\int_{\ell(I)}^\infty\!\!\int_0^{\ell(J)} \frac{t_1^{2\alpha_2}}{|x_2-x_J|^{2\alpha_2}} \\
&\quad\times \int_{\Bbb R^m} \bigg|\bigg(\int_{10I} |t_2^2{\triangle^{(2)}}e^{-t_2^2{\triangle^{(2)}}}a_{R,2} (z_1,\cdot)|dz_1\bigg)(y_2)\bigg|^2dy_2{dt_2\over t_2}{dt_1\over
t_1^{2n+m+1+4M}}dx_2dx_1\\
&\lesssim\int_{100I} \int_{|x_2-x_J|\approx 2^{j_2}\ell(J)}\int_{\ell(I)}^\infty \frac{t_1^{2\alpha_2}}{|x_2-x_J|^{2\alpha_2}} \\
&\quad\times |I| \int_{10I} \ \int_0^\infty\int_{\Bbb R^m}  |t_2^2{\triangle^{(2)}}e^{-t_2^2{\triangle^{(2)}}}a_{R,2} (z_1,y_2)|^2dy_2 {dt_2\over t_2}\ dz_1\ {dt_1\over
t_1^{2n+m+1+4M}}dx_2dx_1\\
&\lesssim |I|\|a_{R,2}\|_2^2\int_{100I} \int_{|x_2-x_J|\approx 2^{j_2}\ell(J)}\int_{\ell(I)}^\infty \frac{t_1^{2\alpha_2}}{|x_2-x_J|^{2\alpha_2}} {dt_1dx_2dx_1\over t_1^{2n+m+1+4M}}\\
&\lesssim |I|^2\|a_{R,2}\|_2^2  (2^{j_2}\ell(J))^m  \frac{1}{(2^{j_2}\ell(J))^{2\alpha_2}} {1\over \ell(I)^{2n+m+4M-2\alpha_2}}\\
&\lesssim  {  \ell(I)^{-4M}\|a_{R,2}\|_2^2   \frac{1}{(2^{j_2}\ell(J))^{2\alpha_2-m}} {1\over \ell(I)^{m-2\alpha_2}}  } \\
&\lesssim  { \ell(I)^{-4M}\|a_{R,2}\|_2^2   \frac{1}{(2^{j_2})^{2\alpha_2-m}  } }
\end{align*}
by choosing $m<\alpha_2<4M$.

For the term $\textrm{II}_{1312}$, we have
\begin{align*}
\textrm{II}_{1312}
&\lesssim |R|\int_{100I} \int_{|x_2-x_J|\approx 2^{j_2}\ell(J)}\int_{\ell(I)}^{\frac{|x_2-x_J|}8}t_1^{-2n} \\
&\quad\times\!\!\int_0^{\ell(J)}\exp\Big(-2{\ell(J)^2\over ct_2^2}\Big){dt_2\over t_2}
    {(\max\{t_1,\ell(J)\})^{2\alpha}\over |x_2-x_J|^{2m+2\alpha}  } \|a_{R,2}\|_2^2{dt_1\over t_1^{1+4M}}dx_2dx_1.
\end{align*}
Let $\alpha={1\over2}$. We further get
\begin{align*}
\textrm{II}_{1312}
&\lesssim |R|\|a_{R,2}\|_2^2\int_{100I} \int_{|x_2-x_J|\approx 2^{j_2}\ell(J)}\int_{\ell(I)}^{\frac{|x_2-x_J|}8}t_1^{-2n}
    {t_1+\ell(J) \over  |x_2-x_J| ^{2m+1}  } {dt_1\over t_1^{1+4M}}dx_2dx_1\\
    &\lesssim |R| \|a_{R,2}\|_2^2\ |I|\ (2^{j_2}\ell(J))^{-m-1}\int_{\ell(I)}^\infty {dt_1\over t_1^{2n+4M}}\\
    &\quad + |R| \|a_{R,2}\|_2^2\ |I|\ (2^{j_2}\ell(J))^{-m-1}\ \ell(J)\ \int_{\ell(I)}^\infty {dt_1\over t_1^{2n+1+4M}}\\
&\lesssim  \ell(I)^{-4M}\|a_{R,2}\|_2^2  {1\over 2^{j_2(m+1)}}.
\end{align*}
This finishes the estimate for the term $\textrm{II}_{131}$.

We plug \eqref{i23} into $\textrm{II}_{132}$ to get
\begin{align*}
&\textrm{II}_{132} \\
&\lesssim \int_{100I} \int_{|x_2-x_J|\approx 2^{j_2}\ell(J)}\int_{\frac{|x_2-x_J|}8}^\infty\!\!\int_0^{\ell(J)}\!\! \int_{\mathbb R^{n}}\int_{\mathbb R^{m}}
                \\
&\quad\times \int_{\Bbb R^m}\chi^{(1)}_{t_1}(x_1-y_1,x_2-w_2)\chi^{(2)}_{t_2}(w_2-y_2)dw_2\ t_1^{-2n}\exp\Big(-{|x_2-x_J|^2\over ct_1^2}\Big) \\
&\qquad\times  \bigg|M_2\bigg(\int_{10I} |t_2^2{\triangle^{(2)}}e^{-t_2^2{\triangle^{(2)}}}a_{R,2} (z_1,\cdot)|dz_1\bigg)(y_2)\bigg|^2{dy_1dt_1\over t_1^{n+m+1+4M}}{dy_2dt_2\over t_2^{m+1}}dx_2dx_1\\
&\ \ + \int_{100I} \int_{|x_2-x_J|\approx 2^{j_2}\ell(J)}\int_{\frac{|x_2-x_J|}8}^\infty\!\int_0^{\ell(J)} \int_{\mathbb R^{n}}\int_{\mathbb R^{m}}\\
&\quad\times \int_{\Bbb R^m}\chi^{(1)}_{t_1}(x_1-y_1,x_2-w_2)\chi^{(2)}_{t_2}(w_2-y_2)dw_2\ t_1^{-2n} \exp\Big(-{\ell(J)^2\over ct_2^2}\Big)\\
&\qquad \times
    \bigg|\int_{10I}\int_{10J}  {(\max\{t_1,t_2\})^{\alpha}\over ( \max\{t_1,t_2\} + |y_2-u_2| )^{m+\alpha}  }   |a_{R,2} (z_1,u_2)|du_2dz_1\bigg|^2\ {dy_1dt_1\over t_1^{n+m+1+4M}}{dy_2dt_2\over t_2^{m+1}}dx_2dx_1\\
&=:\textrm{II}_{1321}+\textrm{II}_{1322}.
\end{align*}

To estimate $\textrm{II}_{1321}$, we use the same method  in $\textrm{II}_{1311}$ to obtain
\begin{align*}
\textrm{II}_{1321}
&\lesssim  { \ell(I)^{-4M}\|a_{R,2}\|_2^2   \frac{1}{(2^{j_2})^{2\alpha_2-m}  } }
\end{align*}
by choosing $m<\alpha_2<4M$.

We then estimate the term $\textrm{II}_{1322}$. 
\begin{align*}
\textrm{II}_{1322}
&\lesssim \int_{100I} \int_{|x_2-x_J|\approx 2^{j_2}\ell(J)}\int_{\frac{|x_2-x_J|}8}^\infty\!\!\int_0^{\ell(J)}  \exp\Big(-2{\ell(J)^2\over ct_2^2}\Big)  {1\over  t_1 ^{2m}  }  |R|\|a_{R,2}\|_2^2{dt_1\over t_1^{2n+1+4M}}{dt_2\over t_2}dx_2dx_1\\
&\lesssim  \ell(I)^{-4M} \|a_{R,2}\|_2^2 {1\over (2^{j_2})^{2n+m+4M}},
\end{align*}
where we choose $0<\alpha<2M$, and  the last inequality follows from the fundamental estimates that
$\int_0^{\ell(J)}\exp\big(-2{\ell(J)^2\over ct_2^2}\big) {dt_2\over t_2}\lesssim 1$.

This finishes the estimate for the term $\textrm{II}_{132}$.

Finally we estimate the term $\textrm{II}_{14}$. Note that  $a_R=(({\triangle^{(1)}})^M\otimes_2({\triangle^{(2)}})^M)b_R$ and supp $b_{R}\subset 10R=10(I\times J)$.
Let $A(x_2)=\frac{|x_2-x_J|}8.$ Then
\begin{align*}
\textrm{II}_{14}
&= \int_{100I} \int_{|x_2-x_J|\approx 2^{j_2}\ell(J)}\bigg(\int_{\ell(I)}^{A(x_2)}\int_{\ell(J)}^{A(x_2)}+\int_{\ell(I)}^{A(x_2)}\int_{A(x_2)}^\infty +\int_{A(x_2)}^\infty\int_{\ell(J)}^{A(x_2)}\\
 &\ \  +\int_{A(x_2)}^\infty\int_{A(x_2)}^\infty \bigg)   \int_{\mathbb R^{n}}\int_{\mathbb R^{m}} \int_{\Bbb R^m}\chi^{(1)}_{t_1}(x_1-y_1,x_2-w_2)\chi^{(2)}_{t_2}(w_2-y_2)dw_2\\
&\quad\times \Big|\big((t_1^2{\triangle^{(1)}})^{M+1}e^{-t_1^2{\triangle^{(1)}}}\otimes_2 (t_2^2{\triangle^{(2)}})^{M+1}e^{-t_2^2{\triangle^{(2)}}}\big) b_R (y_1,y_2)\Big|^2 {dy_2dt_2\over t_2^{m+4M+1}}{dy_1dt_1\over
t_1^{n+m+4M+1}} dx_2dx_1\\
&=:\textrm{II}_{141}+\textrm{II}_{142}+\textrm{II}_{143}+\textrm{II}_{144}.
\end{align*}
For the terms $\textrm{II}_{141}$, although we know that $t_1<|x_2-x_J|/8$, there is no specific estimate for the lower bound for $|y_1-z_1|$, thus we can only use the fact that $\exp\big(-{|y_1-z_1|^2\over ct_1^2}\big)\leq1$.
Moreover, since $|x_2-x_J|>100\ell(J)$, $|x_2-y_2|<t_1+t_2< |x_2-x_J|/4$, then for $u_2\in 10J$ we have $|y_2-u_2|\geq  |x_2-x_J|/4$. Now based on these observations, by using the almost orthogonality estimate as in \eqref{t12}, we have that
\begin{align*}
\textrm{II}_{141}
&\lesssim \int_{100I} \int_{|x_2-x_J|\approx 2^{j_2}\ell(J)}\int_{\ell(I)}^{A(x_2)}\int_{\ell(J)}^{A(x_2)}\int_{\mathbb R^{n}}\int_{\mathbb R^{m}}\\
 &\qquad\times   \int_{\Bbb R^m}\chi^{(1)}_{t_1}(x_1-y_1,x_2-w_2)\chi^{(2)}_{t_2}(w_2-y_2)dw_2\\ 
&\qquad\times \Big|\int_{10I}\int_{\Bbb R^m}\int_{\Bbb R^m} t_1^{-n-m} \exp\Big(-{|x_1-y_1|^2+|y_2-z_2|^2\over ct_1^2}\Big)
\\
&\qquad\qquad\qquad \ t_2^{-m} \exp\Big(-{|z_2-u_2|^2\over ct_1^2}\Big)|b_{R} (z_1,u_2)|du_2dz_2dz_1\Big|^2{dy_2dt_2\over t_2^{m+4M+1}}{dy_1dt_1\over
t_1^{n+m+4M+1}}dx_2dx_1\\
&\lesssim  \int_{100I} \int_{|x_2-x_J|\approx 2^{j_2}\ell(J)}\int_{\ell(I)}^{A(x_2)}\int_{\ell(J)}^{A(x_2)} t_1^{-2n}
    \frac{(\max\{t_1,t_2\})^{2\alpha}}{|x_2-x_J|^{2m+2\alpha}}|R|\|b_R \|_2^2 {dt_1\over t_1^{4M+1}}{dt_2\over t_2^{4M+1}}dx_2dx_1\\
&\lesssim  \int_{100I} \int_{|x_2-x_J|\approx 2^{j_2}\ell(J)}\int_{\ell(I)}^\infty\int_{\ell(J)}^\infty t_1^{-2n}
    \frac{t_1^{2\alpha}}{|x_2-x_J|^{2m+2\alpha}}|R|\|b_R \|_2^2 {dt_1\over t_1^{4M+1}}{dt_2\over t_2^{4M+1}}dx_2dx_1\\
&\quad+\int_{100I} \int_{|x_2-x_J|\approx 2^{j_2}\ell(J)}\int_{\ell(I)}^\infty\int_{\ell(J)}^\infty t_1^{-2n}
    \frac{t_2^{2\alpha}}{|x_2-x_J|^{2m+2\alpha}}|R|\|b_R \|_2^2 {dt_1\over t_1^{4M+1}}{dt_2\over t_2^{4M+1}}dx_2dx_1
\end{align*}
We use the fact that $\ell(I)\leq \ell(J)$ to obtain that
\begin{align*}
\textrm{II}_{141}
&\lesssim  \ell(I)^{-4M}\ell(J)^{-4M} \|b_R \|_2^2  2^{-(m+2\alpha) j_2},
\end{align*}
where we choose $0<\alpha<2M$.

We then estimate the term $\textrm{II}_{142}$. 
Note that $t_1<|x_2-x_J|/8 \le t_2$. Hence, there is no lower bound for $|y_2-u_2|$ and the almost orthogonality estimate
 appearing in the estimates for term $\textrm{II}_{142}$ will be replaced by
 ${1\over t_2^{2m}}$. Then we have
\begin{align*}
\textrm{II}_{142}
&\lesssim  \int_{100I} \int_{|x_2-x_J|\approx 2^{j_2}\ell(J)}\int_{\ell(I)}^{A(x_2)}
 \int_{{|x_2-x_J|\over8}}^\infty\frac{1}{ t_2^{2m}}|R|\|b_R \|_2^2 {dt_1\over t_1^{2n+4M+1}}{dt_2\over t_2^{4M+1}}dx_2dx_1
\\
&\lesssim  \ell(I)^{-4M}\ell(J)^{-4M} \|b_R \|_2^2 2^{-(m+4M) j_2}.
\end{align*}

We now consider the term $\textrm{II}_{143}$ and use the fact that $\exp\big(-{|y_1-z_1|^2\over ct_1^2}\big)\leq1$. 
Moreover, there is also no lower bound for $|y_2-z_2|$.  And hence the almost orthogonality estimate
 appearing in  the estimates for term $\textrm{I}_{242}$ will be replaced by
 ${1\over t_1^{2m}}$ since  $t_1> t_2$ in this case. Then we have
\begin{align*}
\textrm{II}_{143}
&\lesssim  |R|\|b_R\|_2^2\int_{100I} \int_{|x_2-x_J|\approx 2^{j_2}\ell(J)}\int_{A(x_2)}^\infty {dt_1\over t_1^{2n+2m+4M+1}} \int_{\ell(J)}^\infty {dt_2\over t_2^{4M+1}}dx_1dx_2\\
&\lesssim  |R|\|b_R\|_2^2 \ell(I)^n ( 2^{j_2}\ell(J))^m (2^{j_2}\ell(J))^{-2n-2m-4M}\ell(J)^{-4M}\\
&\lesssim  \ell(I)^{-4M}\ell(J)^{-4M}\|b_R\|_2^2  2^{-(m+2M)j_2}.
\end{align*}

We finally estimate the term $\textrm{II}_{144}$. And we also
 note that in this case there are no lower bounds for $|y_1-z_1|$ or $|y_2-z_2|$. So we will use the fact that $\exp\big(-{|y_1-z_1|^2\over ct_1^2}\big)\leq1$.  And the almost orthogonality estimate
 appearing in  the estimates for term $\textrm{I}_{241}$ will be replaced by
 ${1\over \max\{ t_1,t_2\}^{2m}}$. Then we have
\begin{align*}
\textrm{II}_{144}
&\lesssim  \int_{100I} \int_{|x_2-x_J|\approx 2^{j_2}\ell(J)}\int_{A(x_2)}^\infty\int_{A(x_2)}^\infty t_1^{-2n}(\max\{t_1,t_2\})^{-2m}
   |R|\|b_R\|_2^2{dt_1\over t_1^{4M+1}}{dt_2\over t_2^{4M+1}}dx_1dx_2\\
&\lesssim  \ell(I)^{-4M}\ell(J)^{-4M}\|b_R\|_2^2  2^{-(m+8M)j_2}.
\end{align*}

Combing the estimates of $\textrm{II}_{1i}$ for $i=1,2,3, 4$, we can show that
\begin{align*}
\textrm{II}_1&\lesssim |R|^{1/2} \gamma_1(R)^{-\delta}\ell(I)^{-2M}\ell(J)^{-2M}\Big(\|( (\ell(I)^2{\triangle^{(1)}})^M) \otimes_2
(\ell(J)^2{\triangle^{(2)}})^M)b_{R}\|_2\\
& +  \|((\ell(I)^2{\triangle^{(1)}})^M\otimes_2
1\!\!1_2) b_{R}\|_2+ \|(1\!\!1_1\otimes_2
(\ell(J)^2{\triangle^{(2)}})^M)b_{R}\|_2+ \|b_{R}\big\|_2\Big)
\end{align*}
for $\delta>0$.
}

By H\"older's inequality and Journ\'e's covering lemma, we obtain that
$\textrm{II}\lesssim1$.
Hence the proof of Proposition \ref{leAtom} is completed by
 (\ref{SL alpha uniformly bd on outside of Omega}) and (\ref{SL alpha uniformly bd on inside of Omega}).
\end{proof}

We now turn to {\bf Step 2}.
Our goal is to show that every
$f\in H^1_{F,\triangle^{(1)},\triangle^{(2)} }(\mathbb{R}^{n}\times\mathbb{R}^{m})\cap L^2(\mathbb{R}^{n+m}
)$ has a $(1, 2, M)$-atom representation,
with appropriate quantitative control of the coefficients.
To be more specific,
\begin{prop}\label{prop-product H-SL subset H-at}
Suppose $M\geq 1$. If $f\in
H^1_{F,\triangle^{(1)},\triangle^{(2)} }(\mathbb{R}^{n}\times\mathbb{R}^{m})\cap L^2(\mathbb{R}^{n+m})$, then there exist a
family of $(1, 2, M)$-atoms $\{a_j\}_{j=0}^\infty$ and a sequence of
numbers $\{\lambda_j\}_{j=0}^\infty\in \ell^1$ such that $f$ can be
represented in the form $f=\sum_{j}\lambda_ja_j$, with the sum
converging in $L^2(\mathbb{R}^{n+m})$, and
$$ \|f\|_{\mathbb{H}^1_{F,at,M}(\mathbb{R}^{n}\times\mathbb{R}^{m})}\leq C\sum_{j=0}^\infty|\lambda_j|
\leq
C\|f\|_{H^1_{F,\triangle^{(1)},\triangle^{(2)} }(\mathbb{R}^{n}\times\mathbb{R}^{m})},
$$
where $C$ is independent of $f$. In particular,
$$ H^1_{F,\triangle^{(1)},\triangle^{(2)} }(\mathbb{R}^{n}\times\mathbb{R}^{m})\cap
L^2(\mathbb{R}^{n+m})\ \ \subseteq\
\mathbb{H}^1_{F,at,M}(\mathbb{R}^{n}\times\mathbb{R}^{m}).
$$
\end{prop}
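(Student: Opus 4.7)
The plan is to combine a continuous heat-semigroup reproducing formula with a Whitney-type decomposition of the level sets of $S_{F,\triangle^{(1)},\triangle^{(2)}}$ into maximal dyadic rectangles. This adapts the Chang--Fefferman / Auscher--Duong--McIntosh paradigm to the flag setting, using the area function from Definition \ref{def-of-S-function-Sf L} together with the Plancherel--P\'olya tools from Section \ref{s:2.2} and the finite-propagation-style heat-kernel estimates recalled at the start of this section.

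Set $\psi_t(\lambda)=(t^2\lambda)^M e^{-t^2\lambda}$ and $\eta_t(\lambda)=(t^2\lambda) e^{-t^2\lambda}$; spectral calculus gives a constant $c_M$ and the $L^2(\mathbb R^{n+m})$ reproducing identity
\begin{equation*}
f = c_M \int_0^\infty\!\!\int_0^\infty \bigl(\psi_{t_1}(\triangle^{(1)}) \otimes_2 \psi_{t_2}(\triangle^{(2)})\bigr)\bigl( \eta_{t_1}(\triangle^{(1)}) \otimes_2 \eta_{t_2}(\triangle^{(2)}) f\bigr) \; \frac{dt_1}{t_1}\frac{dt_2}{t_2},
\end{equation*}
in which the outer $\psi$-factors carry the $M$ Laplacian powers demanded by the atom structure and the inner $\eta$-factors are precisely what appears in $S_{F,\triangle^{(1)},\triangle^{(2)}}(f)$. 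Define $\Omega_k=\{S_{F,\triangle^{(1)},\triangle^{(2)}}(f) > 2^k\}$ and $\widetilde\Omega_k=\{M_s(\chi_{\Omega_k}) > 1/2\}$, so that $\sum_k 2^k|\widetilde\Omega_k| \lesssim \|f\|_{H^1_{F,\triangle^{(1)},\triangle^{(2)}}}$. Let $\widehat\Omega$ denote the flag tent, namely the set of $(y_1,y_2,t_1,t_2)$ whose $\chi_{t_1,t_2}(\cdot-y_1,\cdot-y_2)$-support lies in $\Omega$, split the parameter space by $B_k = \widehat{\widetilde\Omega_k}\setminus\widehat{\widetilde\Omega_{k+1}}$, and truncate the reproducing integral to each $B_k$ to obtain $f=\sum_k f_k$ in $L^2$.

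Enumerate the maximal dyadic subrectangles $R=I_R\times J_R$ of $\widetilde\Omega_k$ with $\ell(I_R)\le\ell(J_R)$ as $m(\widetilde\Omega_k)$. For each such $R$ I would restrict the $(y_1,y_2,t_1,t_2)$-integration in $f_k$ to the portion of $B_k$ sitting above $R$ whose scales are compatible with $(\ell(I_R),\ell(J_R))$, and call the resulting function $b_R$. Setting $a_R=\bigl((\triangle^{(1)})^M\otimes_2(\triangle^{(2)})^M\bigr)b_R$, $a_k=\lambda_k^{-1}\sum_R a_R$, and $\lambda_k=C\,2^k|\widetilde\Omega_k|$, one obtains the candidate decomposition $f=\sum_k \lambda_k a_k$ in $L^2$ with $\sum_k|\lambda_k|\lesssim \|f\|_{H^1_{F,\triangle^{(1)},\triangle^{(2)}}}$.

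What remains is to verify the atom axioms. The support condition (ii) for $b_R$ will follow by coupling the flag-tent localization built into $B_k$ with a finite-propagation bound for the kernel of $(t^2\triangle)^\kappa e^{-t^2\triangle}$ in the spirit of Lemma \ref{lemma finite speed}, whence ${\rm supp}\,a_k\subset 10\widetilde\Omega_k$. The crucial size estimate (iii) reduces, by $L^2$--$L^2$ duality against test sequences $\{h_R\}$ supported in $10R$, to the flag tent-space bound
\begin{equation*}
\iint_{B_k}\bigl|\eta_{t_1}(\triangle^{(1)})\otimes_2\eta_{t_2}(\triangle^{(2)}) f(y_1,y_2)\bigr|^2 \, \frac{dy_1\,dt_1}{t_1^{n+m+1}}\frac{dy_2\,dt_2}{t_2^{m+1}} \lesssim 2^{2k}|\widetilde\Omega_k|,
\end{equation*}
which is immediate from the definitions of $B_k$ and $\Omega_k$. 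The main obstacle will be the combinatorics of the two-parameter flag tent: one must reconcile a tent governed by $t_1$ (acting in all $n+m$ variables) with one governed by $t_2$ (acting only in the last $m$ variables) and route both onto dyadic rectangles obeying $\ell(I_R)\le\ell(J_R)$. This will force a combination of Journ\'e's covering lemma with the Plancherel--P\'olya equivalence of Section \ref{s:2.2} to discretize the continuous parameters $(t_1,t_2)$ into $(\ell(I_R),\ell(J_R))$ and to absorb the resulting tent overlap without losing the sharp $|\widetilde\Omega_k|^{-1}$ dependence required by (iii).
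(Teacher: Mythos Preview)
Your overall architecture---reproducing formula, level sets $\Omega_k$, enlarged sets $\widetilde\Omega_k$, tent decomposition, and the key $L^2$ tent bound $\lesssim 2^{2k}|\widetilde\Omega_k|$---matches the paper's proof closely. But there is a genuine gap in your treatment of the support condition (ii).

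You propose to use $\psi_t(\lambda)=(t^2\lambda)^M e^{-t^2\lambda}$ as the \emph{outer} operator in the reproducing formula and then claim that the support of $(\triangle^{(1)})^{k_1}\otimes_2(\triangle^{(2)})^{k_2}\,b_R$ lies in $10R$ by invoking ``a finite-propagation bound for the kernel of $(t^2\triangle)^\kappa e^{-t^2\triangle}$ in the spirit of Lemma~\ref{lemma finite speed}.'' This step fails: the heat kernel and all its derivatives have \emph{Gaussian} decay, not compact support. Lemma~\ref{lemma finite speed} applies only to operators $\Phi(t\sqrt{L})$ with $\Phi=\widehat{\varphi}$ for some even $\varphi\in C_0^\infty(-c_0^{-1},c_0^{-1})$; the function $\Phi(\xi)=e^{-\xi^2}$ is the Fourier transform of another Gaussian, which is not compactly supported, so the lemma gives nothing here. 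With only Gaussian tails on the outer operator you cannot force $b_R$ (or its Laplacian powers) to be supported in a fixed dilate of $R$, and the atom definition requires exact compact support, not rapid decay.

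The fix, which is exactly what the paper does, is to replace the outer factors by $\varphi^{(i)}(t_i\sqrt{\triangle^{(i)}})$ where $\varphi^{(i)}$ are as in Lemma~\ref{lemma finite speed}. The reproducing identity still holds because $\int_0^\infty \varphi^{(i)}(u)\,u^2 e^{-u^2}\,du/u$ is a nonzero constant, and now the kernel of $(t_i^2\triangle^{(i)})^{k_i}\varphi^{(i)}(t_i\sqrt{\triangle^{(i)}})$ is genuinely supported in $\{|x-y|<t_i\}$. Combined with $(y_1,y_2,t_1,t_2)\in T(R)$ this gives $|x_1-y_1|\le 3\ell(I_R)$ and $|x_2-y_2|\le 3\ell(I_R)+3\ell(J_R)$, yielding (ii). A secondary remark: Journ\'e's covering lemma is not needed in this direction of the argument---it enters only when proving that atoms lie in $H^1_{F,\triangle^{(1)},\triangle^{(2)}}$, not when decomposing $f$ into atoms. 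The size estimate (iii) follows directly from the tent-space bound you wrote and $L^2$ duality, without any covering lemma.
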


\begin{proof}

Let $f\in H^1_{F,\triangle^{(1)},\triangle^{(2)} }(\mathbb{R}^{n}\times\mathbb{R}^{m})\cap
L^2(\mathbb{R}^{n+m})$.
 For each $\ell\in\mathbb Z$, we define
 \begin{eqnarray*}
    \Omega_\ell&:=&\{(x_1,x_2)\in \mathbb{R}^{n}\times\mathbb{R}^{m}: S_{F,\triangle^{(1)},\triangle^{(2)}}(f)(x_1,x_2)>2^\ell \},\\
    B_\ell&:=&\Big\{R=I\times J:\ \ell(J)\geq\ell(I),\
        |R\cap \Omega_\ell|>{1\over 2}|R|,\  | R\cap \Omega_{\ell+1}|\leq {1\over 2}|R| \Big\}, {\rm\ and}\\
    \widetilde{\Omega}_\ell&:=&\Big\{(x_1,x_2)\in \mathbb{R}^{n}\times\mathbb{R}^{m}: \mathcal{M}_s(\chi_{\Omega_\ell})>{1\over10} \Big\}.
\end{eqnarray*}
For each dyadic rectangle $R=I\times J$ in $\mathbb{R}^{n}\times\mathbb{R}^{m}$, the tent $T(R)$ is defined as
$$    T(R)
    := \big\{ (y_1,y_2,t_1,t_2):\ (y_1,y_2)\in R, t_1\in (\ell(I)/2, \ell(I)], t_2\in (\ell(J)/2,\ell(J)]\big\}.
$$
For brevity, in what follows we will write $\chi_{T(R)}$ for
$\chi_{T(R)}(y_1, y_2, t_1, t_2)$.

Using the reproducing formula, we can write
\begin{align}\label{e2 in section 5.3.3}
    & f(x_1,x_2)\nonumber\\
    &= \int_0^\infty\!\!\int_0^\infty\nonumber
        \psi^{(1)}(t_1\sqrt{\triangle^{(1)}})\psi^{(2)}(t_2\sqrt{\triangle^{(2)}})(t_1^2\triangle^{(1)}e^{-t_1^2\triangle^{(1)}}\otimes_2 t_2^2\triangle^{(2)}e^{-t_2^2\triangle^{(2)}})(f)(x_1,x_2){dt_1dt_2\over t_1t_2}\\
    &=\int_0^\infty\!\!\int_0^\infty\!\! \int_{\mathbb R^n}\int_{\mathbb R^m}\ \int_{\mathbb R^m}
        K_{\psi(t_1\sqrt{\triangle^{(1)}})}(x_1,y_1,x_2,z_2)K_{\psi(t_2\sqrt{\triangle^{(2)}})}(z_2,y_2)dz_2\nonumber\\
    &\hskip1cm (t_1^2\triangle^{(1)}e^{-t_1^2\triangle^{(1)}} t_2^2\triangle^{(2)}e^{-t_2^2\triangle^{(2)}})(f)(y_1,y_2)dy_2dy_1{dt_1dt_2\over t_1t_2}\nonumber\\
    &= \sum_{\ell\in\mathbb{Z}}\sum_{R\in B_\ell}  \int_{T(R)} \int_{\mathbb R^m}
        K_{\psi(t_1\sqrt{\triangle^{(1)}})}(x_1,y_1,x_2,z_2)K_{\psi(t_2\sqrt{\triangle^{(2)}})}(z_2,y_2)dz_2\nonumber\\
    &\hskip1cm (t_1^2\triangle^{(1)}e^{-t_1^2\triangle^{(1)}} t_2^2\triangle^{(2)}e^{-t_2^2\triangle^{(2)}})(f)(y_1,y_2)dy_2dy_1{dt_1dt_2\over t_1t_2}\nonumber\\
    &= \sum_{\ell\in\mathbb{Z}}\lambda_{\ell} \bigg( {1\over \lambda_{\ell}}\sum_{\bar R\in B_\ell, \bar R {\rm\ max}}\    \sum_{R\in B_\ell, R\subset \bar R } \int_{T(R)} \int_{\mathbb R^m}
        K_{\psi(t_1\sqrt{\triangle^{(1)}})}(x_1,y_1,x_2,z_2)K_{\psi(t_2\sqrt{\triangle^{(2)}})}(z_2,y_2)dz_2\nonumber\\
    &\hskip1cm (t_1^2\triangle^{(1)}e^{-t_1^2\triangle^{(1)}} t_2^2\triangle^{(2)}e^{-t_2^2\triangle^{(2)}})(f)(y_1,y_2)dy_2dy_1{dt_1dt_2\over t_1t_2}\bigg)\nonumber\\
    &=:  \sum_{\ell\in\mathbb{Z}}\lambda_{\ell} a_{\ell}(x_1,x_2), \nonumber
\end{align}
where
\begin{align*}
&\lambda_\ell:=\bigg\|\bigg( \sum_{\bar R\in B_\ell } \int_{0}^\infty\int_{0}^\infty
        \big|(t_1^2\triangle^{(1)}e^{-t_1^2\triangle^{(1)}} t_2^2\triangle^{(2)}e^{-t_2^2\triangle^{(2)}})(f)(y_1,y_2)\big|^2 \chi_{T(R)}{dt_1dt_2\over t_1t_2}\bigg)^{1\over2}\bigg\|_2 |\widetilde{\Omega}_\ell|^{1\over2}.
\end{align*}

We now first claim that each $a_{\ell}$ is a flag atom.
First, it is direct to see that for each $\ell$,
$$a_{\ell}(x_1,x_2)  =  \sum_{\bar R\in B_\ell, \bar R {\rm\ max}} a_{\ell, \bar R}(x_1,x_2).$$
Next, for each $\ell$ and $\bar R\in B_\ell$ with  $\bar R {\rm\ max}$, we further have
$$a_{\ell, \bar R}(x_1,x_2): = \bigg(\big({\triangle^{(1)}}\big)^M \big( {\triangle^{(2)}}\big)^M\bigg)\big( b_{\ell, \bar R}\big)(x_1,x_2),$$
where
\begin{eqnarray}\label{b ell}
 b_{\ell, \bar R}(x_1,x_2)&:=&  {1\over \lambda_{\ell}}   \sum_{R\in B_\ell, R\subset \bar R } \int_{T(R)} t_1^{2M}t_2^{2M}\\
 &&\quad\quad \int_{\mathbb R^m}
        K_{\varphi^{(1)}(t_1\sqrt{\triangle^{(1)}})}(x_1,y_1,x_2,z_2)K_{\varphi^{(2)}(t_2\sqrt{\triangle^{(2)}})}(z_2,y_2)dz_2\nonumber\\
        &&\hskip1cm (t_1^2\triangle^{(1)}e^{-t_1^2\triangle^{(1)}} t_2^2\triangle^{(2)}e^{-t_2^2\triangle^{(2)}})(f)(y_1,y_2)dy_2dy_1{dt_1dt_2\over t_1t_2}\nonumber
\end{eqnarray}
and $\varphi^{(1)}, \varphi^{(2)}$ are the function mentioned in Lemma
\ref{lemma finite speed}. Then it follows from Lemma \ref{lemma
finite speed} that the integral kernel
$K_{(t_i^2\triangle^{(i)})^{k}\varphi^{(i)}(t_i\sqrt{\triangle^{(i)}})}$ of the operator
$(t_i^2\triangle^{(i)})^{k}\varphi^{(i)}(t_i\sqrt{\triangle^{(i)}})$ satisfy
\begin{eqnarray}\label{suppK1}
{\rm
supp}\,K_{(t_1^2\triangle^{(1)})^{k}\varphi^{(1)}(t_1\sqrt{\triangle^{(1)}})}\subset
\big\{(x,y)\in{\Bbb R}^{n+m}\times {\Bbb R}^{n+m}:
|x-y|<t_1\big\}
\end{eqnarray}
 and
\begin{eqnarray}\label{suppK2}
{\rm supp}\, K_{(t_2^2\triangle^{(2)})^{k}\varphi^{(2)}(t_2\sqrt{\triangle^{(2)}})}\subset
\big\{(u,v)\in{\Bbb R}^{m}\times {\Bbb R}^{m}:
|u-v|<t_2\big\}.
\end{eqnarray}
We now consider the support of $\big(({\triangle^{(1)}})^{k_1} \otimes_2({\triangle^{(2)}})^{k_2}\big)b_{\ell, \bar R}$.
From the definition of $b_{\ell, \bar R}$ as in \eqref{b ell}, we have that
\begin{eqnarray*}
&&\big(({\triangle^{(1)}})^{k_1} \otimes_2({\triangle^{(2)}})^{k_2}\big)( b_{\ell, \bar R})(x_1,x_2)\\
&&\ \ :=  {1\over \lambda_{\ell}}   \sum_{R\in B_\ell, R\subset \bar R } \int_{T(R)} t_1^{2M-2k}t_2^{2M-2k}\\
&&\qquad\int_{\mathbb R^m}
         K_{ (t_1^2\triangle^{(1)})^{k} \varphi^{(1)}(t_1\sqrt{\triangle^{(1)}})}(x_1,y_1,x_2,z_2)K_{(t_2^2\triangle^{(2)})^{k}\varphi^{(2)}(t_2\sqrt{\triangle^{(2)}})}(z_2,y_2)dz_2\nonumber\\
        &&\hskip1.5cm (t_1^2\triangle^{(1)}e^{-t_1^2\triangle^{(1)}} t_2^2\triangle^{(2)}e^{-t_2^2\triangle^{(2)}})(f)(y_1,y_2)dy_2dy_1{dt_1dt_2\over t_1t_2}.\nonumber
\end{eqnarray*}

Now from the following term on the right-hand side of the above equality
\begin{eqnarray}\label{suppKK}
 \int_{\mathbb R^m }K_{(t_1^2\triangle^{(1)})^{k}\varphi^{(1)}(t_1\sqrt{\triangle^{(1)}})}(x_1,y_1,x_2,z_2) K_{(t_2^2\triangle^{(2)})^{k}\varphi^{(2)}(t_2\sqrt{\triangle^{(2)}})}(z_2,y_2)dz_2
 \end{eqnarray}
and from the support conditions \eqref{suppK1} and \eqref{suppK2}, we obtain that for $(x_1,x_2)$ in \eqref{suppKK},
$$|x_1-y_1|\le 3\ell(I_R),\ |x_2-z_2|\le 3\ell(I_R),\quad {\rm and}\quad |z_2-y_2|\le 3\ell(J_R)$$
since
$(y_1,y_2,t_1,t_2)\in  T(R)$.
Hence we obtain that
 $$|x_1-y_1|\le 3\ell(I_R)\quad {\rm and}\quad |x_2-y_2|\le 3\ell(I_R)+3\ell(J_R).$$

As a consequence, we have
that for every $k_1,k_2=0,1,\dots,M$,
\begin{eqnarray}\label{eq1005}
{\rm supp}\, \big(({\triangle^{(1)}})^{k_1} \otimes_2({\triangle^{(2)}})^{k_2}\big)b_{\ell, \bar R}\subseteq 10\frak R,
\end{eqnarray}
where $\ell(I_{\frak R})=\ell(I_R)$ and $\ell(J_{\frak R})=\ell(I_R)+\ell(J_R)$.

Then based on the support condition above and on the definition of $\widetilde{\Omega}_\ell$, we obtain that
$$ {\rm supp}\  a_{\ell} \subset  \widetilde{\Omega}_\ell.$$

Next we estimate $\|a_{\ell}\|_2$.
Taking $g\in L^2(\mathbb{R}^{n+m})$
with  $\|g\|_2=1$, from the definition of $a_{\ell}$, we have
\begin{eqnarray*}
&&\hspace{-.4cm}
 \Big|\int_{\mathbb{R}^{n}\times\mathbb{R}^{m}}
a_{\ell}(x_1,x_2)g(x_1,x_2)dx_2dx_1 \Big|\\
&&=\Bigg| \bigg(  {1\over \lambda_{\ell}}\sum_{\bar R\in B_\ell, \bar R {\rm\ max}}\  \sum_{R\in B_\ell, R\subset \bar R } \int_{T(R)}
        \psi(t_1\sqrt{\triangle^{(1)}})\psi(t_2\sqrt{\triangle^{(2)}})(g)(y_1,y_2)\nonumber\\
    &&\hskip1cm (t_1^2\triangle^{(1)}e^{-t_1^2\triangle^{(1)}} t_2^2\triangle^{(2)}e^{-t_2^2\triangle^{(2)}})(f)(y_1,y_2)dy_2dy_1{dt_1dt_2\over t_1t_2}\bigg) \Bigg|\\
&&\leq     {1\over \lambda_{\ell}}  \int_{\mathbb{R}^{n}\times\mathbb{R}^{m}}    \bigg( \sum_{\bar R\in B_\ell } \int_{0}^\infty\int_{0}^\infty
        |\psi(t_1\sqrt{\triangle^{(1)}})\psi(t_2\sqrt{\triangle^{(2)}})(g)(y_1,y_2)|^2 \chi_{T(R)}{dt_1dt_2\over t_1t_2}\bigg)^{1\over2}\\
    &&\hskip1cm  \bigg( \sum_{\bar R\in B_\ell } \int_{0}^\infty\int_{0}^\infty
        \big|(t_1^2\triangle^{(1)}e^{-t_1^2\triangle^{(1)}} t_2^2\triangle^{(2)}e^{-t_2^2\triangle^{(2)}})(f)(y_1,y_2)\big|^2 \chi_{T(R)}{dt_1dt_2\over t_1t_2}\bigg)^{1\over2}dy_2dy_1\\
&&\leq     {1\over \lambda_{\ell}}  \|g\|_2 \bigg\|\bigg( \sum_{\bar R\in B_\ell } \int_{0}^\infty\int_{0}^\infty
        \big|(t_1^2\triangle^{(1)}e^{-t_1^2\triangle^{(1)}} t_2^2\triangle^{(2)}e^{-t_2^2\triangle^{(2)}})(f)(y_1,y_2)\big|^2 \chi_{T(R)}{dt_1dt_2\over t_1t_2}\bigg)^{1\over2}\bigg\|_2\\
  &&\leq  |\widetilde{\Omega}_\ell|^{-{1\over2}},
 \end{eqnarray*}
and hence, we have  $
\|a\|_2 \leq C
|\widetilde {\Omega}_\ell|^{-{1\over 2}}$.

A similar argument to that above shows that for every  $0\leq k_1,k_2\leq M$,
\begin{eqnarray*}
\sum_{\bar R\in B_\ell, \bar R {\rm\ max}}\ell(I_R)^{-4M}\ell(J_{\frak R})^{-4M}
\|\big(\ell(I_R)^2\triangle^{(1)}\big)^{k_1}\otimes_2\big(\ell(J_{\frak  R})^2\triangle^{(2)}\big)^{k_2}b_{\ell,\bar R}\|^2_2
  \leq  C |\Omega|^{-1} .
\end{eqnarray*}

 Combining all the estimates above, we can see that $a$ is a $(1, 2, M)$-atom
 as in Definition \ref{def of product atom}  up to some constant depending only on $M,\psi$.

To see that the atomic decomposition $\sum_\ell \lambda_\ell
a_\ell$ converges to $f$ in the $L^2(\mathbb{R}^{n+m})$ norm, we
only need to show that $\|\sum_{|\ell|>G} \lambda_\ell
a_\ell\|_2\rightarrow 0$ as $G$ tends to
infinity. To see this, first note that
$$
    \Big\|\sum_{|\ell|>G} \lambda_\ell a_\ell\Big\|_2
    =\sup_{h:\, \|h\|_2}
    \Big|\big\langle \sum_{|\ell|>G} \lambda_\ell a_\ell, h\big\rangle_{L^2(\mathbb{R}^{n+m})}\Big|.
$$
Next, we have
\begin{align*}
    &\Big|\big\langle \sum_{|\ell|>G} \lambda_\ell a_\ell, h\big\rangle_{L^2(\mathbb{R}^{n+m})}\Big|\\
    &=\bigg| \sum_{|\ell|>G} \sum_{R\in B_\ell}  \int_{T(R)} \psi(t_1\sqrt{\triangle^{(1)}})\psi(t_2\sqrt{\triangle^{(2)}})(h)(y_1,y_2)\nonumber\\
    &\hskip1.5cm \times(t_1^2\triangle^{(1)}e^{-t_1^2\triangle^{(1)}} t_2^2\triangle^{(2)}e^{-t_2^2\triangle^{(2)}})(f)(y_1,y_2)dy_1dy_2{dt_1dt_2\over t_1t_2}\ \bigg|\\
    &\leq \int_{\mathbb{R}^{n}\times\mathbb{R}^{m}}\bigg( \sum_{|\ell|>G}\sum_{R\in B_\ell}
        \int_{0}^\infty\!\!\int_{0}^\infty |\psi(t_1\sqrt{\triangle^{(1)}})\psi(t_2\sqrt{\triangle^{(2)}})(h)(y_1,y_2)|^2\chi_{T(R)}{dt_1dt_2\over t_1t_2} \bigg)^{1\over2}\nonumber\\
    &\hskip.78cm\bigg( \sum_{|\ell|>G}\sum_{R\in B_\ell}\!
        \int_{0}^\infty\!\!\!\int_{0}^\infty \!\!\big|(t_1^2\triangle^{(1)}e^{-t_1^2\triangle^{(1)}}\otimes_2 t_2^2\triangle^{(2)}e^{-t_2^2\triangle^{(2)}})(f)(y_1,y_2)\big|^2\chi_{T(R)}  {dt_1dt_2\over t_1t_2}\bigg)^{1\over2}\!dy_1dy_2\\
    &\leq C\|h\|_2 \bigg\|\bigg( \sum_{|\ell|>G}\sum_{R\in B_\ell}
        \int_{0}^\infty\!\!\int_{0}^\infty \big|(t_1^2\triangle^{(1)}e^{-t_1^2\triangle^{(1)}}\otimes_2 t_2^2\triangle^{(2)}e^{-t_2^2\triangle^{(2)}})(f)\big|^2\chi_{T(R)}  {dt_1dt_2\over t_1t_2}\bigg)^{1\over2}\bigg\|_2\\
    &\rightarrow 0
\end{align*}
as $G$ tends to $\infty$, since $\|S_{F,\triangle^{(1)},\triangle^{(2)}}f\|_2 < \infty$.  This implies that $f = \sum_\ell \lambda_\ell a_\ell$ in the
sense of $L^2(\mathbb R^{n+m})$.

Next, we verify the estimate for the series
$\sum_{\ell}|\lambda_\ell|$. To deal with this, we claim that for
each $\ell\in\mathbb{Z}$,
\begin{eqnarray*}
    \sum_{R\in B_\ell}\int_{T(R)} \big|(t_1^2\triangle^{(1)}e^{-t_1^2\triangle^{(1)}}\otimes_2 t_2^2\triangle^{(2)}e^{-t_2^2\triangle^{(2)}})(f)(y_1,y_2)\big|^2 dy_2dy_1{dt_1dt_2\over t_1t_2}
    \leq C2^{2(\ell+1)}|\widetilde{\Omega}_\ell|.
\end{eqnarray*}

\noindent First we note that
\begin{equation*}
    \int_{\widetilde{\Omega}_\ell\backslash \Omega_{\ell+1}} S_{F,\triangle^{(1)},\triangle^{(2)}}(f)^2(x_1,x_2)\, dx_2dx_1
    \leq  2^{2(\ell+1)}|\widetilde{\Omega}_\ell|.
\end{equation*}
Also we point out that
\begin{eqnarray*}
    && \int_{\widetilde{\Omega}_\ell\backslash \Omega_{\ell+1}} S_{F,\triangle^{(1)},\triangle^{(2)}}(f)^2(x_1,x_2)\, dx_2dx_1\\
    &&= \int_{\widetilde{\Omega}_\ell\backslash \Omega_{\ell+1}} \int_{\Bbb R^{n+1}_+}\int_{\Bbb R^{m+1}_+} \chi_{t_1,t_2}(x_1-y_1,x_2-y_2) \\
    &&\hskip1cm\big|\big(t_1^2\triangle^{(1)}e^{-t_1^2\triangle^{(1)}}\otimes_2
    t_2^2\triangle^{(2)}e^{-t_2^2\triangle^{(2)}}\big)f(y_1,y_2)\big|^2{dy_2dt_2\, dy_1dt_1\over t_2^{m+1} t_1^{n+m+1}}\, dx_2dx_1\\
    &&=   \int_{\Bbb R^{n+1}_+}\int_{\Bbb R^{m+1}_+}\big|\big(t_1^2\triangle^{(1)}e^{-t_1^2\triangle^{(1)}}\otimes_2
    t_2^2\triangle^{(2)}e^{-t_2^2\triangle^{(2)}}\big)(f)(y_1,y_2)\big|^2\\
    &&\hskip1cm\times |\{(x_1,x_2)\in\widetilde{\Omega}_\ell\backslash \Omega_{\ell+1}:\, |x_1-y_1|<t_1,|x_2-y_2|<t_1+t_2\}|
        {dy_2dt_2\, dy_1dt_1\over t_2^{m+1} t_1^{n+m+1} } \\
    &&\geq \sum_{R\in B_\ell}\int_{T(R)}\big|\big(t_1^2\triangle^{(1)}e^{-t_1^2\triangle^{(1)}}\otimes_2
    t_2^2\triangle^{(2)}e^{-t_2^2\triangle^{(2)}}\big)(f)(y_1,y_2)\big|^2\\
    &&\hskip1cm\times |\{(x_1,x_2)\in\widetilde{\Omega}_\ell\backslash \Omega_{\ell+1}:\, |x_1-y_1|<t_1,|x_2-y_2|<t_1+t_2\}|
        {dy_2dt_2\, dy_1dt_1\over  t_2^{m+1} t_1^{n+m+1}} \\
    &&\geq C\sum_{R\in B_\ell}\int_{T(R)} \big|(t_1^2\triangle^{(1)}e^{-t_1^2\triangle^{(1)}}\otimes_2 t_2^2\triangle^{(2)}e^{-t_2^2\triangle^{(2)}})(f)(y_1,y_2)\big|^2
        {dy_1dy_2dt_1dt_2\over t_1t_2},
\end{eqnarray*}
where the last inequality follows from the definition of $B_\ell$. This shows that the claim holds.

As a consequence, we have
\begin{align*}
    &\sum_\ell|\lambda_\ell| \\
    &\leq C\!\sum_\ell\!\bigg\|\bigg( \sum_{R\in B_\ell}
        \int_{0}^\infty\!\!\int_{0}^\infty \big|(t_1^2\triangle^{(1)}e^{-t_1^2\triangle^{(1)}}\otimes_2 t_2^2\triangle^{(2)}e^{-t_2^2\triangle^{(2)}})(f)\big|^2\chi_{T(R)}
        {dt_1dt_2\over t_1t_2}\bigg)^{1/2}\bigg\|_2|\widetilde{\Omega}_\ell|^{1\over2}\\
    &\leq C\!\sum_\ell\!\bigg(\!\sum_{R\in B_\ell}\int_{T(R)}\! \big|(t_1^2\triangle^{(1)}e^{-t_1^2\triangle^{(1)}}\otimes_2 t_2^2\triangle^{(2)}e^{-t_2^2\triangle^{(2)}})(f)(y_1,y_2)\big|^2
        dy_1dy_2{dt_1dt_2\over t_1t_2}\bigg)^{1\over2}  |\widetilde{\Omega}_\ell|^{1\over2}\\
    &\leq C\sum_\ell2^{\ell+1}|\widetilde{\Omega}_\ell|\leq C\sum_\ell2^{\ell}|\Omega_\ell|\\
    &\leq C\|S_{F,\triangle^{(1)},\triangle^{(2)}}(f)\|_1\\
    &=C\|f\|_{H^1_{F,\triangle^{(1)},\triangle^{(2)}}(\mathbb R^n\times \mathbb R^m)}.
\end{align*}
 This completes the proof of Proposition \ref{prop-product H-SL subset H-at}.
\end{proof}

\section{Estimate of Riesz transform and area function via atomic decomposition}

In this section we prove
\begin{align}\label{Riesz Area}
\sum_{j=1}^{n+m}\sum_{k=1}^m\|R_{j,k}(f)\|_1+\|f\|_1\lesssim \|S_F(f)\|_1.
\end{align}

Based on the estimate in \eqref{SFLaplace SF}, to prove the estimate \eqref{Riesz Area},
it suffices to prove that there exists a positive constant $C$ such that for  $j=0,1,\ldots,n+m$, and for $k=0,\ldots,m,$
\begin{align}\label{eeee1}
\|R_{j,k}(f)\|_{1} \leq C\|S_{F,\triangle^{(1)},\triangle^{(2)}}(f)\|_1. 
\end{align}
Indeed, as mentioned, $R_{j,k}$ is the composition of $R^{(1)}_j$ and $R^{(2)}_k,$ and hence $R_{j,k}$ is bounded on $L^p(\R^{n+m}), 1<p<\infty.$
The flag Riesz transform can also be defined by $T:=\nabla^{(1)} ({\triangle^{(1)}})^{-1/2}\otimes_2 \nabla^{(2)} ({\triangle^{(2)}})^{-1/2}$ as follows via functional calculus,
\begin{eqnarray}\label{e8.16}
Tf(x_1, x_2)={1\over 4 {\pi}}\int_0^{\infty}\int_0^{\infty}\big(\nabla^{(1)} e^{-t_1\triangle^{(1)}}\otimes_2 \nabla^{(2)}
e^{-t_2\triangle^{(2)}}\big)f(x_1, x_2){dt_1 dt_2\over \sqrt{t_1 t_2}}.
\end{eqnarray}

The estimate $\sum_{j=1}^{n+m}\sum_{k=1}^m\|R_{j,k}(f)\|_1+\|f\|_1\lesssim \|S_F(f)\|_1$ follows from the following theorem.
\begin{theorem}\label{Riesz}
The flag Riesz transform $\nabla^{(1)} ({\triangle^{(1)}})^{-1/2}\otimes_2 \nabla^{(2)} ({\triangle^{(2)}})^{-1/2}$
extends to a bounded operator from
$H_{F,\triangle^{(1)},\triangle^{(2)}}^1(\mathbb{R}^{n}\times\mathbb{R}^{m})$ to
$L^1(\mathbb{R}^{n+m})$.
\end{theorem}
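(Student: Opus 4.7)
The plan is to reduce Theorem \ref{Riesz} to a uniform atomic $L^1$-estimate and then mimic, with the Riesz kernel in place of the square function kernel, the geometric argument already carried out in the proof of Lemma \ref{leAtom}. Since $R_{j,k}=R_j^{(1)}\otimes_2 R_k^{(2)}$ is the composition of two classical Riesz transforms, it is bounded on $L^2(\mathbb{R}^{n+m})$, and in particular satisfies the weak-type $(2,2)$ hypothesis of Lemma \ref{lemma-of-T-bd-on Hp}. Combining Lemma \ref{lemma-of-T-bd-on Hp} with Theorem \ref{theorem of Hardy space atomic decom again} (which identifies $H^1_{F,\triangle^{(1)},\triangle^{(2)}}$ with $H^1_{F,at,M}$ for $M>\max\{n,m\}/4$), matters reduce to showing
\begin{equation*}
\|Ta\|_{L^1(\mathbb{R}^n\times\mathbb{R}^m)}\leq C
\end{equation*}
with $C$ independent of the $(1,2,M)$-atom $a$.

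Fix such an atom with $\mathrm{supp}\,a\subset\Omega$ and atomic decomposition $a=\sum_{R\in m(\Omega)}a_R$ with $a_R=(\triangle^{(1)})^M\otimes_2(\triangle^{(2)})^M b_R$ and $\mathrm{supp}\,b_R\subset 10R$. Following the pattern of Lemma \ref{leAtom}, I will split
\begin{equation*}
\int_{\mathbb{R}^{n+m}}|Ta|=\int_{\bigcup_R \widetilde{R}}|Ta|+\int_{(\bigcup_R\widetilde{R})^c}|Ta|,
\end{equation*}
where $\widetilde{R}$ is the enlarged rectangle constructed from $\widetilde{I}$ and $\widetilde{J}$ produced by two iterations of the strong maximal function. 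The interior integral is controlled by H\"older's inequality, $|\bigcup\widetilde{R}|\leq C|\Omega|$ and the $L^2$-boundedness of $T$:
\begin{equation*}
\int_{\bigcup \widetilde{R}}|Ta|\leq |\bigcup\widetilde{R}|^{1/2}\|Ta\|_{L^2}\leq C|\Omega|^{1/2}\|a\|_{L^2}\leq C.
\end{equation*}
For the exterior piece I decompose, as in \eqref{DE},
\begin{equation*}
\int_{(\bigcup \widetilde{R})^c}|Ta_R|\leq \int_{(100\widetilde{I})^c\times \mathbb{R}^m}|Ta_R|+\int_{\mathbb{R}^n\times(100\widetilde{J})^c}|Ta_R|=\mathrm{I}+\mathrm{II},
\end{equation*}
and within $\mathrm{I}$ I further split the $x_2$-integral at $100J$ versus $(100J)^c$ (and symmetrically for $\mathrm{II}$), exactly as in the square function case.

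The key technical step is the kernel estimate on each piece. Using the heat-semigroup representation \eqref{e8.16},
\begin{equation*}
Ta_R(x_1,x_2)=\frac{1}{4\pi}\int_0^\infty\!\!\int_0^\infty\!\Big(\nabla^{(1)}e^{-t_1\triangle^{(1)}}(t_1\triangle^{(1)})^M\otimes_2 \nabla^{(2)}e^{-t_2\triangle^{(2)}}(t_2\triangle^{(2)})^M\Big)b_R(x_1,x_2)\frac{dt_1dt_2}{\sqrt{t_1t_2}\,(t_1t_2)^M}.
\end{equation*}
Since $\nabla^{(i)}e^{-t_i\triangle^{(i)}}(t_i\triangle^{(i)})^M$ has the integral kernel governed by a Gaussian with $t_i^{-n_i-1/2-M}$-type decay and the same factorization of scales is available, I can repeat verbatim the two-parameter split of the $t_1$- and $t_2$-integrals at $\ell(I), \ell(J), |x_1-x_I|/4, |x_2-x_J|/4$ used for $\mathrm{I}_{21}$--$\mathrm{I}_{24}$. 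Absorbing the $M$-fold $\triangle^{(i)}$'s into the Gaussian (which is where $M>\max\{n,m\}/4$ is used) produces exactly the same pointwise bound
\begin{equation*}
\int_{100J}|Ta_R(x_1,x_2)|^2\,dx_2\lesssim \frac{|I|^{1/n+1}}{|x_1-x_I|^{2n+1}}\ell(I)^{-4M}\ell(J)^{-4M}\,\big\|(1\!\!1_1\otimes_2(\ell(J)^2\triangle^{(2)})^M)b_R\big\|_{L^2}^2
\end{equation*}
(and its symmetric variants). Applying Cauchy--Schwarz in $R$, Journ\'e's covering lemma in the forms $\sum_{R}|R|\gamma_i(R)^{-1}\leq C|\Omega|$, and the normalization (iii) of Definition \ref{def of product atom} then yields $\mathrm{I}+\mathrm{II}\leq C$, completing the atomic estimate.

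The main obstacle I anticipate is verifying the pointwise kernel bound cleanly: unlike $t^2\triangle e^{-t^2\triangle}$, the Riesz building block $\nabla e^{-t\triangle}$ is only of order $t^{-1/2}$ away from the diagonal, so one must balance the two powers $\sqrt{t_1t_2}$ in the denominator of \eqref{e8.16} against the $(t_1t_2)^M$ factor from the atom and the Gaussian to ensure sufficient decay in both $|x_i-x_{I_R}|$ directions (and correct integrability in $t_i$ at $0$ and $\infty$). This is exactly where the condition $M>\max\{n,m\}/4$, which already governs Lemma \ref{leAtom}, comes into play; once those book-keeping inequalities are carried out in the four regions $\{t_i\lessgtr \ell(I_R),\ell(J_R)\}\cap\{t_i\lessgtr |x_i-x_{I_R}|/4\}$, the rest of the argument proceeds in direct parallel to Lemma \ref{leAtom}.
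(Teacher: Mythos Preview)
Your overall architecture matches the paper's proof exactly: reduce to a uniform $L^1$ bound on $(1,2,M)$-atoms via Lemma \ref{lemma-of-T-bd-on Hp} and Theorem \ref{theorem of Hardy space atomic decom again}, handle $\bigcup\widetilde R$ by H\"older and $L^2$-boundedness, and on the exterior split $\mathrm{I}+\mathrm{II}$ and conclude with Journ\'e's lemma together with condition (iii) in Definition \ref{def of product atom}.

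Where you diverge from the paper is in the kernel bookkeeping, and your claim that it is ``verbatim'' Lemma \ref{leAtom} is too optimistic. The paper does \emph{not} unfold both heat integrals and absorb both powers $(\triangle^{(i)})^M$ simultaneously. Instead, on the mixed regions (your $\mathrm{I}_1$ and $\mathrm{II}_1$) it keeps one factor as the bounded operator $\nabla^{(2)}(\triangle^{(2)})^{-1/2}$ (resp.\ $\nabla^{(1)}(\triangle^{(1)})^{-1/2}$) acting on $a_R$ in $L^2$, unfolds only the remaining heat integral, and splits that single $t$-integral at the flag-adapted scale $\ell(I)\ell(J)$ (and at $\ell(J)^2$ for $\mathrm{II}$), not at $\ell(I)$ or $|x_1-x_I|/4$. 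Only in selected subterms (e.g.\ $\mathrm{I}_{221}$, $\mathrm{II}_{12}$) does it cash in $a_R=(\triangle^{(1)})^M a_{R,2}$ or $a_R=(1\!\!1\otimes_2(\triangle^{(2)})^M)a_{R,1}$. As a result the paper never produces the pointwise estimate you quote for $\int_{100J}|Ta_R|^2\,dx_2$; it obtains instead direct $L^1$ bounds of the shape $|R|^{1/2}\gamma_i(R)^{-\delta}\big(\|a_R\|_{L^2}+\ell^{-2M}\|(\triangle^{(\cdot)})^M\otimes_2 1\!\!1\, b_R\|_{L^2}\big)$, and these are what feed into Journ\'e. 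Your fully-unfolded approach should still close, but the integrability checks in $t_1,t_2$ with the measure $dt_i/\sqrt{t_i}$ (rather than $dt_i/t_i$) and the choice of exponents $\alpha_1,\alpha_2$ in each region require a separate case analysis and do not reduce to the square-function computation; note also that the paper invokes $M>\max\{n,m\}/2$ in this proof.
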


\begin{proof}
Let $T:=\nabla^{(1)} ({\triangle^{(1)}})^{-1/2}\otimes_2 \nabla^{(2)} ({\triangle^{(2)}})^{-1/2}$.
It suffices to show that   $T$  is uniformly bounded on each $(1, 2, M)$ atom $a$ with
$M>\max\{n, m\}/2$, and  there exists a constant
$C>0$ independent of $a$ such that
\begin{eqnarray}\label{T alpha uniformly bd}
\|T(a)\|_1\leq C.
\end{eqnarray}

From the definition of $(1, 2, M)$ atom, it follows
that $a$ is supported in some $\Omega\subset
\mathbb{R}^{n}\times\mathbb{R}^{m}$ and $a$ can be further
decomposed into $a=\sum_{R\in m(\Omega)}a_R$. For any $R=I \times
J\subset\Omega$, let $l$ be the biggest dyadic cube containing  $I$,
so that $l\times J\subset\widetilde{\Omega}$, where
$\widetilde{\Omega}=\{x\in\mathbb{R}^{n}\times\mathbb{R}^{m}:\
M_s(\chi_{\Omega})(x)>1/2\}$. Next, let $Q$ be the biggest dyadic
cube containing $J$, so that $l\times Q\subset
\widetilde{\widetilde{\Omega}}$, where
$\widetilde{\widetilde{\Omega}}=\{x\in
\mathbb{R}^{n}\times\mathbb{R}^{m}:\
M_s(\chi_{\widetilde{\Omega}})(x)>1/2\}$. Now let $\widetilde{R}$ be
the 100-fold dilate of $l\times Q$ concentric with $l\times Q$.
Clearly, an application of the strong maximal function theorem shows
that $\big|\bigcup\limits_{R\subset\Omega} \widetilde{R}\big|\leq
C|\widetilde{\widetilde{\Omega}}|\leq C|\widetilde{\Omega}|\leq
C|\Omega|$. From (iii) in the definition of $(1, 2, M)$
atom, we can obtain that
$$
\int_{\cup \widetilde{R}}|T(a)(x)|dx \leq \big|\cup
\widetilde{R}\big|^{1/2}\|T(a)\|_2\leq
C|\Omega|^{1/2}
\|a\|_2
 \leq   C|\Omega|^{1/2}|\Omega|^{-1/2}
 \leq  C.$$
Therefore,  the proof of  (\ref{T alpha uniformly bd}) reduces to showing
that
\begin{eqnarray}\label{T alpha uniformly bd on outside of Omega}
\int_{\big(\cup \widetilde{R}\big)^c}|T(a)(x_1,x_2)|dx_2dx_1 \leq C.
\end{eqnarray}

\noindent Since  $a=\sum_{R\in m(\Omega)}a_R$, we have
\begin{align*}
&\int_{\big(\cup \widetilde{R}\big)^c}|T(a)(x_1,x_2)|dx_2dx_1
\\
&\leq \sum_{R\in
m(\Omega)} \int_{{\widetilde{R}}^c}|T(a_R)(x_1,x_2)|dx_2dx_1\\
&\leq \sum_{R\in
m(\Omega)}\int_{(100l)^c\times\mathbb{R}^{m}}|T(a_R)(x_1,x_2)|dx_2dx_1+\sum_{R\in
m(\Omega)}\int_{\mathbb{R}^{n}\times(100S)^c}|T(a_R)(x_1,x_2)|dx_2dx_1 \\
&=\textrm{I}+\textrm{II}.
\end{align*}

For term $\textrm{I}$, we observe that
\begin{align*}
\int_{(100l)^c\times\mathbb{R}^{m}}|T(a_R)(x_1,x_2)|dx_2dx_1
&=\Big(\int_{(100l)^c\times 100J} +\int_{(100l)^c\times (100J)^c}\Big) |T(a_R)(x_1,x_2)|dx_2dx_1 \\
&=\textrm{I}_1+\textrm{I}_2.
 \end{align*}

Let us first estimate the term $\textrm{I}_1$. H\"older's inequality gives
\begin{align*}
\textrm{I}_1&= \int_{(100l)^c}\int_{100J}  \bigg| \nabla^{(2)} ({\triangle^{(2)}})^{-1/2} \otimes_2 \nabla^{(1)}({\triangle^{(1)}})^{-1/2}a_R(x_1, x_2)\bigg|dx_2dx_1\\
&\lesssim |J|^{1/2}\int_{(100l)^c}\bigg(\int_{100J}  \bigg| \nabla^{(2)} ({\triangle^{(2)}})^{-1/2}\big( \nabla^{(1)}({\triangle^{(1)}})^{-1/2}a_R(x_1, \cdot)\big)(x_2)\bigg|^2dx_2\bigg)^{1/2}dx_1.
\end{align*}
By using the $L^2$-boundedness of $\nabla^{(2)} ({\triangle^{(2)}})^{-1/2}$, we obtain that
\begin{align*}
\textrm{I}_1
&\lesssim |J|^{1/2}\int_{(100l)^c}\bigg(\int_{\Bbb R^m}  \bigg|\nabla^{(1)}({\triangle^{(1)}})^{-1/2}a_R(x_1, x_2)\bigg|^2dx_2\bigg)^{1/2}dx_1\\
&\lesssim |J|^{1/2}\int_{(100l)^c}\bigg(\int_{100J}  \bigg| \int_0^{\infty}\nabla^{(1)} e^{-t_1\triangle^{(1)}}  a_R(x_1, x_2) {dt_1 \over \sqrt{t_1 }}\bigg|^2dx_2\bigg)^{1/2}dx_1\\
&\quad + |J|^{1/2}\int_{(100l)^c}\bigg(\int_{(100J)^c}  \bigg| \int_0^{\infty}\nabla^{(1)} e^{-t_1\triangle^{(1)}}  a_R(x_1, x_2) {dt_1 \over \sqrt{t_1 }}\bigg|^2dx_2\bigg)^{1/2}dx_1\\
&=\textrm{I}_{11}+\textrm{I}_{12}
\end{align*}

We first handle $\textrm{I}_{11}$.
Let $a_R= (({\triangle^{(1)}})^M \otimes_2 1\!\!1_2)a_{R,2}$ where $a_{R,2}=(1\!\!1_1 \otimes_2({\triangle^{(2)}})^M)b_R$.
\begin{align*}
\textrm{I}_{11} &\lesssim |J|^{1/2}\int_{(100l)^c}\bigg(\int_{100J}  \bigg|\int_{10I}\int_{10J} \int_0^{\infty} q_{t_1}^{(1)}(x_1-y_1,x_2-y_2)a_R(y_1, y_2) dy_2dy_1{dt_1 \over {t_1 }}\bigg|^2dx_2\bigg)^{1/2}dx_1\\
&= |J|^{1/2}\int_{(100l)^c}\bigg(\int_{100J}  \bigg|\int_{10I}\int_{10J} \int_0^{\infty} \big(t_1\triangle^{(1)}\big)^Mq_{t_1}^{(1)}(x_1-y_1,x_2-y_2)a_{R,2}(y_1, y_2)dy_2dy_1 \\
 &\hskip6cm \times {dt_1 \over {t_1^{1+M} }}\bigg|^2dx_2\bigg)^{1/2}dx_1.
\end{align*}
By the heat kernel estimate and Hardy-Littlewood Maximal function on $\Bbb R^m$,
\begin{align*}
&\textrm{I}_{11}\\
&\lesssim |J|^{1/2}\int_{(100l)^c}\bigg(\int_{100J}  \bigg|\int_{10I}\int_{10J} \int_0^{\infty} {1\over t_1^{n+m\over2}}e^{-{|(x_1,x_2)-(y_1,y_2)|^2\over t_1}} |a_{R,2}(y_1, y_2)|dy_2dy_1 {dt_1 \over t_1^{1+M}}\bigg|^2dx_2\bigg)^{1/2}dx_1\\
&= |J|^{1/2}\int_{(100l)^c}\bigg(\int_{100J}  \bigg|\int_{10J} t_1^{-\frac m2}e^{-|x_2-y_2|^2\over t_1}\int_{10I}\int_0^{\infty} {1\over t_1^{n\over2}}e^{-{|x_1-y_1|^2\over t_1}} |a_{R,2}(y_1, y_2)|dy_1 {dt_1 \over t_1^{1+M}}dy_2\bigg|^2dx_2\bigg)^{1/2}dx_1\\
&\lesssim  |J|^{1/2}\int_{(100l)^c}\bigg(\int_{100J}  \bigg|M_2\bigg(\int_{10I}\int_0^{\infty} {1\over t_1^{n\over2}}e^{-{|x_1-y_1|^2\over t_1}} |a_{R,2}(y_1, \cdot)|dy_1 {dt_1 \over t_1^{1+M}}\bigg)(x_2)\bigg|^2dx_2\bigg)^{1/2}dx_1\\
&\lesssim  |J|^{1/2}\int_{(100l)^c}\bigg(\int_{\Bbb R^m}  \bigg|\int_{10I}\int_0^{\infty} {1\over t_1^{n\over2}}e^{-{|x_1-y_1|^2\over t_1}} |a_{R,2}(y_1, x_2)|dy_1 {dt_1 \over t_1^{1+M}}\bigg|^2dx_2\bigg)^{1/2}dx_1\\
&\lesssim  |J|^{1/2}\int_{(100l)^c}\bigg(\int_{10J}  \bigg|\int_{10I}\int_0^{\ell(I)^2} {1\over t_1^{n\over2}}e^{-{|x_1-y_1|^2\over t_1}} |a_{R,2}(y_1, x_2)|dy_1 {dt_1 \over t_1^{1+M}}\bigg|^2dx_2\bigg)^{1/2}dx_1\\
&\qquad +  |J|^{1/2}\int_{(100l)^c}\bigg(\int_{10J}  \bigg|\int_{10I}\int_{\ell(I)^2}^\infty {1\over t_1^{n\over2}}e^{-{|x_1-y_1|^2\over t_1}} |a_{R,2}(y_1, x_2)|dy_1 {dt_1 \over t_1^{1+M}}\bigg|^2dx_2\bigg)^{1/2}dx_1\\
&=:\textrm{I}_{111} +\textrm{I}_{112}.
 \end{align*}

We first consider $\textrm{I}_{111}$ and write
\begin{align*}
\textrm{I}_{111}
&\lesssim |J|^{1/2}\sum_{j_1=\tilde j}^\infty \int_{|x_1-x_I|\approx 2^{j_1}\ell(I)}\bigg(\int_{10J}  \bigg|\int_{10I}\int_0^{\ell(I)^2} {1\over t_1^{n\over2}}e^{-{|x_1-y_1|^2\over t_1}} |a_{R,2}(y_1, x_2)|dy_1 {dt_1 \over t_1^{1+M}}\bigg|^2dx_2\bigg)^{1/2}dx_1\\
&\lesssim |J|^{1/2}\sum_{j_1=\tilde j}^\infty \int_{|x_1-x_I|\approx 2^{j_1}\ell(I)}\\
&\qquad\bigg(\int_{10J}  \bigg|\int_{10I}\int_0^{\ell(I)^2} {1\over t_1^{n\over2}} \bigg({ t_1\over|x_1-x_I|^2}\bigg)^{\alpha_1} |a_{R,2}(y_1, x_2)|dy_1 {dt_1 \over t_1^{1+M}}\bigg|^2dx_2\bigg)^{1/2}dx_1\\
&\lesssim |J|^{1/2}\sum_{j_1=\tilde j}^\infty \int_{|x_1-x_I|\approx 2^{j_1}\ell(I)}\frac{\ell(I)^{2\alpha_1-n-2M}}{|x_1-x_I|^{2\alpha_1}}\bigg(\int_{10J}  \bigg|\int_{10I} |a_{R,2}(y_1, x_2)|dy_1\bigg|^2dx_2\bigg)^{1/2}dx_1\\
&\lesssim |J|^{1/2}\sum_{j_1=\tilde j}^\infty \int_{|x_1-x_I|\approx 2^{j_1}\ell(I)}\frac{\ell(I)^{2\alpha_1-n-2M}}{|x_1-x_I|^{2\alpha_1}}\bigg(\int_{10J}  |I|\int_{10I} |a_{R,2}(y_1, x_2)|^2dy_1dx_2\bigg)^{1/2}dx_1
 \end{align*}
where we choose $2\alpha_1-n>2M$ and $\tilde j$ is the smallest integer such that
$2^{\tilde j} I \cap (100l)^c\not=\emptyset$.
 Hence,
\begin{align*}
\textrm{I}_{111}
&\lesssim |R|^{1/2}\sum_{j_1=\tilde j}^\infty \int_{|x_1-x_I|\approx 2^{j_1}\ell(I)}\frac{\ell(I)^{2\alpha_1-n}}{|x_1-x_I|^{2\alpha_1}} \ell(I)^{-2M}\|a_{R,2}\|_2\\
&\lesssim |R|^{1/2} \sum_{j_1=\tilde j}^\infty\frac{\ell(I)^{2\alpha_1-n}}{(2^{j_1}\ell(I))^{2\alpha_1-n}}  \ell(I)^{-2M}\|a_{R,2}\|_2\\
&\lesssim |R|^{1\over2}   \gamma_1(R)^{-(2\alpha_1-n)}    \ell(I)^{-2M} \|a_{R,2}\|_2,
 \end{align*}
where the last inequality follows from the fact that
\begin{equation}\label{tilde j}
2^{\tilde j} \approx {\ell(l)\over \ell(I)}.
\end{equation}

To consider $\textrm{I}_{112}$, we choose $0<2\alpha_1-n<2M$ and hence
\begin{align*}
&\textrm{I}_{112}\\
&\lesssim |J|^{1/2}\sum_{j_1=\tilde j}^\infty \int_{|x_1-x_I|\approx 2^{j_1}\ell(I)}\bigg(\int_{10J}  \bigg|\int_{10I}\int_{\ell(I)^2}^\infty {1\over t_1^{n\over2}}e^{-{|x_1-y_1|^2\over t_1}} |a_{R,2}(y_1, x_2)|dy_1 {dt_1 \over t_1^{1+M}}\bigg|^2dx_2\bigg)^{1/2}dx_1\\
&\lesssim |J|^{1/2}\sum_{j_1=\tilde j}^\infty \int_{|x_1-x_I|\approx 2^{j_1}\ell(I)}\\
&\qquad\bigg(\int_{10J}  \bigg|\int_{10I}\int_{\ell(I)^2}^\infty {1\over t_1^{n\over2}} \bigg({ t_1\over|x_1-x_I|^2}\bigg)^{\alpha_1} |a_{R,2}(y_1, x_2)|dy_1 {dt_1 \over t_1^{1+M}}\bigg|^2dx_2\bigg)^{1/2}dx_1\\
&\lesssim |J|^{1/2}\sum_{j_1=\tilde j}^\infty \int_{|x_1-x_I|\approx 2^{j_1}\ell(I)}\frac{\ell(I)^{2\alpha_1-n-2M}}{|x_1-x_I|^{2\alpha_1}}\bigg(\int_{10J}  \bigg|\int_{10I} |a_{R,2}(y_1, x_2)|dy_1\bigg|^2dx_2\bigg)^{1/2}dx_1\\
&\lesssim |J|^{1/2}\sum_{j_1=\tilde j}^\infty \int_{|x_1-x_I|\approx 2^{j_1}\ell(I)}\frac{\ell(I)^{2\alpha_1-n-2M}}{|x_1-x_I|^{2\alpha_1}}\bigg(\int_{10J}  |I|\int_{10I} |a_{R,2}(y_1, x_2)|^2dy_1dx_2\bigg)^{1/2}dx_1\\
&\lesssim |R|^{1\over2}   \gamma_1(R)^{-(2\alpha_1-n)}    \ell(I)^{-2M} \|a_{R,2}\|_2,
 \end{align*}
where again the last inequality follows from \eqref{tilde j}

We now handle $\textrm{I}_{12}$. Similar to the term $\textrm{I}_{11}$ we have
\begin{align*}
\textrm{I}_{12} &\lesssim |J|^{1/2}\int_{(100l)^c}\bigg(\int_{(100J)^c}  \bigg|\int_{10I}\int_{10J} \int_0^{\infty} q_{t_1}^{(1)}(x_1-y_1,x_2-y_2)a_R(y_1, y_2) dy_2dy_1{dt_1 \over {t_1 }}\bigg|^2dx_2\bigg)^{1/2}dx_1\\
&= |J|^{1/2}\int_{(100l)^c}\bigg(\int_{(100J)^c}  \bigg|\int_{10I}\int_{10J} \int_0^{\infty} \big(t_1\triangle^{(1)}\big)^Mq_{t_1}^{(1)}(x_1-y_1,x_2-y_2)a_{R,2}(y_1, y_2)dy_2dy_1 \\
 &\hskip6cm \times {dt_1 \over {t_1^{1+M} }}\bigg|^2dx_2\bigg)^{1/2}dx_1\\
&\lesssim |J|^{1/2}\int_{(100l)^c}\bigg(\int_{(100J)^c}  \bigg|\int_{10I}\int_{10J} \int_0^{\infty} \frac1{t_1^{\frac{n+m}2}}e^{-\frac{|x_1-y_1|^2+|x_2-y_2|^2}{t_1}}|a_{R,2}(y_1, y_2)|dy_2dy_1 \\
 &\hskip6cm \times {dt_1 \over {t_1^{1+M} }}\bigg|^2dx_2\bigg)^{1/2}dx_1\\
&\lesssim |J|^{1/2}\int_{(100l)^c}\bigg(\int_{(100J)^c}  \bigg|\int_{10I}\int_{10J} \int_0^{\infty} \frac1{t_1^{\frac n2}}\Big(\frac{t_1}{|x_1-y_1|^2}\Big)^{\alpha_1} \frac1{t_1^{\frac m2}}\Big(\frac{t_1}{|x_2-y_2|^2}\Big)^{\alpha_2}\\
 &\hskip6cm \times |a_{R,2}(y_1, y_2)|dy_2dy_1 {dt_1 \over {t_1^{1+M} }}\bigg|^2dx_2\bigg)^{1/2}dx_1
\end{align*}
for any $\alpha_1,\alpha_2>0$. Next, similar to the estimate in $\textrm{I}_{11}$, let $\tilde j$ is the smallest integer such that
$2^{\tilde j} I \cap (100l)^c\not=\emptyset$. Then we have

\begin{align*}
\textrm{I}_{12}
&\lesssim |J|^{1/2}\sum_{j_1=\tilde j}^\infty \int_{|x_1-x_I|\approx 2^{j_1}\ell(I)}\bigg(\sum_{j_2=6}^\infty\int_{|x_2-y_J|\approx 2^{j_2}\ell(J)} \\
&\quad\times \bigg|\int_{10I}\int_{10J} \int_0^{\ell(I)^2} \frac1{t_1^{\frac n2}}\Big(\frac{t_1}{|x_1-y_1|^2}\Big)^{\alpha_1} \frac1{t_1^{\frac m2}}\Big(\frac{t_1}{|x_2-y_2|^2}\Big)^{\alpha_2}|a_{R,2}(y_1, y_2)|dy_2dy_1{dt_1 \over {t_1^{1+M} }}\bigg|^2dx_2\bigg)^{1/2}dx_1\\
&\ + |J|^{1/2}\sum_{j_1=\tilde j}^\infty \int_{|x_1-x_I|\approx 2^{j_1}\ell(I)}\bigg(\sum_{j_2=6}^\infty\int_{|x_2-y_J|\approx 2^{j_2}\ell(J)} \\
&\quad\times \bigg|\int_{10I}\int_{10J} \int_{\ell(I)^2}^{\infty} \frac1{t_1^{\frac n2}}\Big(\frac{t_1}{|x_1-y_1|^2}\Big)^{\alpha_1} \frac1{t_1^{\frac m2}}\Big(\frac{t_1}{|x_2-y_2|^2}\Big)^{\alpha_2}|a_{R,2}(y_1, y_2)|dy_2dy_1{dt_1 \over {t_1^{1+M} }}\bigg|^2dx_2\bigg)^{1/2}dx_1\\
&=\textrm{I}_{121} + \textrm{I}_{122}.
\end{align*}
As for $\textrm{I}_{121}$, by choosing $2\alpha_1>n+2M$ and $2\alpha_2>m$, and then by taking the integration of $t_1$ as
$$\int_0^{\ell(I)^2} t_1^{\alpha_1-{\frac n2}+\alpha_2-{\frac m2}-M-1}dt_1=\ell(I)^{2\alpha_1-n+2\alpha_2-m-2M},$$
we have
\begin{align*}
\textrm{I}_{121}
&\lesssim |J|^{1/2}\sum_{j_1=\tilde j}^\infty \int_{|x_1-x_I|\approx 2^{j_1}\ell(I)}\bigg(\sum_{j_2=6}^\infty\int_{|x_2-y_J|\approx 2^{j_2}\ell(J)} \\
&\quad\times \bigg|\ell(I)^{2\alpha_1-n+2\alpha_2-m-2M}\frac1{2^{2j_1\alpha_1}\ell(I)^{2\alpha_1}}\frac1{2^{2j_2\alpha_2}\ell(J)^{2\alpha_2}} \|a_{R,2}\|_1\bigg|^2dx_2\bigg)^{1/2}dx_1\\
&\lesssim |J|^{1/2}\sum_{j_1=\tilde j}^\infty \int_{|x_1-x_I|\approx 2^{j_1}\ell(I)}\ell(I)^{2\alpha_1-n+2\alpha_2-m-2M}\frac1{2^{2j_1\alpha_1}\ell(I)^{2\alpha_1}} \|a_{R,2}\|_1 \\
&\quad\times \bigg(\sum_{j_2=6}^\infty \frac {2^{j_2m}\ell(J)^m}{2^{4j_2\alpha_2}\ell(J)^{4\alpha_2}}\bigg)^{1/2}dx_1\\
&\lesssim |J|^{1/2}\sum_{j_1=\tilde j}^\infty  \ell(I)^{2\alpha_1-n+2\alpha_2-m-2M}\frac{2^{j_1n}\ell(I)^n}{2^{2j_1\alpha_1}\ell(I)^{2\alpha_1}} |R|^{1/2}\|a_{R,2}\|_2\ell(J)^{\frac m2-2\alpha_2} \\
&\lesssim \gamma_1(R)^{-(2\alpha_1-n)}|R|^{1/2}\ell(I)^{-2M}\|a_{R,2}\|_2,
\end{align*}
where again the last inequality inequality follows from \eqref{tilde j}.

As for $\textrm{I}_{121}$, by choosing $n<2\alpha_1<n+2M$ and $m/4<\alpha_2<m/2$, and then by taking the integration of $t_1$ as
$$\int_{\ell(I)^2}^\infty t_1^{\alpha_1-{\frac n2}+\alpha_2-{\frac m2}-M-1}dt_1=\ell(I)^{2\alpha_1-n+2\alpha_2-m-2M},$$
we have
\begin{align*}
\textrm{I}_{121}
&\lesssim |J|^{1/2}\sum_{j_1=\tilde j}^\infty \int_{|x_1-x_I|\approx 2^{j_1}\ell(I)}\bigg(\sum_{j_2=6}^\infty\int_{|x_2-y_J|\approx 2^{j_2}\ell(J)} \\
&\quad\times \bigg|\ell(I)^{2\alpha_1-n+2\alpha_2-m-2M}\frac1{2^{2j_1\alpha_1}\ell(I)^{2\alpha_1}}\frac1{2^{2j_2\alpha_2}\ell(J)^{2\alpha_2}} \|a_{R,2}\|_1\bigg|^2dx_2\bigg)^{1/2}dx_1\\
&\lesssim |J|^{1/2}\sum_{j_1=\tilde j}^\infty \int_{|x_1-x_I|\approx 2^{j_1}\ell(I)}\ell(I)^{2\alpha_1-n+2\alpha_2-m-2M}\frac1{2^{2j_1\alpha_1}\ell(I)^{2\alpha_1}} \|a_{R,2}\|_1 \\
&\quad\times \bigg(\sum_{j_2=6}^\infty \frac {2^{j_2m}\ell(J)^m}{2^{4j_2\alpha_2}\ell(J)^{4\alpha_2}}\bigg)^{1/2}dx_1\\
&\lesssim |J|^{1/2}\sum_{j_1=\tilde j}^\infty  \ell(I)^{2\alpha_1-n+2\alpha_2-m-2M}\frac{2^{j_1n}\ell(I)^n}{2^{2j_1\alpha_1}\ell(I)^{2\alpha_1}} |R|^{1/2}\|a_{R,2}\|_2\ell(J)^{\frac m2-2\alpha_2} \\
&\lesssim \gamma_1(R)^{-(2\alpha_1-n)}|R|^{1/2}\ell(I)^{-2M}\|a_{R,2}\|_2,
\end{align*}
where again the last inequality inequality follows from \eqref{tilde j}.

We now consider $\textrm{I}_2$. We write
\begin{align*}
\textrm{I}_2&\leq \int_{(100l)^c\times (100J)^c }  \bigg|  \nabla^{(1)}({\triangle^{(1)}})^{-1/2} \otimes_2 \nabla^{(2)} ({\triangle^{(2)}})^{-1/2} a_R(x_1, x_2)\bigg|dx_2dx_1\\
&= \int_{(100l)^c\times (100J)^c }  \bigg|  {1\over 2 {\sqrt\pi}} \int_0^{\infty}  \nabla^{(1)}e^{-t_1\triangle^{(1)}}    \nabla^{(2)} ({\triangle^{(2)}})^{-1/2} a_R(x_1, x_2)  {dt_1 \over \sqrt{t_1 }}\bigg|dx_2dx_1\\
&=  \int_{(100l)^c\times (100J)^c }  \bigg|  {1\over 2 {\sqrt\pi}} \int_0^{\ell(I)^2} \nabla^{(1)} e^{-t_1\triangle^{(1)}}  \nabla^{(2)} ({\triangle^{(2)}})^{-1/2} a_R(x_1, x_2) {dt_1 \over \sqrt{t_1 }}\bigg|dx_2dx_1\\
&\quad+ \int_{(100l)^c\times (100J)^c}  \bigg|  {1\over 2 {\sqrt\pi}} \int_{\ell(I)^2}^\infty \nabla^{(1)} e^{-t_1\triangle^{(1)}}  \nabla^{(2)} ({\triangle^{(2)}})^{-1/2} a_R(x_1, x_2) {dt_1 \over \sqrt{t_1 }}\bigg|dx_2dx_1\\
&=:\textrm{I}_{21}+\textrm{I}_{22}.
 \end{align*}

We first consider $\textrm{I}_{21}$.
From the heat kernel estimate and the support condition of $a_R$, it is clear that
\begin{align*}
\textrm{I}_{21}
&\lesssim \int_{(100l)^c\times (100J)^c }  \bigg|  {1\over 2 {\sqrt\pi}} \int_0^{\ell(I)^2} \\
&\qquad  \int_{\mathbb R^{n+m}}q^{(1)}_{t_1}(x_1-y_1,x_2-y_2) \nabla^{(2)} ({\triangle^{(2)}})^{-1/2} a_R(y_1, y_2) dy_1dy_2{dt_1 \over {t_1 }}\bigg|dx_2dx_1\\
&\lesssim   \int_{(100l)^c\times (100J)^c }  \bigg|  {1\over 2 {\sqrt\pi}} \int_0^{\ell(I)^2} \\
&\qquad  \int_{10I}\int_{10J} q^{(1)}_{t_1}(x_1-y_1,x_2-y_2) \nabla^{(2)} ({\triangle^{(2)}})^{-1/2} a_R(y_1, y_2) dy_1dy_2{dt_1 \over {t_1 }}\bigg|dx_2dx_1\\
&\quad+   \int_{(100l)^c\times (100J)^c}  \bigg|  {1\over 2 {\sqrt\pi}} \int_0^{\ell(I)^2} \sum_{k_2=0}^\infty\\
&\qquad  \int_{10I}\int_{|y_2-y_J|\approx 2^{k_2}\ell(J) }  q^{(1)}_{t_1}(x_1-y_1,x_2-y_2) \nabla^{(2)} ({\triangle^{(2)}})^{-1/2} a_R(y_1, y_2) dy_1dy_2{dt_1 \over {t_1 }}\bigg|dx_2dx_1\\
&=: \textrm{I}_{211}+\textrm{I}_{212}.
 \end{align*}

For the term $\textrm{I}_{211}$, H\"older's inequality gives
\begin{align*}
\textrm{I}_{211}
&\lesssim \int_{(100l)^c\times (100J)^c }    \int_0^{\ell(I)^2}\int_{10I}\bigg(\int_{10J} |q^{(1)}_{t_1}(x_1-y_1,x_2-y_2)|^2dy_2\bigg)^{1/2} \\
&\qquad\quad\bigg(\int_{10J} |\nabla^{(2)} ({\triangle^{(2)}})^{-1/2} a_R(y_1, y_2)|^2dy_2\bigg)^{1/2} {dt_1 \over {t_1 }}dx_2dx_1\\
&\lesssim \int_{(100l)^c\times (100J)^c}    \int_0^{\ell(I)^2}\int_{10I}\bigg(\int_{10J}  \bigg({1\over t_1^{n+m\over2}}e^{-{|(x_1,x_2)-(y_1,y_2)|^2\over t_1}}\bigg)^2dy_2\bigg)^{1/2} \\
&\qquad\quad\bigg(\int_{10J} |a_R(y_1, y_2)|^2dy_2\bigg)^{1/2} {dt_1 \over {t_1 }}dy_1dx_2dx_1.
\end{align*}
Choosing $2\alpha_1-n>0$ and $2\alpha_2-m>0$, the fact $\ell(I)\le \ell(J)$ implies
\begin{align*}
\textrm{I}_{211}
&\lesssim \sum_{j_1=\tilde j}^\infty \int_{|x_1-x_I|\approx 2^{j_1}\ell(I)}\sum_{j_2=6}^\infty \int_{|x_2-y_J|\approx 2^{j_2}\ell(J)}|J|^{1/2}\int_0^{\ell(I)^2} \\
&\qquad    {1\over t_1^{n+m\over2}} \bigg({ t_1\over|x_1-x_I|^2}\bigg)^{\alpha_1} \bigg({ t_1\over|x_2-y_J|^2}\bigg)^{\alpha_2}\int_{10I} \bigg(\int_{10J} |a_R(y_1, y_2)|^2dy_2\bigg)^{1/2} {dt_1 \over {t_1 }}dy_1dx_2dx_1\\
&\lesssim |R|^{1/2}\sum_{j_1=\tilde j}^\infty \int_{|x_1-x_I|\approx 2^{j_1}\ell(I)}\sum_{j_2=6}^\infty \int_{|x_2-y_J|\approx 2^{j_2}\ell(J)}\int_0^{\ell(I)^2} t_1^{\alpha_1+\alpha_2-\frac{n+m}2-1}dt_1\\
&\qquad     \bigg({ 1\over|x_1-x_I|^2}\bigg)^{\alpha_1} \bigg({ 1\over|x_2-y_J|^2}\bigg)^{\alpha_2}\|a_R\|_2dx_2dx_1\\
&\lesssim |R|^{1/2}\sum_{j_1=\tilde j}^\infty\sum_{j_2=6}^\infty \ell(I)^{2(\alpha_1+\alpha_2-\frac{n+m}2)}{ 1\over \big(2^{j_1}\ell(I)\big)^{2\alpha_1-n}}{ 1\over \big(2^{j_2}\ell(J)\big)^{2\alpha_2-m}}\|a_R\|_2\\
&\lesssim |R|^{1/2}\gamma_1(R)^{-(2\alpha_1-n)}    \|a_R\|_2,
 \end{align*}
where again the last inequality follows from \eqref{tilde j}

Let $a_R= (1\!\!1_1 \otimes_2({\triangle^{(2)}})^M)a_{R,1}$ where $a_{R,1}=(({\triangle^{(1)}})^M \otimes_2 1\!\!1_2)b_R$.
To estimate $\textrm{I}_{212}$, we have
\begin{align*}
&\textrm{I}_{212} \\
&\lesssim \int_{(100l)^c\times (100J)^c }  \bigg| \int_0^{\ell(I)^2}\int_0^\infty \sum_{k_2=0}^\infty \int_{10I}\int_{|y_2-z_2|\approx 2^{k_2}\ell(J) }\\
&\qquad  \times\int_{10J}  q^{(1)}_{t_1}(x_1-y_1,x_2-y_2)(t_2{\triangle^{(2)}})^M q^{(2)}_{t_1}(y_2-z_2)  a_{R,1}(y_1, z_2) dz_2dy_1dy_2{dt_1 \over t_1} {dt_2 \over t_2^{1+M}}\bigg|dx_2dx_1\\
&\lesssim \int_{(100l)^c\times (100J)^c} \int_0^{\ell(I)^2}\int_0^\infty\sum_{k_2=0}^\infty   {1\over t_1^{n+m\over2}}e^{-{|(x_1,x_2)-(y_1,y_2)|^2\over t_1}} \\
&\qquad  \int_{10I}\int_{|y_2-z_2|\approx 2^{k_2}\ell(J) }\int_{10J}   {1\over t_2^{m\over2}}e^{-{|y_2-z_2|^2\over t_2}}dy_2 |a_{R,1}(y_1, z_2)| dz_2dy_1{dt_1 \over {t_1 }} {dt_2 \over {t_2^{1+M} }}dx_2dx_1\\
&\lesssim \int_{(100l)^c\times (100J)^c } \int_0^{\ell(I)^2}\bigg(\int_0^{t_1}+\int_{t_1}^\infty \bigg) {1\over t_1^{n+m\over2}}e^{-{|x_1-y_1|^2\over t_1}}  \int_{10I} \int_{10J}   {1\over t_2^{m\over2}}\\
&\qquad \sum_{k_2=0}^\infty\int_{|y_2-z_2|\approx 2^{k_2}\ell(J) }e^{-{|x_2-y_2|^2\over t_1}}e^{-{|y_2-z_2|^2\over t_2}}dy_2 |a_{R,1}(y_1, z_2)| dz_2dy_1{dt_1 \over {t_1 }} {dt_2 \over {t_2^{1+M} }}dx_2dx_1\\
&=:\textrm{I}_{2121}+\textrm{I}_{2122}.
 \end{align*}

By the heat kernel estimate, we choose $2\alpha_1-n>0$ and $2\alpha_2-m>0$ to obtain
\begin{align*}
\textrm{I}_{2121}
&\lesssim \int_{(100l)^c\times (100J)^c} \int_0^{\ell(I)^2}{1\over t_1^{n+m\over2}}e^{-{|x_1-y_1|^2\over t_1}}\int_0^{t_1}{1\over t_2^{m\over2}}e^{-{\ell(J)^2}\over 2t_2}{dt_2 \over {t_2^{1+M} }}  \\
&\qquad  \int_{10I} \int_{10J} \sum_{k_2=0}^\infty\int_{|y_2-z_2|\approx 2^{k_2}\ell(J) } e^{-{|x_2-y_2|^2\over 2 t_1}}e^{-{|y_2-z_2|^2\over 2t_1}}dy_2 |a_{R,1}(y_1, z_2)| dz_2dy_1{dt_1 \over {t_1 }}dx_2dx_1\\
&\lesssim \int_{(100l)^c\times (100J)^c } \int_0^{\ell(I)^2} {1\over t_1^{n+m\over2}}e^{-{|x_1-y_1|^2\over t_1}}\int_0^{t_1} \ell(J)^{-m-2M-1}dt_2 \\
&\qquad  \int_{10I} \int_{10J}   t_1^{{m\over2}} e^{-{|x_2-z_2|^2\over 2t_1}}|a_R(y_1, z_2)| dz_2dy_1{dt_1 \over {t_1 }}dx_2dx_1\\
&\lesssim\sum_{j_1=\tilde j}^\infty \int_{|x_1-x_I|\approx 2^{j_1}\ell(I)}\sum_{j_2=6}^\infty \int_{|x_2-y_J|\approx 2^{j_2}\ell(J)}\int_0^{\ell(I)^2} \int_{10I}\int_{10J} t_1^{-\frac n2}\ell(J)^{-m-2M}\\
&\quad\times  \bigg({ t_1\over|x_1-x_I|^2}\bigg)^{\alpha_1} \bigg({ t_1\over|x_2-y_J|^2}\bigg)^{\alpha_2} |a_{R,1}(y_1, z_2)| dz_2dy_1 {dt_1 \over {t_1 }}dx_2dx_1.
\end{align*}
Then by using the H\"older's inequality we get that
\begin{align*}
\textrm{I}_{2121}
&\lesssim|R|^{1/2}\sum_{j_1=\tilde j}^\infty\sum_{j_2=6}^\infty \int_0^{\ell(I)^2}t_1^{\alpha_1+\alpha_2-\frac{n}2-1}dt_1\ell(J)^{-m-2M} \\
&\qquad\times   { 1\over \big(2^{j_1}\ell(I)\big)^{2\alpha_1-n}}{ 1\over \big(2^{j_2}\ell(J)\big)^{2\alpha_2-m}}\|a_{R,1}\|_2  \\
&\lesssim |R|^{1/2}\gamma_1(R)^{-(2\alpha_1-n)}\ell(J)^{-2M}    \|a_{R,1}\|_2.
\end{align*}
Choosing $2\alpha_1>n+m$ and $m/2<\alpha_2<M$, we get
\begin{align*}
&\textrm{I}_{2122}\\
&\lesssim \int_{(100l)^c\times (100J)^c} \int_0^{\ell(I)^2}{1\over t_1^{n+m\over2}}e^{-{|x_1-y_1|^2\over t_1}}\int_{t_1}^\infty {1\over t_2^{m\over2}}e^{-{\ell(J)^2}\over 2t_2} \\
&\qquad  \int_{10I} \int_{10J} \sum_{k_2=0}^\infty\int_{|y_2-z_2|\approx 2^{k_2}\ell(J) } e^{-{|x_2-y_2|^2\over 2 t_2}}e^{-{|y_2-z_2|^2\over 2t_2}}dy_2 |a_{R,1}(y_1, z_2)| dz_2dy_1{dt_1 \over {t_1 }}{dt_2 \over {t_2^{1+M} }} dx_2dx_1\\
&\lesssim \int_{(100l)^c\times (100J)^c} \int_0^{\ell(I)^2} {1\over t_1^{n+m\over2}}e^{-{|x_1-y_1|^2\over t_1}}\int_{t_1}^\infty {1\over t_2^{m\over2}}e^{-{\ell(I)^2}\over 2t_2} \\
&\qquad  \int_{10I} \int_{10J}   t_2^{{m\over2}} e^{-{|x_2-z_2|^2\over 2t_2}}|a_{R,1}(y_1, z_2)| dz_2dy_1{dt_1 \over {t_1 }}{dt_2 \over {t_2^{1+M} }} dx_2dx_1\\
&\lesssim\sum_{j_1=\tilde j}^\infty \int_{|x_1-x_I|\approx 2^{j_1}\ell(I)}\sum_{j_2=6}^\infty \int_{|x_2-y_J|\approx 2^{j_2}\ell(J)}\int_0^{\ell(I)^2} \int_{10I}\int_{10J} t_1^{-\frac {n+m}2}\int_{t_1}^\infty e^{-{\ell(I)^2}\over 2t_2} \\
&\quad\times  \bigg({ t_1\over|x_1-x_I|^2}\bigg)^{\alpha_1} \bigg({ t_2\over|x_2-y_J|^2}\bigg)^{\alpha_2} |a_{R,1}(y_1, z_2)| dz_2dy_1 {dt_1 \over {t_1 }}{dt_2 \over {t_2^{1+M} }}dx_2dx_1\\
&\lesssim|R|^{1/2}\sum_{j_1=\tilde j}^\infty\sum_{j_2=6}^\infty \int_0^{\ell(I)^2}t_1^{\alpha_1-\frac{n+m}2-1}\bigg(\int_{t_1}^{\ell(J)^2} \Big(\frac{t_2}{\ell(J)^2}\Big)^{1+M}t_2^{\alpha_2-M-1}dt_2+\int_{\ell(J)^2}^\infty t_2^{\alpha_2-M-1}dt_2\bigg)dt_1\\
&\qquad\times   { 1\over \big(2^{j_1}\ell(I)\big)^{2\alpha_1-n}}{ 1\over \big(2^{j_2}\ell(J)\big)^{2\alpha_2-m}}\|a_{R,1}\|_2  \\
&\lesssim |R|^{1/2}\gamma_1(R)^{-(2\alpha_1-n)}\ell(J)^{-2M}   \|a_{R,1}\|_2.
\end{align*}
We now consider $\textrm{I}_{22}$.
From the heat kernel estimate and the support condition of $a_R$, it is clear that
\begin{align*}
\textrm{I}_{22}
&\lesssim \int_{(100l)^c\times (100J)^c}  \bigg|  {1\over 2 {\sqrt\pi}} \int_{\ell(I)^2}^\infty \\
&\qquad  \int_{\mathbb R^{n+m}}q^{(1)}_{t_1}(x_1-y_1,x_2-y_2) \nabla^{(2)} ({\triangle^{(2)}})^{-1/2} a_R(y_1, y_2) dy_1dy_2{dt_1 \over {t_1 }}\bigg|dx_2dx_1\\
&\lesssim \int_{(100l)^c\times (100J)^c }  \bigg|  {1\over 2 {\sqrt\pi}} \int_{\ell(I)^2}^\infty \\
&\qquad  \int_{10I}\int_{10J} q^{(1)}_{t_1}(x_1-y_1,x_2-y_2) \nabla^{(2)} ({\triangle^{(2)}})^{-1/2} a_R(y_1, y_2) dy_1dy_2{dt_1 \over {t_1 }}\bigg|dx_2dx_1\\
&\quad+   \int_{(100l)^c\times (100J)^c}  \bigg|  {1\over 2 {\sqrt\pi}} \int_{\ell(I)^2}^\infty \sum_{k_2=0}^\infty\\
&\qquad  \int_{10I}\int_{|y_2-z_2|\approx 2^{k_2}\ell(J) }  q^{(1)}_{t_1}(x_1-y_1,x_2-y_2) \nabla^{(2)} ({\triangle^{(2)}})^{-1/2} a_R(y_1, y_2) dy_1dy_2{dt_1 \over {t_1 }}\bigg|dx_2dx_1\\
&=: \textrm{I}_{221}+\textrm{I}_{222}.
 \end{align*}

Let $a_R= (({\triangle^{(1)}})^M \otimes_2 1\!\!1_2)a_{R,2}$ where $a_{R,2}=(1\!\!1_1 \otimes_2({\triangle^{(2)}})^M)b_R$. For the term $\textrm{I}_{221}$, H\"older's inequality gives
\begin{align*}
\textrm{I}_{221}
&=  \int_{(100l)^c\times (100J)^c }  \bigg|  {1\over 2 {\sqrt\pi}} \int_{\ell(I)^2}^\infty \\
&\qquad  \int_{10I}\int_{10J} (t_1\triangle^{(1)})^M q^{(1)}_{t_1}(x_1-y_1,x_2-y_2) \nabla^{(2)} ({\triangle^{(2)}})^{-1/2} a_{R,2}(y_1, y_2) dy_1dy_2{dt_1 \over {t_1^{1+M} }}\bigg|dx_2dx_1\\
&\lesssim \int_{(100l)^c\times (100J)^c } \int_{\ell(I)^2}^\infty   \int_I\bigg(\int_J | (t_1\triangle^{(1)})^Mq^{(1)}_{t_1}(x_1-y_1,x_2-y_2)|^2dy_2\bigg)^{1/2} \\
&\qquad\quad\bigg(\int_{10J} |\nabla^{(2)} ({\triangle^{(2)}})^{-1/2} a_R(y_1, y_2)|^2dy_2\bigg)^{1/2}{dt_1 \over {t_1^{1+M} }}dx_2dx_1\\
&\lesssim|J|^{1/2} \int_{(100l)^c\times (100J)^c } \int_{\ell(I)^2}^\infty\int_{10I} {1\over t_1^{n+m\over2}}e^{-{|(x_1,x_2)-(x_I,y_J)|^2\over t_1}}\\
&\qquad\quad\bigg(\int_J |a_{R,2}(y_1, y_2)|^2dy_2\bigg)^{1/2} {dt_1 \over {t_1^{1+2M} }}dy_1dx_2dx_1.
\end{align*}
Hence,
\begin{align*}
\textrm{I}_{221}
&\lesssim|R|^{1/2} \sum_{j_1=\tilde j}^\infty \int_{|x_1-x_I|\approx 2^{j_1}\ell(I)}\sum_{j_2=6}^\infty \int_{|x_2-y_J|\approx 2^{j_2}\ell(J)} \\
&\qquad\quad  \int_{\ell(I)^2}^\infty {1\over t_1^{n+m\over2}}e^{-{|(x_1,x_2)-(x_I,y_J)|^2\over t_1}} {dt_1 \over {t_1^{1+M} }}dx_1dx_2\|a_{R,2}\|_2 \\
&\lesssim|R|^{1/2}\sum_{j_1=\tilde j}^\infty \int_{|x_1-x_I|\approx 2^{j_1}\ell(I)}\sum_{j_2=6}^\infty \int_{|x_2-y_J|\approx 2^{j_2}\ell(J)}\int_{\ell(I)^2}^\infty  t_1^{\alpha_1+\alpha_2-\frac{n+m}2-1-M}dt_1\\
&\qquad     \bigg({ 1\over|x_1-x_I|^2}\bigg)^{\alpha_1} \bigg({ 1\over|x_2-y_J|^2}\bigg)^{\alpha_2}\|a_{R,2}\|_2dx_1dx_2\\
&\lesssim|R|^{1/2}\sum_{j_1=\tilde j}^\infty\sum_{j_2=6}^\infty \ell(I)^{2(\alpha_1+\alpha_2-\frac{n+m}2-M)}{ 1\over \big(2^{j_1}\ell(I)\big)^{2\alpha_1-n}}{ 1\over \big(2^{j_2}\ell(J)\big)^{2\alpha_2-m}}\|a_{R,2}\|_2\\
&\lesssim |R|^{1/2}\gamma_1(R)^{-(2\alpha_1-n)}\ell(I)^{-2M}\|a_{R,2}\|_2,
\end{align*}
where $2\alpha_1>n$, $2\alpha_2>m$ and $2\alpha_1+2\alpha_2<n+m+2M$.

Let $a_R= (({\triangle^{(1)}})^M \otimes_2 ({\triangle^{(2)}})^M)b_R$.
To estimate $\textrm{I}_{222}$, we have
\begin{align*}
&\textrm{I}_{222} \\
&\lesssim \int_{(100l)^c\times (100J)^c }  \bigg| \int_{\ell(I)^2}^\infty\int_0^\infty \sum_{k_2=0}^\infty\int_{10I}\int_{|y_2-z_2|\approx 2^{k_2}\ell(J) }\\
&\quad\int_{10J}  (t_1\triangle^{(1)})^Mq^{(1)}_{t_1}(x_1-y_1,x_2-y_2) (t_2\triangle^{(2)})^Mq^{(2)}_{t_1}(y_2-z_2)  b_R(y_1, z_2) dz_2dy_1dy_2{dt_1 \over t_1^{1+M}} {dt_2 \over t_2^{1+M}}\bigg|dx_2dx_1\\
&\lesssim \int_{(100l)^c\times (100J)^c} \int_{\ell(I)^2}^\infty\int_0^\infty\sum_{k_2=0}^\infty   {1\over t_1^{n+m\over2}}e^{-{|(x_1,x_2)-(y_1,y_2)|^2\over t_1}} \\
&\quad  \int_{10I}\int_{|y_2-z_2|\approx 2^{k_2}\ell(J) }\int_{10J}   {1\over t_2^{m\over2}}e^{-{|y_2-z_2|^2\over t_2}}dy_2 |b_R(y_1, z_2)| dz_2dy_1{dt_1 \over {t_1^{1+M} }} {dt_2 \over {t_2^{1+M} }}dx_2dx_1\\
&\lesssim \int_{(100l)^c\times (100J)^c} \int_{\ell(I)^2}^\infty\bigg(\int_0^{t_1}+\int_{t_1}^\infty \bigg) {1\over t_1^{n+m\over2}}e^{-{|x_1-y_1|^2\over t_1}} \\
&\quad  \int_{10I} \int_{10J}   {1\over t_2^{m\over2}}\sum_{k_2=0}^\infty\int_{|y_2-z_2|\approx 2^{k_2}\ell(J) }e^{-{|x_2-y_2|^2\over t_1}}e^{-{|y_2-z_2|^2\over t_2}}dy_2 |b_R(y_1, z_2)| dz_2dy_1{dt_1 \over {t_1^{1+M} }} {dt_2 \over {t_2^{1+M} }}dx_2dx_1\\
&=:\textrm{I}_{2221}+\textrm{I}_{2222}.
 \end{align*}

{Note that $M>m/2$. By the heat kernel estimate, we choose $2\alpha_1-n>0, 2\alpha_2-m>0$, $\beta_2>0$ and $\alpha_1+\alpha_2+\beta_2<\frac n2+M$ }\ to obtain
\begin{align*}
\textrm{I}_{2221}
&\lesssim \int_{(100l)^c\times (100J)^c} \int_{\ell(I)^2}^\infty{1\over t_1^{n+m\over2}}e^{-{|x_1-y_1|^2\over t_1}}\int_0^{t_1} \Big(\frac{t_2}{\ell(J)^2}\Big)^{\beta_2+\frac m2+M}t_2^{-\frac m2-M-1}dt_2  \\
&\qquad  \int_I \int_J \sum_{k_2=0}^\infty\int_{|y_2-z_2|\approx 2^{k_2}\ell(J) } e^{-{|x_2-y_2|^2\over 2 t_1}}e^{-{|y_2-z_2|^2\over 2t_1}}dy_2 |b_R(y_1, z_2)| dz_2dy_1{dt_1 \over {t_1^{1+M} }}dx_2dx_1\\
&\lesssim \int_{(100l)^c\times (100J)^c } \int_{\ell(I)^2}^\infty {1\over t_1^{n+m\over2}}e^{-{|x_1-y_1|^2\over t_1}}t_1^{\beta_2}\ell(J)^{-2\beta_2-m-2M} \\
&\qquad  \int_I \int_J   t_1^{{m\over2}} e^{-{|x_2-z_2|^2\over 2t_1}}|b_R(y_1, z_2)| dz_2dy_1{dt_1 \over {t_1 }}dx_2dx_1\\
&\lesssim\sum_{j_1=\tilde j}^\infty \int_{|x_1-x_I|\approx 2^{j_1}\ell(I)}\sum_{j_2=6}^\infty \int_{|x_2-y_J|\approx 2^{j_2}\ell(J)}\int_{\ell(I)^2}^\infty \int_I\int_J t_1^{-\frac n2+\beta_2}\ell(J)^{-2\beta_2-m-2M}\\
&\quad\times  \bigg({ t_1\over|x_1-x_I|^2}\bigg)^{\alpha_1} \bigg({ t_1\over|x_2-y_J|^2}\bigg)^{\alpha_2} |b_R(y_1, z_2)| dz_2dy_1 {dt_1 \over {t_1^{1+M} }}dx_2dx_1\\
&\lesssim|R|^{1/2}\sum_{j_1=\tilde j}^\infty\sum_{j_2=6}^\infty \int_{\ell(I)^2}^\infty t_1^{\alpha_1+\alpha_2+\beta_2-\frac{n}2-M-1}dt_1\ell(J)^{-m-2M} \\
&\qquad\times   { 1\over \big(2^{j_1}\ell(I)\big)^{2\alpha_1-n}}{ 1\over \big(2^{j_2}\ell(J)\big)^{2\alpha_2-m}}\|b_R\|_2  \\
&\lesssim |R|^{1/2}\gamma_1(R)^{-(2\alpha_1-n)} \ell(I)^{-2M} \ell(J)^{-2M}   \|b_R\|_2.
\end{align*}

Choosing $n<2\alpha_1<n+m+2M$ and $m/2<\alpha_2<M<\beta_2$, we get
\begin{align*}
&\textrm{I}_{2222}\\
&\lesssim \int_{(100l)^c\times (100J)^c} \int_{\ell(I)^2}^\infty {1\over t_1^{n+m\over2}}e^{-{|x_1-y_1|^2\over t_1}}\int_{t_1}^\infty {1\over t_2^{m\over2}}e^{-{\ell(J)^2}\over 2t_2} \\
&\qquad  \int_I \int_J \sum_{k_2=0}^\infty\int_{|y_2-z_2|\approx 2^{k_2}\ell(J) } e^{-{|x_2-y_2|^2\over 2 t_2}}e^{-{|y_2-z_2|^2\over 2t_2}}dy_2 |b_R(y_1, z_2)| dz_2dy_1{dt_1 \over {t_1^{1+M} }}{dt_2 \over {t_2^{1+M} }} dx_2dx_1\\
&\lesssim \int_{(100l)^c\times (100J)^c } \int_{\ell(I)^2}^\infty {1\over t_1^{n+m\over2}}e^{-{|x_1-y_1|^2\over t_1}}\int_{t_1}^\infty {1\over t_2^{m\over2}}e^{-{\ell(J)^2}\over 2t_2} \\
&\qquad  \int_I \int_J   t_2^{{m\over2}} e^{-{|x_2-z_2|^2\over 2t_2}}|b_R(y_1, z_2)| dz_2dy_1{dt_1 \over {t_1^{1+M} }}{dt_2 \over {t_2^{1+M} }} dx_2dx_1\\
&\lesssim\sum_{j_1=\tilde j}^\infty \int_{|x_1-x_I|\approx 2^{j_1}\ell(I)}\sum_{j_2=6}^\infty \int_{|x_2-y_J|\approx 2^{j_2}\ell(J)}\int_{\ell(I)^2}^\infty \int_I\int_J t_1^{-\frac {n+m}2}\int_{t_1}^\infty e^{-{\ell(I)^2}\over 2t_2} \\
&\quad\times  \bigg({ t_1\over|x_1-x_I|^2}\bigg)^{\alpha_1} \bigg({ t_2\over|x_2-y_J|^2}\bigg)^{\alpha_2} |b_R(y_1, z_2)| dz_2dy_1 {dt_1 \over {t_1^{1+M}}}{dt_2 \over {t_2^{1+M} }}dx_2dx_1\\
&\lesssim|R|^{1/2}\sum_{j_1=\tilde j}^\infty\sum_{j_2=6}^\infty \int_{\ell(I)^2}^\infty t_1^{\alpha_1+\alpha_2-\frac{n+m}2-M-1}\bigg(\int_{t_1}^{\ell(J)^2} \Big(\frac{t_2}{\ell(J)^2}\Big)^{\beta_2}t_2^{\alpha_2-M-1}dt_2+\int_{\ell(J)^2}^\infty t_2^{\alpha_2-M-1}dt_2\bigg)dt_1 \\
&\qquad\times   { 1\over \big(2^{j_1}\ell(I)\big)^{2\alpha_1-n}}{ 1\over \big(2^{j_2}\ell(J)\big)^{2\alpha_2-m}}\|b_R\|_2 \\
&\lesssim |R|^{1/2}\gamma_1(R)^{-(2\alpha_1-n)} \ell(I)^{-2M}\ell(J)^{-2M}    \|b_R\|_2.
\end{align*}

Combining the above estimates, there exists a positive constant $\delta_1$ such that
\begin{align*}
&\int_{(100l)^c\times\mathbb{R}^{m}}|T(a_R)(x_1,x_2)|dx_2dx_1 \\
&\lesssim|R|^{1/2}\gamma_1(R)^{-\delta_1}\bigg(\|a_R\|_2+\ell(J)^{-2M}\|a_{R,1}\|_2\\
&\qquad+\ell(I)^{-2M}\|a_{R,2}\|_2+\ell(I)^{-2M}\ell(J)^{-2M}    \|b_R\|_2\bigg)\\
&=|R|^{1/2}\gamma_1(R)^{-\delta_1}\bigg(\|a_R\|_2+\ell(I)^{-2M}\|(({\triangle^{(1)}})^M\otimes_2 1\!\!1_2)b_R\|_2\\
&\qquad\qquad +\ell(I)^{-2M}\|(1\!\!1_1 \otimes_2({\triangle^{(2)}})^M)b_R\|_2+\ell(I)^{-2M}\ell(J)^{-2M}    \|b_R\|_2\bigg).
\end{align*}

Using H\"older's inequality, Journ\'e's covering lemma and the properties of flag atoms, we have
\begin{align*}
\textrm{I}&:= \sum_{R\in m(\Omega)}\int_{(100l)^c\times\mathbb{R}^{m}}|T(a_R)(x_1,x_2)|dx_2dx_1 \\
&\leq \sum_{R\in m(\Omega) }|R|^{1/2} \gamma_1(R)^{-\delta_1}
  \ell(I)^{-2M}\ell(J)^{-2M}\Big(\|((\ell(I)^2\triangle^{(1)})^M)\otimes_2
(\ell(J)^2\triangle^{(2)})^M)b_{R}\|_2\\
  &\quad + \|((\ell(I)^2\triangle^{(1)})^M)\otimes_2
    1\!\!1_2)b_{R}\|_2+ \|(1\!\!1_1 \otimes_2(\ell(J)^2{\triangle^{(2)}})^M)b_R\|_2+ \|b_R\|_2\Big)\\
&\lesssim\Big(\sum_{R\in m(\Omega) }|R|  \gamma_1(R)^{-2\delta_1} \Big)^{1/2}  \bigg(\sum_{R\in m(\Omega) } \ell(I)^{-4M}\ell(J)^{-4M}\\
&\qquad\times \Big(\|((\ell(I)^2\triangle^{(1)})^M)\otimes_2
(\ell(J)^2\triangle^{(2)})^M)b_{R}\|^2_2+ \|((\ell(I)^2\triangle^{(1)})^M)\otimes_2
    1\!\!1_2)b_{R}\|^2_2\\
&\qquad+ \|(1\!\!1_1 \otimes_2(\ell(J)^2{\triangle^{(2)}})^M)b_R\|^2_2+ \|b_R\|^2_2\Big)\bigg)^{1/2}\\
&\lesssim |\Omega|^{{1\over 2}} |\Omega|^{-{1\over 2}}
 \lesssim1.
\end{align*}

For term $\textrm{II}$, we observe that
\begin{align*}
&\int_{\mathbb{R}^{n}\times (100S)^c}|T(a_R)(x_1,x_2)|dx_2dx_1\\
&=\Big(\int_{{100I} \times (100S)^c } +\int_{(100I)^c\times (100S)^c}\Big) |T(a_R)(x_1,x_2)|dx_2dx_1 \\
&=\textrm{II}_1+\textrm{II}_2.
 \end{align*}

The estimate of term $\textrm{II}_2$ is the same with the estimate of $\textrm{I}_2$,
\begin{align*}
\textrm{II}_2
&\lesssim |R|^{1/2}\gamma_2(R)^{-\delta} \bigg(\|a_R\|_2 + \ell(J)^{-2M}\|a_{R,1}\|_2 \\
&\qquad + \ell(I)^{-2M}\|a_{R,2}\|_2 + \ell(I)^{-2M}\ell(J)^{-2M}    \|b_R\|_2 \bigg),
\end{align*}
where $\delta>0$.
So we just estimate the term $\textrm{II}_1$.
Then
\begin{align*}
\textrm{II}_1&\leq \int_{{100I} \times (100S)^c}  \bigg|  \nabla^{(1)}({\triangle^{(1)}})^{-1/2} \otimes_2 \nabla^{(2)} ({\triangle^{(2)}})^{-1/2} a_R(x_1, x_2)\bigg|dx_2dx_1\\
&= \int_{{100I} \times (100S)^c}  \bigg|  {1\over 2 {\sqrt\pi}}\int_0^{\infty} \nabla^{(2)} e^{-t_2\triangle^{(2)}}   \big(\nabla^{(1)} ({\triangle^{(1)}})^{-1/2} a_R(x_1, x_2) \big) {dt_2 \over \sqrt{t_2 }}\bigg|dx_2dx_1\\
&= \int_{{100I} \times (100S)^c}  \bigg|  {1\over 2 {\sqrt\pi}}\int_0^{\ell(J)^2} \nabla^{(2)} e^{-t_2\triangle^{(2)}}   \big(\nabla^{(1)} ({\triangle^{(1)}})^{-1/2} a_R(x_1, x_2) \big) {dt_2 \over \sqrt{t_2 }}\bigg|dx_2dx_1\\
&\quad+ \int_{{100I} \times (100S)^c}  \bigg|  {1\over 2 {\sqrt\pi}}\int_{\ell(J)^2}^{\infty} \nabla^{(2)} e^{-t_2\triangle^{(2)}}   \big(\nabla^{(1)} ({\triangle^{(1)}})^{-1/2} a_R(x_1, x_2) \big) {dt_2 \over \sqrt{t_2 }}\bigg|dx_2dx_1\\
&=:\textrm{II}_{11}+\textrm{II}_{12}.
 \end{align*}

We first consider $\textrm{II}_{11}$. We write
\begin{align*}
\textrm{II}_{11}&\leq  \int_{{100I} \times (100S)^c}  \bigg|  {1\over 2 {\sqrt\pi}}\int_0^{\ell(J)^2}\nabla^{(2)} e^{-t_2\triangle^{(2)}}   \big(\nabla^{(1)} ({\triangle^{(1)}})^{-1/2} a_R(x_1, x_2) \big) {dt_2 \over \sqrt{t_2 }}\bigg|dx_2dx_1\\
&\lesssim\sum_{j_2=\bar j}^\infty \int_{100I} \int_{|x_2-y_J|\approx 2^{j_2}\ell(J)} \int_0^{\ell(J)^2}\bigg(\int_{100J} + \int_{|y_2-y_J|\ge50\ell(J)}\bigg)\\
&\qquad\qquad t_2^{-\frac{m}2}e^{-{|x_2-y_2|^2\over t_2}} \big|\big(\nabla^{(1)} ({\triangle^{(1)}})^{-1/2} a_R(x_1, y_2) \big)\big| dy_2{dt_2 \over t_2 }dx_2dx_1\\
&=:\textrm{II}_{111}+\textrm{II}_{112}.
 \end{align*}
By the $L^2(\Bbb R^{n+m})$ boundedness of $\nabla^{(1)} ({\triangle^{(1)}})^{-1/2}$,
\begin{align*}
\textrm{II}_{111}
&\lesssim|R|^{1/2}\sum_{j_2=\bar j}^\infty\int_{|x_2-y_J|\approx 2^{j_2}\ell(J)} \int_0^{\ell(J)^2} t_2^{-\frac{m}2}e^{-{|x_2-y_J|^2\over t_2}}{dt_2 \over t_2 }dx_2 \|a_R\|_2 \\
&\lesssim|R|^{1/2}\sum_{j_2=\bar j}^\infty\int_{|x_2-y_J|\approx 2^{j_2}\ell(J)} \int_0^{\ell(J)^2} t_2^{-\frac{m}2}\bigg({ t_2\over|x_2-y_J|^2}\bigg)^{\alpha_2}{dt_2 \over t_2 }dx_2 \|a_R\|_2\\
&\lesssim|R|^{1/2}\sum_{j_2=\bar j}^\infty\ell(J)^{2\alpha_2-m}{ 1\over (2^{j_2}\ell(J))^{2\alpha_2-m}}\|a_R\|_2\\
&\lesssim|R|^{1\over2}   \gamma_2(R)^{-(2\alpha_2-m)}    \|a_R\|_2,
 \end{align*}
where $\alpha_2>m/2$ and the last inequality follows from the fact that
$$ 2^{\bar j} \approx {\ell(S)\over \ell(J)}. $$
For the term of $\textrm{II}_{112}$, the heat kernel estimate gives
\begin{align*}
\textrm{II}_{112}
&\lesssim\sum_{j_2=\bar j}^\infty \sum_{k_2=0}^\infty \int_{100I} \int_{|x_2-y_J|\approx 2^{j_2}\ell(J)} \int_0^{\ell(J)^2}\int_{|y_2-y_J|\approx 2^{k_2}\ell(J)} t_2^{-\frac{m}2}e^{-{|x_2-y_2|^2\over t_2}}\\
&\qquad\int_0^\infty\int_{10I}\int_{10J} t_1^{-\frac{n+m}2}e^{-{|(x_1,y_2)-(z_1,z_2)|^2\over t_1}} |a_R(z_1, z_2)|dz_1dz_2{dt_1 \over t_1 } dy_2{dt_2 \over t_2 }dx_1dx_2\\
&\lesssim|I|\sum_{j_2=\bar j}^\infty \sum_{k_2=0}^\infty  \int_{|x_2-y_J|\approx 2^{j_2}\ell(J)} \int_0^{\ell(J)^2}\int_{|y_2-y_J|\approx 2^{k_2}\ell(J)} t_2^{-\frac{m}2}e^{-{|x_2-y_2|^2\over t_2}}\\
&\qquad \bigg(\int_0^{t_2}+\int_{t_2}^\infty\bigg)\int_{10I}\int_{10J} t_1^{-\frac{n+m}2}e^{-{|y_2-z_2|^2\over 2t_1}} |a_R(z_1, z_2)|dz_1dz_2{dt_1 \over t_1 } dy_2{dt_2 \over t_2 }dx_2\\
&=:\textrm{II}_{1121}+\textrm{II}_{1122}.
\end{align*}
We do the integral for the variable $y_2$ to get
\begin{align*}
\textrm{II}_{1121}
&\lesssim|I|\sum_{j_2=\bar j}^\infty \int_{|x_2-y_J|\approx 2^{j_2}\ell(J)} \int_0^{\ell(J)^2}\int_0^{t_2} t_1^{-\frac{n+m}2}e^{-\ell(J)^2\over t_1}{dt_1 \over t_1 } e^{-{|x_2-y_J|^2\over 2t_2}}{dt_2 \over t_2 }\\
&\qquad\qquad   \int_{10I}\int_{10J}  |a_R(z_1, z_2)|dz_1dz_2dx_2 \\
&\lesssim|R|^{1/2}|I|\sum_{j_2=\bar j}^\infty \int_{|x_2-y_J|\approx 2^{j_2}\ell(J)} \int_0^{\ell(J)^2} t_2^{\alpha_1-\frac{n+m}2}\ell(J)^{-2\alpha_1} e^{-{|x_2-y_J|^2\over 2t_2}}{dt_2 \over t_2 } \|a_R\|_2\\
&\lesssim|R|^{1/2}|I|\ell(J)^{-2\alpha_1}\sum_{j_2=\bar j}^\infty \frac{\ell(J)^{2\alpha_1+2\alpha_2-n-m}}{(2^{j_2}\ell(J))^{2\alpha_2-m}} \|a_R\|_2\\
&\lesssim|R|^{1\over2}   \gamma_2(R)^{-(2\alpha_2-m)}    \|a_R\|_2,
\end{align*}
where $2\alpha_1>n+m$ and $2\alpha_2>m$. Similarly,
\begin{align*}
\textrm{II}_{1122}
&\lesssim|R|^{1/2}|I|\|a_R\|_{L^2(\mathbb R^{n+m})} \\
&\qquad \sum_{j_2=\bar j}^\infty \int_{|x_2-y_J|\approx 2^{j_2}\ell(J)} \int_0^{\ell(J)^2}e^{-\ell(J)^2\over 2t_2} \int_{t_2}^\infty t_1^{-\frac{n+m}2} e^{-{|x_2-y_J|^2\over 2t_1}}\frac{dt_1}{t_1}\frac{dt_2}{t_2}dx_2\\
&\lesssim|R|^{1/2}|I|\|a_R\|_2 \sum_{j_2=\bar j}^\infty \int_0^{\ell(J)^2}e^{-\ell(J)^2\over 2t_2} \int_{t_2}^\infty t_1^{-\frac{n+m}2}\frac{t_1^{\alpha_1}}{(2^{j_2}\ell(J))^{2\alpha_1-m}}\frac{dt_1}{t_1}\frac{dt_2}{t_2}\\
&\lesssim|R|^{1/2}|I| \gamma_2(R)^{-(2\alpha_1-m)} \|a_R\|_2 \ell(J)^{m-2\alpha_1}\int_0^{\ell(J)^2} t_2^{\alpha_1-\frac{n+m}2}e^{-\ell(J)^2\over 2t_2}\frac{dt_2}{t_2}\\
&\lesssim|R|^{1\over2}   \gamma_2(R)^{-(2\alpha_2-m)}    \|a_R\|_2,
\end{align*}
where $m<2\alpha_1<n+m$.

Let $a_R= (1\!\!1_1 \otimes_2 ({\triangle^{(2)}})^M) a_{R,1}$, where $a_{R,1}=( ({\triangle^{(1)}})^M\otimes_2 1\!\!1_2) b_R$
We now consider $\textrm{II}_{12}$ and write
\begin{align*}
\textrm{II}_{12}&\leq  \int_{{100I} \times (100S)^c}  \bigg|  {1\over 2 {\sqrt\pi}}\int_{\ell(J)^2}^\infty \nabla^{(2)} (t_2\triangle^{(2)})^M e^{-t_2\triangle^{(2)}}   \big(\nabla^{(1)} ({\triangle^{(1)}})^{-1/2} a_{R,1}(x_1, x_2) \big) {dt_2 \over {t_2^{\frac12+M} }}\bigg|dx_2dx_1\\
&\lesssim\sum_{j_2=\bar j}^\infty \int_{100I} \int_{|x_2-y_J|\approx 2^{j_2}\ell(J)} \int_{\ell(J)^2}^\infty \bigg(\int_{100J} + \int_{|y_2-y_J|\ge50\ell(J)}\bigg)\\
&\qquad\qquad t_2^{-\frac{m}2}e^{-{|x_2-y_2|^2\over t_2}} \big|\big(\nabla^{(1)} ({\triangle^{(1)}})^{-1/2} a_{R,1}(x_1, y_2) \big)\big| dy_2{dt_2 \over t_2^{1+M} }dx_2dx_1\\
&=:\textrm{II}_{121}+\textrm{II}_{122}.
 \end{align*}

By the $L^2(\Bbb R^{n+m})$ boundedness of $\nabla^{(1)} ({\triangle^{(1)}})^{-1/2}$,
\begin{align*}
\textrm{II}_{121}
&\lesssim|R|^{1/2}\sum_{j_2=\bar j}^\infty\int_{|x_2-y_J|\approx 2^{j_2}\ell(J)} \int_{\ell(J)^2}^\infty t_2^{-\frac{m}2-M}e^{-{|x_2-y_J|^2\over t_2}}{dt_2 \over t_2 }dx_2 \|a_{R,1}\|_2 \\
&\lesssim|R|^{1/2}\sum_{j_2=\bar j}^\infty (2^{j_2}\ell(J)^{m-2\alpha_2} \int_{\ell(J)^2}^\infty t_2^{\alpha_2-\frac{m}2-M}{dt_2 \over t_2 } \|a_{R,1}\|_2\\
&\lesssim|R|^{1\over2}   \gamma_2(R)^{-(2\alpha_2-m)}  \ell(J)^{-2M}  \|a_{R,1}\|_2,
 \end{align*}
where $0<2\alpha_2-m<M$.

For the term of $\textrm{II}_{122}$, the heat kernel estimate gives
\begin{align*}
\textrm{II}_{122}
&\lesssim\sum_{j_2=\bar j}^\infty \sum_{k_2=0}^\infty \int_{100I} \int_{|x_2-y_J|\approx 2^{j_2}\ell(J)} \int_{\ell(J)^2}^\infty \int_{|y_2-y_J|\approx 2^{k_2}\ell(J)} t_2^{-\frac{m}2-M}e^{-{|x_2-y_2|^2\over t_2}}\\
&\qquad\qquad\int_0^\infty\int_{10I}\int_{10J} t_1^{-\frac{n+m}2}e^{-{|(x_1,y_2)-(z_1,z_2)|^2\over t_2}} |a_{R,1}(z_1, z_2)|dz_1dz_2{dt_1 \over t_1 } dy_2{dt_2 \over t_2 }dx_2dx_1\\
&\lesssim|I|\sum_{j_2=\bar j}^\infty \sum_{k_2=0}^\infty  \int_{|x_2-y_J|\approx 2^{j_2}\ell(J)} \int_{\ell(J)^2}^\infty\int_{|y_2-y_J|\approx 2^{k_2}\ell(J)} t_2^{-\frac{m}2-M}e^{-{|x_2-y_2|^2\over t_2}}\\
&\qquad\qquad \bigg(\int_0^{t_2}+\int_{t_2}^\infty\bigg)\int_{10I}\int_{10J} t_1^{-\frac{n+m}2}e^{-{|y_2-z_2|^2\over 2t_1}} |a_{R,1}(z_1, z_2)|dz_1dz_2{dt_1 \over t_1 } dy_2{dt_2 \over t_2 }dx_2\\
&=:\textrm{II}_{1221}+\textrm{II}_{1222}.
\end{align*}

We integrate with respect to $y_2$

\begin{align*}
\textrm{II}_{1221}
&\lesssim|I|\sum_{j_2=\bar j}^\infty \int_{|x_2-y_J|\approx 2^{j_2}\ell(J)} \int_{\ell(J)^2}^\infty \int_0^{t_2} t_1^{-\frac{n+m}2}e^{-\ell(J)^2\over 2t_1}{dt_1 \over t_1 }t_2^{-M} e^{-{|x_2-y_J|^2\over 2t_2}}{dt_2 \over t_2 }\\
&\qquad\qquad   \int_{10I}\int_{10J}  |a_R(z_1, z_2)|dz_1dz_2dx_2 \\
&\lesssim|R|^{1/2}|I|\sum_{j_2=\bar j}^\infty (2^{j_2}\ell(J))^{m-2\alpha_2}\int_{\ell(J)^2}^\infty t_2^{\alpha_1+\alpha_2-\frac{n+m}2-M}\ell(J)^{-2\alpha_1}{dt_2 \over t_2 } \|a_{R,1}\|_2\\
&=|R|^{1/2}|I|\sum_{j_2=\bar j}^\infty (2^{j_2}\ell(J))^{m-2\alpha_2}\ell(J)^{2\alpha_1+2\alpha_2-n-m-2M}\ell(J)^{-2\alpha_1} \|a_{R,1}\|_2\\
&\lesssim|R|^{1\over2}   \gamma_2(R)^{-(2\alpha_2-m)}\ell(J)^{-2M}    \|a_{R,1}\|_2,
\end{align*}
where $2\alpha_1>n+m$ and $2\alpha_2>m$ and $2\alpha_1+2\alpha_2<n+m+2M$. Similarly,
\begin{align*}
\textrm{II}_{1222}
&\lesssim|R|^{1/2}|I|\|a_{R,1}\|_2 \\
&\qquad \sum_{j_2=\bar j}^\infty \int_{|x_2-y_J|\approx 2^{j_2}\ell(J)} \int_{\ell(J)^2}^\infty t_2^{-M}  \int_{t_2}^\infty t_1^{-\frac{n+m}2} e^{-{|x_2-y_J|^2\over 2t_1}}\frac{dt_1}{t_1}\frac{dt_2}{t_2}dx_2\\
&\lesssim|R|^{1/2}|I|\|a_{R,1}\|_2 \sum_{j_2=\bar j}^\infty \int_{\ell(J)^2}^\infty  t_2^{-M}  \int_{t_2}^\infty t_1^{-\frac{n+m}2}\frac{t_1^{\alpha_1}}{(2^{j_2}\ell(J))^{2\alpha_1-m}}\frac{dt_1}{t_1}\frac{dt_2}{t_2}\\
&\lesssim|R|^{1/2}|I| \gamma_2(R)^{-(2\alpha_1-m)} \|a_{R,1}\|_2 \ell(J)^{m-2\alpha_1}\int_{\ell(J)^2}^\infty t_2^{\alpha_1-\frac{n+m}2-M}\frac{dt_2}{t_2}\\
&\lesssim|R|^{1\over2}   \gamma_2(R)^{-(2\alpha_1-m)} \ell(J)^{-2M}   \|a_{R,1}\|_2,
\end{align*}
where $m<2\alpha_1<n+m$.

Combining the above estimates, there exists a positive constant $\delta_2$ such that
\begin{align*}
&\int_{\mathbb{R}^{n}\times (100S)^c}|T(a_R)(x_1,x_2)|dx_2dx_1 \\
&\lesssim|R|^{1/2}\gamma_2(R)^{-\delta_2}\bigg(\|a_R\|_2+\ell(I)^{-2M}\|a_{R,2}\|_2\\
&\qquad+\ell(J)^{-2M}\|a_{R,1}\|_2+\ell(I)^{-2M}\ell(J)^{-2M}    \|b_R\|_2\bigg)\\
&=|R|^{1/2}\gamma_2(R)^{-\delta_2}\bigg(\|a_R\|_2+\ell(I)^{-2M}\|(({\triangle^{(1)}})^M\otimes_2 1\!\!1_2)b_R\|_2\\
&\qquad\qquad +\ell(I)^{-2M}\|(1\!\!1_1 \otimes_2({\triangle^{(2)}})^M)b_R\|_2+\ell(I)^{-2M}\ell(J)^{-2M}    \|b_R\|_2\bigg).
\end{align*}

Using the H\"older's inequality, Journe\'s covering lemma and the properties of flag atoms, we have
\begin{align*}
\textrm{II}&:= \sum_{R\in m(\Omega)}\int_{\mathbb{R}^{n}\times (100S)^c}|T(a_R)(x_1,x_2)|dx_1dx_2  \\
&\leq \sum_{R\in m(\Omega) }|R|^{1/2} \gamma_2(R)^{-\delta_2}
  \ell(I)^{-2M}\ell(J)^{-2M}\Big(\|((\ell(I)^2\triangle^{(1)})^M)\otimes_2
(\ell(J)^2\triangle^{(2)})^M)b_{R}\|_2\\
  &\quad + \|((\ell(I)^2\triangle^{(1)})^M)\otimes_2
    1\!\!1_2)b_{R}\|_2+ \|(1\!\!1_1 \otimes_2(\ell(J)^2{\triangle^{(2)}})^M)b_R\|_2+ \|b_R\|_2\Big)\\
&\lesssim\Big(\sum_{R\in m(\Omega) }|R|  \gamma_2(R)^{-2\delta_2} \Big)^{1/2}  \bigg(\sum_{R\in m(\Omega) } \ell(I)^{-4M}\ell(J)^{-4M}\\
&\qquad\times \Big(\|((\ell(I)^2\triangle^{(1)})^M)\otimes_2
(\ell(J)^2\triangle^{(2)})^M)b_{R}\|^2_2+ \|((\ell(I)^2\triangle^{(1)})^M)\otimes_2
    1\!\!1_2)b_{R}\|^2_2\\
&\qquad+ \|(1\!\!1_1 \otimes_2(\ell(J)^2{\triangle^{(2)}})^M)b_R\|^2_2+ \|b_R\|^2_2\Big)\bigg)^{1/2}\\
&\lesssim |\Omega|^{{1\over 2}} |\Omega|^{-{1\over 2}}
 \lesssim1.
 \end{align*}

Therefore,
\begin{align*}
&\int_{\big(\cup \widetilde{R}\big)^c}|T(a)(x_1,x_2)|dx_2dx_1
\\
&\leq \sum_{R\in
m(\Omega)}\int_{(100l)^c\times\mathbb{R}^{m}}|T(a_R)(x_1,x_2)|dx_2dx_1+\sum_{R\in
m(\Omega)}\int_{\mathbb{R}^{n}\times(100S)^c}|T(a_R)(x_1,x_2)|dx_2dx_1\\
&\le C.
\end{align*}
The inequality \eqref{T alpha uniformly bd on outside of Omega} is done and the proof is completed.
\end{proof}

Based on the result above, we already showed that $\sum_{j=1}^{n+m}\sum_{k=1}^m\|R_{j,k}(f)\|_1+\|f\|_1\lesssim \|S_F(f)\|_1$,
which, together with all estimates provided from Section 2 to Section 4, gives that
\begin{eqnarray*}
\|S_F(f)\|_1& \lesssim & \|S_F(U)\|_1 \lesssim \|U^*\|_1\lesssim \|M^{*}_{\Phi}(f)\|_1 \lesssim  \|U^*\|_1\\
&\lesssim &  \|U^+\|_1\lesssim \|M^{+}_{\Phi}(f)|_1  \lesssim  \|U^+\|_1\\
&\lesssim &   \sum_{j=1}^{n+m}\sum_{k=1}^m\|R_{j,k}(f)\|_1+\|f\|_1\\
&\lesssim& \|S_F(f)\|_1.
\end{eqnarray*}

\newpage

\noindent{\bf Acknowledgement:}
The authors would like to thank the referees for all the helpful comments and suggestions, which made this paper much more clear and accurate.

BDW's research supported in part by National Science Foundation grants DMS \# 1560955 and DMS \# 1800057.  JL supported by ARC DP 160100153 and Macquarie University New Staff Grant.
MYL is supported by Ministry of Science and Technology, R.O.C. under Grant \#MOST 106-2115-M-008-003-MY2 as well as supported by National Center for Theoretical Sciences of Taiwan.
This paper started in 2007 when YSH visited JL at Sun Yat-Sen University, and subsequent work was done when MYL visited JL at Sun Yat-Sen University in 2012 and 2013,
and JL visited MYL at National Central University, Taiwan in 2014 and when JL visited BDW at Washington University in STL in 2015.

\bigskip
\bigskip

\bigskip

\bigskip
Department of Mathematics,  Auburn University,  Auburn, AL 36849-5310, U.S.A.
\smallskip

{\it E-mail}: \texttt{hanyong@auburn.edu}

\bigskip
\bigskip

Department of Mathematics, National Central University, Chung-Li 320, Taiwan.

\noindent\&

National Center for Theoretical Sciences, 1 Roosevelt Road, Sec. 4, National Taiwan University, Taipei 106, Taiwan

\smallskip

{\it E-mail}: \texttt{mylee@math.ncu.edu.tw}

\bigskip
\bigskip

Department of Mathematics,
         Macquarie University,
         NSW,  2109,
        Australia

\smallskip
{\it E-mail}: \texttt{ ji.li@mq.edu.au}

\bigskip
\bigskip

Department of Mathematics, Washington University--St. Louis, St. Louis, MO 63130-4899, U.S.A.

\smallskip

{\it E-mail}: \texttt{wick@math.wustl.edu}


\begin{thebibliography}{99}

\bibitem{CF1} \textsc{S-Y. A. Chang and R. Fefferman,}  Some recent
developments in Fourier analysis and $H^{p}$ theory on product domains,
Bull. Amer. Math. Soc., \textbf{12} (1985), 1-43.

\bibitem{CF2} \textsc{S-Y. A. Chang and R. Fefferman,} The Calder\'{o}n-Zygmund decomposition on product domains, Amer. J. Math., \textbf{104}
(1982), 455-468.

\bibitem{CF3} \textsc{S-Y. A. Chang and R. Fefferman,} A continuous version
of duality of $H^{1}$ with $BMO$ on the bidisc, Ann. of math., \textbf{112}
(1980), 179-201.

\bibitem{CRW} \textsc{R.R. Coifman, R. Rochberg and G. Weiss},
Factorization theorems for Hardy spaces in several variables, Ann. of Math., (2) {\bf103} (1976), 611--635.


\bibitem{CS}
\textsc{T. Coulhon and A. Sikora},
{\it Gaussian heat kernel upper bounds via Phragm\'en-Lindel\"of theorem},
Proc. Lond. Math. Soc., {\bf 96} (2008), 507--544.

\bibitem{DH} \textsc{D. Deng and Y. Han}, {Harmonic analysis on
    spaces of homogeneous type}, with a preface by Yves
    Meyer, Lecture Notes in Math.~\textbf{1966}, Springer-Verlag,
    Berlin, 2009.


\bibitem{DLOPW}
\textsc{X. Duong,  J. Li, Y. Ou, J. Pipher and B. D. Wick},  Commutators of multi-parameter flag singular integrals and applications, Anal. PDE., {\bf12} (2019), no.5, 1325--1355.

\bibitem{FS} \textsc{C. Fefferman and E. M. Stein,}
$H^p$ spaces of several variables, Acta Math., {\bf129} (1972), 137-194.

\bibitem{F1} \textsc{R. Fefferman,} Multi-parameter Fourier analysis, Study
\textbf{112}, Beijing Lectures in Harmonic Analysis, Edited by E. M. Stein,
47-130. Annals of Mathematics Studies Princeton University Press.

\bibitem{F2} \textsc{R. Fefferman,} Harmonic Analysis on product spaces,
Ann. of Math., \textbf{126} (1987), 109-130.

\bibitem{F3} \textsc{R. Fefferman,} Multiparameter Calder\'{o}n-Zygmund
theory, Harmonic analysis and partial differential equations (Chicago, IL,
1996), 207-221, Chicago Lectures in Math., Univ. Chicago Press, Chicago, IL,
1999.

\bibitem{FeS} \textsc{R. Fefferman and E. M. Stein,} Singular integrals on
product spaces, Adv. Math., \textbf{45} (1982), 117-143.

\bibitem{FLa}
\textsc{S.~H. Ferguson and M.~T. Lacey},
 A characterization of product {BMO} by commutators,
 {Acta Math.}, {\bf189} (2002), 143--160.

\bibitem{FSa}
\textsc{S.H. Ferguson and C. Sadosky},
Characterizations of bounded mean oscillation on the polydisk in terms of Hankel operators and Carleson measures,
{J. Anal. Math.} \textbf{81} (2000), 239--267.



\bibitem{FJ} \textsc{M. Frazier and B. Jawerth,}
A discrete transform and decomposition of distribution, J. Func. Anal., \textbf{93} (1990), 34--170.


\bibitem{G1} \textsc{P. Glowacki,}
The Melin calculus for general homogeneous groups, Ark. Mat., {\bf 45} (2007), 31--48.

\bibitem{G2}\textsc{P. Glowacki,}
Composition and $L^2$-boundedness of flag kernels, Colloq. Math., \textbf{118} (2010), 581--585.

\bibitem{G3} \textsc{P. Glowacki,}
Correction to ``Composition and $L^2$-boundedness of flag kernels", Colloq. Math., \textbf{120} (2010), 331.

\bibitem{GS} \textsc{R. Gundy and E. M. Stein,} $H^{p}$ theory for the
polydisk, Proc. Nat. Acad. Sci., \textbf{76} (1979), 1026--1029.

\bibitem{H1} \textsc{Y. Han,} Calder\'on-type reproducing formula and the $Tb$ theorem,
 Rev. Mat. Iberoam., \textbf{10} (1994), 51--91.

\bibitem{HSaw} \textsc{Y. Han and E. Sawyer},  Littlewood--Paley theory on spaces of homogeneous type and the classical function spaces, Mem. Amer. Math. Soc., {\bf110} (1994), no 530, vi+126 pp.


\bibitem{H2} \textsc{Y. Han,} Plancherel-P\^{o}lya type inequality on space of homogeneous type and its applications,
Proc. Amer. Math. Soc., \textbf{126} (1998), 3315--3327.


\bibitem{HLLL} \textsc{Y. Han, M.-Y. Lee, C.-C. Lin and Y.-C. Lin,}
Calder\'{o}n-Zygmund operators on product Hardy spaces, J. Funct. Anal., \textbf{258} (2010), 2834--2861.


\bibitem{HHLT} \textsc{Y.C. Han, Y.S. Han,  J. Li and C. Tan}, Marcinkiewicz multipliers and Lipschitz spaces on Heisenberg groups, to appear in Canadian J.  Math.


\bibitem{HLS} \textsc{Y. Han, G. Lu and E. Sawyer},
Flag Hardy spaces and Marcinkiewicz multipliers on the Heisenberg group, Anal. PDE., \textbf{7} (2014), 1465--1534.


\bibitem{HLMMY}
\textsc{ S. Hofmann, G. Z. Lu, D. Mitrea, M. Mitrea and L. X. Yan},
{\it Hardy spaces associated to nonnegative self-adjoint operators satisfying Davies-Gaffney estimates},
Memo. Amer. Math. Soc., {\bf 214} (2011), no. 1007.



\bibitem{J1} \textsc{J. L. Journ\'{e},} Calder\'{o}n--Zygmund operators on
product spaces, Rev. Mat. Iberoam., \textbf{1} (1985), 55--92.


\bibitem{LPPW} \textsc{M. Lacey, S. Petermichl, J. Pipher and B.D. Wick},  Multiparameter Riesz commutators,
{Amer. J. Math.}, {\bf131} (2009), 731--769.


\bibitem{M} \textsc{K. Merryfield,} $H^p$ spaces on poly-half spaces,
Ph. D. Thesis, Univ. of Chicago, 1980.


\bibitem{MRS} \textsc{D. M\"{u}ller, F. Ricci, and E. M. Stein,}
Marcinkiewicz multipliers and multi-parameter structure on Heisenberg(-type)
groups, I, Invent. math., \textbf{119} (1995), 119--233.

\bibitem{MRS2} \textsc{D. M\"{u}ller, F. Ricci, and E. M. Stein,}
Marcinkiewicz multipliers and multi-parameter structure on Heisenberg(-type)
groups, II, Math. Z., \textbf{221} (1996), 267--291.

\bibitem{NS1} \textsc{A. Nagel and E. M. Stein,} On the product theory of
singular integrals, Rev. Mat. Iberoam., \textbf{20} (2004), 531--561.

\bibitem{NRS} \textsc{A. Nagel, F. Ricci, and E. M. Stein,} Singular
integrals with flag kernels and analysis on quadratic CR manifolds, J. Func.
Anal., \textbf{181} (2001), 29--118.

\bibitem{NRSW1} \textsc{A. Nagel, F. Ricci, E. M. Stein, and S. Wainger,}
Singular integrals with flag kernels on homogeneous groups: I, Rev. Mat. Iberoam., \textbf{28} (2012), 631--722.

\bibitem{NRSW2} \textsc{A. Nagel, F. Ricci, E. M. Stein, and S. Wainger,}
Algebras of singular integral operators with kernels controlled by multiple norms, Mem. Amer. Math. Soc., {\bf256} (2018), no. 1230, vii+141 pp.

\bibitem{Ne} \textsc{Z. Nehari}, On bounded bilinear forms, {Ann. of Math.}, {\bf 65} (1957), 153--162.


\bibitem{P} \textsc{J. Pipher,} Journ\'{e}'s covering lemma and its
extension to higher dimensions, Duke Math. J., \textbf{53}
(1986), 683--690.

\bibitem{Si2}
\textsc{A. Sikora},
{\it Riesz transform, Gaussian bounds and the method of wave equation},
Math. Z., {\bf 247} (2004), 643--662.

\bibitem{St}\textsc{E.M. Stein,} Harmonic analysis: Real variable
methods, orthogonality and oscillatory integrals, Princeton Univ.
Press, Princeton, NJ, (1993).

\bibitem{SW} \textsc{E.M. Stein and G. Weiss,} On the theory of harmonic functions of several variables, I:
The theory of $H^p$ spaces, Acta Math., {\bf 103}(1960), 25--62.

\bibitem{U} \textsc{A. Uchiyama},
The factorization of $H^p$ on the space of homogeneous type,
Pacific J. Math., {\bf92} (1981), 453--468.


\bibitem{Yo}
K. Yosida,
{\it Functional Analysis} (fifth edition),
Spring-Verlag, Berlin, 1978.

\end{thebibliography}
\end{document}